\setlist{leftmargin=5mm}
\numberwithin{equation}{section}
\newcommand{\N}{\mathbb{N}}
\newcommand{\R}{\mathbb{R}}
\newcommand{\E}{\mathbb{E}}
\newcommand{\Prob}{\mathbb{P}}
\newcommand{\G}{\mathbb{G}}
\newcommand{\pnorm}[2]{\lVert#1\rVert_{#2}}
\newcommand{\abs}[1]{\lvert#1\rvert}
\newcommand{\bigabs}[1]{\big\lvert#1\big\rvert}
\newcommand{\biggabs}[1]{\bigg\lvert#1\bigg\rvert}
\renewcommand{\epsilon}{\varepsilon}
\renewcommand{\d}[1]{\mathrm{d}#1}
	\def\MR#1{}
\theoremstyle{definition}\newtheorem{problem}{Problem}[section]
\theoremstyle{definition}
\theoremstyle{remark}\newtheorem{assumption}{Assumption}
\theoremstyle{remark}\newtheorem{remark}[problem]{Remark}
\theoremstyle{definition}
\theoremstyle{plain}\newtheorem{theorem}[problem]{Theorem}
\theoremstyle{plain}
\theoremstyle{plain}\newtheorem{lemma}[problem]{Lemma}
\theoremstyle{plain}\newtheorem{proposition}[problem]{Proposition}
\theoremstyle{plain}
\theoremstyle{plain}\newtheorem{corollary}[problem]{Corollary}
\theoremstyle{plain}
\theoremstyle{plain}
\theoremstyle{plain}
\begin{document}

\title[Inference in convex models]{Inference for local parameters in convexity constrained models}
\thanks{The research of H.~Deng is partially supported by DMS-1451817 and CCF-1934924. The research of Q.~Han is partially supported by DMS-1916221. The research of B.~Sen is partially supported by DMS-1712822. }

\author[H. Deng]{Hang Deng}

\address[H. Deng]{
	Department of Statistics, Rutgers University, Piscataway, NJ 08854, USA.
}
\email{hdeng@stat.rutgers.edu}

\author[Q. Han]{Qiyang Han}

\address[Q. Han]{
	Department of Statistics, Rutgers University, Piscataway, NJ 08854, USA.
}
\email{qh85@stat.rutgers.edu}

\author[B. Sen]{Bodhisattva Sen}

\address[B. Sen]{
	Department of Statistics, Columbia University, New York, NY 10027, USA.
}
\email{bodhi@stat.columbia.edu}

\date{\today}

\keywords{limit distribution theory, confidence interval, convex regression, log-concave density estimation, $s$-concave density estimation, deconvolution, shape constraints}
\subjclass[2000]{60F17, 62E17}
\maketitle

\begin{abstract}
We consider the problem of inference for local parameters of a convex regression function $f_0: [0,1] \to \R$ based on observations from a standard nonparametric regression model, using the convex least squares estimator (LSE) $\widehat{f}_n$. For $x_0 \in (0,1)$, the local parameters include the pointwise function value $f_0(x_0)$, the pointwise derivative $f_0'(x_0)$, and the anti-mode (that is, the smallest minimizer) of $f_0$. It is well-known that the limiting distribution of the estimation error $(\widehat{f}_n(x_0) - f_0(x_0), \widehat{f}_n'(x_0) - f_0'(x_0) )$ depends on the unknown second derivative $f_0''(x_0)$, and is therefore not directly applicable for inference. To circumvent this impasse, we show that the following locally normalized errors (LNEs) enjoy pivotal limiting behavior: Let $[\widehat{u}(x_0), \widehat{v}(x_0)]$ be the maximal interval containing $x_0$ where $\widehat{f}_n$ is linear. Then, under standard conditions, 
\begin{align*}
\begin{pmatrix}
	\sqrt{n(\widehat{v}(x_0)-\widehat{u}(x_0))}(\widehat{f}_n(x_0)-f_0(x_0))  \\
	\sqrt{n(\widehat{v}(x_0)-\widehat{u}(x_0))^3}(\widehat{f}_n'(x_0)-f_0'(x_0))
\end{pmatrix}
\rightsquigarrow \sigma \cdot
\begin{pmatrix}
 	\mathbb{L}^{(0)}_2 \\
 	\mathbb{L}^{(1)}_2
 \end{pmatrix},
\end{align*}
where $n$ is the sample size, $\sigma$ is the standard deviation of the errors, and $\mathbb{L}^{(0)}_2, \mathbb{L}^{(1)}_2$ are universal random variables. This asymptotically pivotal LNE theory instantly yields a simple tuning-free procedure for constructing confidence intervals for $f_0(x_0)$ and $f_0'(x_0)$. We also construct an asymptotically pivotal LNE for the anti-mode of $f_0$, and its limiting distribution does not even depend on $\sigma$. These asymptotically pivotal LNE theories are further extended to other convexity/concavity constrained models for which a limit distribution theory is available for problem-specific estimators. Concrete models include: (i) Log-concave density estimation, (ii) $s$-concave density estimation, (iii) convex nonincreasing density estimation, (iv) concave bathtub-shaped hazard function estimation, and (v) concave distribution function estimation from corrupted data. The proposed confidence intervals for all these models are proved to have asymptotically exact coverage and optimal length, and require no further information than the estimator itself. We provide extensive simulation results that validate our theoretical results. 
\end{abstract}



\section{Introduction}

\subsection{Overview}
Consider the standard nonparametric regression model:
\begin{align}\label{model:regression}
Y_i = f_0(X_i)+\xi_i,\quad 1\leq i\leq n,
\end{align}
where $f_0: [0,1]\to \R$ is an unknown convex function, $X_1,\ldots,X_n$ are fixed or random design points, and $\xi_i$'s are i.i.d.~mean $0$ (unobserved) errors with variance $\sigma^2 >0$. We are interested in inference for local parameters of this model, including the function value $f_0(x_0)$ and its derivative $f_0'(x_0)$ at an interior point $x_0 \in (0,1)$, and the anti-mode of $f_0$, that is, the smallest minimizer of $f_0$.

The convex/concave regression model has been studied for more than 60 years in statistics. It was first proposed by \cite{hildreth1954point} to solve real problems particularly in economics where, for example, demand and supply relationship is often assumed to satisfy the concavity constraint; also see~\cite{varian1984nonparametric, matzkin1991semiparametric, convexexample}. Driven by its broad applications, considerable progress has been made in convex regression in the last few decades. Most of these works are almost exclusively focused on the convex least squares estimator (LSE) $\widehat{f}_n$ which is defined as the convex function that minimizes the mean squared error:
\begin{align*}
\widehat{f}_n\; \in\; \underset{f:\,\mathrm{convex}}{\mathrm{arg\,min}} \,\frac{1}{n}\sum_{i=1}^n \big(Y_i - f(X_i) \big)^2.
\end{align*}
Although not unique, the convex LSE $\widehat{f}_n$ has unique specification at the design points, that is, $( \widehat{f}_n(X_1), \ldots, \widehat{f}_n(X_n))^{\top}$ is unique. If we linearly interpolate this unique specification, the resulting piecewise linear function with kinks at design points is also unique and we treat this $\widehat{f}_n$ as the (unique) convex LSE without loss of generality. Consistency of the convex LSE $\widehat{f}_n$ is proved in \cite{hanson1976consistency}. \cite{mammen1991nonparametric} derives the pointwise convergence rate and \cite{dumbgen2004consistency} gives the uniform convergence rate of $\widehat{f}_n$. In \cite{groeneboom2001canonical,groeneboom2001estimation}, the authors derive the local asymptotic distribution theory for the LSE $\widehat{f}_n$. For global risk and the adaptation behavior of the convex LSE, results can be found in \cite{chatterjee2015risk,guntuboyina2013global,bellec2018sharp}. The most relevant result to our objectives in this paper is the limit distribution theory by \cite{groeneboom2001estimation}, which states that under certain conditions on the noise $\{\xi_i\}$ and design points $\{X_i\}$, when $f_0$ is twice continuously differentiable in a neighborhood of $x_0$ with $f_0''(x_0) > 0$,
\begin{align}\label{intro:limit}
\begin{pmatrix}
	\big(4! \sigma/f_0''(x_0) \big)^{1/5} \cdot n^{2/5} \big( \widehat{f}_n(x_0)-f_0(x_0) \big) \\
	\big(4! \sigma/f_0''(x_0) \big)^{3/5} \cdot n^{1/5} \big( \widehat{f}_n'(x_0)-f_0'(x_0) \big)
\end{pmatrix}
\rightsquigarrow
\sigma \cdot \begin{pmatrix}
	\mathbb{H}_{2}^{(2)}(0) \\
	\mathbb{H}_{2}^{(3)}(0)
\end{pmatrix},
\end{align}
where $\mathbb{H}_{2}^{(2)}(0)$ and $\mathbb{H}_{2}^{(3)}(0)$ are defined by a pivotal process with no dependence on $f_0$, $n$, or $\sigma$ (see Theorem~\ref{thm:limiting_dist_convex} for the details). Here $\rightsquigarrow$ denotes weak convergence. This theory is extended by \cite{chen2014convex,ghosal2017univariate} to include mean functions that are ``flatter'' at $x_0$, that is, $f_0''(x_0) =0$.  

As nice as the pointwise limit distribution theory \eqref{intro:limit} is for the convex LSE, there is, so far, no theoretically valid inference method that exploits its merits. The main difficulty in using (\ref{intro:limit}) for inference rests in its dependence on the unknown parameter $f_0''(x_0)$; even though it is fairly easy to find a consistent estimator for the noise level $\sigma^2$.
It is tempting to look for a sample proxy of $f_0''(x_0)$ by considering, e.g., kernel smoothing methods to estimate $f_0''(x_0)$; or we might consider
bootstrap methods such as the $m$-out-of-$n$ bootstrap and bootstrap with smoothing \cite{sen2010inconsistency,seijo2011change} so that such a sample proxy can be bypassed. However, these inference approaches require careful tuning (bandwidth for smoothing and $m$ for $m$-out-of-$n$ bootstrap) that can be delicate and hard to evaluate, making them not very appealing in shape restricted problems. 

\begin{figure}
\centering
\includegraphics[width=12cm]{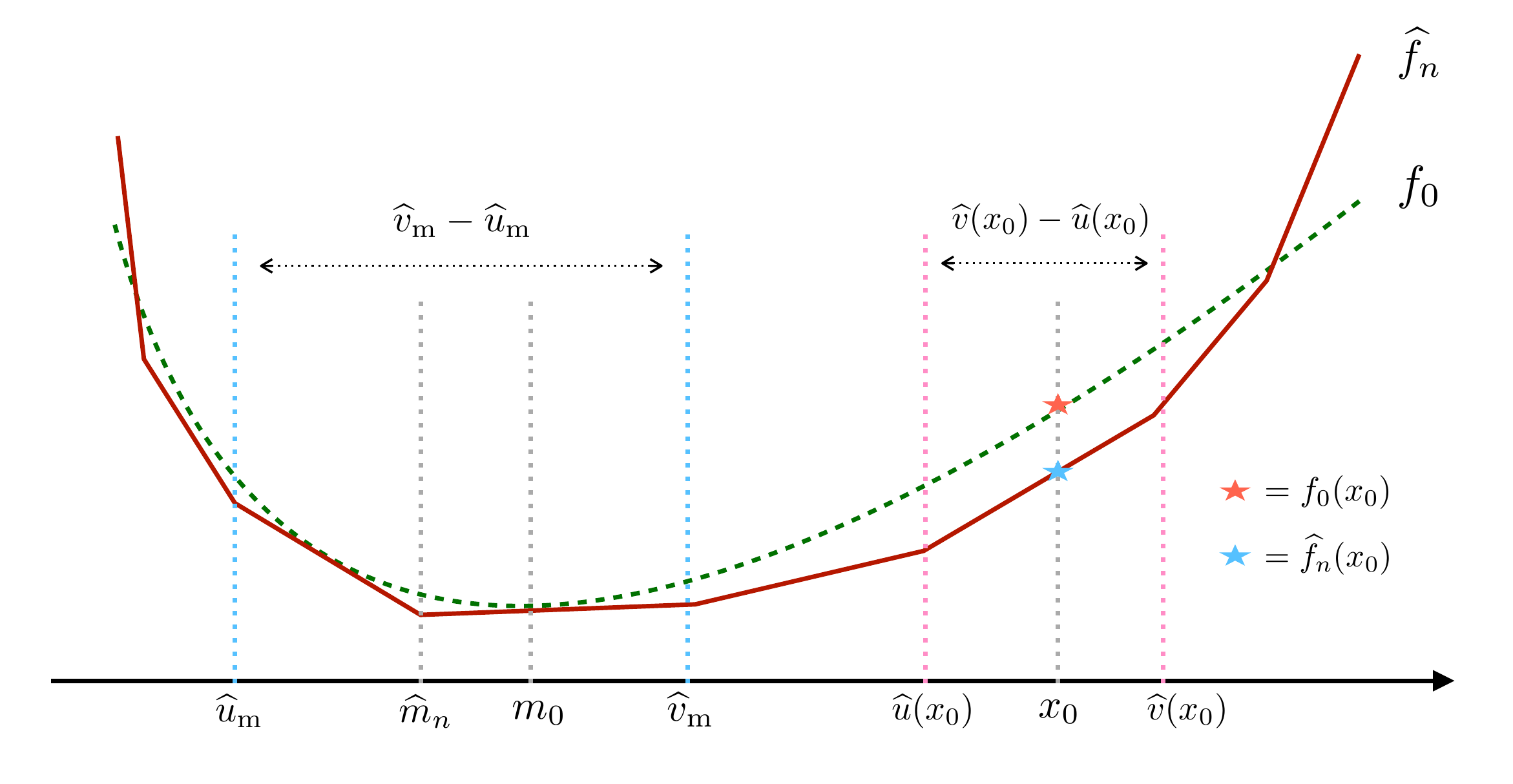}
\caption{Figure illustration of the quantities in (\ref{intro:limit_convex}) and (\ref{intro:limit_mode}).}
\label{fig:inf}
\end{figure}

It turns out that inference can be carried out, in this problem, in a surprisingly straightforward way and the sample proxy of $f_0''(x_0)$ is directly `accessible' from the convex LSE $\widehat{f}_n$,  although the second derivative of $\widehat{f}_n$ is almost everywhere zero (as $\widehat{f}_n$ is piecewise linear). The key observation is that the bias-variance trade-off should happen on each linear piece of the convex LSE $\widehat{f}_n$, since otherwise the linear pieces would adjust their lengths to further reduce the mean squared error of $\widehat{f}_n$. Let $[\widehat{u}(x_0), \widehat{v}(x_0)]$ be the maximal interval containing $x_0$ where $\widehat{f}_n$ is linear (see Figure~\ref{fig:inf}). As 
\begin{align}\label{intro:second_derivative_proxy}
\hbox{(bias)} \quad & f_0''(x_0)(\widehat{v}(x_0) - \widehat{u}(x_0))^2 \asymp \frac{\sigma}{\sqrt{n(\widehat{v}(x_0) - \widehat{u}(x_0))}} \quad \hbox{(s.d.)}
\cr
& \Rightarrow \;\; f_0''(x_0) \asymp \sigma/\sqrt{n(\widehat{v}(x_0) - \widehat{u}(x_0))^5},
\end{align}
it is reasonable to expect that, by plugging \eqref{intro:second_derivative_proxy} into \eqref{intro:limit}, the resulting quantities will be asymptotically pivotal. In fact, following this intuition, we rigorously establish a pivotal limit distribution theory (see Theorem \ref{thm:pivotal_limit_fcn}): Under the same conditions for \eqref{intro:limit},
\begin{align}\label{intro:limit_convex}
\begin{pmatrix}
	\sqrt{n(\widehat{v}(x_0)-\widehat{u}(x_0))}(\widehat{f}_n(x_0)-f_0(x_0))  \\
	\sqrt{n(\widehat{v}(x_0)-\widehat{u}(x_0))^3}(\widehat{f}_n'(x_0)-f_0'(x_0))
\end{pmatrix}
\rightsquigarrow \sigma \cdot
\begin{pmatrix}
 	\mathbb{L}^{(0)}_2 \\
 	\mathbb{L}^{(1)}_2
 \end{pmatrix},
\end{align}
where $\mathbb{L}^{(0)}_2, \mathbb{L}^{(1)}_2$ are universal random variables, whose distributions do not depend on $f_0$, $n$, or $\sigma$. We also show that $\mathbb{L}^{(0)}_2, \mathbb{L}^{(1)}_2$ have exponentially decaying tails (see Corollary \ref{cor:unif_tail_pivot}), a result that we obtain from new exponential tail estimates for the random variables $\mathbb{H}_2^{(2)}(0)$ and $\mathbb{H}_2^{(3)}(0)$ appearing in the limit theory (\ref{intro:limit}) (see Theorem \ref{thm:tail_uniform}). The latter result answers affirmatively a question concerning the existence of moments of  $\mathbb{H}_2^{(2)}(0)$ posed in \cite{groeneboom2001canonical}. Furthermore, the above pivotal limit distribution theory (\ref{intro:limit_convex}) can be generalized to the scenario when $f_0''(x_0)=0$ in similar spirit to \cite{chen2014convex,ghosal2017univariate}; see Theorem \ref{thm:pivotal_limit_fcn} for more details.

It is important to note that the distribution of $(\mathbb{L}_2^{(0)}, \mathbb{L}_2^{(1)})$ in (\ref{intro:limit_convex}) is different from $(\mathbb{H}_2^{(2)}(0), \mathbb{H}_2^{(3)}(0))$ in (\ref{intro:limit}), as the sample proxy $\sigma/\sqrt{n(\widehat{v}(x_0) - \widehat{u}(x_0))^5}$ in (\ref{intro:second_derivative_proxy}) is actually \emph{not} a consistent estimator of $f_0''(x_0)$; but rather it has the same order of magnitude as $f_0''(x_0)$. As we may treat $\sqrt{n (\widehat{v}(x_0) - \widehat{u}(x_0))}$ and $\sqrt{n (\widehat{v}(x_0) - \widehat{u}(x_0))^3}$ in (\ref{intro:limit_convex}) as local normalizing factors for the magnitude of the standard deviation of $\widehat{f}_n(x_0) - f_0(x_0)$ and $\widehat{f}_n'(x_0) - f_0'(x_0)$ respectively, we call the normalized errors in \eqref{intro:limit_convex} and other errors of this type the \emph{locally normalized errors} (LNEs).

The asymptotically pivotal LNE theory in (\ref{intro:limit_convex}) can be used for inference immediately. In testing the hypothesis $H_0: f_0(x_0) = \mu_0$ versus $H_1: f_0(x_0) \neq \mu_0$ for a fixed $\mu_0$, the rejection region at significance level $1- \delta$ is 
\begin{align*}
\Big\{ \widehat{f}_n(x_0): \big| \sqrt{n(\widehat{v}(x_0)-\widehat{u}(x_0))}(\widehat{f}_n(x_0)- \mu_0) \big| \le \widehat{\sigma}\cdot  c_{\delta}^{(0)} \Big\},
\end{align*}
and the $1- \delta$ confidence interval (CI) for $f_0(x_0)$ is
\begin{align*}
\Big[ \widehat{f}_n(x_0) - \widehat{\sigma}\cdot c_{\delta}^{(0)}\big/ \sqrt{n(\widehat{v}(x_0)-\widehat{u}(x_0))}, \widehat{f}_n(x_0) +\widehat{\sigma}\cdot  c_{\delta}^{(0)}\big/ \sqrt{n(\widehat{v}(x_0)-\widehat{u}(x_0))} \Big],
\end{align*}
where $c_{\delta}^{(0)}$ is the $(1- \delta)$-quantile of $|\mathbb{L}_2^{(0)}|$, and $\widehat{\sigma}$ is a consistent estimator of $\sigma$.

Another important problem in convex regression is the inference for the anti-mode, defined as the smallest minimizer of $f_0$. It turns out that the above approach of constructing an asymptotically pivotal LNE is still applicable for this location parameter. We establish a pivotal limit distribution theory for the anti-mode as follows: Let $m_0$ and $\widehat{m}_n$ be the anti-mode of $f_0$ and $\widehat{f}_n$ respectively. Under regularity conditions on the noise variables and design points, it holds, when $f_0$ is twice continuously differentiable in a neighborhood of $m_0$ with $f_0''(m_0) >0$, that (see Theorem \ref{thm:pivotal_limit_mode})
\begin{align}\label{intro:limit_mode}
\frac{1}{\widehat{v}_{\mathrm{m}}- \widehat{u}_{\mathrm{m}}}\big(\widehat{m}_n- m_0\big)\rightsquigarrow \mathbb{M}_2,
\end{align}
where $\widehat{u}_{\mathrm{m}}$ and $\widehat{v}_{\mathrm{m}}$ are the nearest kink points of $\widehat{f}_n$ to the left and right of $\widehat{m}_n$ (see Figure \ref{fig:inf}), and $\mathbb{M}_2$ has a pivotal distribution. What is even more striking in (\ref{intro:limit_mode}) than the pivotal limit distribution theory \eqref{intro:limit_convex} is that the LNE for the anti-mode is scale-free and therefore it is not necessary to estimate $\sigma$. 

The approach of the asymptotically pivotal LNE theory in (\ref{intro:limit_convex})-(\ref{intro:limit_mode}) has much broader applications beyond the regression setting in (\ref{model:regression}). In Section \ref{section:other_convex}, we extend this approach to many other nonparametric models under convexity/concavity constraints where a limit distribution theory similar to (\ref{intro:limit}) is available. These models include:
\begin{enumerate}[label = (\roman*),leftmargin = 0.4in]
	\item log-concave density estimation \cite{balabdaoui2009limit,saumard2014logconcavity,samworth2018recent},
	\item $s$-concave density estimation \cite{dharmadhikari1988unimodality,koenker2010quasi,han2015approximation},
		\item convex nonincreasing density estimation \cite{groeneboom2001estimation},
	\item convex bathtub-shaped hazard function estimation \cite{jankowski2009nonparametric}, and
	\item concave distribution function estimation from corrupted data~\cite{jongbloed2009estimating}. 
\end{enumerate} 

In the popular log-concave density estimation model, we construct asymptotically pivotal LNEs for the value  and the derivative at a point and the mode of the underlying log-concave density using the standard log-concave maximum likelihood estimator (MLE) that has been studied intensively in the literature, see e.g.,~\cite{walther2002detecting,cule2010maximum,cule2010theoretical,dumbgen2009maximum,dumbgen2011approximation,pal2007estimating,seregin2010nonparametric,kim2016global,kim2016adaptation,feng2018adaptation,doss2013global,barber2020local,han2019}. In other models, asymptotically pivotal LNE theories analogous to (\ref{intro:limit_convex}) and (\ref{intro:limit_mode}) (whenever available) are also established for natural tuning-free estimators with a limit distribution theory of the type (\ref{intro:limit}).

To the best of our knowledge, inference procedures with theoretical guarantees in the above models are limited to the problem of inference for the mode of log-concave densities, for which \cite{doss2016inference} developed the likelihood ratio test (LRT). We discuss this LRT based method in detail in Section~\ref{subsection:log_concave} and provide a numerical performance comparison with the proposed CIs in Section~\ref{subsection:DW_comparison}.
	
To put our results in a broader context, the idea of constructing an asymptotically pivotal LNE for inference was first employed in isotonic regression where  $f_0: [0,1]\to \R$, in model \eqref{model:regression},  is assumed to be a nondecreasing function. \cite{deng2020confidence} establishes the following local limit theory for an asymptotically pivotal LNE based on the isotonic LSE $\widehat{f}_n^{(\mathrm{iso})}$: 
\begin{align}\label{intro:limit_monotone}
\sqrt{n\big(\widehat{v}^{(\mathrm{iso})} (x_0)-\widehat{u}^{(\mathrm{iso})} (x_0) \big)} \big(\widehat{f}_n^{(\mathrm{iso})}(x_0)-f_0(x_0)\big)\rightsquigarrow \sigma \cdot  \mathbb{L}^{(\mathrm{iso})}_1,
\end{align}
where $[\widehat{u}^{(\mathrm{iso})}(x_0), \widehat{v}^{(\mathrm{iso})}(x_0)]$  is the maximal interval containing $x_0$ where $\widehat{f}_n^{(\mathrm{iso})}$ remains constant, and $\mathbb{L}^{(\mathrm{iso})}_1$ has a pivotal distribution. Compared to (\ref{intro:limit_monotone}), the asymptotically pivotal LNE theory (\ref{intro:limit_convex})-(\ref{intro:limit_mode}) demonstrates the additional advantage of convexity/concavity constraints in providing simultaneous inference for all local parameters $f_0(x_0)$, $f_0'(x_0)$, $m_0$. This is possible as the convexity/concavity constraints induce a natural second-order curvature condition under which sufficient information is available for all these local parameters, whereas it is not possible to infer more than $f_0(x_0)$ from the first-order monotonicity constraint as in (\ref{intro:limit_monotone}).  
	
Technically, the pivotal LNE theory (\ref{intro:limit_convex})-(\ref{intro:limit_mode}) for models under convexity/concavity constraints is more challenging to establish than (\ref{intro:limit_monotone}) for at least two different reasons. Firstly, unlike the block estimators with max-min and min-max formulas in isotonic regression, the convex LSE has no explicit formula. Technical complications due to the lack of such explicit formulas are well documented in convexity constrained problems \cite{groeneboom2001canonical,groeneboom2001estimation,doss2016inference}. In our problem, the implicit functionals that represent $\widehat{u}(x_0),\widehat{v}(x_0)$ in terms of the underlying process (with piecewise linear convex realizations) are in general not continuous with respect to the topology induced by the mode of convergence of the underlying process to its limit. The essential difficulty then is to argue that the underlying process must converge to the limit in the `continuity set' of this implicit functional in the prescribed topology. Secondly, (\ref{intro:limit_mode}) is different from (\ref{intro:limit_convex}) in that the location of the anti-mode of $\widehat{f}_n$ is random in (\ref{intro:limit_mode}), while $\widehat{f}_n(x_0)$ and $\widehat{f}_n'(x_0)$ have fixed location $x_0$ in (\ref{intro:limit_convex}).  This means that the localization arguments used for proving (\ref{intro:limit_mode}) must be carried out at a random center, and therefore must be performed in a nonstandard `uniform' fashion. These difficulties lead us to adopt a technical approach entirely different from \cite{deng2020confidence} to prove (\ref{intro:limit_convex})-(\ref{intro:limit_mode}).

The rest of the paper is organized as follows. We study the local inference mainly through \eqref{intro:limit_convex} and \eqref{intro:limit_mode} for convex regression in Section \ref{section:pivot_limit}. In Section \ref{section:other_convex}, we build a framework for constructing the LNEs for general models under convexity/concavity constraints and apply it to the models mentioned above. In Section \ref{section:tail_estimate}, we present a uniform tail estimate for the related limit processes that is both useful, for the results in Section \ref{section:pivot_limit}, and of independent interest. We carry out extensive simulations in Section \ref{section:simulation} to support our theoretical results in Sections~\ref{section:pivot_limit} and~\ref{section:other_convex}. All technical proofs are deferred to the Appendix.

\subsection{Notation}
For simplicity of presentation, we write the CI $[\widehat{\theta} - c_0, \widehat{\theta} + c_0]$ which is symmetric around $\widehat{\theta}$ as $\mathcal{I} = [ \widehat{\theta} \pm c_0]$. The anti-mode, or the smallest minimizer, of a convex function $f$ is denoted by $[f]_{\mathrm{m}} = [f]_{\mathrm{m}^+}$, and the mode, or the smallest maximizer of a concave function $g$ is denoted by $[g]_{\mathrm{m}^-}$ which equals $[-g]_{\mathrm{m}}$; see (\ref{def:mode}) for a formal definition.  
Let $f_0^{(k)}(\cdot)$ with $k=1,2,\ldots$, denote the $k$-th derivative of $f_0(\cdot)$. We may also use $f_0^{(0)}(x_0) \equiv f_0(x_0)$ and $f_0^{(1)}(x_0) \equiv f_0'(x_0)$ interchangeably. For two real numbers $a,b$, $a\vee b\equiv \max\{a,b\}$, $a\wedge b\equiv\min\{a,b\}$, and $a_+ \equiv a\vee 0$, $a_- \equiv (-a)\vee 0$. The indicator function $\bm{1}_{ A}(x) =  \bm{1}_{ \{x \in A\} }$ outputs $1$ if $x \in A$ and $0$ otherwise. We use $C_{x}$ or $K_{x}$ to denote a generic constant that depends only on $x$, whose numeric value may change from line to line unless otherwise specified. $a\lesssim_{x} b$ and $a\gtrsim_x b$ mean $a\leq C_x b$ and $a\geq C_x b$ respectively, and $a\asymp_x b$ means $a\lesssim_{x} b$ and $a\gtrsim_x b$ ($a\lesssim b$ means $a\leq Cb$ for some absolute constant $C$). $\mathcal{O}_{\mathbf{P}}$ and $\mathfrak{o}_{\mathbf{P}}$ denote the usual big and small O notation in probability. $\rightsquigarrow$ is reserved for weak convergence for general metric-space valued random variables. In this paper we will consider weak convergence of stochastic processes in the topology induced by  uniform convergence on compacta (that is, compact sets). A function $f$ is locally $C^{\alpha}$ at $x_0$ if it has a continuous $\alpha$-th derivative in a neighborhood of $x_0$. Lastly, $C([a, b])$ is the class of real-valued continuous functions defined on $[a,b] \subset \R$.

\section{Asymptotically pivotal LNE theory: convex regression}\label{section:pivot_limit}

\subsection{Review of the limit distribution theory}

First we state the assumptions.

\begin{assumption}\label{assump:local_smooth}
	Suppose that $f_0:[0,1]\to \R$ is a convex function and there exists some $\alpha \in \N$ such that $f_0$ is locally $C^\alpha$ at $x_0 \in (0,1)$ with $ f_0^{(\beta)}(x_0)=0$, $\beta=2,\ldots,\alpha-1$, and $ f_0^{(\alpha)}(x_0)\neq 0$. 
\end{assumption}
A simple Taylor's expansion of degree $\alpha - 2$ of $f_0^{(2)}(\cdot)$ at $x_0$ yields that $\alpha$ must be even and $f_0^{(\alpha)}(x_0) >0$ (cf.~ \cite[pp.~1305]{balabdaoui2009limit}). The canonical and most interesting case is $\alpha=2$.

\begin{assumption}\label{assump:design}
	Suppose the design points $\{X_i\}$ are either: (i) equally spaced fixed points on $[0,1]$, or (ii) i.i.d.~from the uniform distribution on $[0,1]$.
\end{assumption}

The equally spaced fixed design assumption can be relaxed to nearly equally spaced fixed design in the sense that the following two conditions are satisfied: (1) $\forall i <n, \frac{1}{C n}\leq X_{(i+1)}-X_{(i)}\leq \frac{C}{n}$ holds for some universal constant $C>0$, where $\{X_{(i)}\}$ are the order statistics of $\{X_i\}$, and (2) for $\mathbb{F}_n(x) = n^{-1} \sum_{i=1}^n \bm{1}_{\{X_i \le x\} }$, there exists some $\delta >0$ such that $
\sup_{x: |x - x_0| \le \delta} \big| \mathbb{F}_n(x) - x \big| = \mathfrak{o}(n^{-1/(2\alpha+1)})$. We assume a uniform distribution for the random design setting for simplicity of exposition. Our theory and the construction of CIs in this section can be easily modified to incorporate general design distributions; see Remark \ref{rmk:randon_design_general_dist}.

\begin{assumption}\label{assump:error}
	Suppose the errors $\{\xi_i\}$ are i.i.d.~mean-zero with variance $\sigma^2$ and sub-gaussian, that is, $\E \exp(t \xi_1^2)<\infty$ for $t$ in a neighborhood of $0$, and are independent of $\{X_i\}$ in the case of a random design. 
\end{assumption}

Here we have not tried to pin down the best possible moment condition on the errors. In fact, a sub-gaussian tail condition is assumed in \cite{mammen1991nonparametric,groeneboom2001estimation} in the nearly equally spaced fixed design setting, and a weaker sub-exponential tail condition is assumed in \cite{ghosal2017univariate} in the random design setting, for the limit distribution theory (see Theorem \ref{thm:limiting_dist_convex}) to hold.  For simplicity of presentation, we use a unified and stronger sub-gaussian condition. However, the reader should keep in mind that our main pivotal limit distribution theory (see Theorem \ref{thm:pivotal_limit_fcn}) below will work under the same conditions that validate the proof of Theorem \ref{thm:limiting_dist_convex} below.

Now we state the limit distribution theory for the convex LSE $\widehat{f}_n$ due to \cite{groeneboom2001estimation,ghosal2017univariate}.

\begin{theorem}\label{thm:limiting_dist_convex}
	Suppose Assumptions \ref{assump:local_smooth}-\ref{assump:error} hold. Then, 
	\begin{align*}
	\begin{pmatrix}
	(n/\sigma^2)^{\alpha/(2\alpha+1)}\big(\widehat{f}_n(x_0)-f_0(x_0)\big)\\
	(n/\sigma^2)^{(\alpha-1)/(2\alpha+1)}\big(\widehat{f}_n'(x_0)-f_0'(x_0)\big)
	\end{pmatrix}
    \rightsquigarrow
    \begin{pmatrix}
    d_{\alpha}^{(0)}(f_0,x_0)\cdot \mathbb{H}_{\alpha}^{(2)}(0)\\
    d_{\alpha}^{(1)}(f_0,x_0) \cdot \mathbb{H}_{\alpha}^{(3)}(0)
    \end{pmatrix}
    .
	\end{align*}
	Here
	\begin{align*}
	d_{\alpha}^{(0)}(f_0,x_0) &\equiv \bigg( \frac{f_0^{(\alpha)}(x_0)}{(\alpha+2)!}\bigg)^{1/(2\alpha+1)}, \quad  d_{\alpha}^{(1)}(f_0,x_0)  \equiv \bigg( \frac{f_0^{(\alpha)}(x_0)}{(\alpha+2)!}\bigg )^{3/(2\alpha+1)}, 
	\end{align*}
	and
	$\mathbb{H}_{\alpha}$ is an a.s.~uniquely well-defined random continuous function satisfying the following conditions:
	\begin{enumerate}
		\item For all $t \in \R$,
		\begin{align*}
		\mathbb{H}_{\alpha}(t)\geq \mathbb{Y}_{\alpha}(t)\equiv \int_{0}^{t} \mathbb{B}(s)\,\d{s} +t^{\alpha+2},
		\end{align*}
		where $\mathbb{B}$ is the standard two-sided Brownian motion starting from $0$.
		\item $\mathbb{H}_{\alpha}$ has a convex second derivative $\mathbb{H}_\alpha^{(2)}$.
		\item $\mathbb{H}_{\alpha}$ satisfies
		\begin{align*}
		\int_{-\infty}^{\infty} \big(\mathbb{H}_{\alpha}(t)-\mathbb{Y}_{\alpha}(t)\big)\,\d{\mathbb{H}_{\alpha}^{(3)}(t)} = 0.
		\end{align*}
	\end{enumerate}
	
\end{theorem}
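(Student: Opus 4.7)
The plan is to follow the invelope approach of Groeneboom--Jongbloed--Wellner \cite{groeneboom2001canonical,groeneboom2001estimation}, with the extension in \cite{ghosal2017univariate} allowing a general order of contact $\alpha\ge 2$. First I would encode the convex LSE by its characterization conditions. Let $Y_n$ denote the twice-integrated data process (the integral of the cumulative sum of the $Y_i$'s against the empirical design measure), and let $H_n$ denote the twice-integrated version of $\widehat{f}_n$. The KKT conditions for the least-squares problem translate into the finite-sample ``invelope'' conditions $H_n\ge Y_n$ on $[0,1]$, equality at the kinks of $\widehat{f}_n$, and $\int (H_n-Y_n)\,\mathrm{d}\widehat{f}_n'=0$---a direct analogue of conditions (1)--(3) in the statement of the limit.

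Second, I would localize. Rescale about $x_0$ at the rate $n^{-1/(2\alpha+1)}$ and subtract from the target the linear Taylor polynomial of $f_0$ at $x_0$ (harmlessly absorbed into the invelope relationship because its own second derivative vanishes). Two asymptotic ingredients then combine in the rescaled $Y_n$: a KMT/Brownian approximation of the noise partial sums contributes a term of the form $\sigma\int_0^t \mathbb{B}(s)\,\mathrm{d}s$, and a Taylor expansion of $f_0$ of order $\alpha$---using $f_0^{(\beta)}(x_0)=0$ for $2\le \beta\le \alpha-1$ together with $f_0^{(\alpha)}(x_0)>0$ from Assumption \ref{assump:local_smooth}---contributes the polynomial bias $f_0^{(\alpha)}(x_0)\,t^{\alpha+2}/(\alpha+2)!$. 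Balancing the two fixes the scaling $(n/\sigma^2)^{\alpha/(2\alpha+1)}$, and a deterministic change of time and scale absorbing $f_0^{(\alpha)}(x_0)/(\alpha+2)!$ into $t$ and $\sigma$ into $\mathbb{B}$ sends the rescaled $Y_n$ on compacta to $\mathbb{Y}_\alpha$. The constants $d_\alpha^{(0)},d_\alpha^{(1)}$ and the second exponent $(\alpha-1)/(2\alpha+1)$ are then exact bookkeeping.

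The main obstacle is upgrading uniform-on-compacta convergence of the driving process $Y_n$ to joint convergence of $\widehat{f}_n(x_0)=H_n^{(2)}(x_0)$ and $\widehat{f}_n'(x_0)=H_n^{(3)}(x_0)$, read off at the rescaled origin. This requires three ingredients: (i) tightness of the rescaled $H_n$ in $C([-M,M])$ for every $M$, which demands an a priori bound showing that the kinks of $\widehat{f}_n$ bracketing $x_0$ lie within $\mathcal{O}_{\mathbf{P}}(n^{-1/(2\alpha+1)})$ of $x_0$---a standard bias/variance peeling argument on the convex LSE in the spirit of \cite{mammen1991nonparametric}; (ii) passage to the limit in the characterization conditions, so that every subsequential weak limit is an invelope of $\mathbb{Y}_\alpha$ satisfying (1)--(3), after which the a.s.\ uniqueness of the invelope proved in \cite{groeneboom2001canonical} pins the full sequence down to $\mathbb{H}_\alpha$; and (iii) promoting $C$-convergence of $H_n$ to convergence of $H_n^{(2)}$ and $H_n^{(3)}$ evaluated at $0$, using tightness of the random kink locations together with strict convexity of $\mathbb{H}_\alpha^{(2)}$ to rule out derivative escape at the rescaled origin. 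Evaluating at $t=0$ after these steps delivers the claimed bivariate weak limit.
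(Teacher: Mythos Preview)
Your proposal is correct and follows essentially the same invelope-localization approach that the paper sketches (itself drawn from \cite{groeneboom2001estimation,ghosal2017univariate}): characterize the LSE via the finite-sample invelope conditions, localize at rate $n^{-1/(2\alpha+1)}$ after subtracting the linear Taylor part, obtain the limit of the driving process by a Brownian approximation plus Taylor bias, then pass to the limit in the characterization using tightness of the kinks and a.s.\ uniqueness of $\mathbb{H}_\alpha$, and finally read off the constants via Brownian scaling. The one technical wrinkle the paper emphasizes that you gloss over is the need for two asymptotically equivalent local processes---one (built with the empirical design measure) that preserves the exact invelope inequality and one (built with Lebesgue measure) whose second and third derivatives are literally $n^{2/5}(\widehat f_n-f_0)$ and $n^{1/5}(\widehat f_n'-f_0')$---since a single process does not simultaneously deliver both; but this is a bookkeeping point rather than a gap in your outline.
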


\begin{remark}
	In words, $\mathbb{H}_{\alpha}$ is a.s.~determined as a piecewise cubic function that majorizes $\mathbb{Y}_{\alpha}$ with equality (touch points) taken at jumps of the piecewise constant nondecreasing function $\mathbb{H}_{\alpha}^{(3)}$. The process $\mathbb{H}_{\alpha}$ is called the ``invelope'' function of $\mathbb{Y}_{\alpha}$.
\end{remark}

\subsection{Asymptotically pivotal LNE theory I: Pointwise inference for the function and its derivative}

In this subsection, we consider the inference problem for the parameters $f_0(x_0)$ and $f_0'(x_0)$. We propose the following construction of CIs: let $[\widehat{u}(x_0),\widehat{v}(x_0)]$ be the ``maximal interval'' containing $x_0$ on which $\widehat{f}_n$ is linear, and
\begin{align}\label{def:CI_fcn_0_1}
\mathcal{I}_n^{(0)}(c_\delta^{(0)}) &\equiv \bigg[ \widehat{f}_n(x_0) \pm \frac{c_\delta^{(0)} \cdot \widehat{\sigma}}{ \sqrt{ n(\widehat{v}(x_0)-\widehat{u}(x_0) )} } \bigg],\\
\mathcal{I}_n^{(1)}(c_\delta^{(1)}) &\equiv \bigg[ \widehat{f}_n'(x_0) \pm \frac{c_\delta^{(1)} \cdot \widehat{\sigma}}{ \sqrt{ n(\widehat{v}(x_0)-\widehat{u}(x_0))^3} } \bigg], \nonumber
\end{align}
where $c_{\delta}^{(i)}$ $(i=0,1)$ are universal critical values determined only by the confidence level $1-\delta$, and will be detailed below (see Theorem~\ref{thm:CI_fcn}).
Here $\widehat{\sigma}$ is the square root of a consistent estimator of $\sigma^2$.

\begin{remark}
	To prevent potential ambiguity in the definition of $\widehat{u}(x_0)$ and $\widehat{v}(x_0)$ for finite samples, we require $[\widehat{u}(x_0), \widehat{v}(x_0)]$ to be the ``maximal interval'' which means: (i) the only interval containing $x_0$ if $x_0$ is not a kink of $\widehat{f}_n$, and (ii) the longer one (either one for equal length) if $x_0$ is a kink (so $x_0$ belongs to two intervals). This definition is primarily for practical concerns, as in theory any fixed point $x_0$ is a kink of $\widehat{f}_n$ with  vanishing probability in the large sample limit.
\end{remark}

Our proposal (\ref{def:CI_fcn_0_1}) for the CIs of $f_0(x_0)$ and $f_0'(x_0)$ is based on the following asymptotically pivotal LNE theory; see Appendix \ref{pf:pivotal_limit_fcn} for its proof.
\begin{theorem}\label{thm:pivotal_limit_fcn}
	Suppose Assumptions \ref{assump:local_smooth}-\ref{assump:error} hold. Then
	\begin{align*}
	\begin{pmatrix}
	\sqrt{n(\widehat{v}(x_0)-\widehat{u}(x_0))}\big(\widehat{f}_n(x_0)-f_0(x_0)\big)\\
	\sqrt{n(\widehat{v}(x_0)-\widehat{u}(x_0))^3}\big(\widehat{f}_n'(x_0)-f_0'(x_0)\big)
	\end{pmatrix}
	\rightsquigarrow \sigma\cdot  
	\begin{pmatrix}
	\mathbb{L}^{(0)}_\alpha\\
	 \mathbb{L}^{(1)}_\alpha
	\end{pmatrix}
	.
	\end{align*}
	Here $\mathbb{L}^{(0)}_{\alpha}$ and $\mathbb{L}^{(1)}_\alpha$ are a.s.~finite random variables defined by
	\begin{align*}
	\mathbb{L}^{(0)}_{\alpha} &\equiv \sqrt{h^\ast_{\alpha;-}+h^\ast_{\alpha;+}} \cdot \mathbb{H}_{\alpha}^{(2)}(0),\\
	\mathbb{L}^{(1)}_{\alpha} & \equiv \sqrt{\big(h^\ast_{\alpha;-}+h^\ast_{\alpha;+}\big)^3} \cdot \mathbb{H}_{\alpha}^{(3)}(0),
	\end{align*}
	where $h^\ast_{\alpha;-}$ (resp.~$h^\ast_{\alpha;+}$) is the absolute value of the location of the first touch point of the pair $(\mathbb{H}_{\alpha}, \mathbb{Y}_{\alpha})$ to the left (resp.~right) of $0$.
\end{theorem}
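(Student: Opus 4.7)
The plan is to derive this pivotal LNE theory by upgrading Theorem~\ref{thm:limiting_dist_convex} to a joint weak convergence that also tracks the two kink locations of $\widehat{f}_n$ bracketing $x_0$, and then exploit the fact that, when one assembles $\sqrt{n(\widehat{v}(x_0)-\widehat{u}(x_0))}$, all dependence on the unknown $f_0^{(\alpha)}(x_0)$ cancels against the scaling constants $d_\alpha^{(0)}, d_\alpha^{(1)}$. The starting point is the stronger functional statement that actually underlies the proof of Theorem~\ref{thm:limiting_dist_convex} in~\cite{groeneboom2001estimation, ghosal2017univariate}: at the natural local rate $\tau_n \asymp (\sigma^2/n)^{1/(2\alpha+1)}$ (with multiplicative constant involving $f_0^{(\alpha)}(x_0)$), the rescaled processes associated with $\widehat{f}_n(x_0+\tau_n\,\cdot)$ and $\widehat{f}_n'(x_0+\tau_n\,\cdot)$, both centered by the appropriate Taylor part of $f_0$ at $x_0$, converge jointly (in uniform convergence on compacta) to scaled versions of $\mathbb{H}_\alpha^{(2)}$ and $\mathbb{H}_\alpha^{(3)}$. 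This functional upgrade of \eqref{intro:limit} is inherent in the invelope/KKT machinery of the original proofs.

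The crux of the argument is then to promote this functional convergence to the rescaled kink locations, namely $(\widehat{u}(x_0)-x_0)/\tau_n \rightsquigarrow -h^*_{\alpha;-}$ and $(\widehat{v}(x_0)-x_0)/\tau_n \rightsquigarrow h^*_{\alpha;+}$, jointly with the functional convergence above. This is the main obstacle, because the map ``first kink to the left/right of $x_0$'' on piecewise linear convex functions is \emph{not} continuous in the topology of uniform convergence on compacta: small perturbations can create spurious kinks arbitrarily close to $x_0$. To handle this, I would exploit three a.s.~properties of the invelope $\mathbb{H}_\alpha$: (a) the nearest touch points of $\mathbb{H}_\alpha$ with $\mathbb{Y}_\alpha$ on either side of $0$ lie at strictly positive distance from $0$; (b) on the open interval between them, $\mathbb{H}_\alpha > \mathbb{Y}_\alpha$ with a strict gap; and (c) $\mathbb{H}_\alpha^{(3)}$ has strictly positive jumps at these touch points. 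After passing to an a.s.~Skorokhod coupling, properties (a)--(c) combined with the KKT/Fenchel characterization of $\widehat{f}_n$ and the characterizing majorization/touch conditions for $\mathbb{H}_\alpha$ force, for large $n$: (i) no kink of $\widehat{f}_n$ can sit strictly inside the rescaled interval $(-h^*_{\alpha;-}, h^*_{\alpha;+})$ (otherwise the strict majorization gap of (b) would be violated in the limit), and (ii) the first kinks outside this interval must align with the limiting touch points, via a convex-analytic estimate using the strict slope jump in~(c). This places the underlying process in the ``continuity set'' of the implicit kink-locating functional, which is precisely the technical hurdle flagged in the introduction.

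Combining the two convergences yields joint weak convergence of
\begin{align*}
\Big(\widehat{f}_n(x_0)-f_0(x_0),\ \widehat{f}_n'(x_0)-f_0'(x_0),\ \big(\widehat{v}(x_0)-\widehat{u}(x_0)\big)/\tau_n\Big)
\end{align*}
at their respective rates to $\big(d_\alpha^{(0)}(f_0,x_0)\,\mathbb{H}_\alpha^{(2)}(0),\ d_\alpha^{(1)}(f_0,x_0)\,\mathbb{H}_\alpha^{(3)}(0),\ h^*_{\alpha;-}+h^*_{\alpha;+}\big)$. A direct bookkeeping of scaling constants then shows that the powers of $f_0^{(\alpha)}(x_0)$ hidden in $\sqrt{n\tau_n}$ (resp.\ $\sqrt{(n\tau_n)^3}$) exactly cancel those in $d_\alpha^{(0)}$ (resp.\ $d_\alpha^{(1)}$), so that continuous mapping applied to $\sqrt{n(\widehat{v}-\widehat{u})}=\sqrt{n\tau_n}\cdot\sqrt{(\widehat{v}-\widehat{u})/\tau_n}$ and its cubed analogue produces the announced pivotal limit $\sigma\cdot(\mathbb{L}_\alpha^{(0)}, \mathbb{L}_\alpha^{(1)})$, with $\mathbb{L}_\alpha^{(i)}$ as defined in the theorem statement.
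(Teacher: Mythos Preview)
Your proposal is correct and identifies the same obstruction the paper does---the kink-locating functional is discontinuous in the uniform-on-compacta topology---and resolves it with essentially the same two-sided idea: the touch-point characterization controls one direction, and a convexity/kink argument controls the other. The technical packaging is different, however. You pass to a Skorokhod coupling and argue almost surely: your (i) uses the strict gap $\mathbb{H}_\alpha>\mathbb{Y}_\alpha$ on $(-h^*_{\alpha;-},h^*_{\alpha;+})$ to rule out interior kinks (giving $\widehat{h}_\pm \geq h^*_{\alpha;\pm}-\epsilon$), and your (ii) uses that a sequence of affine functions cannot converge uniformly to a function with a genuine kink, invoking the strict jump of $\mathbb{H}_\alpha^{(3)}$ at $h^*_{\alpha;\pm}$ (giving $\widehat{h}_\pm \leq h^*_{\alpha;\pm}+\epsilon$). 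The paper instead stays at the distributional level and encodes the two directions as membership in, respectively, a \emph{closed} set $S_\pm(\mathfrak{w})=\{(h,y)\in (C(\pm[0,\mathfrak{w}]))^2:\sup(y-h)=0\}$ and an \emph{open} set $\{\widetilde{\Delta}_{n,\pm}^{\mathrm{loc}}(\mathfrak{w})<0\}$ where $\widetilde{\Delta}_{n,\pm}^{\mathrm{loc}}(\mathfrak{w})=2(\widetilde{\mathbb{H}}_n^{\mathrm{loc}})^{(2)}(\pm\mathfrak{w}/2)-(\widetilde{\mathbb{H}}_n^{\mathrm{loc}})^{(2)}(\pm\mathfrak{w})-(\widetilde{\mathbb{H}}_n^{\mathrm{loc}})^{(2)}(0)$ is the convexity defect; it then applies Portmanteau directly to the joint weak convergence of $\big((\widetilde{\mathbb{H}}_n^{\mathrm{loc}})^{(2)}(0),(\widetilde{\mathbb{H}}_n^{\mathrm{loc}})^{(3)}(0),\widetilde{\Delta}_{n,\pm}^{\mathrm{loc}},(\mathbb{H}_n^{\mathrm{loc}},\mathbb{Y}_n^{\mathrm{loc}})\big)$ without any Skorokhod construction. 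The paper's formulation makes the open/closed dichotomy explicit and is a bit more streamlined; your route is more geometric and perhaps easier to visualize. Both rely on the same a.s.\ facts about the limit (that $0$ is not a touch point and that the slope jump at $h^*_{\alpha;\pm}$ is strictly positive), and the final Brownian-scaling cancellation is identical.
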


The proof of above theorem, at a high level, proceeds via a careful application of the continuous mapping theorem, by combining the proof of Theorem \ref{thm:limiting_dist_convex} and a suitable characterization of $\widehat{u}(x_0)$ and $\widehat{v}(x_0)$. Intuitively, one may wish to do so by considering $\widehat{u}(x_0)$ and $\widehat{v}(x_0)$ as two functionals $\mathcal{H}_\pm$ of the underlying process $(\mathbb{H}_n^{\mathrm{loc}})^{(2)}$, the finite sample version of $\mathbb{H}_{\alpha}^{(2)}$ defined in Theorem \ref{thm:limiting_dist_convex}, whose realizations are piecewise linear convex functions (see Appendix \ref{pf:preliminary} for a precise definition). However, it turns out that $\mathcal{H}_\pm$ are not continuous with respect to the topology induced by uniform convergence on compacta in which $(\mathbb{H}_n^{\mathrm{loc}})^{(2)}$ converges weakly to $\mathbb{H}_{\alpha}^{(2)}$; see (\ref{ineq:pivotal_lim_fcn_0}) for a counterexample. To overcome this difficulty, we employ a dual characterization of $\widehat{u}(x_0),\widehat{v}(x_0)$ using both $(\mathbb{H}_n^{\mathrm{loc}})^{(2)}$ and $\mathbb{H}_n^{\mathrm{loc}}$ (see (\ref{ineq:pivotal_lim_fcn_1})-(\ref{ineq:pivotal_lim_fcn_2})) that 
maintains suitable topological openness and closedness properties. In essence, the additional information on $\mathbb{H}_n^{\mathrm{loc}}$ shows that the convergence of the underlying process $(\mathbb{H}_n^{\mathrm{loc}})^{(2)}$ to its limit must occur inside the `continuity set' of the functionals $\mathcal{H}_\pm$ in the prescribed topology, and therefore $\widehat{u}(x_0)$ and $\widehat{v}(x_0)$, after proper scaling, converge in distribution to their white noise analogues. The universality of the limit then follows from Brownian scaling arguments; details can be found in Appendix \ref{pf:pivotal_limit_fcn}. 

In Figure \ref{fig:ecdf_L} below, we plot the approximate cumulative distribution functions of $\mathbb{L}_2^{(0)}$ and $\mathbb{L}_2^{(1)}$ based on simulation methods discussed in detail in Section \ref{section:simulation}. By time reflection $t\mapsto -t$ of the pair $(\mathbb{H}_2(t),\mathbb{Y}_2(t))$ in Theorem \ref{thm:limiting_dist_convex} and the symmetry of the two-sided Brownian motion about $0$, it is easy to see that $(-1)^\ell\mathbb{H}_2^{(\ell)}(-t)=_d \mathbb{H}_2^{(\ell)}(t)$ for $0\leq \ell\leq 3$ and $t\geq0$, so $\mathbb{H}_2^{(\ell)}(0)$ is symmetric for $\ell \in \{1,3\}$. Hence $\mathbb{L}^{(1)}_2$ is symmetric. Figure \ref{fig:ecdf_L0} also shows overwhelming numerical evidence in support of the symmetry of $\mathbb{L}^{(0)}_2$. It is an interesting open question to formally prove the conjectured symmetry of $\mathbb{L}_2^{(0)}$. Note that the symmetry of $\mathbb{L}^{(1)}_2$ and the conjectured symmetry of $\mathbb{L}^{(0)}_2$ lead to symmetric CIs proposed in (\ref{def:CI_fcn_0_1}).

\begin{figure}[!hbt]
	\centering
	\subfigure[$\mathbb{L}_2^{(0)}$]{
		\label{fig:ecdf_L0} 
		\includegraphics[width=0.48\textwidth]{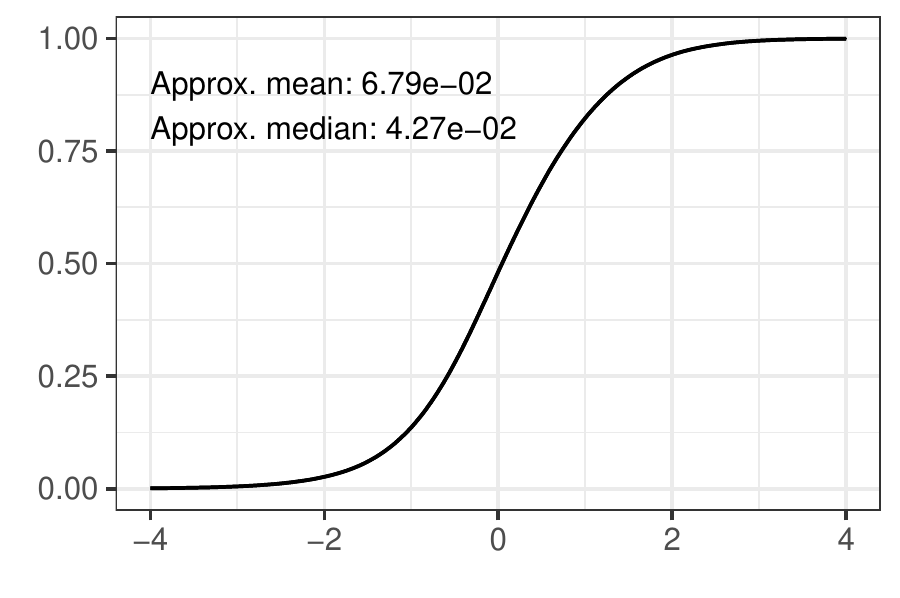}}
	\hspace{0\textwidth}
	\subfigure[$\mathbb{L}_2^{(1)}$]{
		\label{fig:ecdf_L1} 
		\includegraphics[width=0.48\textwidth]{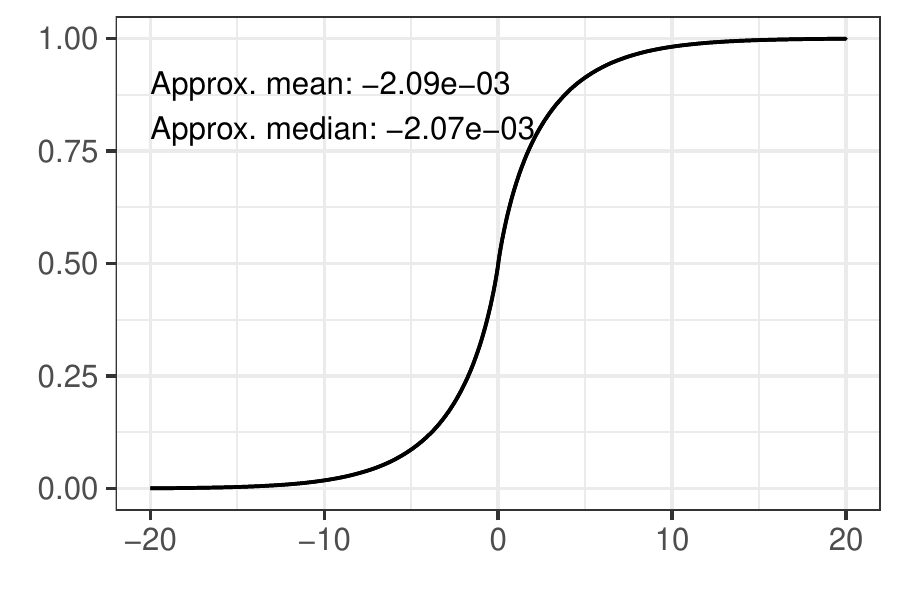}}
	\caption{Empirical distribution functions approximating the distributions of $\mathbb{L}_2^{(0)}$ and $\mathbb{L}_2^{(1)}$.}
	\label{fig:ecdf_L}
\end{figure}

\begin{remark}
	We compare Theorem \ref{thm:pivotal_limit_fcn} with the asymptotically pivotal LNE theory for isotonic regression developed in \cite{deng2020confidence}. Let $f_0$ be a univariate nondecreasing regression function in the regression model (\ref{model:regression}). Then the isotonic LSE $\widehat{f}_n^{(\textrm{iso})}$ is a piecewise constant nondecreasing function. Suppose Assumptions \ref{assump:local_smooth}-\ref{assump:error} hold (but assuming $f_0$ is nondecreasing in Assumption \ref{assump:local_smooth}), then $\widehat{f}_n^{(\textrm{iso})}$ satisfies 
	\begin{align}\label{ineq:iso_limit}
	(n/\sigma^2)^{1/(2+\alpha^{-1})}\big(\widehat{f}_n^{(\mathrm{iso})}(x_0)-f_0(x_0)\big)\rightsquigarrow \bigg(\frac{f_0^{(\alpha)}(x_0)}{(\alpha+1)!}\bigg)^{1/(2\alpha+1)}\cdot \mathbb{D}_{\alpha},
	\end{align}
	where $\mathbb{D}_{\alpha}$ is the slope at zero of the greatest convex minorant of $t \mapsto \mathbb{B}(t)+t^{\alpha+1}$; see~\cite{brunk1970estimation,wright1981asymptotic,han2019limit,han2019berry}. Let $\widehat{u}^{(\mathrm{iso})} (x_0)$ and $\widehat{v}^{(\mathrm{iso})} (x_0)$ be the left and right end-points of the constant piece of the isotonic LSE $\widehat{f}_n^{(\textrm{iso})}$ that contains $x_0$. Then under the same conditions as for the above limit theory (\ref{ineq:iso_limit}), \cite{deng2020confidence} proved the following asymptotically pivotal LNE theory:
	\begin{align}\label{ineq:iso_pivot_limit}
	\sqrt{n\big(\widehat{v}^{(\mathrm{iso})} (x_0)-\widehat{u}^{(\mathrm{iso})} (x_0) \big)} \big(\widehat{f}_n^{(\mathrm{iso})}(x_0)-f_0(x_0)\big)\rightsquigarrow \sigma\cdot \mathbb{L}^{(\mathrm{iso})}_\alpha,
	\end{align}
	where $\mathbb{L}^{(\mathrm{iso})}_\alpha$ does not depend on $f_0$. Theorem \ref{thm:pivotal_limit_fcn} can therefore be viewed as a `second-order analogue' of the limit theory (\ref{ineq:iso_pivot_limit}) in the context of convex regression, but with several notable differences:
	\begin{itemize}
		\item In the monotone setting, the local smoothness index $\alpha$ must be an odd integer, while in the convex setting, $\alpha$ must be an even integer. Hence the canonical assumption in the monotone setting is a non-vanishing first derivative, while in the convex setting the assumption is a non-vanishing second derivative.
		\item The assumption on the second derivative of $f_0$ and the information in $\widehat{u}(x_0),\widehat{v}(x_0)$ in the setting of convex regression is strong enough for a \emph{joint asymptotically pivotal LNE theory} for both $f_0(x_0)$ and $f_0'(x_0)$. As we will see below, it is also possible to derive asymptotically pivotal LNE theory for other local parameters, such as the anti-mode of the convex regression function, under similar local smoothness assumptions.  
		\item At a technical level, the isotonic estimate $\widehat{f}_n^{(\mathrm{iso})}(x_0)$ is the local average of the observations over the interval $[\widehat{u}^{(\mathrm{iso})}(x_0),\widehat{v}^{(\mathrm{iso})}(x_0)]$, while in the setting of convex regression, $\widehat{f}_n(x_0)$ is typically \emph{not} the local linear regression fit of the observations over the interval $[\widehat{u}(x_0),\widehat{v}(x_0)]$. This makes the technical analysis in Theorem \ref{thm:pivotal_limit_fcn} more involved and implicit compared to (\ref{ineq:iso_pivot_limit}). 
	\end{itemize}
\end{remark}

One particularly important and the canonical case is $\alpha=2$, where the CIs in (\ref{def:CI_fcn_0_1}) have asymptotically exact coverage and shrink at the optimal rate, as detailed below. See Appendix \ref{pf:CI_fcn} for a proof of the following result.
\begin{theorem}\label{thm:CI_fcn}
	Suppose Assumptions \ref{assump:local_smooth}-\ref{assump:error} hold with $\alpha =2$. Let $c_\delta^{(0)},c_\delta^{(1)}$ be chosen such that
	\begin{align}\label{def:cv_L}
	\Prob\big(\abs{\mathbb{L}_2^{(i)} }>c_\delta^{(i)}\big)=\delta,\quad i=0,1.
	\end{align}
	Then for any consistent variance estimator $\widehat{\sigma}$, the CIs in (\ref{def:CI_fcn_0_1}) satisfy 
	\begin{align*}
	\lim_{n \to \infty} \Prob_{f_0}\big(f_0(x_0) \in \mathcal{I}_n^{(0)}(c_\delta^{(0)})\big) = \lim_{n \to \infty} \Prob_{f_0}\big(f_0'(x_0) \in \mathcal{I}_n^{(1)}(c_\delta^{(1)})\big) = 1- \delta,
	\end{align*}
	and for any $\epsilon>0$, 
	\begin{align*}
	&\liminf_{n \to \infty} \bigg\{\Prob_{f_0}\bigg(\bigabs{\mathcal{I}_n^{(0)}(c_\delta^{(0)}) }<2c_\delta^{(0)} \mathfrak{g}_\epsilon^{(0)}\cdot (\sigma^2/n)^{2/5} d_{2}^{(0)}(f_0,x_0) \bigg)\\
	& \qquad \qquad \bigwedge \Prob_{f_0}\bigg(\bigabs{\mathcal{I}_n^{(1)}(c_\delta^{(1)}) }<2c_\delta^{(1)} \mathfrak{g}_\epsilon^{(1)}\cdot (\sigma^2/n)^{1/5} d_{2}^{(1)} (f_0,x_0) \bigg) \bigg\}\geq 1-\epsilon. 
	\end{align*}
	Here $\mathfrak{g}_\epsilon^{(i)} (i=0,1)$'s are constants that depend only on $\epsilon$. 
\end{theorem}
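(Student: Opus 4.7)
\emph{Plan and coverage.} The theorem splits naturally into the coverage claim (which reads off Theorem \ref{thm:pivotal_limit_fcn} via Slutsky) and the length bound (which additionally requires Theorem \ref{thm:limiting_dist_convex}). For coverage, rewrite
\[
\{f_0(x_0)\in \mathcal{I}_n^{(0)}(c_\delta^{(0)})\} = \bigl\{T_n^{(0)}/\widehat{\sigma}\le c_\delta^{(0)}\bigr\},\qquad T_n^{(0)}:=\bigabs{\sqrt{n(\widehat{v}(x_0)-\widehat{u}(x_0))}(\widehat{f}_n(x_0)-f_0(x_0))}.
\]
Theorem \ref{thm:pivotal_limit_fcn} at $\alpha=2$ gives $T_n^{(0)}\rightsquigarrow \sigma|\mathbb{L}_2^{(0)}|$, and combined with $\widehat{\sigma}\to\sigma$ in probability via Slutsky's lemma this upgrades to $T_n^{(0)}/\widehat{\sigma}\rightsquigarrow |\mathbb{L}_2^{(0)}|$. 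Since the CDF of $|\mathbb{L}_2^{(0)}|$ is continuous at $c_\delta^{(0)}$ (implicit in the defining identity (\ref{def:cv_L}) and ensured by the tail bound in Corollary \ref{cor:unif_tail_pivot}), the Portmanteau theorem delivers the coverage $1-\delta$; the derivative case is identical after substituting $(\widehat{v}-\widehat{u})^3$, $c_\delta^{(1)}$, and $\mathbb{L}_2^{(1)}$.

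\emph{Length.} Since $|\mathcal{I}_n^{(0)}(c_\delta^{(0)})|=2c_\delta^{(0)}\widehat{\sigma}/\sqrt{n(\widehat{v}(x_0)-\widehat{u}(x_0))}$, the task is to pin down the stochastic order of $(n(\widehat{v}-\widehat{u}))^{-1/2}$. The crux is that the limits in Theorems \ref{thm:limiting_dist_convex} and \ref{thm:pivotal_limit_fcn} arise as continuous functionals of the \emph{same} rescaled local process $(\mathbb{H}_n^{\mathrm{loc}})^{(2)}$, so they converge jointly to
\[
\bigl(\sigma^{4/5}d_{2}^{(0)}(f_0,x_0)\mathbb{H}_2^{(2)}(0),\;\sigma\sqrt{h^\ast}\,\mathbb{H}_2^{(2)}(0)\bigr),\qquad h^\ast := h_{2;-}^\ast+h_{2;+}^\ast.
\]
Dividing the second coordinate by the first (permissible since $\mathbb{H}_2^{(2)}(0)\ne 0$ a.s.~by the Brownian construction), the continuous mapping theorem yields $\sqrt{n(\widehat{v}-\widehat{u})}/n^{2/5}\rightsquigarrow \sigma^{1/5}\sqrt{h^\ast}/d_{2}^{(0)}(f_0,x_0)$; plugging this back with $\widehat{\sigma}/\sigma\to 1$, a short algebraic simplification gives
\[
|\mathcal{I}_n^{(0)}(c_\delta^{(0)})| = 2c_\delta^{(0)}(\sigma^2/n)^{2/5}d_{2}^{(0)}(f_0,x_0)\cdot (h^\ast)^{-1/2}\bigl(1+\mathfrak{o}_{\mathbf{P}}(1)\bigr).
\]
Taking $\mathfrak{g}_\epsilon^{(0)}$ to slightly exceed the $(1-\epsilon)$-quantile of $(h^\ast)^{-1/2}$—which depends only on $\epsilon$ as $h^\ast$ is a universal random variable—secures the required liminf bound. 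The derivative case is handled by the same ratio argument applied to the second coordinate of Theorem \ref{thm:pivotal_limit_fcn}, with $\mathfrak{g}_\epsilon^{(1)}$ extracted from $(h^\ast)^{-3/2}$.

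\emph{Main obstacle.} The theorem is essentially a bookkeeping corollary of Theorems \ref{thm:pivotal_limit_fcn} and \ref{thm:limiting_dist_convex}; the only genuinely delicate point is justifying the joint weak convergence used in the ratio argument for the length bound. Although this jointness is not made explicit in the two theorem statements, it is a direct consequence of the construction in the proof of Theorem \ref{thm:pivotal_limit_fcn}, where both normalized errors arise as simultaneous continuous images of a single weak limit of the rescaled local process in the compact-uniform topology. The ancillary nondegeneracy facts—$\mathbb{H}_2^{(2)}(0)\neq 0$ a.s., $h^\ast>0$ a.s., and continuity of the CDFs of $|\mathbb{L}_2^{(i)}|$ at $c_\delta^{(i)}$—all follow from the Brownian construction of $\mathbb{H}_2$ and the tail estimates in Section \ref{section:tail_estimate}.
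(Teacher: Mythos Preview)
Your coverage argument is correct and matches the paper's (implicit) reasoning. Your length argument is also correct but takes an unnecessary detour compared to the paper.

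For the length, you form the ratio of the two weak limits from Theorems~\ref{thm:limiting_dist_convex} and~\ref{thm:pivotal_limit_fcn} and cancel the common factor $\mathbb{H}_2^{(2)}(0)$, which forces you to invoke the side fact $\mathbb{H}_2^{(2)}(0)\neq 0$ a.s. The paper's proof is more direct: the proof of Theorem~\ref{thm:pivotal_limit_fcn} already establishes, as an intermediate step, the joint weak convergence
\[
\big(\widehat{h}_-(x_0),\widehat{h}_+(x_0)\big)\rightsquigarrow \big(h^\ast_-(f_0),h^\ast_+(f_0)\big),
\]
so $n^{1/5}(\widehat{v}(x_0)-\widehat{u}(x_0))=\widehat{h}_-+\widehat{h}_+\rightsquigarrow h^\ast_-(f_0)+h^\ast_+(f_0)$ directly. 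Applying the Brownian scaling $h^\ast_\pm(f_0)=h^\ast_{2;\pm}/\gamma_1$ with $\gamma_1^{1/2}\sigma=\gamma_0\gamma_1^2=\sigma^{4/5}d_2^{(0)}(f_0,x_0)$, together with Slutsky for $\widehat{\sigma}$, gives
\[
n^{2/5}\bigabs{\mathcal{I}_n^{(0)}(c_\delta^{(0)})}=\frac{2\widehat{\sigma}c_\delta^{(0)}}{\sqrt{\widehat{h}_++\widehat{h}_-}}\rightsquigarrow \frac{2c_\delta^{(0)}}{\sqrt{h^\ast_{2;+}+h^\ast_{2;-}}}\cdot \sigma^{4/5}d_2^{(0)}(f_0,x_0),
\]
which is exactly your final display without ever dividing by $\mathbb{H}_2^{(2)}(0)$. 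Your route reaches the same destination and the same $\mathfrak{g}_\epsilon^{(i)}$ (a quantile of $(h^\ast)^{-1/2}$ or $(h^\ast)^{-3/2}$), but the paper's version is cleaner because it exploits a byproduct of the proof of Theorem~\ref{thm:pivotal_limit_fcn} that you effectively reconstructed by hand.
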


\begin{remark}\label{remark:optimal_rate}
The lengths of the proposed CIs shrink at the optimal rates in the sense that they adapt to the oracle rates which are locally asymptotically minimax optimal as shown in~\cite[Theorem 5.1]{groeneboom2001estimation}. In the oracle case where $f_0''(x_0)$ and $\sigma$ are both known, Theorem \ref{thm:limiting_dist_convex} implies an oracle CI for $f_0^{(i)}(x_0) \,(i=0,1)$ as 
\begin{align*}
\Big[ \widehat{f}^{(i)}_n(x_0) \pm (\sigma^2/n)^{(2-i)/5} d_{2}^{(i)}(f_0, x_0)  c_{\delta}\big( |\mathbb{H}_2^{(i + 2)}| \big)\Big],
\end{align*}
where $c_{\delta}\big( |\mathbb{H}_2^{(i + 2)}| \big)$ is the $(1- \delta)$-quantile of $|\mathbb{H}_2^{(i + 2)}|$. The length of this oracle CI shrinks at the rate $(\sigma^2/n)^{(2-i)/5} d_{2}^{(i)}(f_0, x_0)$, which is now shown by Theorem \ref{thm:CI_fcn} to be achievable using the proposed CI $\mathcal{I}_n^{(i)}(c_\delta^{(i)})$ in \eqref{def:CI_fcn_0_1}. 
\end{remark}

Let us now consider the case when $\alpha \neq 2$. Let $c_\delta^{(0)},c_\delta^{(1)}$ be chosen such that
	\begin{align}\label{cond:unif_tail}
	\sup_\alpha \Big\{\Prob\big(\abs{\mathbb{L}_{\alpha}^{(0)} }>c_\delta^{(0)}\big)\vee \Prob\big(\abs{\mathbb{L}_{\alpha}^{(1)} }>c_\delta^{(1)}\big) \Big\} \leq \delta.
	\end{align}
Then we may construct adaptive CIs for both $f_0(x_0)$ and $f_0'(x_0)$. We formalize this result in the following theorem; the proof is essentially the same as that of Theorem \ref{thm:CI_fcn} and is thus omitted.

\begin{theorem}\label{thm:adaptive_CI}
	Suppose Assumptions \ref{assump:local_smooth}-\ref{assump:error} hold. Let $c_\delta^{(0)},c_\delta^{(1)}$ be chosen according to (\ref{cond:unif_tail}). Then 
	\begin{align*}
	\liminf_{n \to \infty} \Big\{ \Prob_{f_0}\big(f_0(x_0) \in \mathcal{I}_n^{(0)}(c_\delta^{(0)})\big)\wedge \Prob_{f_0}\big(f_0'(x_0) \in \mathcal{I}_n^{(1)}(c_\delta^{(1)})\big) \Big\} \geq 1-\delta,
	\end{align*}
	and for any $\epsilon>0$, 
	\begin{align*}
	&\liminf_{n \to \infty} \bigg\{ \Prob_{f_0} \Big( \bigabs{\mathcal{I}_n^{(0)} (c_\delta^{(0)}) }<2 c_\delta^{(0)} \mathfrak{g}_{\epsilon,\alpha}^{(0)}\cdot (\sigma^2/n)^{\alpha/(2\alpha+1)} d_{\alpha}^{(0)} (f_0,x_0) \Big)\\
	& \qquad \qquad \bigwedge \Prob_{f_0} \Big( \bigabs{\mathcal{I}_n^{(1)} (c_\delta^{(1)}) }<2c_\delta^{(1)} \mathfrak{g}_{\epsilon,\alpha}^{(1)}\cdot (\sigma^2/n)^{(\alpha-1)/(2\alpha+1)} d_{\alpha}^{(1)} (f_0,x_0)  \Big) \bigg\} \geq 1-\epsilon. 
	\end{align*}
	Here $\mathfrak{g}_{\epsilon,\alpha}^{(i)}$'s (for $i=0,1$) are constants that depend only on $\epsilon,\alpha$, and $d_\alpha^{(i)}(f_0,x_0)$'s are defined in Theorem \ref{thm:limiting_dist_convex}.
\end{theorem}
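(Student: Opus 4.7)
The proof follows the same two-step template used for Theorem~\ref{thm:CI_fcn}, with the equality $\Prob(|\mathbb{L}_2^{(i)}| > c_\delta^{(i)}) = \delta$ replaced by the uniform inequality (\ref{cond:unif_tail}), which weakens the conclusions from exact coverage and deterministic length to the claimed one-sided bounds. I would carry out the two steps as follows.

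\textbf{Step 1: Asymptotic coverage.} For $i\in\{0,1\}$, write $T_n^{(i)}\equiv \sqrt{n(\widehat{v}(x_0)-\widehat{u}(x_0))^{2i+1}}(\widehat{f}_n^{(i)}(x_0)-f_0^{(i)}(x_0))$. Theorem~\ref{thm:pivotal_limit_fcn} yields $T_n^{(i)}\rightsquigarrow \sigma\mathbb{L}_{\alpha}^{(i)}$, and Slutsky together with consistency of $\widehat{\sigma}$ gives $T_n^{(i)}/\widehat{\sigma}\rightsquigarrow \mathbb{L}_\alpha^{(i)}$. Since $\mathbb{H}_\alpha^{(i+2)}(0)$ has a continuous distribution and $h^\ast_{\alpha;-}+h^\ast_{\alpha;+}>0$ almost surely, the distribution of $\mathbb{L}_\alpha^{(i)}$ is continuous, so the Portmanteau theorem gives
\[
\Prob_{f_0}\big(f_0^{(i)}(x_0)\in\mathcal{I}_n^{(i)}(c_\delta^{(i)})\big)=\Prob\big(|T_n^{(i)}/\widehat{\sigma}|\le c_\delta^{(i)}\big)\;\longrightarrow\;\Prob\big(|\mathbb{L}_\alpha^{(i)}|\le c_\delta^{(i)}\big)\;\ge\;1-\delta,
\]
where the last inequality is exactly the uniform tail condition (\ref{cond:unif_tail}). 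Taking the minimum over $i$ yields the coverage claim.

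\textbf{Step 2: Rate of the gap $\widehat v(x_0)-\widehat u(x_0)$.} The crucial observation is that the scale of the length can be read off by comparing the pivotal limit of Theorem~\ref{thm:pivotal_limit_fcn} with the ``unpivotal'' limit of Theorem~\ref{thm:limiting_dist_convex}, in view of the identity $\mathbb{L}_\alpha^{(i)}=(h^\ast_{\alpha;-}+h^\ast_{\alpha;+})^{(2i+1)/2}\mathbb{H}_\alpha^{(i+2)}(0)$. Concretely, dividing the two convergences (using the joint convergence of the numerator/denominator pair and Skorohod representation, or equivalently a joint CMT argument) I would conclude
\[
\sqrt{n(\widehat v(x_0)-\widehat u(x_0))}\big/\bigl(\sigma^{1/(2\alpha+1)} n^{\alpha/(2\alpha+1)}\bigr) \;\rightsquigarrow\; \sqrt{h^\ast_{\alpha;-}+h^\ast_{\alpha;+}}\big/d_\alpha^{(0)}(f_0,x_0),
\]
and the analogous statement with exponent $3$ for the derivative CI.

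\textbf{Step 3: Length bound.} Substituting the scaling from Step 2 into the definition of $|\mathcal{I}_n^{(0)}(c_\delta^{(0)})|=2c_\delta^{(0)}\widehat\sigma/\sqrt{n(\widehat v(x_0)-\widehat u(x_0))}$ and using $\widehat\sigma\to\sigma$ in probability, arithmetic simplification (namely $\sigma^{2\alpha/(2\alpha+1)}n^{-\alpha/(2\alpha+1)}=(\sigma^2/n)^{\alpha/(2\alpha+1)}$) gives
\[
\frac{|\mathcal{I}_n^{(0)}(c_\delta^{(0)})|}{2c_\delta^{(0)}\,(\sigma^2/n)^{\alpha/(2\alpha+1)}\,d_\alpha^{(0)}(f_0,x_0)}\;\rightsquigarrow\;\frac{1}{\sqrt{h^\ast_{\alpha;-}+h^\ast_{\alpha;+}}},
\]
and the analogous statement for $\mathcal{I}_n^{(1)}$ with exponent $3/2$ in the denominator. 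Since $h^\ast_{\alpha;\pm}>0$ almost surely, the right-hand side is an a.s.~finite random variable, so for each $\epsilon>0$ one can pick $\mathfrak{g}_{\epsilon,\alpha}^{(i)}$ as a $(1-\epsilon/2)$-quantile of $1/\sqrt{h^\ast_{\alpha;-}+h^\ast_{\alpha;+}}$ (with a tiny slack to accommodate the two events simultaneously via a union bound), and Portmanteau again delivers the claimed $\liminf$ bound for both $i=0,1$.

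\textbf{Main obstacle.} There is no genuine technical hurdle beyond what Theorems~\ref{thm:limiting_dist_convex}--\ref{thm:pivotal_limit_fcn} already provide; the most delicate point is the \emph{joint} convergence in Step 2 of the pair $(\sqrt{n(\widehat v-\widehat u)^{2i+1}},\,(n/\sigma^2)^{(\alpha-i)/(2\alpha+1)}(\widehat f_n^{(i)}-f_0^{(i)}))$, which one needs in order to cancel the common factor $\mathbb{H}_\alpha^{(i+2)}(0)$ and isolate $\sqrt{h^\ast_{\alpha;-}+h^\ast_{\alpha;+}}$. This joint convergence is, however, already implicit in the proofs of Theorems~\ref{thm:limiting_dist_convex} and~\ref{thm:pivotal_limit_fcn}, both of which stem from the same weak-convergence argument for the localized invelope process $(\mathbb{H}_n^{\mathrm{loc}})^{(2)}\rightsquigarrow \mathbb{H}_\alpha^{(2)}$; indeed the discussion following Theorem~\ref{thm:pivotal_limit_fcn} makes clear that $\widehat u(x_0),\widehat v(x_0)$ are determined by continuity-set functionals of the very same underlying process, so the joint statement comes essentially for free.
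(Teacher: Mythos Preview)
Your proposal is correct and follows essentially the same route as the paper; the paper explicitly states that the proof of this theorem ``is essentially the same as that of Theorem~\ref{thm:CI_fcn} and is thus omitted.'' Your Step~1 (coverage via Theorem~\ref{thm:pivotal_limit_fcn} plus the uniform tail condition~(\ref{cond:unif_tail})) and Step~3 (length via Brownian rescaling) match the paper's argument for Theorem~\ref{thm:CI_fcn} exactly.

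One minor remark on Step~2: the ``dividing the two convergences to cancel $\mathbb{H}_\alpha^{(i+2)}(0)$'' framing is unnecessarily circuitous. The paper's proof of Theorem~\ref{thm:pivotal_limit_fcn} already establishes the \emph{marginal} convergence $\widehat h_-+\widehat h_+\rightsquigarrow h^\ast_-(f_0)+h^\ast_+(f_0)$ as part of the four-dimensional joint limit, so there is nothing to cancel: one simply writes $\sqrt{n(\widehat v-\widehat u)}=n^{\alpha/(2\alpha+1)}\sqrt{\widehat h_-+\widehat h_+}$, applies the continuous mapping theorem to $\widehat h_-+\widehat h_+$ alone, and then invokes the Brownian scaling $h^\ast_\pm(f_0)=h^\ast_{\alpha;\pm}/\gamma_1$ to identify the constant. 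You correctly note in your ``Main obstacle'' paragraph that this joint convergence is already available, so the substance of your argument is the same as the paper's---only the packaging differs.
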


The existence of critical values $c_\delta^{(i)}(i=0,1)$ satisfying (\ref{cond:unif_tail}) is verified in Corollary \ref{cor:unif_tail_pivot} ahead, so indeed adaptive CIs for both $f_0(x_0)$ and $f_0'(x_0)$ can be constructed by calibrating the critical values alone. 

\subsection{Asymptotically pivotal LNE theory  II: Inference for the anti-mode}

The above idea of constructing CIs for $f_0(x_0)$ and $f_0'(x_0)$ can be taken further to other `local parameters' for which a limit distribution theory is available. In this subsection we consider the inference problem for the anti-mode of the convex regression function $f_0$. More precisely, we define the \emph{anti-mode} of a convex function $f$ on $[0,1]$ as its smallest minimizer
\begin{align}\label{def:mode}
[f]_{ \mathrm{m} }  = [f]_{ \mathrm{m}^{+} } 
\equiv \min \big\{t: f(t) = \min_{u \in [0,1]} f(u) \big\}.
\end{align}
For a concave function $g$, the \emph{mode} is defined as its smallest maximizer $[g]_{\mathrm{m}^{-}} \equiv [-g]_{\mathrm{m}}$. We continue to use this notion of the mode for densities not necessarily convex or concave.

Let $m_0 \equiv [f_0]_{\mathrm{m}} \in (0,1)$ be the anti-mode of $f_0$ and $\widehat{m}_n \equiv [\widehat{f}_n]_{\mathrm{m}}$ be the anti-mode of the convex LSE $\widehat{f}_n$. Note that $\widehat{m}_n$ is a kink point of $\widehat{f}_n$. Let $ \widehat{u}_{\mathrm{m}}$ (resp.~$\widehat{v}_{\mathrm{m}}$) be the first kink of $\widehat{f}_n$ to the left (resp.~right) of $\widehat{m}_n$. We propose the following CI for $ m_0$:
\begin{align}\label{def:CI_mode}
\mathcal{I}_n^{\mathrm{m}}(c_\delta^{\mathrm{m}})\equiv \Big[ \widehat{m}_n \pm c_\delta^{\mathrm{m}} \big(\widehat{v}_{\mathrm{m}}- \widehat{u}_{\mathrm{m}}\big)\Big] \cap [0,1].
\end{align}
Here $c_\delta^{\mathrm{m}}$ is a universal critical value determined only by the confidence level $1-\delta$,  to be described below (see Theorem~\ref{thm:CI_mode}). For finite samples, when $\widehat{m}_n$ has no kink to its left (resp.~right), we simply let $\widehat{u}_{\mathrm{m}} = \widehat{m}_n$ (resp.~$\widehat{v}_{\mathrm{m}} = \widehat{m}_n$). It does not affect the limit theory as either case happens with vanishing probability for $m_0 \in (0,1)$. Note that $\widehat{v}_{\mathrm{m}} - \widehat{u}_{\mathrm{m}} > 0$ always holds unless $n=1$.

The above proposal \eqref{def:CI_mode} for a CI of $ m_0$ is based on the following asymptotically pivotal LNE theory (see  Appendix~\ref{pf:pivotal_limit_mode} for a proof of the following result). We will focus on the canonical case $\alpha=2$ for simplicity of exposition. 
\begin{theorem}\label{thm:pivotal_limit_mode}
	Suppose $f_0$ is locally $C^2$ at $ m_0 \in (0,1)$ with $f_0''( m_0)>0$, and that Assumptions \ref{assump:design}-\ref{assump:error} hold. Then
	\begin{align}\label{limit_mode_1}
	(n/\sigma^2)^{1/5}\big(\widehat{m}_n - m_0 \big)\rightsquigarrow d_2^{\mathrm{m}}(f_0)\cdot 
	\big[ \mathbb{H}_{2}^{(2)} \big]_{ \mathrm{m} } ,
	\end{align}
	where $d_2^{\mathrm{m}}(f_0) = \big(4!/f_0''( m_0)\big)^{2/5}$. Furthermore,
	\begin{align}\label{limit_mode_2}
	\frac{1}{\widehat{v}_{\mathrm{m}}- \widehat{u}_{\mathrm{m}}}\big(\widehat{m}_n- m_0\big)\rightsquigarrow \mathbb{M}_2.
	\end{align}
	Here $\mathbb{M}_2$ is an a.s.~finite random variable defined by
	\begin{align*}
	\mathbb{M}_2\equiv \frac{ \big[ \mathbb{H}_{2}^{(2)} \big]_{ \mathrm{m} } }{ h^\ast_{2, \mathrm{m} ;-}+ h^\ast_{2, \mathrm{m} ;+} },
	\end{align*}
	where $h^\ast_{2, \mathrm{m} ;-}$ (resp.~$h^\ast_{2, \mathrm{m} ;+} $) is the first kink of the random convex function $\mathbb{H}_{2}^{(2)}$ (defined in Theorem \ref{thm:limiting_dist_convex}) to the left (resp.~right) of its anti-mode $\big[ \mathbb{H}_{2}^{(2)} \big]_{ \mathrm{m} } $.
\end{theorem}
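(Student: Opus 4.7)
The plan is to establish both (\ref{limit_mode_1}) and (\ref{limit_mode_2}) via a joint localization around $m_0$ at the scale $h_n = d_2^{\mathrm{m}}(f_0) \cdot (\sigma^2/n)^{1/5}$, combined with a carefully applied continuous mapping theorem that extracts the anti-mode together with its two adjacent kinks.

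First I would set up the local process. Introduce the rescaled coordinate $s = (t-m_0)/h_n$ and define $\widehat{f}_n^{\mathrm{loc}}(s)$ as the appropriate rescaling of $\widehat{f}_n(m_0+sh_n)-f_0(m_0)$; because $f_0'(m_0)=0$, no linear correction is needed. Alongside, introduce a twice-integrated local invelope $\mathbb{H}_n^{\mathrm{loc}}$ with $(\mathbb{H}_n^{\mathrm{loc}})^{(2)} = \widehat{f}_n^{\mathrm{loc}}$ and its companion process $\mathbb{Y}_n^{\mathrm{loc}}$. From the machinery underlying the proof of Theorem \ref{thm:limiting_dist_convex}, one obtains the joint convergence $(\mathbb{H}_n^{\mathrm{loc}}, (\mathbb{H}_n^{\mathrm{loc}})^{(2)}, \mathbb{Y}_n^{\mathrm{loc}}) \rightsquigarrow (\mathbb{H}_2, \mathbb{H}_2^{(2)}, \mathbb{Y}_2)$ in the topology of uniform convergence on compacta. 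In local coordinates, $(\widehat{m}_n - m_0)/h_n = [\widehat{f}_n^{\mathrm{loc}}]_{\mathrm{m}}$, while $(\widehat{u}_{\mathrm{m}} - m_0)/h_n$ and $(\widehat{v}_{\mathrm{m}} - m_0)/h_n$ are the kinks of $\widehat{f}_n^{\mathrm{loc}}$ immediately to the left and right of its anti-mode.

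For (\ref{limit_mode_1}), I would invoke the argmin continuous mapping theorem to conclude $[\widehat{f}_n^{\mathrm{loc}}]_{\mathrm{m}} \rightsquigarrow [\mathbb{H}_2^{(2)}]_{\mathrm{m}}$, modulo two ingredients: almost sure uniqueness of $[\mathbb{H}_2^{(2)}]_{\mathrm{m}}$ (which follows from the discrete touch-point structure of $(\mathbb{H}_2, \mathbb{Y}_2)$ together with strict convexity of $\mathbb{Y}_2$ near its minimizer), and tightness of $[\widehat{f}_n^{\mathrm{loc}}]_{\mathrm{m}}$ at the local scale, obtained from a preliminary rate-of-convergence estimate for $\widehat{m}_n$ established analogously to the pointwise rate for $\widehat{f}_n$.

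The hard part will be (\ref{limit_mode_2}). The functional $g \mapsto (\text{anti-mode}, \text{left kink}, \text{right kink})$ on piecewise linear convex $g$ is \emph{not} continuous with respect to uniform-on-compacta convergence: an arbitrarily small perturbation can create or erase kinks near the anti-mode. I would circumvent this exactly in the spirit of the proof of Theorem \ref{thm:pivotal_limit_fcn}, using a dual characterization in which the kinks of $(\mathbb{H}_n^{\mathrm{loc}})^{(2)}$ are identified with touch points of the pair $(\mathbb{H}_n^{\mathrm{loc}}, \mathbb{Y}_n^{\mathrm{loc}})$, and exploiting the invelope equation (condition~(3) of Theorem \ref{thm:limiting_dist_convex}) to force the limiting touch points adjacent to $[\mathbb{H}_2^{(2)}]_{\mathrm{m}}$ to be isolated almost surely, so that the convergence occurs inside the continuity set of the kink functional. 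A genuinely new difficulty, absent from Theorem \ref{thm:pivotal_limit_fcn}, is that the kinks of interest are identified relative to the \emph{random} center $\widehat{m}_n$ rather than a deterministic point; the continuity-set argument must therefore hold uniformly over a compact neighborhood (on the local scale) containing $\widehat{m}_n$ with high probability, which is made possible by the $\mathcal{O}_{\mathbf{P}}(1)$ tightness from (\ref{limit_mode_1}). Once the joint convergence of $(\widehat{m}_n - m_0)/h_n$ and $(\widehat{v}_{\mathrm{m}} - \widehat{u}_{\mathrm{m}})/h_n$ to $\big([\mathbb{H}_2^{(2)}]_{\mathrm{m}}, \, h^\ast_{2,\mathrm{m};-} + h^\ast_{2,\mathrm{m};+}\big)$ is in place, (\ref{limit_mode_2}) follows from the continuous mapping theorem applied to the ratio, using that the denominator is almost surely positive in the limit.
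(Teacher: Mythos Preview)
Your plan is correct and matches the paper's approach closely; two technical points in your sketch need sharpening. First, the almost sure uniqueness of $[\mathbb{H}_2^{(2)}]_{\mathrm{m}}$ does not follow from discreteness of touch points or strict convexity of $\mathbb{Y}_2$ (which is not convex): what must be ruled out is that the piecewise constant $\mathbb{H}_2^{(3)}$ takes the value~$0$ on one of its steps, and the paper handles this separately (Lemma~\ref{lem:uniqueness_H_2_minimzer}) via a Cameron--Martin change of measure showing $\Prob(b\in\mathrm{range}(\mathbb{H}_2^{(3)}))=0$ for every fixed $b$. Second, rather than arguing the dual characterization ``uniformly over a compact neighborhood'' of the random center, the paper explicitly recenters the local processes at $\widetilde{m}_n^{\mathrm{loc},(2)}=n^{1/5}(\widehat{m}_n-m_0)$ and proves weak convergence of the \emph{shifted} processes $(\widetilde{\mathbb{H}}_n^{\mathrm{loc}})^{(2)}(\widetilde{m}_n^{\mathrm{loc},(2)}+\cdot)$ and $(\mathbb{H}_n^{\mathrm{loc}},\mathbb{Y}_n^{\mathrm{loc}})(\widetilde{m}_n^{\mathrm{loc},(2)}+\cdot)$ directly, via Skorokhod representation together with the uniqueness lemma (so that on the Skorokhod space the sample minimizers converge almost surely). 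Once this is in place, the dual touch-point characterization from the proof of Theorem~\ref{thm:pivotal_limit_fcn} applies verbatim to the shifted processes, yielding the joint limit of $(\widehat{m}_n-m_0)/h_n$ and $(\widehat{v}_{\mathrm{m}}-\widehat{u}_{\mathrm{m}})/h_n$ exactly as you describe.
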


As we will mention later in Section \ref{subsection:log_concave}, \cite{balabdaoui2009limit} proved a limit distribution theory for the mode of the MLE of log-concave densities that is parallel to \eqref{limit_mode_1}. Although our proof strategy is similar to that in~\cite{balabdaoui2009limit}, the limit distribution theory \eqref{limit_mode_1} is new in convex regression.

The proof of the more significant result \eqref{limit_mode_2} is more difficult than the proofs of 
Theorem \ref{thm:pivotal_limit_fcn} and \eqref{limit_mode_1}. As $\widehat{u}_{\mathrm{m}}$ and $\widehat{v}_{\mathrm{m}}$ have to be characterized by processes with center $\widehat{m}_n$ that is \emph{random}, the continuous mapping argument in the proof of Theorem \ref{thm:pivotal_limit_fcn} and the argmax continuous mapping argument in the proof of \eqref{limit_mode_1} (originally developed in \cite{balabdaoui2009limit}) cannot be applied, at least directly. As a result, the weak convergence on compacta must be argued for the randomly centered processes. Details of the resulting technical complications and the proof can be found in Appendix \ref{pf:pivotal_limit_mode}.

In Figure \ref{fig:ecdf_M2} below, we plot the approximate cumulative distribution function of $\mathbb{M}_2$ based on simulation methods discussed in detail in Section \ref{section:simulation}. The distribution of $\mathbb{M}_2$ is symmetric due to the symmetry of the two-sided Brownian motion about $0$, which is strongly supported by Figure \ref{fig:ecdf_M2}.

\setlength{\belowcaptionskip}{-10pt}
	\begin{figure}[!hbt]
	\centering
	\includegraphics[width=0.6\textwidth]{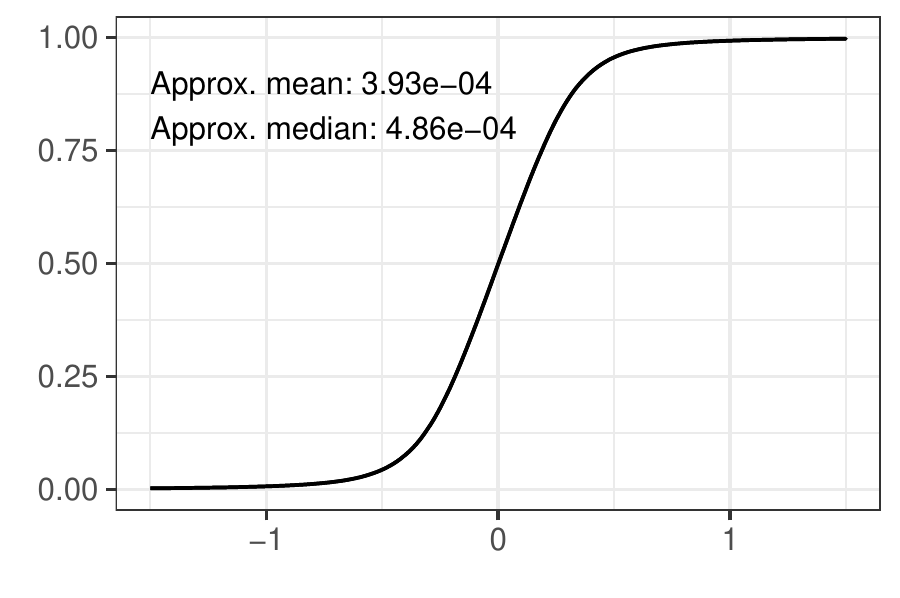}
	\caption{Empirical distribution function approximating the distribution of $\mathbb{M}_2$.}
	\label{fig:ecdf_M2}
\end{figure}

\begin{remark}
	As discussed in the Introduction, the second-order curvature of $\mathbb{H}_2$ contains sufficient information about $f_0(x_0), f_0'(x_0)$ and $m_0$. The joint distributional convergence of the LNEs for these local parameters can be established by a combination of the proofs of Theorems \ref{thm:pivotal_limit_fcn} and \ref{thm:pivotal_limit_mode} with minor changes.
\end{remark}

One striking difference of the CI (\ref{def:CI_mode}) compared to (\ref{def:CI_fcn_0_1}) is the \emph{complete elimination of the need to estimate the variance $\sigma^2$}. This is clearly reflected in the pivotal limiting distribution for $ m_0$ in the above theorem. The intuition is that both the quantities $\widehat{m}_n- m_0$ and $\widehat{v}_{\mathrm{m}}- \widehat{u}_{\mathrm{m}}$ have roughly the same order of magnitudes, so their ratio becomes pivotal in the limit.

As a straightforward consequence of Theorem \ref{thm:pivotal_limit_mode} (proved in Appendix \ref{pf:CI_mode}), the CI (\ref{def:CI_mode}) has asymptotically exact coverage and shrinks at the optimal length.

\begin{theorem}\label{thm:CI_mode}
	Let $c_\delta^{\mathrm{m}}$ be chosen such that
	\begin{align}\label{def:cv_M}
	\Prob\big(\abs{\mathbb{M}_2}>c_\delta^{\mathrm{m}}\big) = \delta. 
	\end{align}
	Then the CI in (\ref{def:CI_mode}) satisfies
	\begin{align*}
	\lim_{n \to \infty} \Prob_{ m_0}\big( m_0 \in \mathcal{I}_n^{\mathrm{m}}(c_\delta^{\mathrm{m}})\big) = 1- \delta,
	\end{align*}
	and for any $\epsilon>0$,
	\begin{align*}
	\liminf_{n \to \infty} \Prob_{ m_0}\bigg(\bigabs{\mathcal{I}_n^{\mathrm{m}} (c_\delta^{\mathrm{m}}) }<2c_\delta^{\mathrm{m}} \mathfrak{g}_\epsilon^{\mathrm{m}}\cdot (\sigma^2/n)^{1/5} d_2^{\mathrm{m}}(f_0)\bigg)\geq 1-\epsilon.
	\end{align*}
	Here $\mathfrak{g}_\epsilon^{\mathrm{m}}$ is a constant depending only on $\epsilon$, and $d_2^{\mathrm{m}}(f_0)$ is defined in Theorem \ref{thm:pivotal_limit_mode}.
\end{theorem}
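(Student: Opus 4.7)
The proof reduces to applying Theorem \ref{thm:pivotal_limit_mode} via routine weak-convergence manipulations, exactly in the spirit of the authors' own comment that this is a ``straightforward consequence.''

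For the coverage claim, I would first observe that since $m_0 \in (0,1)$, the intersection with $[0,1]$ in the definition \eqref{def:CI_mode} is immaterial to whether $m_0$ lies in the CI: the event $\{m_0 \in \mathcal{I}_n^{\mathrm{m}}(c_\delta^{\mathrm{m}})\}$ coincides with $\{|\widehat{m}_n - m_0|\le c_\delta^{\mathrm{m}}(\widehat{v}_{\mathrm{m}} - \widehat{u}_{\mathrm{m}})\}$. Since $\widehat{v}_{\mathrm{m}} - \widehat{u}_{\mathrm{m}}>0$ almost surely for $n\ge 2$, dividing through identifies this with $\{|\widehat{m}_n - m_0|/(\widehat{v}_{\mathrm{m}} - \widehat{u}_{\mathrm{m}}) \le c_\delta^{\mathrm{m}}\}$. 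Applying the continuous mapping theorem ($x\mapsto|x|$) to \eqref{limit_mode_2} and invoking \eqref{def:cv_M} then gives the limit $\Prob(|\mathbb{M}_2|\le c_\delta^{\mathrm{m}}) = 1-\delta$, provided $c_\delta^{\mathrm{m}}$ is a continuity point of the distribution of $|\mathbb{M}_2|$ (implicit in the equality in \eqref{def:cv_M}; absolute continuity of $\mathbb{M}_2$ itself follows from the underlying Brownian scaling).

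For the length claim, the deterministic bound $|\mathcal{I}_n^{\mathrm{m}}(c_\delta^{\mathrm{m}})|\le 2c_\delta^{\mathrm{m}}(\widehat{v}_{\mathrm{m}} - \widehat{u}_{\mathrm{m}})$ reduces the statement to exhibiting, for each $\epsilon>0$, some $\mathfrak{g}_\epsilon^{\mathrm{m}}$ depending only on $\epsilon$ with
\begin{align*}
\liminf_{n\to\infty} \Prob_{m_0}\bigl((\widehat{v}_{\mathrm{m}} - \widehat{u}_{\mathrm{m}}) < \mathfrak{g}_\epsilon^{\mathrm{m}}(\sigma^2/n)^{1/5} d_2^{\mathrm{m}}(f_0)\bigr) \ge 1-\epsilon.
\end{align*}
The key input is the joint weak convergence
\begin{align*}
(n/\sigma^2)^{1/5}(\widehat{v}_{\mathrm{m}} - \widehat{u}_{\mathrm{m}}) \rightsquigarrow d_2^{\mathrm{m}}(f_0)\cdot (h^\ast_{2,\mathrm{m};-} + h^\ast_{2,\mathrm{m};+}),
\end{align*}
which is a byproduct of the proof of Theorem \ref{thm:pivotal_limit_mode}, since $\widehat{m}_n,\widehat{u}_{\mathrm{m}},\widehat{v}_{\mathrm{m}}$ appear as continuous functionals of the same localized process centered at $\widehat{m}_n$, and hence their scaled versions converge jointly to the corresponding functionals of $\mathbb{H}_2^{(2)}$. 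Granted this, the almost-sure finiteness of $h^\ast_{2,\mathrm{m};-}+h^\ast_{2,\mathrm{m};+}$ lets us choose $\mathfrak{g}_\epsilon^{\mathrm{m}}$ with $\Prob(h^\ast_{2,\mathrm{m};-}+h^\ast_{2,\mathrm{m};+}\ge \mathfrak{g}_\epsilon^{\mathrm{m}})\le \epsilon$, and the Portmanteau theorem (open-set form) delivers the displayed inequality.

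The main, and essentially only non-routine, obstacle is extracting the joint convergence display above from the proof of Theorem \ref{thm:pivotal_limit_mode}. That proof already copes with the nonstandard ``random-center'' localization around $\widehat{m}_n$ in a uniform fashion; what remains is to verify that the triple $\bigl((n/\sigma^2)^{1/5}(\widehat{m}_n - m_0),\, (n/\sigma^2)^{1/5}(\widehat{m}_n-\widehat{u}_{\mathrm{m}}),\, (n/\sigma^2)^{1/5}(\widehat{v}_{\mathrm{m}}-\widehat{m}_n)\bigr)$ converges jointly to the pivotal triple read off from the invelope $\mathbb{H}_2^{(2)}$. Once this is in place, both parts of the theorem are immediate from the continuous mapping and Portmanteau theorems.
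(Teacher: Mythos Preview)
Your proposal is correct and follows essentially the same approach as the paper's proof: the coverage claim is immediate from the pivotal limit \eqref{limit_mode_2}, and the length claim follows from the weak convergence $n^{1/5}(\widehat{v}_{\mathrm{m}}-\widehat{u}_{\mathrm{m}})\rightsquigarrow h^\ast_{\mathrm{m};+}(f_0)+h^\ast_{\mathrm{m};-}(f_0)$, which is established as part of the proof of Theorem~\ref{thm:pivotal_limit_mode}, together with the Brownian rescaling $h^\ast_{\mathrm{m};\pm}(f_0)=\gamma_1^{-1}h^\ast_{2,\mathrm{m};\pm}$ and $\gamma_1^{-1}=\sigma^{2/5}d_2^{\mathrm{m}}(f_0)$. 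Your treatment is slightly more explicit about the intersection with $[0,1]$ and the continuity-point issue, but the substance is the same.
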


\section{Inference in other convex/concave models}\label{section:other_convex}

In this section, we consider the inference problem for local parameters in other convexity/concavity constrainted models beyond the regression setting in Section \ref{section:pivot_limit}. The specific models we treat are:

\begin{enumerate}[label=(\roman*), leftmargin = 0.4in]
	\item log-concave density estimation \cite{balabdaoui2009limit},
	\item $s$-concave density estimation \cite{han2015approximation},
	\item convex nonincreasing density estimation \cite{groeneboom2001estimation},
	\item convex bathtub-shaped hazard function estimation  \cite{jankowski2009nonparametric}, and
	\item concave distribution function estimation from corrupted data \cite{jongbloed2009estimating}. 
\end{enumerate}

In each of the above settings, there is a natural estimator (not necessarily the LSE/MLE) exhibiting a non-standard limiting distribution characterized as in Theorem \ref{thm:limiting_dist_convex}. We will construct CIs for local parameters such as the value/derivative of the convexity/concavity constrained function at a fixed point, or the mode of a concave-transformed density. The constructions are largely inspired by the corresponding asymptotically pivotal LNE theories in the regression setting developed in Section \ref{section:pivot_limit}, and the resulting asymptotically pivotal LNE theories in these models follow a similar pattern to Theorems \ref{thm:pivotal_limit_fcn} and \ref{thm:pivotal_limit_mode} in convex regression. However, minor/major modifications are required for different models.

\subsection{Underlying machinery}

Suppose a piecewise linear estimator $\widehat{g}_n$ for a convex (resp.~concave) function $g_0$, where $g_0$ is locally $C^2$ at $x_0$ with $g_0''(x_0)>0$ (resp.~$g_0''(x_0)<0$), satisfies the following non-standard limit distribution theory with $(a,b) \in \R^2_{>0}$:
\begin{align}\label{ineq:underlying_1}
\begin{pmatrix}
n^{2/5}\big(\widehat{g}_n(x_0)-g_0(x_0)\big)\\
n^{1/5}\big(\widehat{g}_n'(x_0)-g_0'(x_0)\big)
\end{pmatrix}
\rightsquigarrow \pm
\begin{pmatrix}
H_{a,b}^{(2)}(0) \\
H_{a,b}^{(3)}(0)
\end{pmatrix}
.
\end{align}
Here we take $+$ in the convex case and $-$ in the concave case, and $H_{a,b}$ is a.s.~uniquely determined as a piecewise cubic function that majorizes a drifted integrated Brownian motion
\begin{align}\label{ineq:generic_gaussian_white_noise}
Y_{a,b}(t) \equiv a \int_{0}^{t} \mathbb{B}(s)\,\d{s} + b t^4,
\end{align}
with equality taken at jumps of the piecewise constant nondecreasing function $H_{a,b}^{(3)}$. Let $h^\ast_{a,b;-}$ (resp.~$h^\ast_{a,b;+}$) be the absolute value of the location of the first touch point of the pair $(H_{a,b},Y_{a,b})$ to the left (resp.~right) of $0$.

Although two nuisance parameters $a,b$ are present in the Gaussian white noise model (\ref{ineq:generic_gaussian_white_noise}), the really difficult nuisance parameter to estimate is $b$, which is typically related to the second derivative of the underlying unknown convex/concave function. This parameter cannot be estimated directly from a piecewise linear estimator $\widehat{g}_n$ as its second derivative is a.e.~$0$, and hence its elimination constitutes the main hurdle in the construction of a valid CI.

Inspired by the idea in Section \ref{section:pivot_limit} in the regression setting, let $[\widehat{u}(x_0),\widehat{v}(x_0)]$ be the maximal interval containing $x_0$ on which $\widehat{g}_n$ is linear. By a continuous mapping type argument, we may show that
\begin{align}\label{ineq:generic_piv_limit}
\begin{pmatrix}
\sqrt{n(\widehat{v}(x_0)-\widehat{u}(x_0))}\big(\widehat{g}_n(x_0)-g_0(x_0)\big)\\
\sqrt{n(\widehat{v}(x_0)-\widehat{u}(x_0))^3}\big(\widehat{g}_n'(x_0)-g_0'(x_0)\big)
\end{pmatrix}
\rightsquigarrow
\pm 
\begin{pmatrix}
\sqrt{h^\ast_{a,b;+}+h^\ast_{a,b;-} }\cdot H_{a,b}^{(2)}(0) \\
\sqrt{ (h^\ast_{a,b;+}+h^\ast_{a,b;-})^3 }\cdot  H_{a,b}^{(3)}(0)
\end{pmatrix}
.
\end{align}
Let $\mathbb{H}\equiv \mathbb{H}_2, \mathbb{Y}\equiv \mathbb{Y}_2, h^\ast_{\pm}\equiv h^\ast_{2,\pm}$ be defined as in Theorems \ref{thm:limiting_dist_convex} and \ref{thm:pivotal_limit_fcn} with $\alpha=2$. Let $\gamma_0,\gamma_1$ be such that
\begin{align*}
\gamma_0\gamma_1^{3/2} =a,\quad \gamma_0\gamma_1^4 =b.
\end{align*}
Then a standard Brownian scaling shows that
\begin{align*}
\gamma_0\mathbb{Y}(\gamma_1 t) = Y_{a,b}(t),
\end{align*}
and hence
\begin{align*}
H_{a,b}^{(2)}(t)=\gamma_0\gamma_1^2 \mathbb{H}^{(2)} (\gamma_1 t), \quad H_{a,b}^{(3)}(t)=\gamma_0\gamma_1^3 \mathbb{H}^{(3)}(\gamma_1 t),\quad h^\ast_{\pm}= \gamma_1 h^\ast_{a,b;\pm}.
\end{align*}
Now the limit distributions in (\ref{ineq:generic_piv_limit}) become
\begin{align*}
\sqrt{h^\ast_{a,b;+}+h^\ast_{a,b;-} }\cdot H_{a,b}^{(2)}(0) &=_d a\cdot \sqrt{ h^\ast_{+} + h^\ast_{-} }\cdot \mathbb{H}^{(2)}(0)\equiv a\cdot \mathbb{L}^{(0)}\\
\sqrt{ (h^\ast_{a,b;+}+h^\ast_{a,b;-} )^3}\cdot H_{a,b}^{(3)}(0) &=_d a\cdot \sqrt{( h^\ast_{+} + h^\ast_{-})^3 }\cdot \mathbb{H}^{(3)}(0) \equiv  a\cdot \mathbb{L}^{(1)},
\end{align*}
where $\mathbb{L}^{(\cdot)}$'s are by definition universal random variables. Hence, with any consistent estimator $\widehat{a}_n$ of $a$, we may construct CIs for $g_0(x_0),g_0'(x_0)$ as
\begin{align}\label{def:CI_generic_0_1}
\mathcal{I}_{n, \ast}^{(0)} (c_\delta^{(0)}) &\equiv \bigg[ \widehat{g}_n(x_0) \pm \frac{\widehat{a}_n\cdot c_\delta^{(0)}}{ \sqrt{ n(\widehat{v}(x_0)-\widehat{u}(x_0) )} } \bigg], \\
\mathcal{I}_{n, \ast}^{(1)}(c_\delta^{(1)}) &\equiv \bigg[ \widehat{g}_n'(x_0) \pm \frac{\widehat{a}_n\cdot c_\delta^{(1)}}{ \sqrt{ n(\widehat{v}(x_0)-\widehat{u}(x_0))^3} }\bigg]. \nonumber
\end{align}
These CIs have asymptotically exact coverage, and can be shown to shrink at optimal length, provided the critical values $c_\delta^{(i)}$ are chosen to be the corresponding quantiles for the universal random variables $\mathbb{L}^{(i)}$, for $i=0,1$. 

For mode estimation, let $m_0 \equiv [g_0]_{\mathrm{m}}$ (resp.~$m_0 \equiv [g_0]_{\mathrm{m}^-}$) and $\widehat{m}_n\equiv [\widehat{g}_n]_{\mathrm{m}}$ (resp.~$\widehat{m}_n\equiv [\widehat{g}_n]_{\mathrm{m}^-}$) be the anti-mode (resp.~mode) of the estimator $\widehat{g}_n$, where $g_0$ is convex (resp.~concave) and satisfies $g_0''(m_0)>0$ (resp.~$g_0''(m_0)<0$). Suppose $\widehat{m}_n$ satisfies the `argmin' (resp.~`argmax') version of (\ref{ineq:underlying_1}), that is, 
\begin{align}\label{ineq:underlying_2}
n^{1/5}(\widehat{m}_n-m_0)\rightsquigarrow \big[H_{a,b}^{(2)}\big]_{\mathrm{m}}.
\end{align}
Let $ \widehat{u}_{\mathrm{m}}$ (resp.~$\widehat{v}_{\mathrm{m}}$) be the first kink of $\widehat{g}_n$ to the left (resp.~right) of $\widehat{m}_n$. Then a continuous mapping type argument leads to
\begin{align}
\frac{1}{\widehat{v}_{\mathrm{m}}-\widehat{u}_{\mathrm{m}} }(\widehat{m}_n-m_0)\rightsquigarrow
\frac{ \big[ H_{a,b}^{(2)} \big]_{ \mathrm{m} } }{ h^\ast_{a,b, \mathrm{m} ;-}+ h^\ast_{a,b, \mathrm{m} ;+} }
\end{align}
where $h^\ast_{a,b, \mathrm{m} ;-}$ (resp.~$h^\ast_{a,b, \mathrm{m} ;+} $) is the first kink of $H_{a,b}^{(2)}$ to the left (resp.~right) of $  \big[ H_{a,b}^{(2)} \big]_{ \mathrm{m} } $. Using a similar scaling argument as above, one may show that the right hand side of the above display is pivotal, that is,
\begin{align*}
\frac{ \big[ H_{a,b}^{(2)} \big]_{ \mathrm{m} } }{ h^\ast_{a,b, \mathrm{m} ;-}+ h^\ast_{a,b, \mathrm{m} ;+} } = \mathbb{M},
\end{align*}
for some universal random variable $\mathbb{M}$. Hence we may construct a CI for $m_0$ as
\begin{align}\label{def:CI_generic_0_2}
\mathcal{I}_{n,\ast}^{\mathrm{m}}(c_\delta^{\mathrm{m}})\equiv \Big[ \widehat{m}_n \pm c_\delta^{\mathrm{m}} \big(\widehat{v}_{\mathrm{m}}- \widehat{u}_{\mathrm{m}}\big)\Big],
\end{align}
provided the critical value $c_\delta^{\mathrm{m}}$ is chosen to be the corresponding quantile for the universal random variable $\mathbb{M}$.

	\begin{remark}\label{rmk:randon_design_general_dist}
		In the regression setting with a random design, Theorems \ref{thm:pivotal_limit_fcn} and \ref{thm:pivotal_limit_mode} in Section \ref{section:pivot_limit} are stated under the uniform distribution on $[0,1]$. We may use this general machinery to easily extend our conclusions to a general design distribution $P$ on $[0,1]$, that is, $X_i \overset{\mathrm{i.i.d.}}{\sim} P$ for all $1 \le i \le n$. Let the Lebesgue density $\pi$ of $P$ be locally continuous at $x_0 \in (0,1)$ with $\pi(x_0)>0$. Suppose that $f_0$ is locally $C^2$ at $x_0$ with $f_0''(x_0)>0$. After some calculations, we obtain the `driving process':
		\begin{align*}
		\mathbb{Y}(t;f_0)\equiv \frac{\sigma}{\sqrt{\pi(x_0)}}\int_{0}^{t} \mathbb{B}(s)\,\d{s}+\frac{f_0''(x_0)}{4!}t^4.
		\end{align*}
		Hence the LSE $\widehat{f}_n$ satisfies (\ref{ineq:generic_piv_limit}) with $a= \sigma/ \sqrt{\pi(x_0)}$ and $b=f_0''(x_0)/4!$. A consistent estimator for the nuisance parameter $a$ can be taken as
		\begin{align*}
		\widehat{a}_n\equiv \textstyle \widehat{\sigma} \Big( \sum_i \bm{1}_{\{\widehat{u}(x_0)\leq X_i\leq \widehat{v}(x_0)\} } \Big/ \big\{ n (\widehat{v}(x_0)-\widehat{u}(x_0))\big\} \Big)^{-1/2},
		\end{align*}
		where $\widehat{\sigma}^2$ is a consistent estimator for $\sigma^2$. We may modify the CIs for the parameters $f_0(x_0),f_0'(x_0)$ in (\ref{def:CI_fcn_0_1}) by replacing $\widehat{\sigma}$ therein with $\widehat{a}_n$. As the generic CI in (\ref{def:CI_generic_0_2}) is free of the scale parameters $a$ and $b$, we may continue to use the same CI for the anti-mode $m_0$ as defined in (\ref{def:CI_mode}) in the regression setting with a general design distribution.
	\end{remark}

	In the next few subsections we work out this machinery in concrete models mentioned at the beginning of this section.

\subsection{Log-concave density estimation}\label{subsection:log_concave}

Suppose that we observe i.i.d.~data $X_1,\ldots,X_n$ from a log-concave density $f_0 \equiv \exp(\varphi_0)$ where $\varphi_0$ is a proper concave function on $\R$. Let $\widehat{f}_n = \exp(\widehat{\varphi}_n)$ be the log-concave MLE based on $X_1,\ldots,X_n$, that is,
\begin{align}\label{def:log_conc}
\widehat{\varphi}_n &\equiv \underset{{\varphi: \mathrm{\,concave }, \int_{\R} e^\varphi =1} }{\mathrm{arg\, max}}  
\int_{-\infty}^{\infty} \varphi(x)\,\d{\mathbb{F}_n(x)} \\
& = \underset{ \varphi: \rm{\,concave } }{\mathrm{arg\, max}}  
\bigg\{\int_{-\infty}^{\infty} \varphi(x)\,\d{\mathbb{F}_n(x)} - \int_{-\infty}^{\infty} e^{\varphi(x)}\,\d{x}\bigg\}.\nonumber
\end{align}
Here $\mathbb{F}_n$ is the empirical distribution function of the sample $X_1,\ldots,X_n$. It can be shown that $\widehat{\varphi}_n$ is a piecewise linear concave function with possible kinks at the data points. 

The class of log-concave densities is statistically appealing due to its several nice closure properties with respect to marginalization, conditioning and convolution operations (see e.g.,~\cite{saumard2014logconcavity}). The estimation of log-concave densities can be carried out using the method of maximum likelihood, and has been investigated by many authors; see~\cite{walther2002detecting,cule2010maximum,cule2010theoretical,dumbgen2009maximum,dumbgen2011approximation,pal2007estimating,seregin2010nonparametric,kim2016global,kim2016adaptation,feng2018adaptation,doss2013global,barber2020local,han2019}, just to name a few. The log-concave shape constraint also has applications in other settings; see, e.g.,  \cite{muller2009smooth,samworth2012independent,chen2013smoothed,balabdaoui2018inference}. We refer the reader to \cite{saumard2014logconcavity,samworth2018recent} for comprehensive reviews.

We first consider inference for the parameters $f_0(x_0)$ and $f_0'(x_0)$. Let $[\widehat{u}(x_0),\widehat{v}(x_0)]$ be the maximal interval containing $x_0$ on which $\widehat{\varphi}_n$ is linear, and
\begin{align}\label{def:CI_log_concave_0_1}
\mathcal{I}_{n, \mathrm{lc}}^{(0)} (c_\delta^{(0)}) &\equiv \bigg[ \widehat{f}_n(x_0) \pm \frac{\sqrt{\widehat{f}_n(x_0)}\cdot c_\delta^{(0)}}{ \sqrt{ n(\widehat{v}(x_0)-\widehat{u}(x_0) )} } \bigg] \cap [0, \infty),\\
\mathcal{I}_{n, \mathrm{lc}}^{(1)} (c_\delta^{(1)}) &\equiv \bigg[ \widehat{f}_n'(x_0) \pm \frac{\sqrt{\widehat{f}_n(x_0)}\cdot c_\delta^{(1)}}{ \sqrt{ n(\widehat{v}(x_0)-\widehat{u}(x_0))^3} } \bigg]. \nonumber
\end{align}
The above CIs are based on the following result, proved in Appendix \ref{pf:pivotal_limit_log_concave}.
\begin{theorem}\label{thm:pivotal_limit_log_concave}
	Suppose $f_0$ is a log-concave density with $f_0=e^{\varphi_0}$ for some concave function $\varphi_0$, $f_0(x_0)>0$ and $\varphi_0$ is locally $C^2$ at $x_0$ with $\varphi_0''(x_0)<0$.
	\begin{enumerate}
		\item With $\mathbb{L}^{(i)}_2(i=0,1)$ defined in Theorem \ref{thm:pivotal_limit_fcn},
		\begin{align*}
		\begin{pmatrix}
		\sqrt{n(\widehat{v}(x_0)-\widehat{u}(x_0))}\big(\widehat{f}_n(x_0)-f_0(x_0)\big)\\
		\sqrt{n(\widehat{v}(x_0)-\widehat{u}(x_0))^3}\big(\widehat{f}_n'(x_0)-f_0'(x_0)\big)
		\end{pmatrix}
		\rightsquigarrow - \sqrt{f_0(x_0)}\cdot 
		\begin{pmatrix}
		\mathbb{L}^{(0)}_2\\
		\mathbb{L}^{(1)}_2
		\end{pmatrix}.
		\end{align*}
		\item Let $c_\delta^{(0)},c_\delta^{(1)}$ be chosen such that $
		\Prob\big(\abs{\mathbb{L}_2^{(i)} }>c_\delta^{(i)}\big)=\delta$ for $i=0,1$, then the CIs in (\ref{def:CI_log_concave_0_1}) satisfy 
		\begin{align*}
		&\lim_{n \to \infty} \Prob_{f_0}\big(f_0(x_0) \in \mathcal{I}_{n, \mathrm{lc}}^{(0)} (c_\delta^{(0)}) \big) = \lim_{n \to \infty} \Prob_{f_0}\big(f_0'(x_0) \in \mathcal{I}_{n, \mathrm{lc}}^{(1)} (c_\delta^{(1)})\big) = 1- \delta.
		\end{align*}
		\item For any $\epsilon>0$, 
		\begin{align*}
		&\liminf_{n \to \infty} \bigg\{\Prob_{f_0}\Big(\bigabs{\mathcal{I}_{n, \mathrm{lc}}^{(0)} (c_\delta^{(0)})  }<2c_\delta^{(0)} \mathfrak{g}_\epsilon^{(0)}\cdot n^{-2/5} d_{2, \mathrm{lc}}^{(0)} (f_0,x_0)\Big)\\
		& \qquad \qquad \bigwedge \Prob_{f_0}\Big(\bigabs{\mathcal{I}_{n, \mathrm{lc}}^{(1)} (c_\delta^{(1)}) }<2c_\delta^{(1)} \mathfrak{g}_\epsilon^{(1)}\cdot n^{-1/5} d_{2, \mathrm{lc}}^{(1)} (f_0,x_0)\Big)\bigg\}\geq 1-\epsilon. 
		\end{align*}
		Here $\mathfrak{g}_\epsilon^{(i)} (i=0,1)$'s are constants that depend only on $\epsilon$, and
		\begin{align*}
		d_{2, \mathrm{lc}}^{(0)} (f_0,x_0) = \bigg(\frac{f_0(x_0)^3 \abs{\varphi_0''(x_0)} }{4!}\bigg)^{1/5},\quad d_{2, \mathrm{lc}}^{(1)}(f_0,x_0) =\bigg(\frac{f_0(x_0)^4 \abs{\varphi_0''(x_0)}^3 }{(4!)^3}\bigg)^{1/5}.
		\end{align*}
	\end{enumerate}
\end{theorem}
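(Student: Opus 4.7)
The plan is to combine the local limit distribution theory for the log-concave MLE $\widehat{\varphi}_n$ due to \cite{balabdaoui2009limit} with the generic LNE machinery of Section \ref{section:other_convex}, applied to the concave estimator $\widehat{g}_n = \widehat{\varphi}_n$, and then transfer the resulting pivot to the density scale via the delta method using $\widehat{f}_n = e^{\widehat{\varphi}_n}$. The key identification is that $\widehat{\varphi}_n$ satisfies the underlying non-standard limit (\ref{ineq:underlying_1}) (with the concavity sign flip) at scaling parameters $a = 1/\sqrt{f_0(x_0)}$ and $b = |\varphi_0''(x_0)|/24$: the factor $1/\sqrt{f_0(x_0)}$ is the local Brownian coefficient for the log-density (the standard deviation per unit effective sample size of $\log(\widehat{f}_n/f_0)$), while the $t^4$-drift coefficient inherits $|\varphi_0''(x_0)|/24$ from Taylor expanding $\varphi_0$ at $x_0$. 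A quick Brownian-scaling consistency check against the constants in the theorem yields $f_0(x_0)\cdot a^{4/5}b^{1/5} = d_{2,\mathrm{lc}}^{(0)}(f_0,x_0)$, matching the scaling of $\widehat{f}_n(x_0)-f_0(x_0)$ obtained via the delta method below.

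With (\ref{ineq:underlying_1}) in hand for $\widehat{\varphi}_n$, the generic pivot (\ref{ineq:generic_piv_limit}) immediately produces
\begin{align*}
\begin{pmatrix}
\sqrt{n(\widehat{v}(x_0)-\widehat{u}(x_0))}\big(\widehat{\varphi}_n(x_0)-\varphi_0(x_0)\big)\\
\sqrt{n(\widehat{v}(x_0)-\widehat{u}(x_0))^3}\big(\widehat{\varphi}_n'(x_0)-\varphi_0'(x_0)\big)
\end{pmatrix}
\rightsquigarrow -\frac{1}{\sqrt{f_0(x_0)}}\begin{pmatrix}\mathbb{L}_2^{(0)} \\ \mathbb{L}_2^{(1)}\end{pmatrix},
\end{align*}
noting that $\widehat{\varphi}_n$ and $\widehat{f}_n$ share the same kinks so $[\widehat{u}(x_0),\widehat{v}(x_0)]$ is the common maximal linear segment containing $x_0$. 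Pointwise delta-method expansions of $e^{\widehat{\varphi}_n}$ and $\widehat{\varphi}_n'e^{\widehat{\varphi}_n}$ give
\begin{align*}
\widehat{f}_n(x_0)-f_0(x_0) &= f_0(x_0)\big(\widehat{\varphi}_n(x_0)-\varphi_0(x_0)\big) + \mathfrak{o}_{\mathbf{P}}(n^{-2/5}),\\
\widehat{f}_n'(x_0)-f_0'(x_0) &= f_0(x_0)\big(\widehat{\varphi}_n'(x_0)-\varphi_0'(x_0)\big) + \varphi_0'(x_0)\big(\widehat{f}_n(x_0)-f_0(x_0)\big)+\mathfrak{o}_{\mathbf{P}}(n^{-1/5}).
\end{align*}
Since the generic machinery also shows $\widehat{v}(x_0)-\widehat{u}(x_0) = \mathcal{O}_{\mathbf{P}}(n^{-1/5})$, the $\varphi_0'(x_0)$-coupling term and the higher-order remainders vanish in probability after multiplication by the LNE factors $\sqrt{n(\widehat{v}-\widehat{u})}$ and $\sqrt{n(\widehat{v}-\widehat{u})^3}$ respectively. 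Multiplying by $f_0(x_0)$ inside the LNE limit for $\widehat{\varphi}_n$ then yields part (1): the claimed pivotal limit $-\sqrt{f_0(x_0)}\cdot(\mathbb{L}_2^{(0)},\mathbb{L}_2^{(1)})^\top$.

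Parts (2) and (3) follow by standard Slutsky arguments: the consistency $\widehat{f}_n(x_0)\to_{\mathbf{P}} f_0(x_0)$ (inherited from the pointwise convergence rate of $\widehat{\varphi}_n$) justifies replacing $\sqrt{f_0(x_0)}$ by $\sqrt{\widehat{f}_n(x_0)}$ in the normalizer, yielding asymptotically exact coverage, while the length bounds follow exactly as in Theorem \ref{thm:CI_fcn} using the fact that $n^{1/5}(\widehat{v}-\widehat{u})$ converges weakly to a positive constant (depending on $f_0(x_0),\varphi_0''(x_0)$) times $h^\ast_{2;+}+h^\ast_{2;-}$, whose exponentially decaying tails (by the analogue of Corollary \ref{cor:unif_tail_pivot}) control the constants $\mathfrak{g}_\epsilon^{(i)}$. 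The main obstacle is the abstract verification that the continuous-mapping step underlying (\ref{ineq:generic_piv_limit}) carries over from the convex-regression invelope treated in Appendix \ref{pf:pivotal_limit_fcn} to the log-concave MLE setting of \cite{balabdaoui2009limit}: while the structural argument is essentially verbatim, one has to read off the correct Gaussian-plus-drift driving process from the local analysis of the log-likelihood, and to verify the dual characterization of $\widehat{u}(x_0)$ and $\widehat{v}(x_0)$ in terms of both the cubic envelope process and its second derivative, in order to close the topological openness/closedness gap described after Theorem \ref{thm:pivotal_limit_fcn}.
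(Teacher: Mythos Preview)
Your proposal is correct and follows essentially the same route as the paper: work at the $\varphi$-level using the Balabdaoui--Rufibach--Wellner local process characterization with $a=1/\sqrt{f_0(x_0)}$, $b=|\varphi_0''(x_0)|/4!$, apply the dual kink-characterization/continuous-mapping argument of Appendix~\ref{pf:pivotal_limit_fcn} to obtain the LNE pivot for $(\widehat{\varphi}_n,\widehat{\varphi}_n')$, and then pass to $(\widehat{f}_n,\widehat{f}_n')$ by the delta method. The paper's proof differs only in presentation: it writes out explicitly the modified local processes $\mathbb{Y}_n^{\mathrm{locmod}},\mathbb{H}_n^{\mathrm{locmod}}$ (subtracting the quadratic remainder $\Psi_{n,f}$ from the $f$-level processes to isolate the linear-in-$\varphi$ part), which makes the identification of the driving process and the verification of the envelope inequality concrete rather than invoking the generic machinery of Section~\ref{section:other_convex} abstractly.
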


Clearly, the above asymptotically pivotal LNE theory shows that the CIs in (\ref{def:CI_log_concave_0_1}) have asymptotically exact coverage. \cite{balabdaoui2009limit} establish the pointwise limit distribution theory, as in \eqref{ineq:underlying_1} for $\widehat{\varphi}_n$ with $a=1/\sqrt{f_0(x_0)}, b= -\varphi_0''(x_0)/4!$ and then, by the delta method, the limit distribution theory for the log-concave MLE $\widehat{f}_n$, that is, 
\begin{align}\label{limit_log_concave}
\begin{pmatrix}
n^{2/5}\big(\widehat{f}_n(x_0)- f_0(x_0)\big)\\
n^{1/5}\big(\widehat{f}_n'(x_0)- f_0'(x_0)\big)
\end{pmatrix}
\rightsquigarrow 
\begin{pmatrix}
d_{2, \mathrm{lc}}^{(0)} (f_0,x_0) \cdot \mathbb{H}_2^{(2)}(0) \\
d_{2, \mathrm{lc}}^{(1)} (f_0,x_0) \cdot \mathbb{H}_2^{(3)}(0)
\end{pmatrix}.
\end{align}
By Theorem~\ref{thm:pivotal_limit_log_concave}-(3) and the above display we see that the CIs in \eqref{def:CI_log_concave_0_1} shrink at optimal length (as in Remark \ref{remark:optimal_rate}). 

Note that in the current setting $f_0$ by itself is not convex/concave, so the proofs need to be carried out at the underlying convex/concave level. 

Next we consider inference for the mode of the log-concave density $f_0$.~\cite{balabdaoui2009limit} obtained the pointwise limit distribution theory (\ref{ineq:underlying_2}) for the plug-in mode estimator $\widehat{m}_n\equiv \big[ \widehat{\varphi}_n \big]_{ \mathrm{m}^{-}}$ with $a=1/\sqrt{f_0(m_0)}$ and $b=-\varphi_0''(m_0)/4!$. We construct below a CI for $m_0$ as in (\ref{def:CI_generic_0_2}).

Note $\widehat{m}_n$ is a kink point of $\widehat{\varphi}_n$. Let $ \widehat{u}_{\mathrm{m}}$ (resp.~$\widehat{v}_{\mathrm{m}}$) be the first kink of $\widehat{\varphi}_n$ to the left (resp.~right) of $\widehat{m}_n$. We propose the following CI:
\begin{align}\label{def:CI_mode_log_concave}
\mathcal{I}_{n, \mathrm{lc}}^{\mathrm{m}} (c_\delta^{\mathrm{m}})\equiv \Big[\widehat{m}_n \pm c_\delta^{\mathrm{m}} \big(\widehat{v}_{\mathrm{m}}- \widehat{u}_{\mathrm{m}}\big) \Big].
\end{align}
The validity of the above CI is based on the following result, proved in Appendix \ref{pf:pivotal_limit_log_concave}.

\begin{theorem}\label{thm:pivotal_limit_mode_log_concave}
	Suppose $f_0$ is a log-concave density with $f_0=e^{\varphi_0}$ for some concave function $\varphi_0$, and $f_0$ is locally $C^2$ at $ m_0$ with $f_0''( m_0)<0$, where $ m_0	 \equiv [\varphi_0]_{ \mathrm{m}^{-}} $ is the mode of $f_0$.
	\begin{enumerate}
		\item With $\mathbb{M}_2$ defined in Theorem \ref{thm:pivotal_limit_mode},
		\begin{align*}
		\frac{1}{\widehat{v}_{\mathrm{m}}- \widehat{u}_{\mathrm{m}}}\big(\widehat{m}_n- m_0\big)\rightsquigarrow \mathbb{M}_2.
		\end{align*}
		\item Let $c_\delta^{\mathrm{m}}$ be chosen such that $
		\Prob\big(\abs{\mathbb{M}_2}>c_\delta^{\mathrm{m}}\big) = \delta$, then the CI in (\ref{def:CI_mode_log_concave}) satisfies
		\begin{align*}
		\lim_{n \to \infty} \Prob_{ m_0}\big( m_0 \in \mathcal{I}_{n, \mathrm{lc}}^{\mathrm{m}} (c_\delta^{\mathrm{m}})\big) = 1- \delta.
		\end{align*}
		\item For any $\epsilon>0$ and $d_{2, \mathrm{lc}}^{\mathrm{m}}(f_0) = \big\{ (4!)^2 f_0( m_0) \big/ (f_0''( m_0))^2\big\}^{1/5}$,
		\begin{align*}
		\liminf_{n \to \infty} \Prob_{ m_0}\bigg(\bigabs{\mathcal{I}_{n, \mathrm{lc}}^{\mathrm{m}}  (c_\delta^{\mathrm{m}}) }<2c_\delta^{\mathrm{m}} \mathfrak{g}_\epsilon^{\mathrm{m}}\cdot n^{-1/5} d_{2, \mathrm{lc}}^{\mathrm{m}}(f_0)\bigg)\geq 1-\epsilon.
		\end{align*}
		Here $\mathfrak{g}_\epsilon^{\mathrm{m}}$ is a constant depending only on $\epsilon$.
	\end{enumerate}
\end{theorem}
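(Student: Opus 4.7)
The plan is to reduce Theorem~\ref{thm:pivotal_limit_mode_log_concave} to the general machinery developed in Section~\ref{section:other_convex}, applied at the concave level $\widehat{\varphi}_n$ rather than at $\widehat{f}_n$. Since $\widehat{f}_n = e^{\widehat{\varphi}_n}$ is monotone in $\widehat{\varphi}_n$, the mode $\widehat{m}_n = [\widehat{f}_n]_{\mathrm{m}^-} = [\widehat{\varphi}_n]_{\mathrm{m}^-}$, and the kinks of $\widehat{f}_n$ coincide with those of the piecewise linear concave function $\widehat{\varphi}_n$, so $(\widehat{m}_n, \widehat{u}_{\mathrm{m}}, \widehat{v}_{\mathrm{m}})$ is entirely determined by $\widehat{\varphi}_n$. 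This reduction is essential because $f_0$ itself need not be concave, whereas $-\widehat{\varphi}_n$ is convex piecewise linear and the generic framework \eqref{ineq:underlying_1}--\eqref{ineq:underlying_2} applies with $a = 1/\sqrt{f_0(m_0)}$ and $b = -\varphi_0''(m_0)/4! = -f_0''(m_0)/(4! f_0(m_0))>0$ (using $f_0'(m_0)=0$).

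For part (1), I would first invoke the local limit distribution theory of~\cite{balabdaoui2009limit} for the log-concave MLE, which establishes the weak convergence of the localized process $(\widehat{\varphi}_n)^{(2)}$ around $m_0$ to $-H_{a,b}^{(2)}$ on compacta, together with the convergence of $\widehat{\varphi}_n$ itself after appropriate rescaling. This is precisely the input \eqref{ineq:underlying_1} adapted to the concave case at the mode. Following the strategy in the proof of Theorem~\ref{thm:pivotal_limit_mode}, one then expresses $\widehat{m}_n - m_0$ and $\widehat{v}_{\mathrm{m}} - \widehat{u}_{\mathrm{m}}$ as functionals of the localized process: the numerator as the (re-scaled) location of the minimizer of $(\widehat{\varphi}_n)^{(2)}$ viewed as a convex function near $m_0$, and the denominator as the distance between its two nearest kinks. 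A joint continuous mapping argument then yields
\begin{align*}
\frac{\widehat{m}_n - m_0}{\widehat{v}_{\mathrm{m}} - \widehat{u}_{\mathrm{m}}} \rightsquigarrow \frac{\big[H_{a,b}^{(2)}\big]_{\mathrm{m}}}{h^\ast_{a,b,\mathrm{m};-} + h^\ast_{a,b,\mathrm{m};+}},
\end{align*}
and the Brownian scaling identities $\gamma_0\gamma_1^{3/2} = a$, $\gamma_0\gamma_1^4 = b$ from Section~\ref{section:other_convex} show that the $\gamma_1^{-1}$ factors cancel between numerator and denominator, so the ratio equals in distribution $[\mathbb{H}_{2}^{(2)}]_{\mathrm{m}} / (h^\ast_{2,\mathrm{m};-} + h^\ast_{2,\mathrm{m};+}) = \mathbb{M}_2$.

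The main obstacle, as in Theorem~\ref{thm:pivotal_limit_mode}, is that the kink-locating functionals around the random center $\widehat{m}_n$ are not continuous with respect to uniform convergence on compacta of piecewise linear convex second-derivative processes, so standard continuous mapping does not apply directly. Two technical ingredients are needed. First, the localization must be performed at the random point $\widehat{m}_n$ rather than at the fixed $m_0$, so weak convergence on compacta of the randomly-centered process must be upgraded from~\cite{balabdaoui2009limit}; this combines tightness of the localized process with tightness of $n^{1/5}(\widehat{m}_n - m_0)$ given by \eqref{ineq:underlying_2}, in the nonstandard `uniform' fashion described in the discussion following Theorem~\ref{thm:pivotal_limit_mode}. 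Second, to ensure the limit falls in the continuity set of the functionals, I would use a dual characterization of $(\widehat{u}_{\mathrm{m}}, \widehat{v}_{\mathrm{m}})$ involving both $\widehat{\varphi}_n$ and $(\widehat{\varphi}_n)^{(2)}$, in analogy with \eqref{ineq:pivotal_lim_fcn_1}--\eqref{ineq:pivotal_lim_fcn_2}, combined with a.s.~properties of $H_{a,b}^{(2)}$ (a.s.~unique mode, isolated kinks)---all inherited by Brownian scaling from the analogous properties of $\mathbb{H}_2^{(2)}$ established in the proof of Theorem~\ref{thm:pivotal_limit_mode}.

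Parts (2) and (3) are then routine consequences. For coverage, $m_0 \in \mathcal{I}_{n,\mathrm{lc}}^{\mathrm{m}}(c_\delta^{\mathrm{m}})$ is equivalent to $|\widehat{m}_n - m_0|/(\widehat{v}_{\mathrm{m}}-\widehat{u}_{\mathrm{m}}) \le c_\delta^{\mathrm{m}}$, which by (1), the symmetry of $\mathbb{M}_2$, and the defining property of $c_\delta^{\mathrm{m}}$ has asymptotic probability exactly $1-\delta$. For the length bound, the Brownian scaling yields $n^{1/5}(\widehat{v}_{\mathrm{m}}-\widehat{u}_{\mathrm{m}}) \rightsquigarrow \gamma_1^{-1}(h^\ast_{2,\mathrm{m};-}+h^\ast_{2,\mathrm{m};+})$ with $\gamma_1 = (b/a)^{2/5}$; plugging in $a = 1/\sqrt{f_0(m_0)}$ and $b = |f_0''(m_0)|/(4! f_0(m_0))$ gives $\gamma_1^{-1} = \{(4!)^2 f_0(m_0)/(f_0''(m_0))^2\}^{1/5} = d_{2,\mathrm{lc}}^{\mathrm{m}}(f_0)$. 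Taking $\mathfrak{g}_\epsilon^{\mathrm{m}}$ to be an upper $(1-\epsilon)$-quantile bound for the universal limit $h^\ast_{2,\mathrm{m};-}+h^\ast_{2,\mathrm{m};+}$ then yields the stated optimal length.
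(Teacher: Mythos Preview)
Your proposal is correct and follows essentially the same approach as the paper: work at the concave level $\widehat{\varphi}_n$, use the localized processes from \cite{balabdaoui2009limit} with $a=1/\sqrt{f_0(m_0)}$, $b=|\varphi_0''(m_0)|/4!$, transplant the randomly-centered argument and dual characterization from the proof of Theorem~\ref{thm:pivotal_limit_mode}, and conclude pivotality via Brownian scaling; parts (2)--(3) then follow exactly as you describe, including your computation $\gamma_1^{-1}=d_{2,\mathrm{lc}}^{\mathrm{m}}(f_0)$. The only minor imprecision is notational: the dual characterization uses the pair $(\mathbb{H}_n^{\mathrm{locmod}},\mathbb{Y}_n^{\mathrm{locmod}})$ together with $(\mathbb{H}_n^{\mathrm{locmod}})^{(2)}$ rather than ``$\widehat{\varphi}_n$ and $(\widehat{\varphi}_n)^{(2)}$'' directly, but you are clearly pointing at the right objects.
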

As in Theorem \ref{thm:pivotal_limit_log_concave}, the above asymptotically pivotal LNE theory shows that the CI in (\ref{def:CI_mode_log_concave}) has asymptotically exact coverage. Comparing the above result with the limit distribution theory for the plug-in mode estimator $\widehat{m}_n$ established in \cite{balabdaoui2009limit} (as in~\eqref{ineq:underlying_2}),
\begin{align}\label{limit_log_concave_mode}
n^{1/5}(\widehat{m}_n-m_0)\rightsquigarrow d_{2, \mathrm{lc}}^{\mathrm{m}}(f_0) \cdot [\mathbb{H}^{(2)}_2]_{\mathrm{m}},
\end{align}
we see that the proposed CI shrinks at optimal length.

Doss and Wellner~\cite{doss2016inference} developed a different procedure for inference of the mode $ m_0$ based on the LRT. More specifically, consider the following hypothesis testing problem:
\begin{align*}
H_0: [\varphi_0]_{ \mathrm{m}^{-}} =  m_0\qquad \textrm{versus}\qquad  H_1: [\varphi_0]_{ \mathrm{m}^{-}} \neq  m_0.
\end{align*}
Let $\widehat{f}_{n,0}$ be the mode-constrained log-concave MLE, that is, $\widehat{f}_{n,0}= e^{\widehat{\varphi}_{n,0}}$, where
\begin{align}\label{def:log_conc_constrained}
\widehat{\varphi}_{n,0} &\equiv 
\underset{ \substack{\varphi: \mathrm{ concave }, \varphi( m_0)\geq \varphi(x), x \in \R } }{\mathrm{arg\,max} }
\bigg\{\int_{-\infty}^{\infty} \varphi(x)\,\d{\mathbb{F}_n(x)} - \int_{-\infty}^{\infty} e^{\varphi(x)}\,\d{x}\bigg\}.
\end{align}
The LRT statistic is now defined as
\begin{align}\label{def:DW_LRT}
2\log \lambda_n( m_0)\equiv 2n\Prob_n\big(\log \widehat{f}_n - \log \widehat{f}_{n,0}\big) = 2n\Prob_n\big(\widehat{\varphi}_n-\widehat{\varphi}_{n,0}\big),
\end{align}
where $\Prob_n = n^{-1}\sum_{i=1}^n \delta_{X_i}$ is the empirical measure based on i.i.d.~observations $X_1,\ldots,X_n$.~\cite{doss2016inference} proved the following result: Under the same conditions as in Theorem \ref{thm:pivotal_limit_mode_log_concave},
\begin{align}\label{ineq:doss_wellner_piv_limit}
2 \log \lambda_n( m_0) \rightsquigarrow \mathbb{K},
\end{align}
where $\mathbb{K}$ has a universal limiting distribution. A CI for $ m_0$ can then be obtained by inverting the above LRT statistic: Let
\begin{align}\label{def:CI_DW_mode}
\mathcal{I}_{n, \mathrm{lc}}^{(m),\textrm{DW}} (d_\delta)\equiv \{ m_0: 2 \log \lambda_n( m_0)\leq d_\delta\},
\end{align}
where $d_\delta$ is chosen such that $\Prob(\mathbb{K}>d_\delta)=\delta$. Then 
\begin{align*}
\lim_{n \to \infty} \Prob_{ m_0}\big( m_0 \in \mathcal{I}_{n, \mathrm{lc}}^{(m),\textrm{DW}} (d_\delta) \big) =  \Prob\big(\mathbb{K}\leq d_\delta\big) = 1-\delta.
\end{align*}
It is easy to see that the implementation of (\ref{def:CI_DW_mode}) requires the computation of many mode-constrained log-concave MLEs, whereas our proposed CI (\ref{def:CI_mode_log_concave}) only requires the computation of the log-concave MLE once. On the technical side, the proof of (\ref{ineq:doss_wellner_piv_limit}) in \cite{doss2016inference} is substantially more difficult and involved compared to the corresponding results in the problem of inference in monotone models \cite{banerjee2001likelihood,banerjee2007likelihood,groeneboom2015nonparametric}, as the difference of the unconstrained and constrained log-concave MLEs $\widehat{f}_n$ and $\widehat{f}_{n,0}$ outside of a $\mathcal{O}_{\mathbf{P}}(n^{-1/5})$ local neighborhood of $ m_0$ is much harder to control. However, as shown in our proposal (\ref{def:CI_mode_log_concave}) and the resulting asymptotically pivotal LNE theory in Theorem \ref{thm:pivotal_limit_mode_log_concave}, it suffices to take advantage of a data-driven $\mathcal{O}_{\mathbf{P}}(n^{-1/5})$ local neighborhood of $ m_0$ using the information in $ \widehat{u}_{\mathrm{m}},\widehat{v}_{\mathrm{m}}$. For a detailed numerical comparison between Doss-Wellner CI (\ref{def:CI_DW_mode}) and our proposal (\ref{def:CI_mode_log_concave}), we refer the reader to Section \ref{subsection:DW_comparison}.

\subsection{$s$-concave density estimation}
Define for $\theta \in (0,1)$,
\begin{align*}
M_s(a,b;\theta)\equiv
\begin{cases}
\big((1-\theta)a^s+\theta b^s\big)^{1/s}, & s\neq 0, a,b > 0,\\
0, & s <0, ab = 0,\\
a^{1-\theta}b^\theta, &s=0,\\
a\wedge b, &s=-\infty.
\end{cases}
\end{align*}
A density $p$ on $\R$ is called $s$-concave, that is, $p \in \mathcal{P}_s$, if and only if for all $x_0,x_1\in \R$ and $\theta \in (0,1)$, $p\big((1-\theta)x_0+\theta x_1\big)\geq M_s(p(x_0),p(x_1);\theta)$. It is easy to see that the density $p$ has  the form $p=\varphi_+^{1/s}$ for some concave function $\varphi$ if $s>0$, $p=\exp(\varphi)$ for some concave $\varphi$ if $s=0$, and $p=\varphi_+^{1/s}$ for some convex $\varphi$ if $s<0$. The function classes $\mathcal{P}_s$ are nested in $s$ in that for every $r>0>s$, we have $\mathcal{P}_r\subset \mathcal{P}_0\subset \mathcal{P}_s\subset \mathcal{P}_{-\infty}$.

The class of $s$-concave densities generalizes that of log-concave densities to a large extent by allowing polynomial tails for the densities. The study of the MLE of $s$-concave densities was initiated in \cite{seregin2010nonparametric} and its global rates of convergence was investigated in \cite{doss2013global,han2019}. Here we will be interested in the regime $ -1<s<0$ and the R\'enyi divergence estimator introduced in \cite{koenker2010quasi} and further studied in \cite{han2015approximation}. Suppose $X_1,\ldots,X_n$ are i.i.d.~samples from a density $f_0 \in \mathcal{P}_s$. Let $\beta_s\equiv 1+1/s<0$ and
\begin{align}\label{def:s_conc}
\widehat{\varphi}_{n,s}\equiv 
\underset{\varphi\geq 0: \mathrm{\,convex }}{\mathrm{arg\, max}}
\bigg\{\int_{-\infty}^{\infty} \varphi(x)\,\d{\mathbb{F}_n(x)} + \frac{1}{\abs{\beta_s}} \int_{-\infty}^{\infty} (\varphi(x))^{\beta_s}\,\d{x}\bigg\}.
\end{align}
\cite{koenker2010quasi} and \cite{han2015approximation} showed that $\widehat{\varphi}_{n,s}$ exists and is unique with probability $1$. Let $\widehat{f}_{n,s}\equiv\widehat{\varphi}_{n,s}^{1/s}$. The connection between (\ref{def:s_conc}) and (\ref{def:log_conc}) can be seen clearly from the dual formulations; we refer the reader to \cite{han2015approximation} for more details.~\cite{han2015approximation}  obtained the limit  distribution theory (\ref{ineq:underlying_1}) for $\widehat{\varphi}_{n,s}$, (\ref{ineq:underlying_2}) for the plug-in mode estimator $\widehat{m}_{m,s}\equiv \big[ \widehat{\varphi}_{n,s} \big]_{ \mathrm{m}}$, with $a=1/\sqrt{f_0(x_0)}, b=r_s\varphi_0''(x_0)/(\varphi_s(x_0)4!)$ and $r_s \equiv -1/s>0$. The limit distribution theory for $\widehat{f}_{n,s}$ can then be obtained by the delta method.

Now we consider the inference problem. The proposal below is similar to (\ref{def:CI_log_concave_0_1}) and (\ref{def:CI_mode_log_concave}) in the setting of log-concave density estimation using the MLE. Let $[\widehat{u}(x_0),\widehat{v}(x_0)]$ be the maximal interval containing $x_0$ on which $\widehat{\varphi}_{n,s}$ is linear. Let $\widehat{u}_{\mathrm{m},s}$ (resp.~$\widehat{v}_{\mathrm{m},s}$) be the first kink of $\widehat{\varphi}_{n,s}$ to the left (resp.~right) of $\widehat{m}_{n,s}$. Consider the following CIs:  
\begin{align}\label{def:CI_s_concave_0_1}
\mathcal{I}_{n, \mathrm{sc}}^{(0)} (c_\delta^{(0)}) &\equiv \bigg[ \widehat{f}_{n,s}(x_0) \pm \frac{\sqrt{\widehat{f}_{n,s}(x_0)}\cdot c_\delta^{(0)}}{ \sqrt{ n(\widehat{v}(x_0)-\widehat{u}(x_0) )} } \bigg] \cap [0, \infty),
\\ \nonumber 
\mathcal{I}_{n, \mathrm{sc}}^{(1)} (c_\delta^{(1)}) &\equiv \bigg[ \widehat{f}_{n,s}'(x_0) \pm \frac{\sqrt{\widehat{f}_{n,s}(x_0)}\cdot c_\delta^{(1)}}{ \sqrt{ n(\widehat{v}(x_0)-\widehat{u}(x_0))^3} } \bigg], 
\\ \nonumber
\mathcal{I}_{n, \mathrm{sc}}^{\mathrm{m}} (c_\delta^{\mathrm{m}}) &\equiv \bigg[\widehat{m}_{n,s} \pm c_\delta^{\mathrm{m}} \big(\widehat{v}_{\mathrm{m},s}-\widehat{u}_{\mathrm{m},s}\big) \bigg]. 
\end{align}
The validity of the above proposed CIs is guaranteed by the following theorems; see Appendix \ref{pf:pivotal_limit_s_concave} for their proofs. 

\begin{theorem}\label{thm:pivotal_limit_s_concave}
	Let $s \in (-1,0)$. Suppose $f_0 \in \mathcal{P}_s$ with $f_0=\varphi_s^{1/s}$ for some convex function $\varphi_s$, $f_0(x_0)>0$ and $\varphi_s$ is locally $C^2$ at $x_0$ with $\varphi_s''(x_0)>0$. 
	\begin{enumerate}
		\item With $\mathbb{L}^{(i)}_2(i=0,1)$ defined in Theorem \ref{thm:pivotal_limit_fcn},
		\begin{align*}
		\begin{pmatrix}
		\sqrt{n(\widehat{v}(x_0)-\widehat{u}(x_0))}\big(\widehat{f}_{n,s}(x_0)-f_0(x_0)\big)\\
		\sqrt{n(\widehat{v}(x_0)-\widehat{u}(x_0))^3}\big(\widehat{f}_{n,s}'(x_0)-f_0'(x_0)\big)
		\end{pmatrix}
		\rightsquigarrow -\sqrt{f_0(x_0)}\cdot  
		\begin{pmatrix}
		\mathbb{L}^{(0)}_2\\
		 \mathbb{L}^{(1)}_2
		\end{pmatrix}.
		\end{align*}
		\item Let $c_\delta^{(0)},c_\delta^{(1)}$ be chosen such that $
		\Prob\big(\abs{\mathbb{L}_2^{(i)} }>c_\delta^{(i)}\big)=\delta$ for $i=0,1$, then the CIs in (\ref{def:CI_s_concave_0_1}) satisfy 
		\begin{align*}
		&\lim_{n \to \infty} \Prob_{f_0}\big(f_0(x_0) \in \mathcal{I}_{n, \mathrm{sc}}^{(0)} (c_\delta^{(0)}) \big) = \lim_{n \to \infty} \Prob_{f_0}\big(f_0'(x_0) \in \mathcal{I}_{n, \mathrm{sc}}^{(1)} (c_\delta^{(1)})\big) = 1- \delta.
		\end{align*}
		\item For any $\epsilon>0$, 
		\begin{align*}
		&\liminf_{n \to \infty} \bigg\{ \Prob_{f_0} \Big( \bigabs{\mathcal{I}_{n, \mathrm{sc}}^{(0)} (c_\delta^{(0)})  }<2c_\delta^{(0)} \mathfrak{g}_\epsilon^{(0)}\cdot n^{-2/5}  d_{2, \mathrm{sc}}^{(0)} (f_0,x_0) \Big)\\
		& \qquad \qquad \bigwedge \Prob_{f_0}\Big(\bigabs{\mathcal{I}_{n, \mathrm{sc}}^{(1)} (c_\delta^{(1)}) }<2c_\delta^{(1)} \mathfrak{g}_\epsilon^{(1)}\cdot n^{-1/5} d_{2, \mathrm{sc}}^{(1)} (f_0,x_0) \Big) \bigg\} \geq 1-\epsilon. 
		\end{align*}
		Here $\mathfrak{g}_\epsilon^{(i)} (i=0,1)$'s are constants that depend only on $\epsilon$, and with $r_s = -1/s$,
		\begin{align*}
		&  d_{2, \mathrm{sc}}^{(0)} (f_0,x_0) = \bigg(\frac{r_s f_0(x_0)^3 \abs{\varphi_s''(x_0)} }{\varphi_s(x_0) 4!}\bigg)^{1/5}, \;
 d_{2, \mathrm{sc}}^{(1)} (f_0,x_0) =\bigg(\frac{r_s^3 f_0(x_0)^4 \abs{\varphi_s''(x_0)}^3 }{(\varphi_s(x_0))^3(4!)^3}\bigg)^{1/5}.
		\end{align*}
	\end{enumerate}
\end{theorem}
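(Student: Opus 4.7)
The plan is to derive Theorem~\ref{thm:pivotal_limit_s_concave} following the two-step structure of the log-concave proof (Theorem~\ref{thm:pivotal_limit_log_concave}): first establish the pivotal LNE theory at the \emph{convex} level of $\widehat{\varphi}_{n,s}$, then transfer it to the density level $\widehat{f}_{n,s} = \widehat{\varphi}_{n,s}^{1/s}$ by the delta method. The only transform-specific replacement relative to the log-concave case is $g(u) = e^u \mapsto g(u) = u^{1/s}$.

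For the first step, the non-standard pointwise limit $n^{2/5}(\widehat{\varphi}_{n,s}(x_0) - \varphi_s(x_0)) \rightsquigarrow H_{a,b}^{(2)}(0)$ with the stated $(a,b)$ is an input from~\cite{han2015approximation}. I would then apply the generic LNE machinery of Section~\ref{section:other_convex} to the convex estimator $\widehat{\varphi}_{n,s}$: identify the maximal linear interval $[\widehat{u}(x_0), \widehat{v}(x_0)]$ of $\widehat{\varphi}_{n,s}$, use the Brownian-scaling identity $Y_{a,b}(t) = \gamma_0 \mathbb{Y}(\gamma_1 t)$ to absorb the nuisance parameter $b$, and invoke the dual open/closed characterization of $(\widehat{u}(x_0),\widehat{v}(x_0))$ developed in the proof of Theorem~\ref{thm:pivotal_limit_fcn} to justify continuous-mapping convergence despite the discontinuity of the kink functionals $\mathcal{H}_\pm$ in the uniform-on-compacta topology. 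This produces the joint pivotal LNE at the convex level:
\begin{align*}
\begin{pmatrix}
\sqrt{n(\widehat{v}(x_0)-\widehat{u}(x_0))}\big(\widehat{\varphi}_{n,s}(x_0) - \varphi_s(x_0)\big)\\
\sqrt{n(\widehat{v}(x_0)-\widehat{u}(x_0))^3}\big(\widehat{\varphi}_{n,s}'(x_0) - \varphi_s'(x_0)\big)
\end{pmatrix}
\rightsquigarrow a\cdot
\begin{pmatrix}
\mathbb{L}_2^{(0)}\\
\mathbb{L}_2^{(1)}
\end{pmatrix}.
\end{align*}

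For the second step, since $f_0(x_0)>0$ implies $\varphi_s(x_0)>0$, the map $g(u)=u^{1/s}$ is $C^\infty$ on a neighborhood of $\varphi_s(x_0)$. A first-order Taylor expansion yields
\begin{align*}
\widehat{f}_{n,s}(x_0) - f_0(x_0) = g'(\varphi_s(x_0))\big(\widehat{\varphi}_{n,s}(x_0) - \varphi_s(x_0)\big) + \mathfrak{o}_{\mathbf{P}}(n^{-2/5}),
\end{align*}
while the chain rule $\widehat{f}_{n,s}'(x) = g'(\widehat{\varphi}_{n,s}(x))\widehat{\varphi}_{n,s}'(x)$ expanded around $\varphi_s(x_0)$ gives
\begin{align*}
\widehat{f}_{n,s}'(x_0) - f_0'(x_0) = g'(\varphi_s(x_0))\big(\widehat{\varphi}_{n,s}'(x_0) - \varphi_s'(x_0)\big) + \mathfrak{o}_{\mathbf{P}}(n^{-1/5}),
\end{align*}
since the additional term $g''(\varphi_s(x_0))(\widehat{\varphi}_{n,s}(x_0)-\varphi_s(x_0))\varphi_s'(x_0)$ is $\mathcal{O}_{\mathbf{P}}(n^{-2/5})$, strictly lower order than the leading $\mathcal{O}_{\mathbf{P}}(n^{-1/5})$ piece. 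Substituting the convex-level LNE and tracking the multiplicative constant $g'(\varphi_s(x_0))\cdot a$ yields part~(1). Parts~(2)--(3) then follow as in Theorems~\ref{thm:CI_fcn} and~\ref{thm:pivotal_limit_log_concave}: Slutsky's theorem (with $\widehat{f}_{n,s}(x_0) \to f_0(x_0)$ in probability) justifies replacing $\sqrt{f_0(x_0)}$ in the CI by the plug-in $\sqrt{\widehat{f}_{n,s}(x_0)}$ and yields asymptotically exact coverage, while the tightness of $(h^*_{2;+}+h^*_{2;-})^{-1/2}$, which lower-bounds $n^{1/5}(\widehat{v}(x_0)-\widehat{u}(x_0))$ in probability, delivers the rate-optimal length bound.

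The main technical obstacle I expect lies in the first step. The R\'enyi-divergence first-order conditions for (\ref{def:s_conc}) are more intricate than those arising in the regression or log-concave MLE settings, so extracting the local Gaussian white-noise process $Y_{a,b}$ and verifying that the finite-sample invelope $(\mathbb{H}_n^{\mathrm{loc}})^{(2)}$ converges weakly to its limit inside the `continuity set' of the kink functionals $\mathcal{H}_\pm$ requires careful adaptation of the argument in Appendix~\ref{pf:pivotal_limit_fcn}. Once the correct local process is identified from the analysis in~\cite{han2015approximation}, the combinatorial geometry of kinks and linear pieces of the convex function $\widehat{\varphi}_{n,s}$ matches the regression case verbatim, so the dual open/closed topological characterization of $(\widehat{u}(x_0),\widehat{v}(x_0))$ transfers essentially directly.
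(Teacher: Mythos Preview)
Your proposal is correct and takes essentially the same approach as the paper: establish the pivotal LNE at the convex level $\widehat{\varphi}_{n,s}$ via the local-process construction of \cite{han2015approximation} combined with the dual open/closed kink characterization from Appendix~\ref{pf:pivotal_limit_fcn}, then pass to $\widehat{f}_{n,s}$ by the delta method with $g(u)=u^{1/s}$. The paper's own proof (Appendix~\ref{pf:pivotal_limit_s_concave}) is a terse sketch that defines the $s$-concave analogues of the ``locmod'' processes in (\ref{ineq:log_concave_2}) and then refers back to the log-concave argument verbatim, exactly as you outline.
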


\begin{theorem}\label{thm:pivotal_limit_mode_s_concave}
	Let $s \in (-1,0)$. Suppose $f_0 \in \mathcal{P}_s$ with $f_0=\varphi_s^{1/s}$ for some convex function $\varphi_s$, $f_0$ is locally $C^2$ at $m_0$ with $f_0''(m_0)<0$,  where $ m_0\equiv  [\varphi_s]_{ \mathrm{m} } $ is the mode of $f_0$.
	\begin{enumerate}
		\item With $\mathbb{M}_2$ defined in Theorem \ref{thm:pivotal_limit_mode},
		\begin{align*}
		\frac{1}{\widehat{v}_{\mathrm{m},s}-\widehat{u}_{\mathrm{m},s}}\big(\widehat{m}_{n,s}- m_0\big)\rightsquigarrow \mathbb{M}_2.
		\end{align*}
		\item Let $c_\delta^{\mathrm{m}}$ be chosen such that $
		\Prob\big(\abs{\mathbb{M}_2}>c_\delta^{\mathrm{m}}\big) = \delta$, then the CI in (\ref{def:CI_s_concave_0_1}) for the mode satisfies
		\begin{align*}
		\lim_{n \to \infty} \Prob_{ m_0}\big( m_0 \in \mathcal{I}_{n, \mathrm{sc}}^{\mathrm{m}} (c_\delta^{\mathrm{m}})\big) = 1- \delta.
		\end{align*}
		\item For any $\epsilon>0$,
		\begin{align*}
		\liminf_{n \to \infty} \Prob_{ m_0}\bigg(\bigabs{\mathcal{I}_{n, \mathrm{sc}}^{\mathrm{m}}  (c_\delta^{\mathrm{m}}) }<2c_\delta^{\mathrm{m}} \mathfrak{g}_\epsilon^{\mathrm{m}}\cdot n^{-1/5} d_{2, \mathrm{sc}}^{\mathrm{m}}(f_0)\bigg)\geq 1-\epsilon.
		\end{align*}
		Here $\mathfrak{g}_\epsilon^{\mathrm{m}}$ depends only on $\epsilon$, and $\displaystyle d_{2, \mathrm{sc}}^{\mathrm{m}}(f_0) = \bigg(\frac{(4!)^2 (\varphi_s( m_0))^2 }{r_s^2 f_0( m_0)\big(\varphi_s''( m_0)\big)^2}\bigg)^{1/5}$.
	\end{enumerate}
\end{theorem}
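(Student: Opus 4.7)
The plan is to follow the blueprint of Theorem \ref{thm:pivotal_limit_mode_log_concave} almost line by line, with the log-concave MLE $\widehat{\varphi}_n$ replaced by the R\'enyi divergence estimator $\widehat{\varphi}_{n,s}$. Since $s \in (-1,0)$, the map $t \mapsto t^{1/s}$ is strictly decreasing on $(0,\infty)$, so the mode of $f_0 = \varphi_s^{1/s}$ coincides with the anti-mode of the convex function $\varphi_s$; that is, $m_0 = [\varphi_s]_{\mathrm{m}}$ and $\widehat{m}_{n,s} = [\widehat{\varphi}_{n,s}]_{\mathrm{m}}$. The problem thus reduces to anti-mode inference for the piecewise-linear convex estimator $\widehat{\varphi}_{n,s}$, placing us squarely in the template of Section \ref{section:other_convex} (cf.\ \eqref{def:CI_generic_0_2}).

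For part (1), I would invoke the pointwise limit theory established in \cite{han2015approximation} for $\widehat{\varphi}_{n,s}$: namely, \eqref{ineq:underlying_2} holds with the scale parameters $a = 1/\sqrt{f_0(m_0)}$ and $b = r_s \varphi_s''(m_0)/(\varphi_s(m_0)\cdot 4!)$, where $r_s = -1/s$. Combined with the weak convergence of the suitably localized second-derivative process of $\widehat{\varphi}_{n,s}$ to $\mathbb{H}_2^{(2)}$, the Brownian scaling argument of Section \ref{section:other_convex} then eliminates both $a$ and $b$ from the ratio $(\widehat{m}_{n,s}-m_0)/(\widehat{v}_{\mathrm{m},s}-\widehat{u}_{\mathrm{m},s})$: both numerator and denominator carry identical $(a,b)$-dependent scaling factors under the Brownian rescaling $t \mapsto \gamma_1 t$, so their quotient converges in distribution to
\[
\frac{[\mathbb{H}_2^{(2)}]_{\mathrm{m}}}{h^\ast_{2,\mathrm{m};-}+h^\ast_{2,\mathrm{m};+}} = \mathbb{M}_2,
\]
exactly the universal random variable in Theorem \ref{thm:pivotal_limit_mode}.

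The main obstacle is the continuous mapping step. The kinks $\widehat{u}_{\mathrm{m},s}, \widehat{v}_{\mathrm{m},s}$ neighbor the \emph{random} center $\widehat{m}_{n,s}$, so one cannot read them off directly from a process localized around the deterministic $m_0$, and the functional extracting the nearest-kink pair is not continuous in the uniform-on-compacta topology in which the localized process converges (the same obstruction flagged after Theorem \ref{thm:pivotal_limit_fcn} and addressed in Appendix \ref{pf:pivotal_limit_mode}). I would adapt the random-center uniformity argument of Appendix \ref{pf:pivotal_limit_mode}: first establish weak convergence on compacta of the second-derivative process of $\widehat{\varphi}_{n,s}$ localized around $\widehat{m}_{n,s}$ (rather than around $m_0$) to $\mathbb{H}_2^{(2)}$ centered at $[\mathbb{H}_2^{(2)}]_{\mathrm{m}}$, using the boundedness and tightness estimates for $\widehat{\varphi}_{n,s}$ already contained in \cite{han2015approximation}; then apply a dual characterization of $(\widehat{u}_{\mathrm{m},s}, \widehat{v}_{\mathrm{m},s})$ analogous to \eqref{ineq:pivotal_lim_fcn_1}--\eqref{ineq:pivotal_lim_fcn_2} that uses both the second derivative and the primitive of $\widehat{\varphi}_{n,s}$, in order to force the convergence into the continuity set of the kink-extraction functional.

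Parts (2) and (3) are then immediate. Asymptotic exact coverage in (2) follows from part (1) together with continuity of the distribution function of $\mathbb{M}_2$ (inherited from that of the continuous invelope process $\mathbb{H}_2^{(2)}$) and the Portmanteau theorem. For the length bound in (3), the pointwise limit \eqref{ineq:underlying_2} for $\widehat{m}_{n,s}$ gives $\widehat{m}_{n,s} - m_0 = \mathcal{O}_{\mathbf{P}}(n^{-1/5} d_{2,\mathrm{sc}}^{\mathrm{m}}(f_0))$, while part (1) implies $\widehat{v}_{\mathrm{m},s} - \widehat{u}_{\mathrm{m},s}$ has the same order $n^{-1/5} d_{2,\mathrm{sc}}^{\mathrm{m}}(f_0)$ with a nondegenerate limit after rescaling; extracting a uniform-in-$\epsilon$ multiplicative constant $\mathfrak{g}_\epsilon^{\mathrm{m}}$ via this tightness yields the stated bound.
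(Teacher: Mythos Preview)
Your proposal is correct and follows essentially the same route as the paper's own proof. The paper's argument in Appendix \ref{pf:pivotal_limit_s_concave} proceeds exactly as you outline: it replaces the log-concave local processes by their $s$-concave analogues, quotes the weak limit of the localized $\mathbb{Y}$-process from \cite[Theorem 6.4]{han2015approximation} to identify $a=1/\sqrt{f_0(m_0)}$ and $b=r_s\varphi_s''(m_0)/(\varphi_s(m_0)\,4!)$, applies the Brownian rescaling to cancel $(a,b)$ from the ratio, and defers the random-center continuous-mapping step and the dual kink characterization to the argument in Appendix \ref{pf:pivotal_limit_mode}.
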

The above asymptotically pivotal LNE theories show that the CIs in (\ref{def:CI_s_concave_0_1}) have asymptotically exact coverage and shrink at the optimal length in the sense similar to Remark \ref{remark:optimal_rate}. 

Suppose the true density $f_0$ is log-concave ($0$-concave) and we use CIs in (\ref{def:CI_s_concave_0_1}) constructed using the divergence estimator $\widehat{f}_{n,s}$. Then these CIs still have asymptotically exact coverage. Now we consider how much price we need to pay for making inference on a true log-concave density by the divergence estimator over the larger class of $s$-concave densities. We formalize the result below which is proved in Appendix~\ref{pf:constat_log_s_concave}. Recall that $r_s=-1/s$ and the notation used in Theorems~\ref{thm:pivotal_limit_log_concave},~\ref{thm:pivotal_limit_mode_log_concave},~\ref{thm:pivotal_limit_s_concave} and~\ref{thm:pivotal_limit_mode_s_concave}.

\begin{proposition}\label{prop:constat_log_s_concave}
	Let $f_0$ be a log-concave density and $f_0=\exp(\varphi_0)$ for some concave function $\varphi_0$. Let $\varphi_s\equiv f_0^{-1/r_s} = \exp(-\varphi_0/r_s)$ be the underlying convex function when $f_0$ is viewed as an $s$-concave density. Then the following hold:
	\begin{enumerate}
		\item For $i=0,1$, $ d_{2, \mathrm{sc}}^{(i)} (f_0,x_0)>d_{2, \mathrm{lc}}^{(i)} (f_0,x_0)$ for all $s \in (-1,0)$, and the limit holds: $\lim_{s \uparrow 0}  d_{2, \mathrm{sc}}^{(i)} (f_0,x_0) = d_{2, \mathrm{lc}}^{(i)} (f_0,x_0)$.
		\item $d_{2, \mathrm{sc}}^{\mathrm{m}}(f_0)= d_{2, \mathrm{lc}}^{\mathrm{m}}(f_0)$ for all $s \in (-1,0)$.
	\end{enumerate}
\end{proposition}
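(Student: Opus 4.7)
The plan is to reduce both parts to a short computation of $\varphi_s''$ in terms of the derivatives of $\varphi_0$. Since $\varphi_s = \exp(-\varphi_0/r_s)$, two applications of the chain rule yield
\begin{align*}
\varphi_s'' = \varphi_s \Big[ (\varphi_0')^2/r_s^2 - \varphi_0''/r_s \Big],
\end{align*}
which is nonnegative because $\varphi_0$ is (locally) concave with $\varphi_0'' \leq 0$. Rearranging gives the key identity
\begin{align*}
\frac{r_s \abs{\varphi_s''(x)}}{\varphi_s(x)} = \abs{\varphi_0''(x)} + \frac{(\varphi_0'(x))^2}{r_s}.
\end{align*}
All subsequent bookkeeping hinges on this one identity.

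For part (1), I would substitute this identity into the fifth-power ratios of $d_{2,\mathrm{sc}}^{(i)}$ to $d_{2,\mathrm{lc}}^{(i)}$ as given in Theorems \ref{thm:pivotal_limit_log_concave} and \ref{thm:pivotal_limit_s_concave}. For $i=0$, almost all factors cancel and one obtains
\begin{align*}
\frac{(d_{2,\mathrm{sc}}^{(0)}(f_0,x_0))^5}{(d_{2,\mathrm{lc}}^{(0)}(f_0,x_0))^5} = \frac{r_s \abs{\varphi_s''(x_0)}}{\varphi_s(x_0)\abs{\varphi_0''(x_0)}} = 1 + \frac{(\varphi_0'(x_0))^2}{r_s \abs{\varphi_0''(x_0)}} > 1
\end{align*}
for every $s \in (-1,0)$, with the case $i=1$ being this same quantity cubed. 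Since $r_s = -1/s \to \infty$ as $s \uparrow 0$, both ratios converge to $1$, which gives the two assertions of part (1).

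For part (2), the crucial additional observation is that at the mode $m_0$ of $f_0 = \exp(\varphi_0)$ one must have $\varphi_0'(m_0) = 0$. The key identity therefore collapses to $\abs{\varphi_s''(m_0)} = \varphi_s(m_0)\abs{\varphi_0''(m_0)}/r_s$, and differentiating $f_0 = e^{\varphi_0}$ twice gives $f_0''(m_0) = f_0(m_0)\varphi_0''(m_0)$. Plugging these two relations into the definitions of $d_{2,\mathrm{sc}}^{\mathrm{m}}$ and $d_{2,\mathrm{lc}}^{\mathrm{m}}$, the factors of $r_s$ and $\varphi_s(m_0)$ on the $s$-concave side cancel exactly, and both fifth powers reduce to $(4!)^2/(f_0(m_0)\varphi_0''(m_0)^2)$.

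The entire argument is a direct calculation and there is no serious obstacle; the only point worth flagging is the vanishing of $\varphi_0'(m_0)$ at the mode, which is precisely what forces the two mode-length constants to coincide exactly rather than being merely comparable as in part (1).
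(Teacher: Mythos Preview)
Your proposal is correct and follows essentially the same route as the paper: both compute $\varphi_s''$ via the chain rule to obtain $\varphi_s'' = \varphi_s\big[(\varphi_0')^2/r_s^2 - \varphi_0''/r_s\big]$, substitute this into the constant definitions, and invoke $\varphi_0'(m_0)=0$ at the mode for part (2). The only cosmetic difference is that you organize the calculation as a ratio of fifth powers, whereas the paper expands each $d_{2,\mathrm{sc}}^{(i)}$ directly.
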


From the above proposition, it is clear that a price will be paid in terms of the length of the CIs when using the divergence estimator $\widehat{f}_{n,s}$ for making inference for $f_0(x_0),f_0'(x_0)$ if the true density $f_0$ is log-concave. This price vanishes as $s \uparrow 0$. However, Proposition~\ref{prop:constat_log_s_concave}-(2) shows that, interestingly, no price will be paid when the task is to make inference about the mode of a log-concave density, even if one uses the divergence estimator that is designed for a strictly larger class of densities.

\subsection{Convex nonincreasing density estimation}

Suppose we observe $X_1,\ldots,X_n$ from a convex nonincreasing density $f_0$ on $[0,\infty)$. Let $\widehat{f}_n$ be the MLE based on $X_1,\ldots,X_n$, that is,
\begin{align*}
\widehat{f}_n&\equiv \underset{f: \textrm{ convex nonincreasing}, \int_{0}^{\infty} f =1}{\mathrm{arg\,max}} \,\int_{0}^{\infty} \log f(x)\,\d{\mathbb{F}_n(x)}\\
&= \underset{f: \textrm{ convex nonincreasing}}{\mathrm{arg\,max}} \, \bigg\{\int_{0}^{\infty} \log f(x)\,\d{\mathbb{F}_n(x)} - \int_{0}^\infty f(x)\,\d{x}\bigg\},
\end{align*}
or the LSE, that is,
\begin{align*}
\widehat{f}_n&\equiv \underset{f: \textrm{ convex nonincreasing}, \int_{0}^{\infty} f =1}{\mathrm{arg\,min}} \,\bigg\{\frac{1}{2} \int_{0}^{\infty} f^2(x)\,\d{x} - \int_{0}^{\infty} f(x) \,\d{\mathbb{F}_n(x)}\bigg\}
\cr
&= \underset{f: \textrm{convex nonincreasing}}{\mathrm{arg\,min}} \,\bigg\{\frac{1}{2} \int_{0}^{\infty} f^2(x)\,\d{x} - \int_{0}^{\infty} f(x) \,\d{\mathbb{F}_n(x)}\bigg\}.
\end{align*}
Recall that $\mathbb{F}_n$ is the empirical distribution function of the sample $X_1,\ldots,X_n$. \cite{groeneboom2001estimation} obtained the limit distribution theory (\ref{ineq:underlying_1}) for the above convex MLE and LSE $\widehat{f}_n$ with $a = \sqrt{f_0(x_0)}, b = f_0''(x_0)/4!$ under natural curvature conditions at $x_0 \in (0,\infty)$.

Now consider inference for the parameters $f_0(x_0),f_0'(x_0)$ using the CIs in (\ref{def:CI_generic_0_1}) with $\widehat{a}_n=\sqrt{\widehat{f}_n(x_0)}$. More specifically, let $[\widehat{u}(x_0),\widehat{v}(x_0)]$ be the maximal interval containing $x_0$ on which $\widehat{f}_n$ is linear, and
\begin{align}\label{def:CI_density_0_1}
\mathcal{I}_{n, \mathrm{d}}^{(0)} (c_\delta^{(0)}) &\equiv \bigg[ \widehat{f}_n(x_0) \pm \frac{\sqrt{\widehat{f}_n(x_0)}\cdot c_\delta^{(0)}}{ \sqrt{ n(\widehat{v}(x_0)-\widehat{u}(x_0) )} } \bigg] \cap [0, \infty),\\
\mathcal{I}_{n, \mathrm{d}}^{(1)}(c_\delta^{(1)}) &\equiv \bigg[\widehat{f}_n'(x_0) \pm \frac{\sqrt{\widehat{f}_n(x_0)}\cdot c_\delta^{(1)}}{ \sqrt{ n(\widehat{v}(x_0)-\widehat{u}(x_0))^3} }\bigg]. \nonumber
\end{align}
The validity of the above CIs is based on the following result, proved in Appendix \ref{pf:pivotal_limit_density}.
\begin{theorem}\label{thm:pivotal_limit_density}
	Suppose that $f_0$ is convex nonincreasing and $f_0$ is locally $C^2$ at $x_0 \in (0,\infty)$ with $f_0''(x_0)>0$. 
	\begin{enumerate}
		\item With $\mathbb{L}^{(i)}_2(i=0,1)$ defined in Theorem \ref{thm:pivotal_limit_fcn},
		\begin{align*}
		\begin{pmatrix}
		\sqrt{n(\widehat{v}(x_0)-\widehat{u}(x_0))}\big(\widehat{f}_n(x_0)-f_0(x_0)\big)\\
		\sqrt{n(\widehat{v}(x_0)-\widehat{u}(x_0))^3}\big(\widehat{f}_n'(x_0)-f_0'(x_0)\big)
		\end{pmatrix}
		\rightsquigarrow \sqrt{f_0(x_0)}\cdot  
		\begin{pmatrix}
		\mathbb{L}^{(0)}_2\\
		\mathbb{L}^{(1)}_2
		\end{pmatrix}
		.
		\end{align*}
		\item Let $c_\delta^{(0)},c_\delta^{(1)}$ be chosen such that $
		\Prob\big(\abs{\mathbb{L}_2^{(i)} }>c_\delta^{(i)}\big)=\delta$ for $i=0,1$, then the CIs in (\ref{def:CI_density_0_1}) satisfy 
		\begin{align*}
		&\lim_{n \to \infty} \Prob_{f_0}\big(f_0(x_0) \in \mathcal{I}_{n, \mathrm{d}}^{(0)} (c_\delta^{(0)}) \big) = \lim_{n \to \infty} \Prob_{f_0}\big(f_0'(x_0) \in \mathcal{I}_{n, \mathrm{d}}^{(1)} (c_\delta^{(1)})\big) = 1- \delta.
		\end{align*}
		\item For any $\epsilon>0$, 
		\begin{align*}
		&\liminf_{n \to \infty} \bigg\{\Prob_{f_0} \Big( \bigabs{\mathcal{I}_{n, \mathrm{d}}^{(0)} (c_\delta^{(0)})  }<2c_\delta^{(0)} \mathfrak{g}_\epsilon^{(0)}\cdot n^{-2/5} d_{2, \mathrm{d}}^{(0)} (f_0,x_0) \Big)\\
		& \qquad \qquad \bigwedge \Prob_{f_0} \Big( \bigabs{\mathcal{I}_{n, \mathrm{d}}^{(1)} (c_\delta^{(1)}) }<2c_\delta^{(1)} \mathfrak{g}_\epsilon^{(1)}\cdot n^{-1/5} d_{2, \mathrm{d}}^{(1)} (f_0,x_0)\Big) \bigg\}\geq 1-\epsilon. 
		\end{align*}
		Here $\mathfrak{g}_\epsilon^{(i)} (i=0,1)$'s are constants that depend only on $\epsilon$, and 
		\begin{align*}
		&d_{2, \mathrm{d}}^{(0)} (f_0,x_0) = \bigg(\frac{f_0(x_0)^2 f_0''(x_0) }{4!}\bigg)^{1/5},\quad  d_{2, \mathrm{d}}^{(1)} (f_0,x_0) =\bigg(\frac{f_0(x_0) f_0''(x_0)^3 }{(4!)^3}\bigg)^{1/5}.
		\end{align*}
	\end{enumerate}
\end{theorem}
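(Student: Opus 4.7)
The plan is to follow the template set by the generic machinery of Section \ref{section:other_convex} and by Theorems \ref{thm:pivotal_limit_fcn}--\ref{thm:CI_fcn} in the regression setting, applied to the density setting of \cite{groeneboom2001estimation}. The starting point is the local process convergence underlying (\ref{ineq:underlying_1}) for the convex nonincreasing MLE/LSE: there exists a localized version $(\widehat f_n^{\mathrm{loc}})^{(2)}$ of the piecewise constant ``second derivative'' of $\widehat f_n$ that converges weakly, uniformly on compacta, to $H_{a,b}^{(2)}$ with $a = \sqrt{f_0(x_0)}$ and $b = f_0''(x_0)/4!$, where $H_{a,b}$ is the invelope of the drifted integrated Brownian motion in (\ref{ineq:generic_gaussian_white_noise}). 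I will use this together with a joint localized convergence of $\widehat f_n^{\mathrm{loc}}$ to $H_{a,b}$ as the input to the proof of part~(1).

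For part~(1), I would express the scaled endpoints $\widehat u(x_0), \widehat v(x_0)$ as functionals $\mathcal{H}_\pm$ of the underlying localized process, exactly as in the proof of Theorem \ref{thm:pivotal_limit_fcn}. Since $\mathcal{H}_\pm$ are not continuous in the uniform-on-compacta topology when applied only to the piecewise linear second derivative process, I would use the dual characterization (touch points between $\widehat f_n^{\mathrm{loc}}$ and its driving process together with the kink structure of $(\widehat f_n^{\mathrm{loc}})^{(2)}$) to argue that convergence occurs inside the continuity set of $\mathcal{H}_\pm$ for the limit $(H_{a,b}, H_{a,b}^{(2)})$. The continuous mapping theorem then yields (\ref{ineq:generic_piv_limit}) with $a = \sqrt{f_0(x_0)}$ and $b = f_0''(x_0)/4!$. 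A Brownian scaling argument as in Section \ref{section:other_convex} converts these scale parameters into the universal variables $\mathbb{L}_2^{(0)}, \mathbb{L}_2^{(1)}$: the parameter $b$ cancels completely in the ratio form because the random interval length $\widehat v(x_0)-\widehat u(x_0)$ scales with $b$, and what remains is precisely $a \cdot (\mathbb{L}_2^{(0)}, \mathbb{L}_2^{(1)}) = \sqrt{f_0(x_0)} \cdot (\mathbb{L}_2^{(0)}, \mathbb{L}_2^{(1)})$.

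Parts~(2) and~(3) then follow immediately as in the proof of Theorem \ref{thm:CI_fcn}. For~(2) I would combine the pivotal limit of part~(1) with the fact that $\widehat f_n(x_0) \to f_0(x_0)$ in probability (so $\sqrt{\widehat f_n(x_0)}$ is consistent for $\sqrt{f_0(x_0)}$) and apply Slutsky's theorem. For~(3) I would control $n(\widehat v(x_0) - \widehat u(x_0))$ from below with high probability using the tightness of the scaled gap $n^{1/5}(\widehat v(x_0)-\widehat u(x_0))$ implied by the process-level convergence, which upon combining with the form of the normalizing factors gives the stated length bound with $d_{2,\mathrm{d}}^{(i)}(f_0,x_0)$ and constants $\mathfrak{g}_\epsilon^{(i)}$ depending only on $\epsilon$.

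The main obstacle, as in the regression setting, is the continuity-set argument for $\mathcal{H}_\pm$. The additional subtlety in the density case is that the underlying process is driven by the empirical distribution function rather than by regression noise, so one must verify that the joint convergence of the localized primitive and its piecewise linear second derivative (in the appropriate topologies) goes through in the density setting. Once that is secured, all remaining steps are analogous to Theorems \ref{thm:pivotal_limit_fcn} and \ref{thm:CI_fcn} and the generic machinery. I expect no essential new difficulty beyond careful bookkeeping of the scale $a = \sqrt{f_0(x_0)}$ and $b = f_0''(x_0)/4!$ through the Brownian scaling identities.
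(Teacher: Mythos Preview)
Your proposal is correct and follows essentially the same approach as the paper: the paper also reduces to the generic machinery of Section~\ref{section:other_convex} by exhibiting the explicit local processes $\mathbb{Y}_n^{\mathrm{loc}}, \mathbb{H}_n^{\mathrm{loc}}$ (from \cite{groeneboom2001estimation}) with $a=\sqrt{f_0(x_0)}$ and $b=f_0''(x_0)/4!$, then invokes the dual-characterization/continuity-set argument of Theorem~\ref{thm:pivotal_limit_fcn} together with the Brownian scaling $\gamma_0\gamma_1^{3/2}=\sqrt{f_0(x_0)}$, $\gamma_0\gamma_1^4=f_0''(x_0)/4!$, and handles parts~(2)--(3) exactly as in Theorem~\ref{thm:CI_fcn}. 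The only additional content in the paper is writing out the local processes explicitly for both the MLE and the LSE, which you correctly anticipate as ``careful bookkeeping.''
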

The above asymptotically pivotal LNE theory shows that the CIs in (\ref{def:CI_density_0_1}) have asymptotically exact coverage and shrink at optimal length.

\subsection{Convex bathtub-shaped hazard function estimation}

Suppose we observe i.i.d.~samples $X_1,\ldots, X_n$ from a density $f_0$ on $[0,\infty)$ with convex hazard rate $h_0\equiv f_0/(1-F_0)$ where $F_0$ is the cumulative distribution function of $f_0$. Let $X_{(1)},\ldots, X_{(n)}$ be the order statistics of $X_1,\ldots,X_n$.  Following \cite{jankowski2009nonparametric}, let $\widehat{h}_n:[0,X_{(n)})\to \R_{\geq 0}$ be the maximizer of
\begin{align*}
h \mapsto \prod_{i=1}^{n-1} h(X_{(i)}) e^{-H(X_{(i)};h)}\cdot e^{-H(X_{(n)};h)},
\end{align*}
where $h$ ranges over all nonnegative convex functions on $[0,X_{(n)})$ and $H(t;h)\equiv \int_0^t h(s)\,\d{s}$, and then extend $\widehat{h}_n$ on the whole real line by setting $\widehat{h}_n(x)\equiv \infty$ for $x\geq X_{(n)}$. \cite{jankowski2009nonparametric} obtained the limit distribution theory (\ref{ineq:underlying_1}) for $\widehat{h}_n$ with $a = \sqrt{h_0(x_0)/(1-F_0(x_0))}, b = h_0''(x_0)/4!$ under natural curvature conditions at $x_0 \in (0,\infty)$.

Now we construct CIs for the parameters $h_0(x_0),h_0'(x_0)$ as in (\ref{def:CI_generic_0_1}) with $\widehat{a}_n=\sqrt{\widehat{h}_n(x_0)/(1-\mathbb{F}_n(x_0))}$. Let $[\widehat{u}(x_0),\widehat{v}(x_0)]$ be the maximal interval containing $x_0$ on which $\widehat{h}_n$ is linear, and
\begin{align}\label{def:CI_hazard_0_1}
\mathcal{I}_{n, \mathrm{h}}^{(0)} (c_\delta^{(0)}) &\equiv \bigg[ \widehat{h}_n(x_0) \pm \frac{ c_\delta^{(0)} \cdot \sqrt{\widehat{h}_n(x_0)} }{ \sqrt{1-\mathbb{F}_n(x_0)} \sqrt{ n(\widehat{v}(x_0)-\widehat{u}(x_0) )} } \bigg] \cap [0, \infty),
\\ \nonumber
\mathcal{I}_{n, \mathrm{h}}^{(1)} (c_\delta^{(1)}) &\equiv \bigg[ \widehat{h}_n'(x_0) \pm \frac{c_\delta^{(1)} \cdot \sqrt{\widehat{h}_n(x_0)} }{ \sqrt{1-\mathbb{F}_n(x_0)}  \sqrt{ n(\widehat{v}(x_0)-\widehat{u}(x_0))^3} } \bigg].
\end{align}
The validity of the above CIs is based on the following result, proved in Appendix \ref{pf:pivotal_limit_hazard}.
\begin{theorem}\label{thm:pivotal_limit_hazard}
	Suppose the hazard rate $h_0=f_0/(1-F_0)$ is convex, and $x_0>0$ is a point such that $h_0$ is locally $C^2$ at $x_0$ with $h_0(x_0)>0, h_0''(x_0)>0$. 
	\begin{enumerate}
		\item With $\mathbb{L}^{(i)}_2(i=0,1)$ defined in Theorem \ref{thm:pivotal_limit_fcn}, 
		\begin{align*}
		\begin{pmatrix}
		\sqrt{n(\widehat{v}(x_0)-\widehat{u}(x_0))}\big(\widehat{h}_n(x_0)-h_0(x_0)\big)\\
		\sqrt{n(\widehat{v}(x_0)-\widehat{u}(x_0))^3}\big(\widehat{h}_n'(x_0)-h_0'(x_0)\big)
		\end{pmatrix}
		\rightsquigarrow \sqrt{ \frac{h_0(x_0)}{1-F_0(x_0)} }\cdot 
		\begin{pmatrix}
		\mathbb{L}^{(0)}_2\\
		\mathbb{L}^{(1)}_2
		\end{pmatrix}.
		\end{align*}
		\item Let $c_\delta^{(0)},c_\delta^{(1)}$ be chosen such that $
		\Prob\big(\abs{\mathbb{L}_2^{(i)} }>c_\delta^{(i)}\big)=\delta$ for $i=0,1$, then the CIs in (\ref{def:CI_density_0_1}) satisfy 
		\begin{align*}
		&\lim_{n \to \infty} \Prob_{h_0}\big(h_0(x_0) \in \mathcal{I}_{n, \mathrm{h}}^{(0)} (c_\delta^{(0)}) \big) = \lim_{n \to \infty} \Prob_{h_0}\big(h_0'(x_0) \in \mathcal{I}_{n, \mathrm{h}}^{(1)} (c_\delta^{(1)})\big) = 1- \delta.
		\end{align*}
		\item For any $\epsilon>0$, 
		\begin{align*}
		&\liminf_{n \to \infty} \bigg\{\Prob_{f_0}\Big(\bigabs{\mathcal{I}_{n, \mathrm{h}}^{(0)} (c_\delta^{(0)})  }<2c_\delta^{(0)} \mathfrak{g}_\epsilon^{(0)}\cdot n^{-2/5}  d_{2, \mathrm{h}}^{(0)} (h_0,x_0)\Big)\\
		& \qquad \qquad \bigwedge \Prob_{f_0}\Big(\bigabs{\mathcal{I}_{n, \mathrm{h}}^{(1)} (c_\delta^{(1)}) }<2c_\delta^{(1)} \mathfrak{g}_\epsilon^{(1)}\cdot n^{-1/5} d_{2, \mathrm{h}}^{(1)} (h_0,x_0)\Big)\bigg\}\geq 1-\epsilon. 
		\end{align*}
		Here $\mathfrak{g}_\epsilon^{(i)} (i=0,1)$'s are constants that depend only on $\epsilon$, and
		\begin{align*}
		& d_{2, \mathrm{h}}^{(0)} (h_0,x_0) = \bigg(\frac{h_0(x_0)^2 h_0''(x_0) }{(1-F_0(x_0))^2 4!}\bigg)^{1/5},\quad  d_{2, \mathrm{h}}^{(1)} (h_0,x_0) =\bigg(\frac{h_0(x_0) h_0''(x_0)^3 }{(1-F_0(x_0))(4!)^3}\bigg)^{1/5}.
		\end{align*}
	\end{enumerate}
\end{theorem}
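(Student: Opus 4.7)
The plan is to apply the general machinery developed in Section~\ref{section:other_convex} (underlying machinery subsection) to the hazard estimation setting, exploiting the fact that \cite{jankowski2009nonparametric} have already established the pointwise limit distribution theory of the form (\ref{ineq:underlying_1}) for $\widehat{h}_n$ with nuisance parameters $a=\sqrt{h_0(x_0)/(1-F_0(x_0))}$ and $b=h_0''(x_0)/4!$. The role of these parameters is familiar: $a$ arises from the local Poissonization of the observation process (the factor $1-F_0(x_0)$ reflecting the fact that only observations with $X_i\ge x_0$ effectively contribute information about the hazard at $x_0$), while $b$ encodes the local curvature of $h_0$ at $x_0$. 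Given this input, the structure of the proof of Theorem~\ref{thm:pivotal_limit_density} in the convex density case carries over almost verbatim, with only the specific form of $a$ replaced.

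The concrete steps would be the following. First, I would state the local weak convergence result of \cite{jankowski2009nonparametric}: an appropriately centered and rescaled local process $(\mathbb{H}_n^{\mathrm{loc}})^{(k)}$ derived from $\widehat{h}_n$ converges on compacta to $H_{a,b}^{(k)}$ for $k\in\{0,1,2,3\}$, where $H_{a,b}$ is the invelope of $Y_{a,b}$ in (\ref{ineq:generic_gaussian_white_noise}). Second, I would carry out the dual-characterization continuous mapping argument (the core step from the proof of Theorem~\ref{thm:pivotal_limit_fcn}) to deduce the joint weak convergence
\begin{align*}
\begin{pmatrix}
 n^{2/5}(\widehat{h}_n(x_0)-h_0(x_0)) \\
 n^{1/5}(\widehat{h}_n'(x_0)-h_0'(x_0)) \\
 n^{1/5}(\widehat{v}(x_0)-x_0) \\
 n^{1/5}(x_0-\widehat{u}(x_0))
\end{pmatrix}
\rightsquigarrow
\begin{pmatrix}
 H_{a,b}^{(2)}(0) \\
 H_{a,b}^{(3)}(0) \\
 h^\ast_{a,b;+} \\
 h^\ast_{a,b;-}
\end{pmatrix}.
\end{align*}
Third, by the Brownian scaling identity recalled in Section~\ref{section:other_convex} with $\gamma_0\gamma_1^{3/2}=a$ and $\gamma_0\gamma_1^4=b$, the pairs $(\sqrt{h^\ast_{a,b;+}+h^\ast_{a,b;-}}\cdot H_{a,b}^{(2)}(0),\sqrt{(h^\ast_{a,b;+}+h^\ast_{a,b;-})^3}\cdot H_{a,b}^{(3)}(0))$ are equal in distribution to $a\cdot(\mathbb{L}^{(0)}_2,\mathbb{L}^{(1)}_2)$; applying the continuous mapping theorem to the joint weak convergence above with the map $(y_0,y_1,v,u)\mapsto(\sqrt{v+u}\,y_0,\sqrt{(v+u)^3}\,y_1)$ yields part~(1). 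For part~(2), I would combine part~(1) with the facts that $\widehat{h}_n(x_0)\to h_0(x_0)$ in probability (which follows from the limit law with rate $n^{-2/5}$) and $\mathbb{F}_n(x_0)\to F_0(x_0)$ in probability (Glivenko--Cantelli), so that $\sqrt{\widehat{h}_n(x_0)/(1-\mathbb{F}_n(x_0))}$ is a consistent estimator of $a$; Slutsky together with the continuity of the distribution of $\mathbb{L}_2^{(i)}$ at its quantile $c_\delta^{(i)}$ then delivers asymptotically exact coverage. For part~(3), the joint convergence yields $n^{1/5}(\widehat{v}(x_0)-\widehat{u}(x_0))\rightsquigarrow \gamma_1^{-1}(h^\ast_{2,+}+h^\ast_{2,-})$, after unfolding the scaling constants, and since $h^\ast_{2,\pm}>0$ a.s., choosing $\mathfrak{g}_\epsilon^{(i)}$ as a high-probability upper bound for $1/\sqrt{h^\ast_{2,+}+h^\ast_{2,-}}$ (and its cubic analogue) produces the stated length control.

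The main obstacle is step two: the endpoints $\widehat{u}(x_0),\widehat{v}(x_0)$ are implicit functionals of the invelope process whose realizations are piecewise cubic, and as emphasized in the discussion following Theorem~\ref{thm:pivotal_limit_fcn}, these functionals are \emph{not} continuous with respect to uniform convergence on compacta of $(\mathbb{H}_n^{\mathrm{loc}})^{(2)}$. I would import the dual characterization from Appendix~\ref{pf:pivotal_limit_fcn}, which extracts the endpoints jointly from $(\mathbb{H}_n^{\mathrm{loc}})^{(2)}$ and $\mathbb{H}_n^{\mathrm{loc}}$ to enforce the necessary topological openness/closedness properties. A minor secondary obstacle is verifying consistency of $\sqrt{\widehat{h}_n(x_0)/(1-\mathbb{F}_n(x_0))}$: while $\mathbb{F}_n(x_0)\to F_0(x_0)$ is immediate, consistency of $\widehat{h}_n(x_0)$ requires invoking the pointwise limit law from \cite{jankowski2009nonparametric}, which needs the same local regularity conditions assumed in the theorem statement. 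Once these are in place, the remainder of the argument is a routine adaptation of Theorem~\ref{thm:pivotal_limit_density}.
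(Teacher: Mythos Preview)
Your proposal is correct and follows essentially the same approach as the paper: invoke the Jankowski--Wellner local process machinery to identify $a=\sqrt{h_0(x_0)/(1-F_0(x_0))}$ and $b=h_0''(x_0)/4!$, then run the dual-characterization continuous mapping argument of Theorem~\ref{thm:pivotal_limit_fcn} together with Brownian scaling, and finish parts~(2)--(3) via Slutsky and the tightness of $h^\ast_{2;\pm}$. The paper's proof adds two model-specific technical details you would encounter when executing this plan: (i) the characterization of $\widehat{h}_n$ in \cite{jankowski2009nonparametric} yields equality in $\mathbb{H}_n^{\mathrm{loc}}\ge \mathbb{Y}_n^{\mathrm{loc}}$ only at kinks with \emph{negative} slope, so one first restricts to $h_0'(x_0)<0$ and argues that the relevant touch points are of this type with high probability after localization; and (ii) the raw local process $\overline{\mathbb{H}}_n^{\mathrm{loc}}$ carries $(1-\mathbb{F}_n)$ in its integrand, which is replaced by $(1-F_0)$ via an asymptotically negligible modification so that the second derivative takes the clean form needed for the argument.
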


The above asymptotically pivotal LNE theory shows that the CIs in (\ref{def:CI_hazard_0_1}) have asymptotically exact coverage and shrink at the optimal length. 

\subsection{Concave distribution function estimation from corrupted data} 

We consider estimation of a concave distribution function as studied in \cite{jongbloed2009estimating} and use their notation. Let $X_1,\ldots,X_n$ be i.i.d.~random variables from an unknown concave distribution function $F_0$ on $[0,\infty)$, and $\epsilon_1,\ldots,\epsilon_n$ be i.i.d.~random variables, independent of the $X_i$'s, with known probability density function $k:[0,\infty)\to [0,\infty)$ that is bounded and nonincreasing. The goal is to estimate the distribution function $F_0$ based on i.i.d.~corrupted observations $Z_i=X_i+\epsilon_i$ with density $g_0\equiv k*\d{F_0} = \int k(\cdot-y)\,\d{F_0}(y)$, and distribution function $G_0$. This is essentially a deconvolution problem.

We will estimate $F_0$ by the LSE defined in \cite{jongbloed2009estimating} as follows. By \cite[Lemma 2.4]{jongbloed2009estimating}, there exists some $p(\cdot)$ that is nondecreasing, equals $0$ on $(-\infty,0)$ and $p(0+)=1/k(0+)$, such that $p*k(x) =  x\bm{1}_{[0,\infty)}(x)$. Explicit forms of $p$ can be found in \cite[Lemma 2.4 and Remark 2.5]{jongbloed2009estimating}. Let $U(x)\equiv x-(p*g_0)(x) = x-(p*k)*\d{F_0} (x) = x - \int_0^x F_0(t)\,\d{t}$. The survival function is $s_0(x)\equiv 1-F_0(x) = U'(x)$. Let $U_n(x)\equiv x-(p*\d{\G_n})(x)$ be the empirical estimate of $U(x)$, where $\G_n$ is the empirical measure of $Z_1,\ldots,Z_n$. The LSE of $s_0$ is now defined as
\begin{align*}
\widehat{s}_n\equiv\underset{s \in \mathcal{S}}{\mathrm{arg\,min}} \bigg\{\frac{1}{2}\int_0^\infty s^2(x)\,\d{x}-\int_0^\infty s(x)\,\d{U_n(x)} \bigg\},
\end{align*}
where the minimum is taken over the class $\mathcal{S}$ containing all $s \in L^2([0,\infty))$ such that $s$ is nonnegative, convex, nonincreasing and $s(0)\in (0,1]$. As shown in \cite[Theorem 2.8]{jongbloed2009estimating}, the set $\mathcal{S}$ in the above minimization can be further reduced to the set $\mathcal{S}_n$ containing all piecewise linear convex nonincreasing functions $s$ with kinks only at $\{Z_1,\ldots,Z_n\}$ and $s(0)=1,s(Z_{(n)})=0$. Computation of $\widehat{s}_n$ is based on a variant of the support reduction algorithm (see \cite{groeneboom2008support}) detailed in the Appendix of \cite{jongbloed2009estimating}.

\cite{jongbloed2009estimating} obtained the limit distribution theory (\ref{ineq:underlying_1}) for the LSE $\widehat{s}_n$ with $a= \sqrt{g_0(x_0)}/k(0), b=s_0''(x_0)/4!$ under natural curvature conditions at $x_0 \in (0,\infty)$.

Now consider inference for the parameters $s_0(x_0),s_0'(x_0)$ using the CIs in (\ref{def:CI_generic_0_1}) with $\widehat{a}_n=\sqrt{\widehat{g}_n(x_0)}/k(0)$. Here $\widehat{g}_n(x_0)\equiv \sum_{i} \bm{1}_{ \{ \widehat{u}(x_0)\leq Z_i\leq \widehat{v}(x_0) \} }/\{ n(\widehat{v}(x_0)-\widehat{u}(x_0))\}$, with  $[\widehat{u}(x_0),\widehat{v}(x_0)]$ being the maximal interval containing $x_0$ on which $\widehat{s}_n$ is linear. Let
\begin{align}\label{def:CI_deconvolution_0_1}
\mathcal{I}_{n, \mathrm{dc}}^{(0)} (c_\delta^{(0)}) &\equiv \bigg[ \widehat{s}_n(x_0) \pm \frac{\sqrt{\widehat{g}_n(x_0)}\cdot c_\delta^{(0)}}{k(0) \sqrt{ n(\widehat{v}(x_0)-\widehat{u}(x_0) )} } \bigg] \cap [0, 1],\\
\mathcal{I}_{n, \mathrm{dc}}^{(1)}(c_\delta^{(1)}) &\equiv \bigg[\widehat{s}_n'(x_0) \pm \frac{\sqrt{\widehat{g}_n(x_0)}\cdot c_\delta^{(1)}}{k(0) \sqrt{ n(\widehat{v}(x_0)-\widehat{u}(x_0))^3} }\bigg]. \nonumber
\end{align}
The above CIs have asymptotically exact coverage and optimal length, as shown below; the proof can be found in Appendix \ref{pf:pivotal_limit_deconvolution}.
\begin{theorem}\label{thm:pivotal_limit_deconvolution}
	Suppose that $s_0=1-F_0$ is convex nonincreasing and $s_0$ is locally $C^2$ at $x_0 \in (0,\infty)$ with $s_0''(x_0)>0$, and $k$ is smooth in the sense that $k(x)$ can be written as $\int_{x}^{\infty} \kappa(y)\, \d{y}$ for a Lipschitz continuous nonnegative function $\kappa$ on $(0, \infty)$.
	\begin{enumerate}
		\item With $\mathbb{L}^{(i)}_2(i=0,1)$ defined in Theorem \ref{thm:pivotal_limit_fcn},
		\begin{align*}
		\begin{pmatrix}
		\sqrt{n(\widehat{v}(x_0)-\widehat{u}(x_0))}\big(\widehat{s}_n(x_0)-s_0(x_0)\big)\\
		\sqrt{n(\widehat{v}(x_0)-\widehat{u}(x_0))^3}\big(\widehat{s}_n'(x_0)-s_0'(x_0)\big)
		\end{pmatrix}
		\rightsquigarrow \frac{\sqrt{g_0(x_0)}}{k(0)}\cdot  
		\begin{pmatrix}
		\mathbb{L}^{(0)}_2\\
		\mathbb{L}^{(1)}_2
		\end{pmatrix}
		.
		\end{align*}
		\item Let $c_\delta^{(0)},c_\delta^{(1)}$ be chosen such that $
		\Prob\big(\abs{\mathbb{L}_2^{(i)} }>c_\delta^{(i)}\big)=\delta$ for $i=0,1$, then the CIs in (\ref{def:CI_deconvolution_0_1}) satisfy 
		\begin{align*}
		&\lim_{n \to \infty} \Prob_{s_0}\big(s_0(x_0) \in \mathcal{I}_{n, \mathrm{dc}}^{(0)} (c_\delta^{(0)}) \big) = \lim_{n \to \infty} \Prob_{s_0}\big(s_0'(x_0) \in \mathcal{I}_{n, \mathrm{dc}}^{(1)} (c_\delta^{(1)})\big) = 1- \delta.
		\end{align*}
		\item For any $\epsilon>0$, 
		\begin{align*}
		&\liminf_{n \to \infty} \bigg\{ \Prob_{s_0} \Big(\bigabs{\mathcal{I}_{n, \mathrm{dc}}^{(0)} (c_\delta^{(0)})  }<2c_\delta^{(0)} \mathfrak{g}_\epsilon^{(0)}\cdot n^{-2/5} d_{2, \mathrm{dc}}^{(0)} (s_0,x_0) \Big)\\
		& \qquad \qquad \bigwedge \Prob_{s_0}\Big( \bigabs{\mathcal{I}_{n, \mathrm{dc}}^{(1)} (c_\delta^{(1)}) }<2c_\delta^{(1)} \mathfrak{g}_\epsilon^{(1)}\cdot n^{-1/5} d_{2, \mathrm{dc}}^{(1)} (s_0,x_0)\Big) \bigg\}\geq 1-\epsilon. 
		\end{align*}
		Here $\mathfrak{g}_\epsilon^{(i)} (i=0,1)$'s are constants that depend only on $\epsilon$, and 
		\begin{align*}
		&d_{2, \mathrm{dc}}^{(0)} (s_0,x_0) = \bigg(\frac{g_0(x_0)^2 s_0''(x_0) }{4! k(0)^4}\bigg)^{1/5},\quad  d_{2, \mathrm{dc}}^{(1)} (s_0,x_0) =\bigg(\frac{g_0(x_0) s_0''(x_0)^3 }{(4!)^3 k(0)^2}\bigg)^{1/5}.
		\end{align*}
	\end{enumerate}
\end{theorem}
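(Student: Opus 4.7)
My plan is to implement the general machinery of Section~\ref{section:other_convex} in this deconvolution setting, largely paralleling the proof strategy for Theorem~\ref{thm:pivotal_limit_fcn}. The key input is the non-standard limit distribution theory for $\widehat{s}_n$ obtained in \cite{jongbloed2009estimating}, which matches (\ref{ineq:underlying_1}) with $a=\sqrt{g_0(x_0)}/k(0)$ and $b=s_0''(x_0)/4!$.

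For part (1), I would first localize $\widehat{s}_n$ around $x_0$ on the $n^{-1/5}$ spatial scale: rescaled versions of $\widehat{s}_n$ and of its successive integrals (playing the roles of $\mathbb{H}_n^{\mathrm{loc}}$ and $(\mathbb{H}_n^{\mathrm{loc}})^{(2)}$ from the regression case) converge weakly in the topology of uniform convergence on compacta to the pair $(H_{a,b}, H_{a,b}^{(2)})$. The Lipschitz condition on $\kappa$ is exactly what \cite{jongbloed2009estimating} require to establish this joint convergence via the invelope characterization. After rescaling, $\widehat{u}(x_0)$ and $\widehat{v}(x_0)$ correspond to the first touch points of the piecewise-cubic majorant with its driving process to the left and right of the origin. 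Since the functionals $\mathcal{H}_{\pm}$ extracting these touch points are not continuous in the uniform-on-compacta topology, I would employ a dual characterization through both the majorant and its second derivative, mirroring the strategy described around (\ref{ineq:pivotal_lim_fcn_1})--(\ref{ineq:pivotal_lim_fcn_2}) from the proof of Theorem~\ref{thm:pivotal_limit_fcn}, to confine the weak limit to the continuity set of $\mathcal{H}_{\pm}$. The continuous mapping theorem then yields joint weak convergence of the two LNEs to $\sqrt{h^{\ast}_{a,b;-}+h^{\ast}_{a,b;+}}\cdot H_{a,b}^{(2)}(0)$ and $\sqrt{(h^{\ast}_{a,b;-}+h^{\ast}_{a,b;+})^{3}}\cdot H_{a,b}^{(3)}(0)$. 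A Brownian scaling argument, precisely as outlined after (\ref{ineq:generic_piv_limit}), converts these into $a\cdot \mathbb{L}_2^{(0)}$ and $a\cdot \mathbb{L}_2^{(1)}$ with $a=\sqrt{g_0(x_0)}/k(0)$, giving part (1).

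Parts (2) and (3) then follow by Slutsky's theorem plus standard length estimates. For (2), I need $\widehat{a}_n=\sqrt{\widehat{g}_n(x_0)}/k(0)$ consistent for $a$. The local histogram $\widehat{g}_n(x_0) = \G_n([\widehat{u}(x_0),\widehat{v}(x_0)])/(\widehat{v}(x_0)-\widehat{u}(x_0))$ converges to $g_0(x_0)$ in probability because $\widehat{v}(x_0)-\widehat{u}(x_0) = \mathcal{O}_{\mathbf{P}}(n^{-1/5})$ while $n(\widehat{v}(x_0)-\widehat{u}(x_0))\to\infty$ in probability (both consequences of the almost-sure positivity and finiteness of the weak limit $h^{\ast}_{a,b;-}+h^{\ast}_{a,b;+}$), combined with continuity of $g_0=k\ast \d{F_0}$ at $x_0$ that follows from the Lipschitz smoothness of $\kappa$. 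For part (3), the length bound reduces to showing $n^{1/5}(\widehat{v}(x_0)-\widehat{u}(x_0))$ is bounded below in probability, which again follows from positivity of $h^{\ast}_{a,b;-}+h^{\ast}_{a,b;+}$; substituting $b=s_0''(x_0)/4!$ into the Brownian scaling then yields the explicit constants $d_{2,\mathrm{dc}}^{(i)}(s_0,x_0)$.

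The main obstacle will be verifying the joint weak convergence of the localized piecewise-cubic majorant and its second derivative in a topology that supports the dual continuity argument, since $\widehat{s}_n$ is characterized implicitly through the deconvolution functional $U_n$ (which involves the kernel $k$ and its antiderivative $p$) rather than through a direct local regression or likelihood. The smoothness assumption on $k$ is precisely what is needed to handle the convolution structure in the tightness and localization estimates; once these are in place, the continuous-mapping step and the subsequent Brownian scaling proceed essentially as in the proof of Theorem~\ref{thm:pivotal_limit_fcn}.
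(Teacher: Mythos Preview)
Your proposal is correct and follows essentially the same route as the paper: define local processes $(\mathbb{Y}_n^{\mathrm{loc}},\mathbb{H}_n^{\mathrm{loc}})$ via the invelope characterization of \cite{jongbloed2009estimating}, invoke their weak convergence result, apply the dual characterization argument from the proof of Theorem~\ref{thm:pivotal_limit_fcn}, and conclude by the Brownian scaling with $\gamma_0\gamma_1^{3/2}=\sqrt{g_0(x_0)}/k(0)$, $\gamma_0\gamma_1^4=s_0''(x_0)/4!$. The only place the paper is more explicit than your sketch is the consistency of $\widehat{g}_n(x_0)$: rather than appealing informally to $n(\widehat{v}-\widehat{u})\to\infty$, it bounds the supremum of $\abs{(\G_n-G_0)(\bm{1}_{[u,v]})}/(v-u)$ over $u,v$ with $n^{1/5}(v-u)\in[c^{-1},c]$ via a VC-subgraph empirical-process estimate, but this is a routine fleshing-out of your outline.
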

It is worth noting that, as in all the other models studied in this paper, the conditions we assume in the above theorem are the same as those  used to derive the limit distribution theory (\ref{ineq:underlying_1}) of $(\widehat{s}_n(x_0), \widehat{s}'_n(x_0))$ in \cite{jongbloed2009estimating}, that is, we do not impose any extra conditions on the underlying model.

\section{A uniform tail estimate for the limit distributions}\label{section:tail_estimate}

We first present a result on an exponential tail estimate of the limit processes in Theorem \ref{thm:limiting_dist_convex} that holds uniformly in $\alpha$; see Appendix \ref{pf:tail_uniform} for its proof.

\begin{theorem}\label{thm:tail_uniform}
	There exist universal constants $L>0,b>0$ such that
	\begin{align}\label{ineq:tail_uniform}
	\sup_\alpha \Big\{\Prob\big(\abs{\mathbb{H}_{\alpha}^{(2)}(0)}>t\big)\vee \Prob\big(\abs{\mathbb{H}_{\alpha}^{(3)}(0)}>t\big) \vee \Prob\big(h^\ast_{\alpha;\pm} >t\big) \Big\}\leq L \exp(-t^b/L).
	\end{align}
	Here $h^\ast_{\alpha;-}$ (resp.~$h^\ast_{\alpha;+}$) is the absolute value of the location of the first touch point of the pair $(\mathbb{H}_{\alpha}, \mathbb{Y}_{\alpha})$ to the left (resp.~right) of $0$.
\end{theorem}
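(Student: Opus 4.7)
The plan is to derive all three tail bounds from a single uniform bound on the touch-point locations $h^\ast_{\alpha;\pm}$. First, exploit the symmetry $(\mathbb{H}_\alpha(-\cdot), \mathbb{Y}_\alpha(-\cdot))\equald (\mathbb{H}_\alpha, \mathbb{Y}_\alpha)$, which follows from the reflection symmetry of the two-sided Brownian motion $\mathbb{B}$ about $0$ together with the fact that $\alpha$ must be even (forced by the convexity of $\mathbb{H}_\alpha^{(2)}$). This reduces the question to an upper bound on $h^\ast_{\alpha;+}$ uniform in $\alpha$.

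For the touch-point bound, the strategy is a comparison/contradiction argument. On $\{h^\ast_{\alpha;+}>t\}$, the nondecreasing step function $\mathbb{H}_\alpha^{(3)}$ has no jump on $(0,t)$, so $\mathbb{H}_\alpha$ restricted to $[0,t]$ is a single cubic polynomial $P$ that majorizes $\mathbb{Y}_\alpha$ on $[0,t]$. For $t\leq 1$ the tail bound is trivial. For $t\geq 1$, I would use Gaussian concentration for $\sup_{s\leq t}\bigabs{\int_0^s \mathbb{B}}$ to write $\mathbb{Y}_\alpha(s)\geq s^{\alpha+2}-K s^{3/2}\geq s^{\alpha+2}/2$ on $[c,t]$ for some $c<1$, on an event of probability at least $1-L\exp(-t^b/L)$. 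Since $\alpha\geq 2$, the drift $s^{\alpha+2}$ outgrows any cubic eventually: pinning $P$ to these large values at four well-chosen sample points in $[c,t]$ (via Lagrange/Hermite interpolation) forces $P(0)$ to be much larger in magnitude than $\mathbb{H}_\alpha(0)$ can be, since $\mathbb{H}_\alpha(0)$ is in turn constrained by the touch points just outside $[-t,t]$ where $\mathbb{H}_\alpha=\mathbb{Y}_\alpha$ has size controlled by Brownian fluctuations near $0$. This contradiction yields the claimed uniform exponential tail; crucially, increasing $\alpha$ only strengthens the drift's dominance over cubic polynomials.

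For the derivative values $\mathbb{H}_\alpha^{(2)}(0)$ and $\mathbb{H}_\alpha^{(3)}(0)$, I would use that $\mathbb{H}_\alpha$ is a single cubic $Q$ on $[-h^\ast_{\alpha;-},h^\ast_{\alpha;+}]$ with four Hermite conditions at the endpoints: $Q=\mathbb{Y}_\alpha$ and $Q'=\mathbb{Y}_\alpha'$ at $\pm h^\ast_{\alpha;\pm}$. The tangency conditions follow because the nonnegative $C^1$ function $\mathbb{H}_\alpha-\mathbb{Y}_\alpha$ attains its minimum $0$ at these interior points. Solving this $4\times 4$ system writes $Q''(0)=\mathbb{H}_\alpha^{(2)}(0)$ and $Q'''(0)=\mathbb{H}_\alpha^{(3)}(0)$ explicitly in terms of $h^\ast_{\alpha;\pm}$, $\mathbb{Y}_\alpha(\pm h^\ast_{\alpha;\pm})$ and $\mathbb{Y}_\alpha'(\pm h^\ast_{\alpha;\pm})=\mathbb{B}(\pm h^\ast_{\alpha;\pm})+(\alpha+2)(\pm h^\ast_{\alpha;\pm})^{\alpha+1}$. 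Combining the touch-point tail bound with standard sub-Gaussian tail estimates for $\mathbb{B}$ and $\int_0^{\cdot} \mathbb{B}$ at times bounded by $h^\ast_{\alpha;\pm}$ then yields (\ref{ineq:tail_uniform}) for the two derivative quantities.

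The main obstacle throughout is the uniformity in $\alpha$: while for fixed $\alpha$ such estimates are essentially implicit in the Groeneboom--Jongbloed--Wellner-type literature, the natural scales of the Brownian fluctuation ($\sqrt{t^3}$) and the polynomial drift ($t^{\alpha+2}$) depend on $\alpha$, and producing a single exponent $b$ and constant $L$ requires decomposing the argument into a bounded-$\alpha$ regime (handled via a compactness/continuity argument) and a large-$\alpha$ regime (where the drift's concentration confines all touch points to a neighborhood of $[-1,1]$ essentially deterministically).
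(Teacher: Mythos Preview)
Your approach is genuinely different from the paper's, and as written it has a real gap in the central step.

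The paper does \emph{not} work directly with the limit process $(\mathbb{H}_\alpha,\mathbb{Y}_\alpha)$. Instead it fixes a specific one-parameter family of convex nonincreasing densities $f_0=f_{0,\alpha}$ on $[0,\gamma_\alpha]$ (a linear piece plus a $(x-x_0)^\alpha$ perturbation, scaled so all relevant constants stay bounded in $\alpha$), and proves \emph{finite-sample} exponential tail bounds for the density LSE: first for the length of the linear piece containing a point (via the characterization inequality and an empirical-process lemma), then for $|\widehat f_n - f_0|$ on a local block, then for $|\widehat f_n' - f_0'|$, and finally for $|\widehat f_n(x_0)-f_0(x_0)|$. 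All constants are tracked uniformly in $\alpha$. The bounds on $\mathbb{H}_\alpha^{(2)}(0)$, $\mathbb{H}_\alpha^{(3)}(0)$ and $h^\ast_{\alpha;\pm}$ then follow by Portmanteau and a Brownian rescaling whose factors $\gamma_0,\gamma_1$ are shown to be bounded away from $0$ and $\infty$ uniformly in $\alpha$. The paper explicitly remarks that it avoids the limit-process route because $\mathbb{H}_\alpha$ has no closed form.

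The gap in your argument is the contradiction step for $h^\ast_{\alpha;+}$. On $\{h^\ast_{\alpha;+}>t\}$ you know only that the cubic $P$ \emph{majorizes} $\mathbb{Y}_\alpha$ on $[0,t]$; it does not interpolate it at four points, so Lagrange/Hermite gives inequalities on $P$ at sample points, not equalities, and these do not force $P(0)$ to be large. You then say $\mathbb{H}_\alpha(0)$ is ``constrained by the touch points just outside $[-t,t]$'', but on this very event the first right touch point is beyond $t$ and its location is unknown, so you have no a priori upper control on $\mathbb{H}_\alpha(0)$. In fact a cubic with small $P(0),P'(0)$ and a large cubic coefficient can majorize $s^{\alpha+2}$ on $[0,t]$ while being tangent to it at $t$, so the contradiction does not obviously materialize; what is needed is a quantitative two-sided tangency constraint, and that reintroduces $h^\ast_{\alpha;-}$, which you have not yet bounded either.

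A second, related issue: your Hermite step for $\mathbb{H}_\alpha^{(2)}(0),\mathbb{H}_\alpha^{(3)}(0)$ expresses these as ratios with $(h^\ast_{\alpha;+}+h^\ast_{\alpha;-})^2$ and $(h^\ast_{\alpha;+}+h^\ast_{\alpha;-})^3$ in the denominator. An upper tail bound on $h^\ast_{\alpha;\pm}$ controls only the numerators; to rule out large $|\mathbb{H}_\alpha^{(2)}(0)|$ you also need a \emph{lower} tail bound on the gap between consecutive touch points, uniformly in $\alpha$, and you do not provide one. The paper sidesteps both problems by never decomposing $\mathbb{H}_\alpha^{(2)}(0)$ this way: it bounds $\widehat f_n(x_0)-f_0(x_0)$ and $\widehat f_n'(x_0)-f_0'(x_0)$ directly.
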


The above theorem resolves a question posed in \cite{groeneboom2001canonical} concerning the existence of moments of $\mathbb{H}_{2}^{(2)}(0)$ (see pp.~1648 therein). In fact, the theorem above shows that all moments of $\mathbb{H}_{\alpha}^{(2)}(0)$ and $\mathbb{H}_{\alpha}^{(3)}(0)$ can be controlled uniformly in $\alpha$.

\begin{remark}
	Although it is in principle possible to track down the constant value $b$ in (\ref{ineq:tail_uniform}) in the proof for the above theorem, this numerical value can be far from optimal. In the related problem of isotonic regression with LSE $\widehat{f}_n^{(\textrm{iso})}$, the limiting distribution $\mathbb{D}_{\alpha}$ in (\ref{ineq:iso_limit}) can be analytically characterized when $\alpha=1$. Let $\mathbb{Z}_1\equiv \mathbb{D}_1/2$ be the Chernoff distribution. Then by \cite{groeneboom1989brownian} (see also \cite{dumbgen2016law}), the density function $p_{\mathbb{Z}_1}$ of $\mathbb{Z}_1$ satisfies
	\begin{align*}
	p_{\mathbb{Z}_1}(t)\sim \frac{1}{2 \mathrm{Ai}'(a_1)} 4^{4/3} t \exp\bigg(-\frac{2}{3}t^3+3^{1/3}a_1 t\bigg),\quad t\uparrow \infty,
	\end{align*}
	and therefore the tail probability for $\mathbb{Z}_1$ satisfies
	\begin{align*}
	\Prob\big(\mathbb{Z}_1>t \big)\sim \frac{1}{2 \mathrm{Ai}'(a_1)} 4^{4/3}\frac{1}{t}\exp\bigg(-\frac{2}{3}t^3\bigg),\quad t\uparrow \infty.
	\end{align*}
	Here $a_1\approx -2.3381$ is the largest zero of the Airy function $\mathrm{Ai}$ and $\mathrm{Ai}'(a_1)\approx 0.7022$. The exponent $3$ here can also be seen by a law of iterated logarithm (LIL) established for the Grenander estimator in the decreasing density model in \cite{dumbgen2016law}, where techniques from local empirical processes (see e.g.,~\cite{deheuvels1994functional,einmahl1997gaussian}) rather than the above formulas are exploited. These techniques (from LIL) naturally hint that in the setting of estimation of convex functions, the limiting random variables may have tail bounds like
	\begin{align*}
	\Prob\big(\abs{\mathbb{H}_{\alpha}^{(2)}(0)}>t\big)\leq K_\alpha\exp(-t^{2+1/\alpha}/K_\alpha),
	\end{align*}
	for some constant $K_\alpha>0$ that may depend on $\alpha$. It is an interesting open question to prove (or disprove) the above conjectured optimal tail behavior. 
\end{remark}

\begin{remark}
	Results of similar spirit as in Theorem \ref{thm:tail_uniform} in the monotone setting are proved in \cite{han2019limit,deng2020confidence} through the representation of the limiting Chernoff-type distributions by explicit min-max formulas (see e.g.,~\cite{groeneboom2014nonparametric,han2019berry}). The proof of Theorem \ref{thm:tail_uniform} in the case for estimation of convex functions is significantly more challenging due to the lack of a closed-form
	expression for the process $\mathbb{H}_{\alpha}$. In fact, instead of directly working with the limiting process, we will derive the tail estimate through the weak limit of finite-sample tail behavior of the LSE in the convex density model with a class of carefully constructed true convex densities, so that the estimates can be obtained uniformly in $\alpha$. 
\end{remark}

As a direct consequence of Theorem \ref{thm:tail_uniform}, we have the following exponential tail for the limit distributions in Theorem \ref{thm:pivotal_limit_fcn}; see Appendix \ref{pf:unif_tail_pivot} for its proof.

\begin{corollary}\label{cor:unif_tail_pivot}
	There exist universal constants $L>0,b>0$ such that 
	\begin{align*}
	\sup_\alpha \Big\{ \Prob\big(\abs{\mathbb{L}_{\alpha}^{(0)}}>t\big)\vee \Prob\big(\abs{\mathbb{L}_{\alpha}^{(1)}}>t\big) \Big\}\leq L\exp(-t^b/L).
	\end{align*}
\end{corollary}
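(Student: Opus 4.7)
The plan is to derive the uniform tail bounds for $\mathbb{L}^{(0)}_\alpha$ and $\mathbb{L}^{(1)}_\alpha$ by combining the product structure of these random variables (given by Theorem \ref{thm:pivotal_limit_fcn}) with the uniform exponential tail estimates for the factors provided by Theorem \ref{thm:tail_uniform}. Write $A_\alpha \equiv h^\ast_{\alpha;-} + h^\ast_{\alpha;+}$, $B_\alpha \equiv |\mathbb{H}_\alpha^{(2)}(0)|$, $B'_\alpha \equiv |\mathbb{H}_\alpha^{(3)}(0)|$, so that $|\mathbb{L}^{(0)}_\alpha| = A_\alpha^{1/2}\,B_\alpha$ and $|\mathbb{L}^{(1)}_\alpha| = A_\alpha^{3/2}\,B'_\alpha$. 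Theorem \ref{thm:tail_uniform} supplies universal constants $L, b > 0$ with
\begin{equation*}
\sup_\alpha \Prob(h^\ast_{\alpha;\pm} > t) \vee \Prob(B_\alpha > t) \vee \Prob(B'_\alpha > t) \leq L\exp(-t^b/L).
\end{equation*}

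For $\mathbb{L}^{(0)}_\alpha$, the elementary inclusion $\{A_\alpha^{1/2} B_\alpha > t\} \subset \{A_\alpha > t\} \cup \{B_\alpha > \sqrt{t}\}$ gives
\begin{equation*}
\Prob(|\mathbb{L}^{(0)}_\alpha| > t) \leq \Prob(h^\ast_{\alpha;-} > t/2) + \Prob(h^\ast_{\alpha;+} > t/2) + \Prob(B_\alpha > \sqrt{t}),
\end{equation*}
and applying the tail bound from Theorem \ref{thm:tail_uniform} to each summand yields a uniform (in $\alpha$) bound of the form $L'\exp(-t^{b/2}/L')$ for a universal $L' > 0$. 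Analogously, for $\mathbb{L}^{(1)}_\alpha$ the inclusion $\{A_\alpha^{3/2} B'_\alpha > t\} \subset \{A_\alpha > t^{1/3}\} \cup \{B'_\alpha > \sqrt{t}\}$ combined with the same union bound gives
\begin{equation*}
\Prob(|\mathbb{L}^{(1)}_\alpha| > t) \leq \Prob(h^\ast_{\alpha;-} > t^{1/3}/2) + \Prob(h^\ast_{\alpha;+} > t^{1/3}/2) + \Prob(B'_\alpha > \sqrt{t}),
\end{equation*}
which, after another application of Theorem \ref{thm:tail_uniform}, is bounded uniformly in $\alpha$ by $L''\exp(-t^{b/3}/L'')$ for some universal $L'' > 0$.

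Taking the minimum of the two exponents (so $b' \equiv b/3$ works) and enlarging the constant to handle both $\mathbb{L}^{(0)}_\alpha$ and $\mathbb{L}^{(1)}_\alpha$ simultaneously produces universal constants that yield the claimed bound. The argument is essentially routine once Theorem \ref{thm:tail_uniform} is in place; the only mild obstacle is tracking how much is lost in the exponent when passing from the factor-wise tails to the product tail, but since this loss is only a multiplicative constant in the exponent (changing $b$ to $b/3$), the qualitative statement of exponential decay is preserved. No quantitative optimization is attempted, consistent with the remark after Theorem \ref{thm:tail_uniform}.
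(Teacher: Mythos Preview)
Your proof is correct and follows essentially the same approach as the paper: both use the product representation from Theorem \ref{thm:pivotal_limit_fcn}, split via the elementary inclusion $\{XY>t\}\subset\{X>t^{1/2}\}\cup\{Y>t^{1/2}\}$, and then invoke Theorem \ref{thm:tail_uniform}. Your version is slightly more explicit in tracking the exponent loss and in further decomposing $A_\alpha$ into its two summands, but the underlying argument is identical.
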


The above corollary verifies the existence of $c_\delta^{(i)} (i=0,1)$ in (\ref{cond:unif_tail}) and hence the existence of adaptive CIs in Theorem \ref{thm:adaptive_CI}.

\section{Simulation studies}\label{section:simulation}

\subsection{Simulated critical values}

We directly use Theorems \ref{thm:pivotal_limit_fcn} and \ref{thm:pivotal_limit_mode} with the true mean function $f_0(x) = 12 (x - 0.5)^2$ and $x_0 = 0.5$ to approximate the distributions of the pivotal random variables $\{ \mathbb{L}_2^{(0)}, \mathbb{L}_2^{(1)}, \mathbb{M}_2\}$. After that, we confirm the universality of these distributions by comparing them to their counterparts from two different $f_0$'s. The convex LSEs are computed using the support reduction algorithm \cite{groeneboom2008support} implemented in the \verb|R| function \verb|conreg| from package \verb|cobs|. 

We formally describe the simulation procedure as follows. Let $n = 10^5$ and design point $X_i = i/n$ for all $0 \le i \le n$. We generate data $\{(X_i, Y_i), 0 \le i \le n\}$ where $Y_i = f_0(X_i) + \xi_i = 12(X_i - 0.5)^2 + \xi_i$ and $\xi_i \overset{\mathrm{i.i.d.}}{\sim} \mathcal{N}(0, 1)$. We compute the convex LSE $\widehat{f}_n$ and then calculate the LNEs at $x_0 = 0.5$ and $m_0=[f_0]_{ \mathrm{m} }= 0.5$:
\begin{align*}
& T^{(0)}(x_0) \equiv \sqrt{n(\widehat{v}(x_0) - \widehat{u}(x_0))}\bigabs{\widehat{f}_n(x_0)-f_0(x_0)},
\cr
& T^{(1)}(x_0) \equiv \sqrt{n(\widehat{v}(x_0) - \widehat{u}(x_0))^3}\bigabs{\widehat{f}'_n(x_0)-f'_0(x_0)}, \hbox{ and }
\cr
& T^{\mathrm{m}} \equiv (\widehat{m}_n- m_0)/(\widehat{v}_{\mathrm{m}}- \widehat{u}_{\mathrm{m}})
\end{align*}
to obtain a sample of $\{ T^{(0)}(x_0), T^{(1)}(x_0), T^{\mathrm{m} } \}$. Repeating this procedure $B = 10^6$ times, we can generate one million samples of $\{ T^{(0)}(x_0), T^{(1)}(x_0), T^{\mathrm{m}} \}$. Their empirical distribution functions are then used to approximate the cumulative distribution functions of $\mathbb{L}_2^{(0)}$, $\mathbb{L}_2^{(1)}$ and $\mathbb{M}_2$, which are given in Figures \ref{fig:ecdf_L0}, \ref{fig:ecdf_L1}, and \ref{fig:ecdf_M2} respectively in Section \ref{section:pivot_limit}. 

We report in Table \ref{tab:cv} some important quantiles of these empirical distributions as the approximate corresponding critical values $c_{\delta}(\mathbb{T})$, defined by $\mathbb{P} \{ \mathbb{T} > c_{\delta}(\mathbb{T}) \} = \delta$ for $\mathbb{T} \in \{\mathbb{L}_2^{(0)}, \mathbb{L}_2^{(1)},\mathbb{M}_2\}$. 

\setlength{\belowcaptionskip}{-10pt}
\setlength{\tabcolsep}{3pt}
\renewcommand{\arraystretch}{1.5}
\begin{table}[htb]
\scriptsize{
	\begin{tabular}{|c||c c c c c c c c c |}
		\hline 
		$\delta$  & 0.990  & 0.975 & 0.950 & 0.900 & 0.500 & 0.100 & 0.050 & 0.025 & 0.010 \\
		\hline
		$c_{\delta}\big(\mathbb{L}_2^{(0)} \big)$  & -2.59 & -2.03 & -1.61 & -1.19 & 0.04 & 1.39 & 1.82 & 2.20 & 2.66   \\
		\hline
		$c_{\delta}\big(\mathbb{L}_2^{(1)} \big) $  & -11.87 & -9.00 & -6.78 & -4.55 & 0.00 & 4.54 & 6.77 & 9.00 & 11.91 
		\\
		\hline
		$c_{\delta}\big(\mathbb{M}_2 \big) $  & -0.86 & -0.61 & -0.48 & -0.35 & 0.00 & 0.35 & 0.47 & 0.61 & 0.86 \\
		\hline
	\end{tabular}
}
	\vspace{1ex}
	\caption{Approximate quantiles of $\mathbb{L}^{(0)}_2$, $\mathbb{L}^{(1)}_2$ and $\mathbb{M}_2$.}
	\label{tab:cv}
\end{table}

Recall that $\mathbb{L}_2^{(1)}$ and $\mathbb{M}_2$ are symmetric and notice that, by Figure \ref{fig:ecdf_L0} and Table \ref{tab:cv}, $\mathbb{L}_2^{(0)}$ is at least nearly symmetric. We give in Table \ref{tab:cv_abs} some absolute sample quantiles which approximate the corresponding critical values $c_{\delta}(\abs{\mathbb{T}})$ for $\mathbb{T} \in \{\mathbb{L}_2^{(0)}, \mathbb{L}_2^{(1)},\mathbb{M}_2\}$. They are used to construct the symmetric CIs (e.g., in \eqref{def:CI_fcn_0_1} and \eqref{def:CI_mode}).

\begin{table}[htb]
\scriptsize{\begin{tabular}{|c||c c c c c c |}
		\hline 
		$\delta$ & 0.50 & 0.20 & 0.10 & 0.05 & 0.02 & 0.01\\
		\hline
		$c_{\delta}\big(|\mathbb{L}_2^{(0)}| \big) $ & 0.65 & 1.30 & 1.73 & 2.13 & 2.63 & 2.99  \\
		\hline
		$c_{\delta}\big( |\mathbb{L}_2^{(1)}| \big) $ & 1.73 & 4.55 & 6.78 & 9.00 & 11.89 & 14.02
		\\
		\hline
		$c_{\delta} \big( |\mathbb{M}_2| \big) $ & 0.19 & 0.35 & 0.47 & 0.61 & 0.86 & 1.13 \\
		\hline
	\end{tabular}}
	\vspace{1ex}
	\caption{Approximate quantiles of $|\mathbb{L}^{(0)}_2|$, $|\mathbb{L}^{(1)}_2|$ and $|\mathbb{M}_2|$.}
	\label{tab:cv_abs}
\end{table}

In the second part of this subsection, we repeat the above procedure with different $f_0$'s and check if the resulting approximate distributions are almost the same. This helps to support the conclusions of Theorems \ref{thm:pivotal_limit_fcn} and \ref{thm:pivotal_limit_mode}. Consider $f_0(x) = 6(x - 0.2)^2$ and $f_0(x) = x + 2/(x+1)$ at $x_0 = 0.5$. The second derivatives of these two functions and $f_0(x) = 12(x - 0.5)^2$ are all different. We follow exactly the same procedure as before but only obtain $B = 10^4$ samples of the corresponding LNEs. Their empirical distribution functions are compared to those of the approximate $\mathbb{L}_2^{(0)}$, $\mathbb{L}_2^{(1)}$ and $\mathbb{M}_2$ in Figure \ref{fig:ecdf_comp}. 

In conclusion, we clearly observe that the empirical distributions from different $f_0$ are in general very close to one other. This indicates that the approximate critical values in Tables \ref{tab:cv} and \ref{tab:cv_abs} should be accurate enough for constructing the proposed CIs. 

\begin{figure}[!hbt]
	\centering
	\subfigure[$\mathbb{L}_2^{(0)}$]{
		\label{fig:log_con_comp:a} 
		\includegraphics[width=0.48\textwidth]{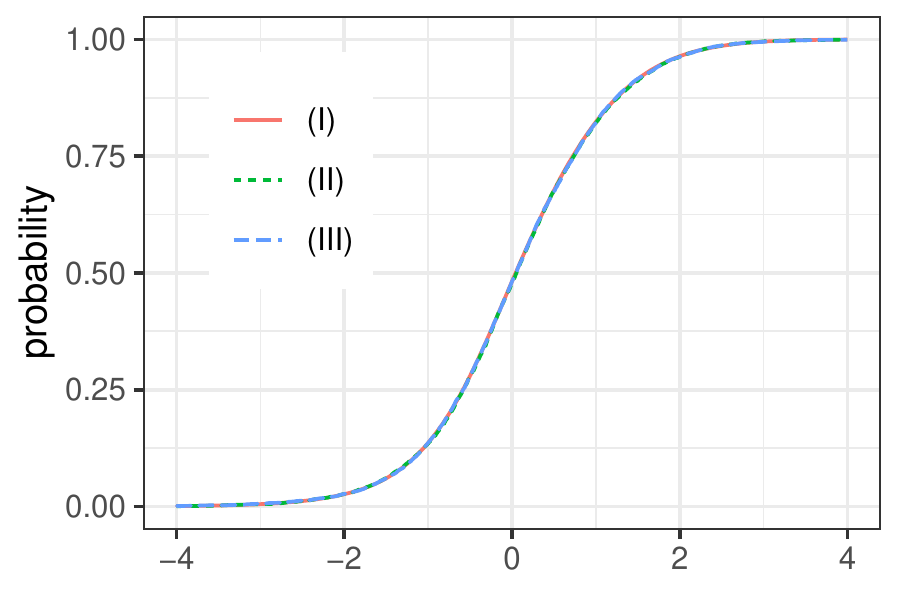}}
	\hspace{0\textwidth}
	\subfigure[$\mathbb{L}_2^{(1)}$]{
		\label{fig:log_con_comp:b} 
		\includegraphics[width=0.48\textwidth]{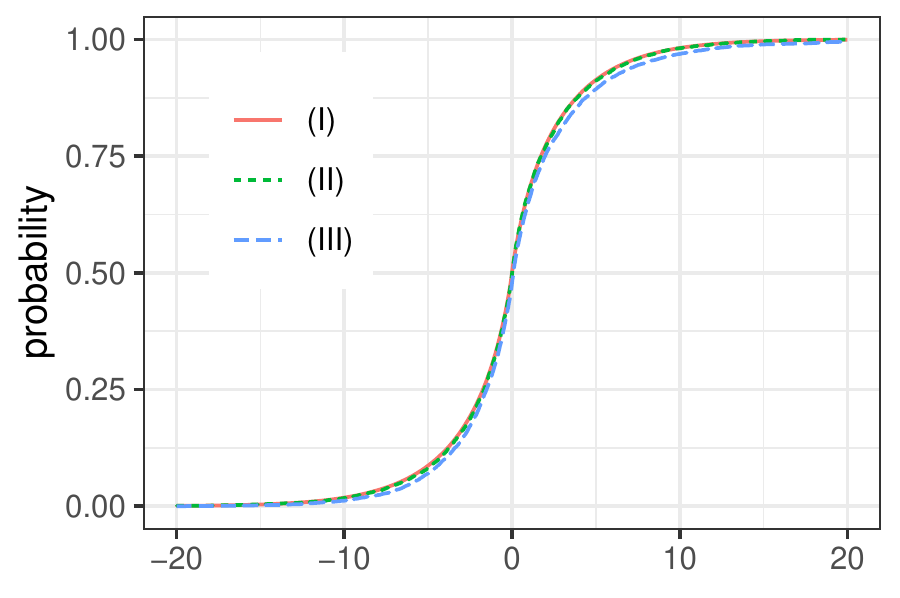}}
	\hspace{0\textwidth}
	\subfigure[$\mathbb{M}^{(0)}$]{
		\label{fig:log_con_comp:c} 
		\includegraphics[width=0.48\textwidth]{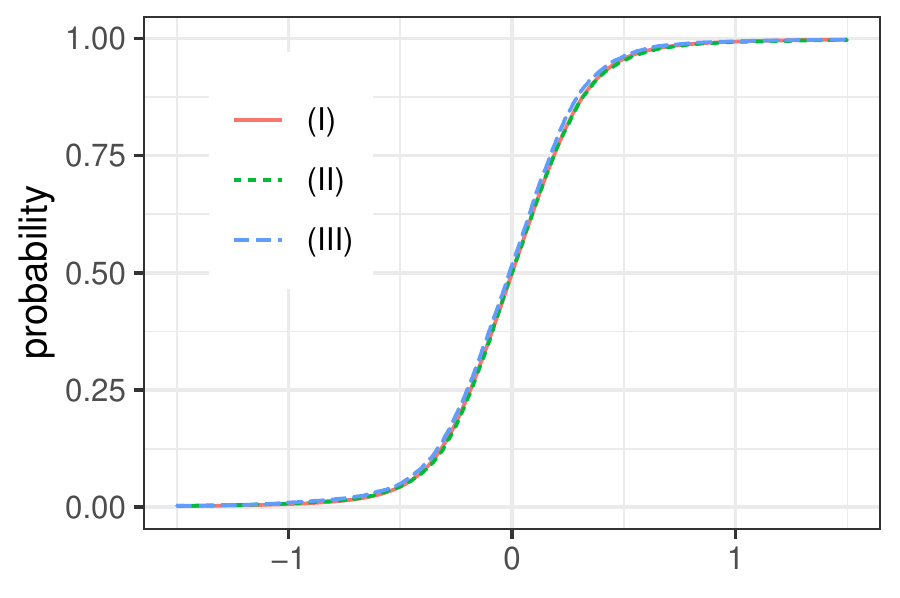}}
	\caption{Empirical distributions of the LNEs generated from: (I) $f_0(x) = 12(x-0.5)^2$, (II) $f_0(x) = 6(x - 0.2)^2$, and (III) $f_0(x) = x + 2/(x + 1)$ at $x_0 = 0.5$.}
	\label{fig:ecdf_comp}
\end{figure}

\begin{remark}
We may approximate the distributions of $\{\mathbb{L}_2^{(0)}, \mathbb{L}_2^{(1)}, \mathbb{M}_2\}$ by first generating samples of $\mathbb{H}_2^{(2)}$ and then computing these random variables from their definition. Let $\widehat{f}_{c}(x): [-c, c] \to \mathbb{R}$ be the solution to
\begin{align*}
\min \ & \phi_c(f) \equiv \frac{1}{2}\int_{-c}^c f^2(t) \d{t} - \int_{-c}^{c} f(t) \d{\big(\mathbb{B}(t) + 4t^3 \big)}
\\ \nonumber
\hbox{s.t.}& \hbox{ $f$ is convex and } f(\pm c) = 12 c^2.
\end{align*}
\cite{groeneboom2001canonical} proved that $\widehat{f}_{c}$ is unique and its linearly extended version converges almost surely to $\mathbb{H}_2^{(2)}$ in the topology of uniform convergence on compacta. They proposed the iterative cubic spline algorithm to compute $\widehat{f}_c$. However, a simulation study on $\mathbb{H}_2^{(2)}(0)$ by \cite{azadbakhsh2014computing} suggested that this algorithm does not perform very well; see Remark A.1 of \cite{azadbakhsh2014computing}. 

To effectively generate samples of $\mathbb{H}_2^{(2)}$ using \verb|R| package \verb|cobs|, \cite{azadbakhsh2014computing} removed the side constraints $f(\pm c) = 12c^2$ of the minimization problem and approximated integrals in $\phi_c(f)$ on a grid $\{X_i, 0 \le i \le N\}$ of $[-c, c]$. Let $n = \lceil N/(2c) \rceil $ be the number of points on each unit interval. Their approach is almost equivalent to convex regression with data $\{(X_i, Y_i), 0 \le i \le N\}$ where $Y_i = 12 X_i^2 + \sqrt{n}\xi_i$ and $\xi_i \overset{\mathrm{i.i.d.}}{\sim} \mathcal{N}(0,1)$ (note that $\mathbb{B}(t) + 4t^3$ can be approximated by the partial sum process of $\{Y_i\}$), which is then similar to the procedure we employ here. We actually implemented this procedure and the resulting approximate distributions show the difference is remarkably small (numerical results omitted here).
\end{remark}

\subsection{Numerical performance of the proposed confidence intervals}

We are now ready to illustrate the proposed procedures of constructing CIs and study their numerical performance. In this subsection, we focus on the convex regression model and the log-concave density estimation model. The following results mainly serve as numerical support of Theorems \ref{thm:CI_fcn} and \ref{thm:CI_mode} for convex regression and Theorems \ref{thm:pivotal_limit_log_concave} and \ref{thm:pivotal_limit_mode_log_concave} for log-concave density estimation, showing that: (i) the corresponding proposed CIs have asymptotically accurate coverage, and (ii) their lengths adapt to oracle rates (cf. Remark \ref{remark:optimal_rate}). To this end, their performance will be evaluated with different sample sizes. 

Finally, in order to compute the lengths of the oracle CIs, we shall simulate the quantiles of $\mathbb{H}_2^{(2)}(0)$, $\mathbb{H}_2^{(3)}(0)$ and $[\mathbb{H}_2^{(2)}]_{\mathrm{m}}$. They can be conveniently obtained as byproducts when we simulate the critical values of $\mathbb{L}_2^{(0)}$, $\mathbb{L}_2^{(1)}$ and $\mathbb{M}_2$; see Table \ref{tab:cv_abs_H} for the approximate quantiles.

\begin{table}[htb]
\scriptsize{\begin{tabular}{|c||c c c c c c |}
		\hline 
		$\delta$ & 0.50 & 0.20 & 0.10 & 0.05 & 0.02 & 0.01\\
		\hline
		$c_{\delta}\big(|\mathbb{H}_2^{(2)}| \big) $ & 0.89 & 1.68 & 2.16 & 2.58 & 3.08 & 3.44  \\
		\hline
		$c_{\delta}\big( |\mathbb{H}_2^{(3)}| \big) $ & 4.28 & 7.79 & 9.66 & 11.14 & 12.72 & 13.70
		\\
		\hline
		$c_{\delta} \big( | [\mathbb{H}_2^{(2)}]_{\mathrm{m}}| \big) $ & 0.18 & 0.32 & 0.40 & 0.46 & 0.53 & 0.57 \\
		\hline
	\end{tabular}}
	\vspace{1ex}
	\caption{Approximate critical values of $|\mathbb{H}_2^{(2)}|$, $|\mathbb{H}_2^{(3)}|$ and $\big| [\mathbb{H}_2^{(2)}]_{\mathrm{m}} \big|$.}
	\label{tab:cv_abs_H}
\end{table}

\subsubsection{Convex Regression}
Suppose we observe in convex regression data $\{(X_i, Y_i), 1\le i \le n\}$ of size $n$. The goal is to construct $95\%$ CIs for the function value $f_0(x_0)$, derivative value $f_0'(x_0)$ and anti-mode $m_0$. Let $X_i = i/n$ for all $0 \le i \le n$ and $\xi_i \overset{\mathrm{i.i.d.}}{\sim} \mathcal{N}(0, \sigma^2 = 1)$. The variance of noise $\sigma^2 = 1$ is assumed to be known; otherwise it can be very well approximated by, say, the difference-based estimators \cite{rice1984bandwidth,munk2005difference}. We consider $f_0(x) = 20 - 20\sqrt{1-(x-0.5)^2}$ and $x_0 = 0.5$. The anti-mode of this convex function is $m_0 = x_0 = 0.5$. 

For each data set $\{(X_i, Y_i), 1 \le i \le n\}$, we apply support reduction algorithm implemented in the \verb|R| function \verb|conreg| from package \verb|cobs| to compute the convex LSE and construct the $95\%$ CIs defined in \eqref{def:CI_fcn_0_1} and \eqref{def:CI_mode} with approximate critical values in Table \ref{tab:cv_abs}. Here $\delta = 0.05$, so that $c_{\delta}^{(i)}$ in (\ref{def:CI_fcn_0_1}) is taken to be $c_{.05}\big( |\mathbb{L}_2^{(i)}| \big)$ in Table \ref{tab:cv_abs}, for $i=0,1$, and $c_{\delta}^{\mathrm{m}}$ in (\ref{def:CI_mode}) equals $c_{.05}\big( |\mathbb{M}_2| \big)$ in Table \ref{tab:cv_abs}. With the proposed CIs constructed, we check if they cover the true values of local parameters and report their lengths. We approximate the coverage probabilities by repeating the above procedures $10^4$ times and calculating the relative frequencies of successful coverage. The plot of the estimated coverage probabilities are given in Figure \ref{fig:regression_comp:a}. Box plots of the lengths of these $10^4$ CIs for each of $\{f_0(x_0), f_0'(x_0), m_0\}$ are reported in Figures \ref{fig:regression_comp:b} -- \ref{fig:regression_comp:d}, along with the oracle CI lengths in red dashed lines. Note that by the limiting distribution theories for these local parameters in Theorem \ref{thm:limiting_dist_convex} and \eqref{limit_mode_1} in Theorem \ref{thm:pivotal_limit_mode}, the symmetric oracle CIs are 
\begin{align*}
&\Big[\widehat{f}_n(x_0) \pm (f_0^{(2)}(x_0)/24)^{1/5}(n/\sigma^2)^{-2/5} c_{\delta}( |\mathbb{H}_2^{(2)}(0)| ) \Big] \hbox{ for } f_0(x_0),\\
& \Big[\widehat{f}'_n(x_0) \pm (f_0^{(2)}(x_0)/24)^{3/5}(n/\sigma^2)^{-1/5} c_{\delta}( |\mathbb{H}_2^{(3)}(0)| ) \Big] \hbox{ for } f'_0(x_0), \hbox{ and}\\
& \Big[\widehat{m}_n \pm (24/f_0^{(2)}(m_0))^{2/5}(n/\sigma^2)^{-1/5} c_{\delta}( | [\mathbb{H}_2^{(2)}]_{\mathrm{m}}| ) \Big] \hbox{ for } m_0.
\end{align*}

\begin{figure}[!hbt]
	\centering
	\subfigure[(i) $f_0(x_0)$, (ii) $f_0'(x_0)$, and (iii) $m_0$]{
		\label{fig:regression_comp:a} 
		\includegraphics[width=0.48\textwidth]{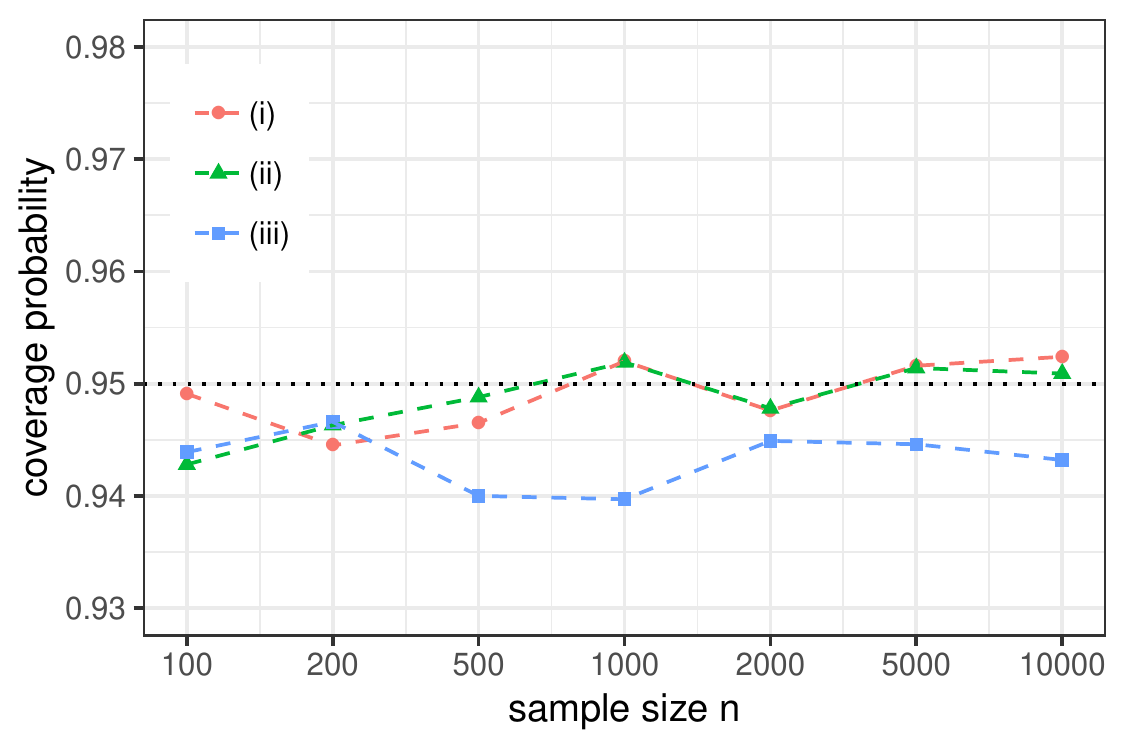}}
	\subfigure[$f_0(x_0)$]{
		\label{fig:regression_comp:b} 
		\includegraphics[width=0.48\textwidth]{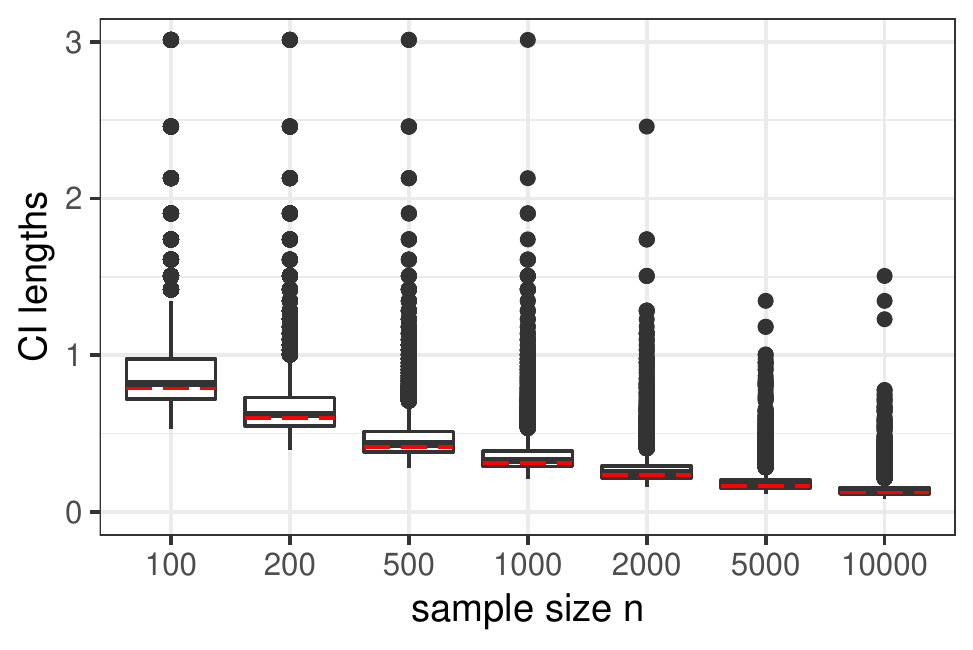}}
	\subfigure[$f_0'(x_0)$]{
		\label{fig:regression_comp:c} 
		\includegraphics[width=0.48\textwidth]{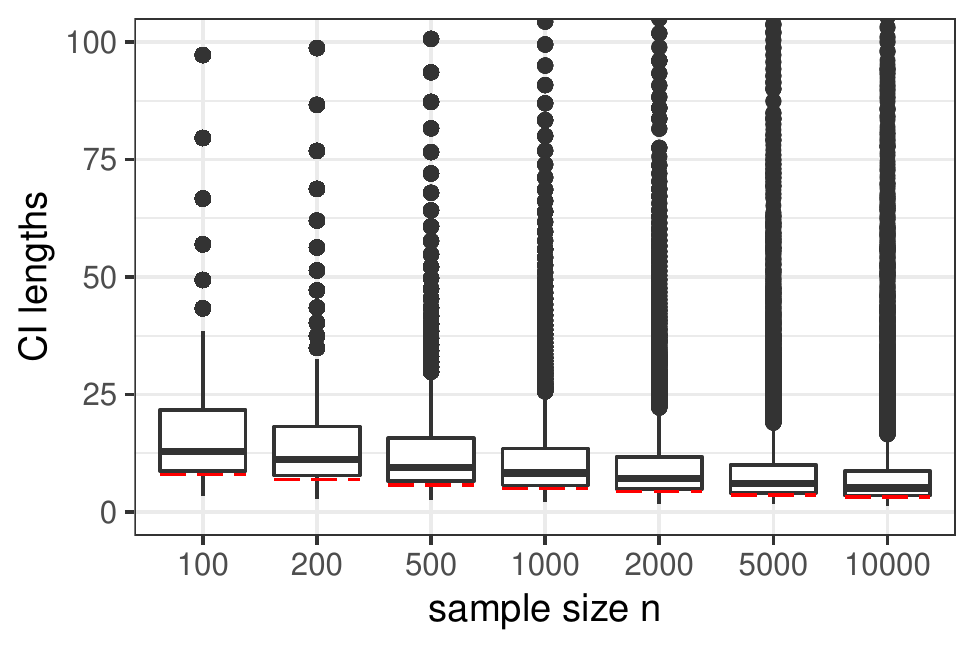}}
	\subfigure[$m_0$]{
		\label{fig:regression_comp:d} 
		\includegraphics[width=0.48\textwidth]{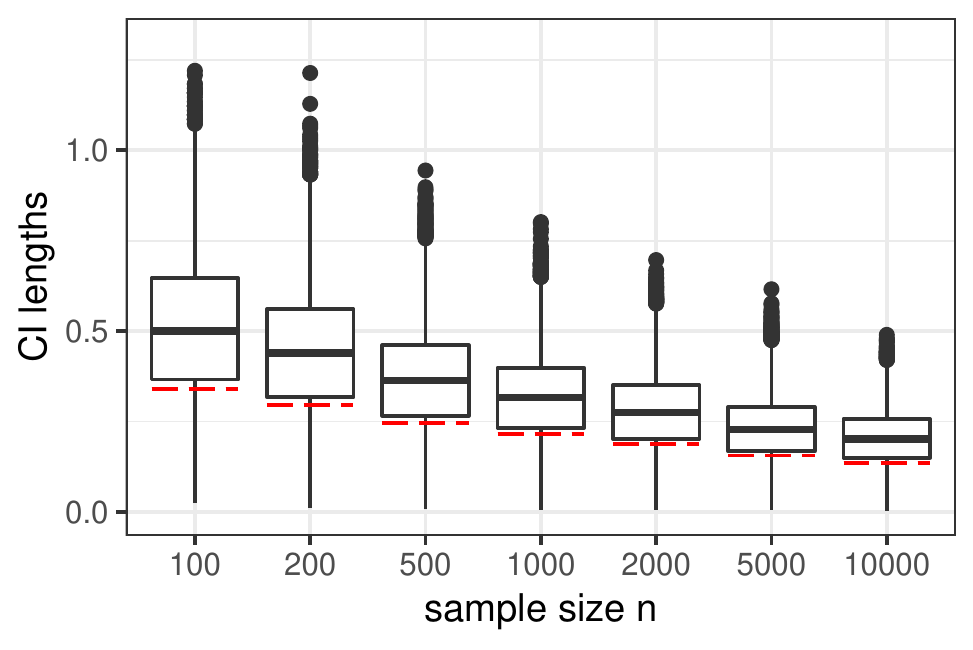}}

	\caption{Plot of the simulated coverage probabilities and box plots of the lengths of the proposed CIs for corresponding local parameters in convex regression. Here $f_0(x) = 20 - 20\sqrt{ 1-(x-0.5)^2}$, $x_0 = 0.5$ and anti-mode $m_0 = 0.5$. The red dashed lines in box plots (b)--(d) represent the lengths of the oracle CIs.}
	\label{fig:regression_comp}
\end{figure}

\begin{figure}[!hbt]
	\centering
	\subfigure[(i) $f_0(x_0)$, (ii) $f_0'(x_0)$, and (iii) $m_0$]{
		\label{fig:log_concave_comp:a} 
		\includegraphics[width=0.48\textwidth]{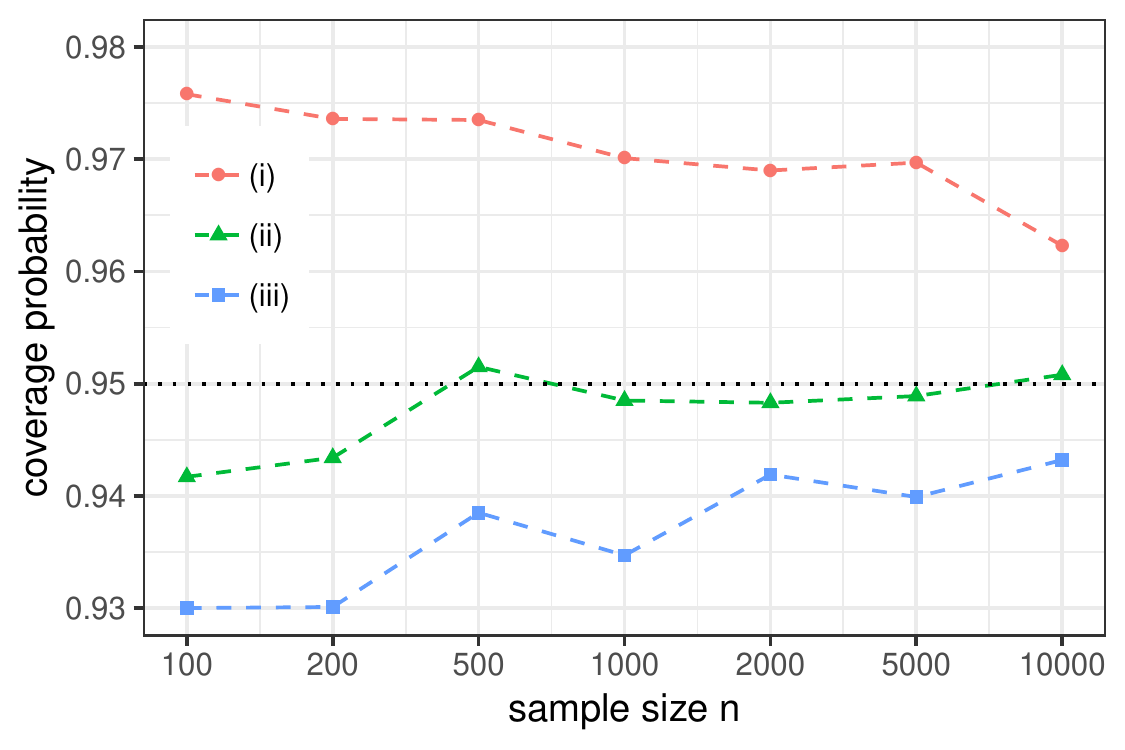}}
	\subfigure[density value $f_0(x_0)$]{
		\label{fig:log_concave_comp:b} 
		\includegraphics[width=0.48\textwidth]{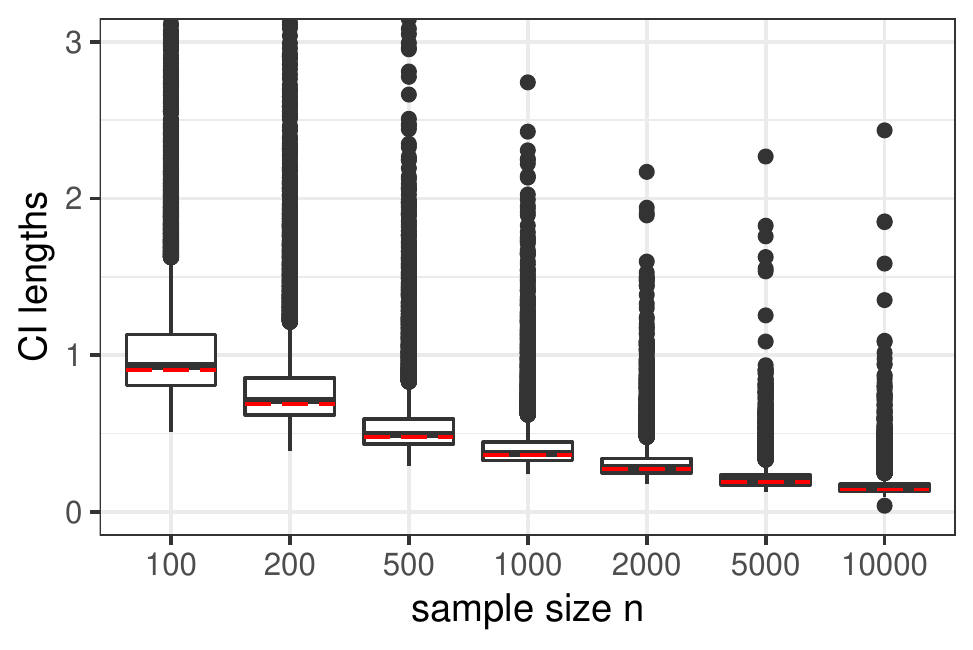}}
	\subfigure[density derivative $f_0'(x_0)$]{
		\label{fig:log_concave_comp:c} 
		\includegraphics[width=0.48\textwidth]{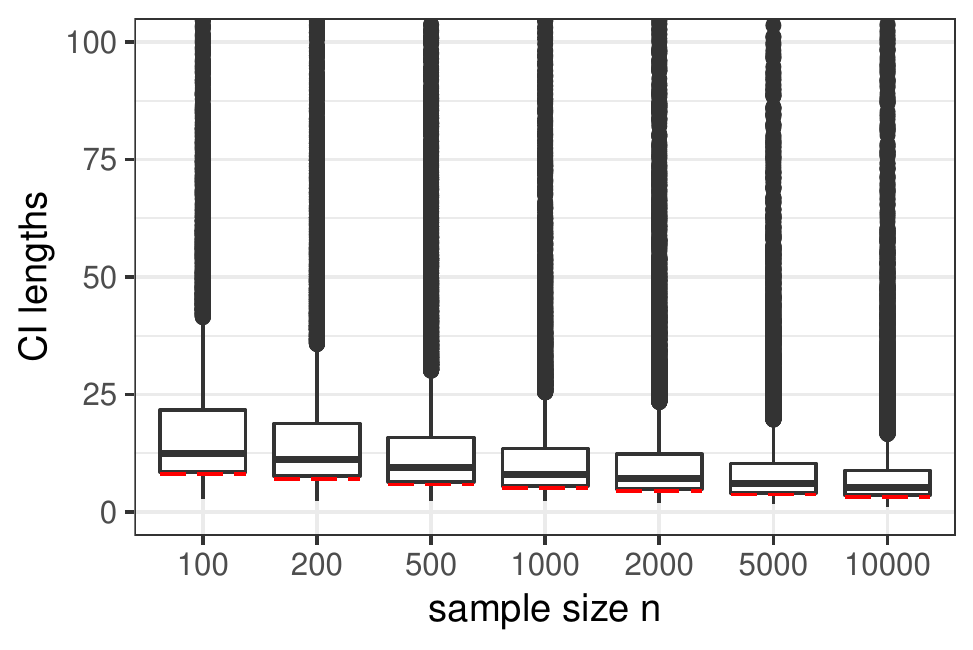}}
	\subfigure[mode $m_0$]{
		\label{fig:log_concave_comp:d} 
		\includegraphics[width=0.48\textwidth]{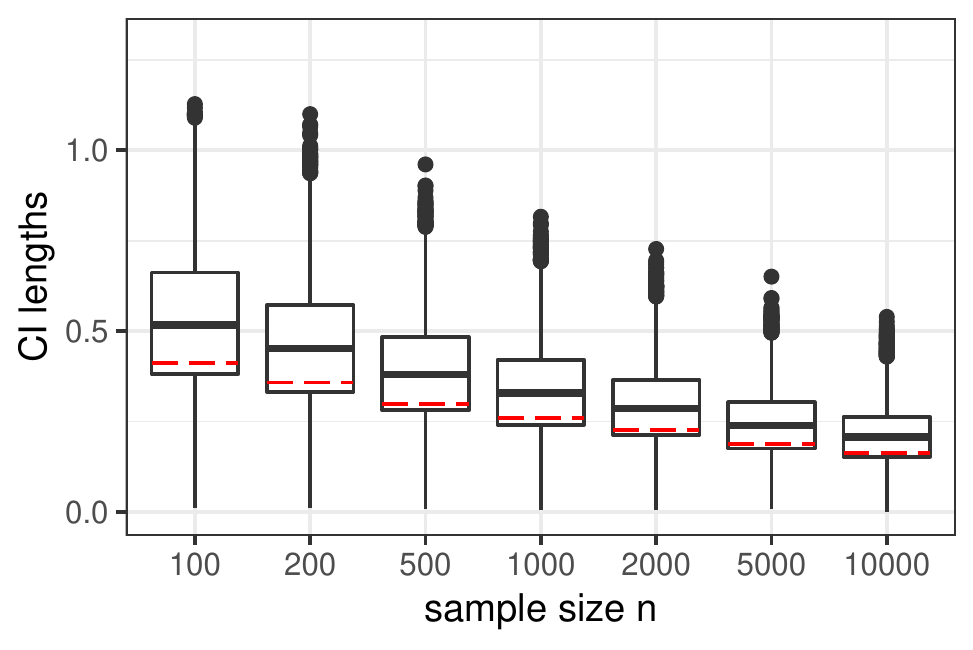}}
	\caption{Plot of the simulated coverage probabilities and box plots of the lengths of the proposed CIs for corresponding parameters in log-concave density estimation. Here $f_0$ is the density of $\mathrm{Beta}(2,3)$ distribution, $x_0 = 0.5$, and mode $m_0 = 1/3$. The red dashed lines in box plots (b)--(d) represent the lengths of the oracle CIs.}
	\label{fig:log_concave_comp}
\end{figure}

As we can see from Figure \ref{fig:regression_comp:a}, all CIs for the local parameters have rather accurate coverage and the convergence of coverage probabilities is approximately achieved for sample size as small as $n=100$. For $n$ greater than $200$, all coverage errors deviate from the nominal coverage by less than $0.005$. In terms of length, it is obvious from Figures \ref{fig:regression_comp:b}--\ref{fig:regression_comp:d} that the lengths of the proposed CIs shrink at the same rate with those of the oracle CIs. Note that when the local pieces $[\widehat{u}(x_0), \widehat{v}(x_0)]$ used to construct CIs for $f_0(x_0)$ and $f_0'(x_0)$ are small the proposed CIs may become quite wide; so we observe relatively more outliers on the CIs for $f_0(x_0)$ and $f_0'(x_0)$ than for $m_0$.

\subsubsection{Log-concave density estimation}
Suppose we observe i.i.d.~data $\{X_i, 1\le i \le n\}$ from a log-concave density. The goal is to construct $95\%$ CIs for the density value $f_0(x_0)$, density derivative value $f_0'(x_0)$ and mode $m_0$. We consider $X_i \overset{\mathrm{i.i.d.}}{\sim} \mathrm{Beta}(2,3)$ and $x_0 = 0.5$. Its density function $f_0(x) = 12 x(1-x)^2 = e^{\varphi_0(x)}$ where $\varphi_0(x) = \log(x) + 2\log(1-x) + \log(12)$ is concave, and thus $f_0$ is a log-concave density. The mode of $f_0$ is $m_0 = 1/3$. 

For each data set $\{X_i, 1\le i \le n\}$, we use the constrained Newton method implemented in \verb|R| package \verb|cnmlcd| \cite{liu2018fast} to compute the log-concave MLE $\widehat{f}_n$. This algorithm is much faster than the active set algorithm from \verb|R| package \verb|logcondens| \cite{dumbgen2009maximum}. We construct the proposed CIs defined in \eqref{def:CI_log_concave_0_1} and \eqref{def:CI_mode_log_concave} with approximate critical values in Table \ref{tab:cv_abs}, check if the CIs cover the truths, and report the CI lengths. We repeat this procedure $10^4$ times and approximate the coverage probabilities by the relative frequencies of successful coverage. The simulated coverage probabilities are reported in Figure \ref{fig:log_concave_comp:a}, and the lengths of these CIs are reported in Figures \ref{fig:log_concave_comp:b}-\ref{fig:log_concave_comp:d}. Note that by the limiting distribution theories of these local parameters in \eqref{limit_log_concave} and \eqref{limit_log_concave_mode}, their symmetric oracle CIs are: 
\begin{align*}
& \Big[\widehat{f}_n(x_0) \pm \big((f_0(x_0))^3|\varphi_0''(x_0)|/24 \big)^{1/5} n^{-2/5} c_{\delta}( |\mathbb{H}_2^{(2)}(0)| ) \Big] \hbox{ for } f_0(x_0),
\\ \nonumber
& \Big[\widehat{f}'_n(x_0) \pm \big((f_0(x_0))^4 |\varphi_0''(x_0)|^3 /24^3 \big)^{1/5} n^{-1/5} c_{\delta}( |\mathbb{H}_2^{(3)}(0)| ) \Big] \hbox{ for } f'_0(x_0), \hbox{ and }
\\ \nonumber
& \Big[\widehat{m}_n \pm \big(24^2f_0(m_0)/(f_0''(m_0))^2\big)^{1/5} n^{-1/5} c_{\delta}( | [\mathbb{H}_2^{(2)}]_{\mathrm{m}}| ) \Big] \hbox{ for } m_0. 
\end{align*}
The red dashed lines in \ref{fig:log_concave_comp:b}-\ref{fig:log_concave_comp:d} represent the lengths of these oracle CIs. We give a brief summary below:
\begin{itemize}
	\item Compared to convex regression in Figure \ref{fig:regression_comp:a}, the convergence of the coverage probabilities of the proposed CIs for density function value $f_0(x_0)$ seems much slower in Figure \ref{fig:log_concave_comp:a}. However, as $n$ increases, the coverage is still converging to $95\%$, which supports Theorem \ref{thm:pivotal_limit_log_concave}.

	\item Based on our extensive simulation results that are not given here due to space constraint, we have observed that the coverage probabilities of the CIs for $f_0'(x_0)$ converge more slowly than that for $f_0(x_0)$, with coverage error greater than $0.02$ even for sample size $n = 1000$. This and the above observation on the CIs for $f_0(x_0)$ in Figure \ref{fig:log_concave_comp:a} perhaps imply that a large sample size may be required to conduct accurate inference for the density value $f_0(x_0)$ or the density derivative value $f_0'(x_0)$.

	\item The coverage probability errors of the CIs for the mode steadily vanishes as $n$ increases, supporting Theorem \ref{thm:pivotal_limit_mode_log_concave}. More simulation results on the CI for the mode of log-concave densities under small sample sizes can be found in the next subsection when compared to the LRT based CIs.

	\item Similar to the case of convex regression, we observe that the lengths of the proposed CIs for local parameters in log-concave densities shrink at the same rate as the oracle ones. This supports the related statements in Theorems \ref{thm:pivotal_limit_log_concave} and \ref{thm:pivotal_limit_mode_log_concave}.
\end{itemize}

\subsection{Comparison with the LRT-based CIs for mode of log-concave densities}\label{subsection:DW_comparison}

Among all the models studied in this paper, it seems that only the mode of a log-concave likelihood density has a proven LRT limit theory \cite{doss2016inference}. We here compare the numerical performance of the proposed procedure for the mode, referred to as LNE CIs, and theirs, referred to as LRT CIs. \cite{doss2019concave} conjectured that the LRT based procedure also works for the function value $f_0(x_0)$ in convex regression, but a formal theory is yet to be developed. 

We first compare coverage probabilities of the LNE and LRT CIs under different confidence levels and based on different log-concave densities. Let sample size $n = 100$. For each i.i.d.~sample $\{X_1, \ldots, X_n\}$ drawn from a distribution with log-concave density, the LNE CI and the LRT CI for its mode are computed to check if the true mode is covered. We repeat this procedure $10^4$ times and approximate the coverage probabilities with the relative frequencies of successful coverage. 

We use the \verb|R| function \verb|LCLRCImode| from package \verb|logcondens.mode| \cite{doss2016inference} to compute the LRT CIs with confidence levels $50\%$, $80\%$, $90\%$, $95\%$, $98\%$, $99\%$. The corresponding approximate critical values of $\mathbb{K}$ in \eqref{ineq:doss_wellner_piv_limit} are also given in this package. Essentially, the \verb|R| function \verb|LCLRCImode| first applies active set method to find the log-concave MLE $\widehat{f}_n$ and uses bisection method to solve the inverse problem \eqref{def:CI_DW_mode}. This means it has to conduct an LRT at every iteration. In contrast, as long as we have the log-concave MLE $\widehat{f}_n$, the LNE CI can be constructed instantly using the formula \eqref{def:CI_mode_log_concave}. It is therefore much slower to compute the LRT CIs, which is the reason why we limit this comparison to sample size $n = 100$.

The simulated coverage probabilities of the LNE and LRT CIs for the mode of log-concave densities are reported in Table \ref{tab:log_concave_comp}, rounded to 2 decimal places. Overall, we find that the performance of these two types of CIs is comparable in terms of coverage probability.

\begin{table}[htb]
	\scriptsize{
	\begin{tabular}{c|c||c|c|c|c|c|c}
			\hline 
			Distribution & CI type & 50\% CI  & 80\% CI & 90\% CI & 95\% CI & 98\% CI & 99\% CI \\
			\hline
			\multirow{2}{*}{$\chi_4^2$} & LNE & 0.58 & 0.80 & 0.88 & 0.93 & 0.97 & 0.98  \\
			& LRT & 0.47 & 0.78 & 0.89 & 0.94 & 0.98 & 0.99 \\
			\hline
			\multirow{2}{*}{$\mathrm{Beta}(2,3)$} & LNE & 0.46 & 0.74 & 0.86 & 0.93 & 0.97 & 0.98  \\
			& LRT & 0.46 & 0.77 & 0.88 & 0.93 & 0.97 & 0.99  \\
			\hline
			\multirow{2}{*}{$\mathrm{Gamma}(1,3)$} & LNE & 0.57 & 0.80 & 0.89 & 0.94 & 0.98 & 0.99 \\
			& LRT & 0.48 & 0.79 & 0.89 & 0.94 & 0.98 & 0.99 \\
			\hline
			\multirow{2}{*}{$\mathrm{Weibull}(1,1.5)$} & LNE & 0.52 & 0.76 & 0.86 & 0.92 & 0.96 & 0.98 \\
			& LRT & 0.46 & 0.77 & 0.88 & 0.93 & 0.97 & 0.99 \\
			\hline
			\multirow{2}{*}{$\mathrm{Normal}(0,1)$} & LNE & 0.54 & 0.82 & 0.90 & 0.95 & 0.98 & 0.99  \\
			& LRT & 0.47 & 0.78 & 0.89 & 0.94 & 0.98 & 0.99 \\
			\hline
		\end{tabular}
	}
	\vspace{1ex}
	\caption{Simulated coverage probabilities of the CIs for the mode of log-concave densities based on $B=10^4$ samples. Here sample size $n=100$.}
	\label{tab:log_concave_comp}
\end{table}

\setlength{\belowcaptionskip}{-10pt}
	\begin{figure}[!hbt]
	\centering
	\includegraphics[width=\textwidth]{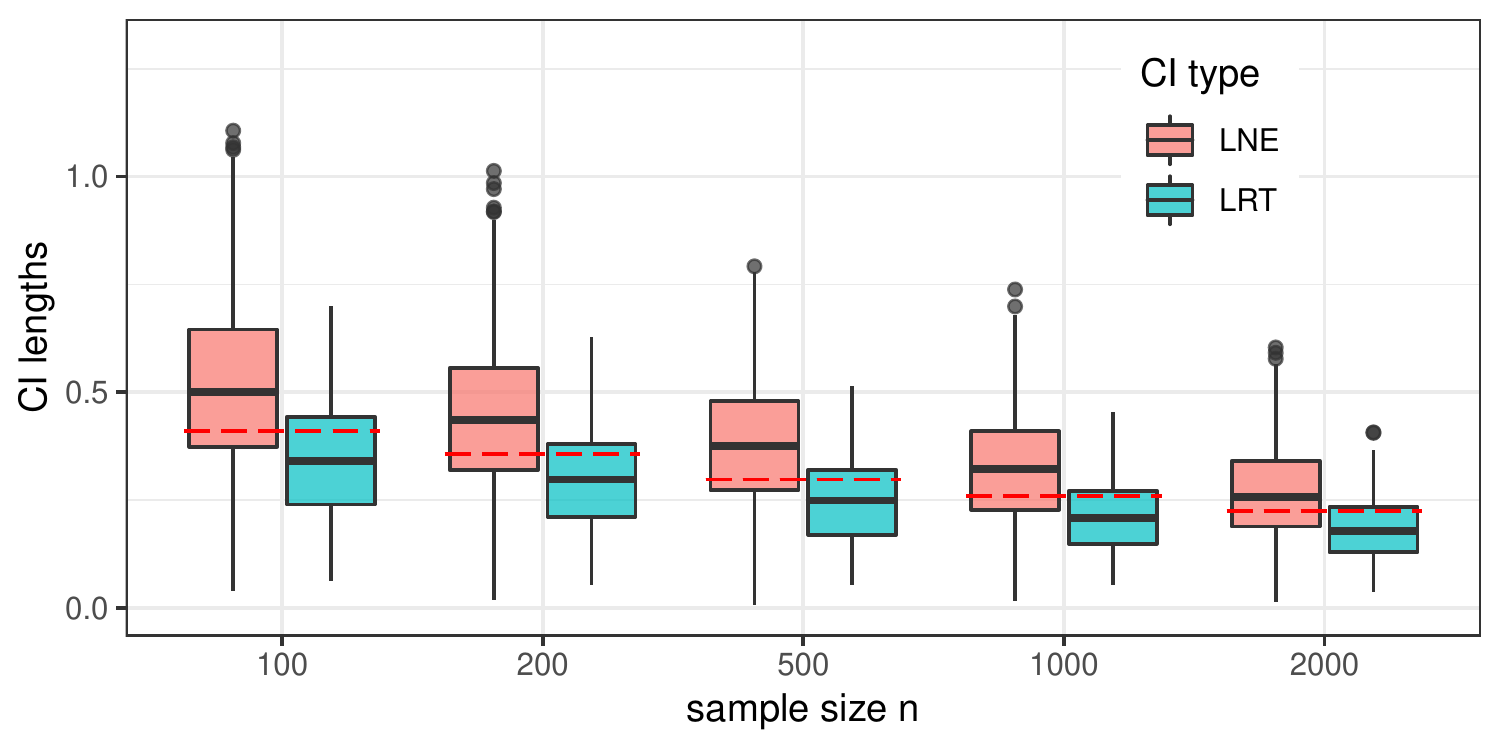}
	\caption{Box plots of the lengths of the LNE and LRT CIs for the mode of $\mathrm{Beta}(2,3)$ distribution. Red dashed lines represent the lengths of oracle CIs.}
	\label{fig:length_comp_log_concave}
\end{figure}

We next compare the lengths of these two types of CIs. In this comparison, we only repeat the procedure $B = 1000$ times and evaluate their performance for $n \in \{100, 200, 500, 1000, 2000\}$. It seems hard to run simulation with greater $B$ and $n$ due to the slow computation of LRT CIs. Here we consider $f_0$ to be the density of $\mathrm{Beta}(2,3)$. In Figure \ref{fig:length_comp_log_concave}, we give box plots of the lengths of the LNE and LRT CIs for the mode of $\mathrm{Beta}(2,3)$ distribution. The red dashed lines represent the lengths of the oracle CIs from limit distribution theory \eqref{limit_log_concave_mode} as discussed in Section 5.2.2. As we can see from Figure \ref{fig:length_comp_log_concave}, the LRT CIs are generally narrower than the LNE CIs and, interestingly, also narrower than the oracle CIs. The wider length of LNE CI is not a real surprise since only using $\widehat{u}_{\mathrm{m}}$ and $\widehat{v}_{\mathrm{m}}$ to construct CIs in finite samples is likely to bring in a fair amount of variation.

\appendix

\section{Proof of results in Section~\ref{section:pivot_limit}}\label{section:proof_main}

\subsection{Preliminaries}\label{pf:preliminary}
	As our proof of the results in Section \ref{section:pivot_limit} relies on the proof of Theorem \ref{thm:limiting_dist_convex} that is given in \cite{groeneboom2001estimation}, we shall give a proof sketch of Theorem \ref{thm:limiting_dist_convex} in this subsection. We will focus on the case $\alpha=2$ with a fixed design, and drop the dependence on $\alpha$ in the notation in the proof below. The localization arguments for $\alpha\neq 2$ in random design are carried out in \cite{ghosal2017univariate}.
	
	\begin{proof}[Proof sketch of Theorem \ref{thm:limiting_dist_convex}]
	Let
	\begin{align}\label{def:proc}
	\mathbb{S}_n(u)&\equiv \frac{1}{n}\sum_{i=1}^n Y_i  \bm{1}_{ \{ X_i\leq u \} } ,\quad \mathbb{Y}_n(t) \equiv \int_0^t \mathbb{S}_n(u)\,\d{u},
	\\ \nonumber
	\mathbb{R}_n(t;f)& \equiv \frac{1}{n}\sum_{i=1}^n f(X_i)  \bm{1}_{ \{ X_i\leq t \} } , \quad \mathbb{H}_n(t;f) \equiv \int_0^t \mathbb{R}_n(u;f)\,\d{u},
	\\ \nonumber
	\widetilde{\mathbb{R}}_n(t;f)& \equiv \int_0^t f(u)\,\d{u},\quad \widetilde{\mathbb{H}}_n(t;f) \equiv \int_0^t \widetilde{\mathbb{R}}_n(u;f)\,\d{u}.
	\end{align}
	Characterization of the LSE (see~\cite[Lemma 2.6]{groeneboom2001estimation}) shows that a piecewise linear convex function $f$ is the LSE if and only if $\mathbb{H}_n^{(3)}(;f)$ is a piecewise non-decreasing constant function and $\mathbb{H}_n(;f)$ majorizes $\mathbb{Y}_n$: $\mathbb{H}_n(t;f)\geq \mathbb{Y}_n(t)$ with equality taken at jumps of $\mathbb{H}_n^{(3)}(;f)$. 
	For the LSE $\widehat{f}_n$, we write for notational simplicity
	\begin{align*}
	&\mathbb{R}_n(t)\equiv \mathbb{R}_n(t;\widehat{f}_n),\quad \mathbb{H}_n(t)\equiv \mathbb{H}_n(t;\widehat{f}_n),\\
	&  \widetilde{\mathbb{R}}_n(t)\equiv  \widetilde{\mathbb{R}}_n(t;\widehat{f}_n),\quad \widetilde{\mathbb{H}}_n(t)\equiv \widetilde{\mathbb{H}}_n(t;\widehat{f}_n).
	\end{align*}
	
	The limit distribution theory is based on the localization of this characterization. In essence, we wish to define local counterparts $\mathbb{Y}_n^{\mathrm{loc}}, \mathbb{H}_n^{\mathrm{loc}}$ of $\mathbb{Y}_n, \mathbb{H}_n$ in such a way that (i) $\mathbb{H}_n^{\mathrm{loc}}$ is nicely related to $\widehat{f}_n$ much as $\mathbb{H}_n$ does, (ii) characterization $\mathbb{Y}_n^{\mathrm{loc}}(t)\leq \mathbb{H}_n^{\mathrm{loc}}(t)$ is preserved with equality taken at jumps of $(\mathbb{H}_n^{\mathrm{loc}})^{(3)}$, and (iii) a non-trivial weak limit of $\mathbb{Y}_n^{\mathrm{loc}}$ can be computed, and the sequence $\{\mathbb{H}_n^{\mathrm{loc}}\}$ along with its derivatives up to order three remain tight as $n \to \infty$ in the suitable sense. Then a standard argument shows the existence of a limiting process for $\{ \mathbb{H}_n^{\mathrm{loc}} \}$
	satisfying conditions indicated above. The uniqueness for such processes with these conditions then well defines the desired process. As a technical subtle point, (i) and (iii) seem not feasible simultaneously for one single process, so we will define two asymptotically equivalent processes $\mathbb{H}_n^{\mathrm{loc}}(t)$ and $\widetilde{\mathbb{H}}_n^{\mathrm{loc}}(t)$ below that satisfy these two requirements separately. Now let us construct local process counterparts of $\mathbb{Y}_n(t)$ and $\mathbb{H}_n(t)$ as follows. Define
	\begin{align}\label{def:local_proc}
	\mathbb{Y}_n^{\mathrm{loc}}(t)&\equiv n^{4/5} \int_{x_0}^{x_0 + n^{-1/5}t} \bigg[ \mathbb{S}_n(v) - \mathbb{S}_n(x_0)
	\\ \nonumber
	&\qquad\qquad - \int_{x_0}^v \big(f_0(x_0)+(u-x_0)f_0'(x_0)\big)\,\d{\mathbb{F}_n(u)}\bigg]\,\d{v},
	\\ \nonumber
	\mathbb{H}_n^{\mathrm{loc}}(t) &\equiv n^{4/5} \int_{x_0}^{x_0 + n^{-1/5}t} \bigg[ \mathbb{R}_n(v) - \mathbb{R}_n(x_0)
	\\ \nonumber
	&\qquad\qquad  - \int_{x_0}^v \big(f_0(x_0)+(u-x_0)f_0'(x_0)\big)\,\d{\mathbb{F}_n(u)}\bigg]\,\d{v} +A_n+B_n t,
	\\ \nonumber
	\widetilde{\mathbb{H}}_n^{\mathrm{loc}}(t) &\equiv n^{4/5} \int_{x_0}^{x_0 + n^{-1/5}t} \bigg[ \widetilde{\mathbb{R}}_n(v) - \widetilde{\mathbb{R}}_n(x_0)
	\\ \nonumber
	&\qquad\qquad  - \int_{x_0}^v \big(f_0(x_0)+(u-x_0)f_0'(x_0)\big)\,\d{u}\bigg]\,\d{v} +A_n+B_n t,
	\end{align}
	where $\mathbb{F}_n(u) \equiv n^{-1}\sum_{i=1}^n \bm{1}_{\{ X_i \le u\}}$, and
	\begin{align*}
	A_n = n^{4/5} \big(\mathbb{H}_n(x_0)-\mathbb{Y}_n(x_0)\big) \hbox{ and } B_n = n^{3/5} \big(\mathbb{R}_n(x_0)-\mathbb{S}_n(x_0) \big).
	\end{align*}
	The following statements are proved
	in \cite{groeneboom2001estimation}:
	\begin{enumerate}
		\item $\mathbb{H}_n^{\mathrm{loc}}(t) \ge \mathbb{Y}_n^{\mathrm{loc}}(t)$ with equality holds when $x_0 + n^{-1/5}t$ is a kink.
		\item It holds that
		\begin{align*}
		\mathbb{Y}_n^{\mathrm{loc}}(t) \rightsquigarrow \mathbb{Y}(t; f_0) \equiv \sigma \int_{0}^t \mathbb{B}(s)\,\d{s} + f_0^{(2)}(x_0) t^4/4!,
		\end{align*}
		in $C([-T,T])$ for any $T>0$.
		\item For any $T>0$, 
		\begin{align*}
		\sup_{t \in [-T,T]} \abs{\mathbb{H}_n^{\mathrm{loc}}(t) - \widetilde{\mathbb{H}}_n^{\mathrm{loc}}(t)} = \mathfrak{o}_{\mathbf{P}}(1).
		\end{align*}
		\item The process $\widetilde{\mathbb{H}}_n^{\mathrm{loc}}$ has derivatives 
		\begin{align*}
		(\widetilde{\mathbb{H}}_n^{\mathrm{loc}})^{(2)}(t)& = n^{2/5} \big\{ \widehat{f}_n(x_0+n^{-1/5}t)-f_0(x_0)-n^{-1/5}f_0'(x_0)t \big\},
		\cr
		(\widetilde{\mathbb{H}}_n^{\mathrm{loc}})^{(3)}(t)& = n^{1/5}\big\{ \widehat{f}_n'(x_0+n^{-1/5}t)-f_0'(x_0) \big\},
		\end{align*}
		so we have the key identities:
		\begin{align*}
		n^{2/5} \big\{ \widehat{f}_n(x_0)-f_0(x_0)\big\} &= (\widetilde{\mathbb{H}}_n^{\mathrm{loc}})^{(2)}(0),\\
		n^{1/5} \big\{\widehat{f}_n'(x_0)-f_0'(x_0)\big\} & = (\widetilde{\mathbb{H}}_n^{\mathrm{loc}})^{(3)}(0).
		\end{align*}
		\item $\widetilde{\mathbb{H}}_n^{\mathrm{loc}}(t)$ and its derivatives up to order $3$ are tight on compacta.
	\end{enumerate}
	
	Now the limit distribution theory for the least square estimator $\widehat{f}_n$ follows by taking $n \to \infty$ and the a.s.~uniqueness of $\mathbb{H}(t; f_0)$, the ``invelope'' function of $\mathbb{Y}(t; f_0)$. Note that with 
	\begin{align}\label{value:gamma}
	\gamma_0 = \sigma \big( 4!\sigma /f_{0}^{(2)}(x_0) \big)^{3/5} \hbox{ and } \gamma_1 = \big(f_{0}^{(2)}(x_0)/(4! \sigma) \big)^{2/5},
	\end{align}
	we have $\gamma_0 \gamma_1^{3/2} = \sigma$ and $\gamma_0\gamma_1^{4} = f_0^{(2)}(x_0)/4!$, so that
	\begin{align}\label{eqn:brownian_scaling}
	\gamma_0 \mathbb{Y}(\gamma_1 t) &= \gamma_0 \bigg(\int_0^{\gamma_1 t} \mathbb{B}(s)\,\d{s} + \gamma_1^4 t^4\bigg)\\
	& =_d \gamma_0 \bigg( \gamma_1^{3/2}\int_0^{ t} \mathbb{B}(s)\,\d{s} + \gamma_1^4 t^4\bigg)\quad (\textrm{by Brownian scaling}) \nonumber\\
	& = \mathbb{Y}(t; f_0). \nonumber
	\end{align}
	Consequently
	\begin{align*}
	\begin{pmatrix}
	(\widetilde{\mathbb{H}}_n^{\mathrm{loc}})^{(2)}(0) \\
	(\widetilde{\mathbb{H}}_n^{\mathrm{loc}})^{(3)}(0) 
	\end{pmatrix} \rightsquigarrow 
	\begin{pmatrix}
	\mathbb{H}^{(2)}(0; f_0) \\
	\mathbb{H}^{(3)}(0; f_0)
	\end{pmatrix} =_d
	\begin{pmatrix}
	\gamma_0\gamma_1^2 \mathbb{H}^{(2)}(0) \\
	\gamma_0\gamma_1^3 \mathbb{H}^{(3)}(0) 
	\end{pmatrix},
	\end{align*}
	where $\gamma_0 \gamma_1^2 = \sigma^{4/5} d_{2}^{(0)}(f_0, x_0)$ and $\gamma_0 \gamma_1^3 = \sigma^{2/5} d_{2}^{(1)}(f_0, x_0)$.
	
The proof sketch of Theorem \ref{thm:limiting_dist_convex} is now complete.
	\end{proof}
	
	\begin{remark}
		For $f_0$ locally $C^{\alpha}$ with general $\alpha$, $\mathbb{Y}(t; f_0) = \sigma \int_{0}^t \mathbb{B}(s)\,\d{s} + f_0^{(\alpha)}(x_0) t^{\alpha + 2} / (\alpha + 2)!$, the scaling relationship reads as follows: Let
		\begin{align*}
		\gamma_0 \gamma_1^{3/2} = \sigma,\quad \gamma_0\gamma_1^{\alpha+2} = \frac{f_0^{(\alpha)}(x_0)}{(\alpha+2)!},
		\end{align*}
		so that 
		\begin{align*}
		\gamma_0 = \sigma \Big( \frac{\sigma (\alpha + 2)!}{f_0^{(\alpha)}(x_0)} \Big)^{3/(2\alpha +1)}  \hbox{ and } \gamma_1 = \Big( \frac{f_{0}^{(\alpha)}(x_0)}{\sigma(\alpha + 2)!} \Big)^{2/(2 \alpha + 1)}.
		\end{align*}
		The rest remains the same.
	\end{remark}
	
	\begin{remark}
		The existence and a.s. uniqueness of the process $\mathbb{H}_\alpha$ in Theorem \ref{thm:limiting_dist_convex} is established formally for $\alpha=2$ in \cite{groeneboom2001canonical}, but an entirely similar arguments applies to general $\alpha$. 
	\end{remark}
	
	\subsection{Proof of Theorem \ref{thm:pivotal_limit_fcn}}\label{pf:pivotal_limit_fcn} 
		
	\begin{proof}[Additional notation]\renewcommand{\qedsymbol}{}
	Let $\widehat{h}_- = \widehat{h}_-(x_0) \equiv n^{1/5}(x_0-\widehat{u}(x_0))$ and $\widehat{h}_+ = \widehat{h}_+(x_0) \equiv n^{1/5}(\widehat{v}(x_0)-x_0)$. Then $\widehat{h}_\pm= \mathcal{O}_{\mathbf{P}}(1)$ by (essentially) \cite[Lemma 8]{mammen1991nonparametric}. Let $h^\ast_-(f_0)$ (resp.~$h^\ast_+(f_0)$) be the absolute value of the location of the first touch point of the pair $(\mathbb{H}(\cdot\,; f_0),\mathbb{Y}(\cdot\,; f_0))$ to the left (resp.~right) of $0$, where $\mathbb{H}(\cdot\,; f_0)$ is the limit process satisfying the characterization conditions with respect to $\mathbb{Y}(\cdot\,; f_0)$. As $\mathbb{H}(\cdot\,; f_0)$ is a random piecewise cubic polynomial, while $\mathbb{Y}(t; f_0) = O_{\mathrm{a.s.}}(t^4)$ as $t \to \infty$, we see that $h^\ast_-(f_0) \vee h^\ast_+(f_0) <\infty$ a.s.~The fact that $h^\ast_-(f_0) =0 , h^\ast_+(f_0)=0$ occurs with probability $0$ follows from \cite[Corollary 2.1]{groeneboom2001canonical}. So w.p.~$1$, $h^\ast_-(f_0), h^\ast_+(f_0) \in (0,\infty)$. 
	\end{proof} 
	
	\begin{proof}[High level idea and difficulty]\renewcommand{\qedsymbol}{}
	One intuitive and tempting idea of the proof is to write $\widehat{h}_{\pm}$ as a functional of $(\widetilde{\mathbb{H}}_n^{\mathrm{loc}})^{(2)}$, that is, $\widehat{h}_{\pm} = \mathcal{H}_{\pm}\big((\widetilde{\mathbb{H}}_n^{\mathrm{loc}})^{(2)}\big)$, and then apply continuous mapping theory. However, as the process $(\widetilde{\mathbb{H}}_n^{\mathrm{loc}})^{(2)}$ converges uniformly to its limit on compact intervals, this approach requires continuity of the the functional $\mathcal{H}_{\pm}$ with respect to the topology of compact uniform convergence. Unfortunately, continuity of $\mathcal{H}_{\pm}$ in this topology is false in general, as can be seen by the following counter-example. Let $\{f_n\},f_\infty$ be convex functions symmetric about $0$, where 
	\begin{align}\label{ineq:pivotal_lim_fcn_0}
	f_n(x)\equiv\max\{n^{-1}(x-1)_+, (x-2)_+\},\quad f_\infty(x)\equiv (x-2)_+
	\end{align}
	on $[0,\infty)$. Then $f_n$ converges to $f_\infty$ uniformly on compacta, but the first positive kink of $f_n$ is $1$ for any $n$, while the first positive kink of $f_\infty$ is $2$.  On the other hand, one would expect that counter-examples of the type (\ref{ineq:pivotal_lim_fcn_0}) can happen for the process $(\widetilde{\mathbb{H}}_n^{\mathrm{loc}})^{(2)}$ only with vanishing probability, as otherwise one of the key characterizations (\ref{ineq:pivotal_lim_fcn_1})-(\ref{ineq:pivotal_lim_fcn_2}) below will be violated in the limit; or put it geometrically, one of the touch points of $(\mathbb{H}_n^{\mathrm{loc}},\mathbb{Y}_n^{\mathrm{loc}})$ will be lost in the limit by violation of (\ref{ineq:pivotal_lim_fcn_2}) ahead. 
	\end{proof}
    
    \begin{proof}[Proof of Theorem \ref{thm:pivotal_limit_fcn}]
    Now we make the intuition outlined above precise via a dual characterization of $\widehat{h}_{\pm}$ using both $(\widetilde{\mathbb{H}}_n^{\mathrm{loc}})^{(2)}$ and the pair $(\mathbb{H}_n^{\mathrm{loc}},\mathbb{Y}_n^{\mathrm{loc}})$. Let 
	\begin{align*}
	\widetilde{\Delta}_{n, \pm}^{\mathrm{loc}}(\mathfrak{w}) = 2 (\widetilde{\mathbb{H}}_n^{\mathrm{loc}})^{(2)}(\pm \mathfrak{w} /2)  - (\widetilde{\mathbb{H}}_n^{\mathrm{loc}})^{(2)}(\pm \mathfrak{w}) - (\widetilde{\mathbb{H}}_n^{\mathrm{loc}})^{(2)}(0)\leq 0.
	\end{align*}
	Due to convexity of $\widehat{f}_n$, for any $ \mathfrak{w}  \in \R_{>0}$,
	\begin{align}\label{ineq:pivotal_lim_fcn_1}
	\widehat{h}_\pm<  \mathfrak{w}  &\Leftrightarrow 2 \widehat{f}_n(x_0 \pm n^{-1/5} \mathfrak{w}/2) - \widehat{f}_n(x_0 \pm n^{-1/5} \mathfrak{w}) - \widehat{f}_n(x_0) <0\\
	& \Leftrightarrow \widetilde{\Delta}_{n, \pm}^{\mathrm{loc}}(\mathfrak{w}) <0.\nonumber
	\end{align}
	On the other hand, on the event $E_n$ that $x_0$ is not a kink of $\widehat{f}_n$ (which occurs with probability tending to one),
	\begin{align}\label{ineq:pivotal_lim_fcn_2}
	\widehat{h}_\pm \leq \mathfrak{w}  
	&\Leftrightarrow \sup_{t \in \pm[0, \mathfrak{w}]}\{\mathbb{Y}_n^{\mathrm{loc}}(t)-\mathbb{H}_n^{\mathrm{loc}}(t)\} = 0\\
	& \Leftrightarrow (\mathbb{H}_n^{\mathrm{loc}},\mathbb{Y}_n^{\mathrm{loc}})|_{\pm[0,\mathfrak{w}]} \in S_{\pm}(\mathfrak{w}),\nonumber
	\end{align}
	where for $0\leq u_1\leq u_2$, $\pm[u_1,u_2]$ is interpreted as $[u_1,u_2]$ for $+$ and $[-u_2,-u_1]$ for $-$, 
	and
	\begin{align*}
	S_{\pm}(\mathfrak{w})&\equiv \bigg\{(h,y) \in \big(C(\pm[0,\mathfrak{w} ])\big)^2,  \sup_{t \in \pm[0,\mathfrak{w}]}\{y(t)-h(t)\}=0 \bigg\}
	\end{align*}
	 is a closed set of $\big(C(\pm[0,\mathfrak{w}])\big)^2$ with respect to the topology induced by the product supremum norm. See Figure \ref{fig:proof} for an illustration of the above equivalence (\ref{ineq:pivotal_lim_fcn_1})-(\ref{ineq:pivotal_lim_fcn_2}). We employ two different characterizations (\ref{ineq:pivotal_lim_fcn_1})-(\ref{ineq:pivotal_lim_fcn_2}) using $(\widetilde{\mathbb{H}}_n^{\mathrm{loc}})^{(2)}$ and $(\mathbb{H}_n^{\mathrm{loc}},\mathbb{Y}_n^{\mathrm{loc}})$ respectively to maintain openness and closedness topological properties in the equivalence characterization of $\widehat{h}_\pm$. As suggested by the counter-example (\ref{ineq:pivotal_lim_fcn_0}), such different characterizations are essential.

	 \begin{figure}
	 	\centering
	 	\includegraphics[width=12cm]{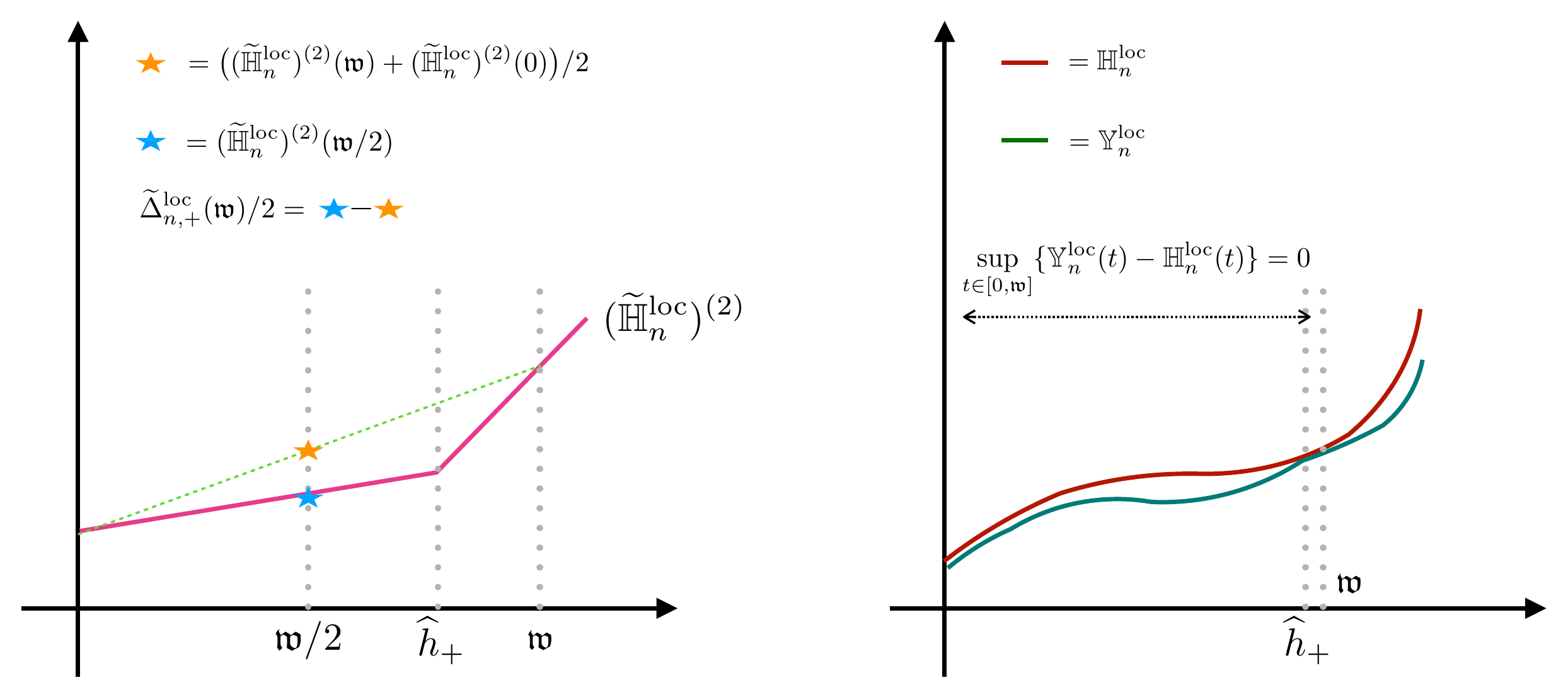}
	 	\caption{Figure illustration of the equivalence (\ref{ineq:pivotal_lim_fcn_1})-(\ref{ineq:pivotal_lim_fcn_2}).}
	 	\label{fig:proof}
	 \end{figure}
	
    Using e.g.,~Skorokhod's representation theorem (see~\cite[Theorem 6.7]{bilingsley1999convergence}), it is easily shown that
	\begin{align*}
	&\bigg((\widetilde{\mathbb{H}}_n^{\mathrm{loc}})^{(2)}(0),\,(\widetilde{\mathbb{H}}_n^{\mathrm{loc}})^{(3)}(0),\,\widetilde{\Delta}_{n, \pm}^{\mathrm{loc}}(\mathfrak{w}),\,(\mathbb{H}_n^{\mathrm{loc}},\mathbb{Y}_n^{\mathrm{loc}})|_{\pm[0,\mathfrak{w}]}\bigg)\\
	&\rightsquigarrow \bigg( \mathbb{H}^{(2)}(0; f_0),\,\mathbb{H}^{(3)}(0; f_0),\,\Delta_{\pm}(\mathfrak{w}),\,(\mathbb{H}(\cdot;f_0),\mathbb{Y}(\cdot;f_0))|_{\pm[0,\mathfrak{w}]} \bigg)
	\end{align*}
	in $\R^3 \times \big(C(\pm[0,\mathfrak{w}])\big)^2$. Here
	\begin{align*}
	\Delta_{\pm}(\mathfrak{w}) = 2 \mathbb{H}^{(2)}(\pm \mathfrak{w}/2; f_0)  - \mathbb{H}^{(2)}(\pm \mathfrak{w}; f_0) - \mathbb{H}^{(2)}(0; f_0)
	\end{align*}
	are the limiting counterparts of $\widetilde{\Delta}_{n, \pm}^{\mathrm{loc}}$.
	
	Fix any continuity point $(\mathfrak{s},\mathfrak{t},\mathfrak{u},\mathfrak{v}) \in \R\times \R \times \R_{> 0}\times \R_{> 0}$ of the random vector $\big(\mathbb{H}^{(2)}(0; f_0), \mathbb{H}^{(3)}(0; f_0), h^\ast_-(f_0), h^\ast_+(f_0) \big)$. By Portmanteau theorem for general metric-space valued random variables (see~\cite[Theorem 11.1.1]{dudley2002real}), 
	\begin{align*}
	&\limsup_{n \to \infty} \Prob\bigg\{ n^{2/5} \big(\widehat{f}_n(x_0)-f_0(x_0)\big)\leq \mathfrak{s}, n^{1/5} \big(\widehat{f}_n'(x_0)-f_0'(x_0)\big) \leq \mathfrak{t}, \\
	&\qquad\qquad\qquad  \widehat{h}_-\leq \mathfrak{u}, \widehat{h}_+\leq \mathfrak{v}\bigg\}\\
	&\leq \limsup_{n \to \infty} \Prob\bigg\{ \bigg((\widetilde{\mathbb{H}}_n^{\mathrm{loc}})^{(2)}(0), (\widetilde{\mathbb{H}}_n^{\mathrm{loc}})^{(3)}(0), (\mathbb{H}_n^{\mathrm{loc}},\mathbb{Y}_n^{\mathrm{loc}})|_{[-\mathfrak{u}, 0]}, (\mathbb{H}_n^{\mathrm{loc}},\mathbb{Y}_n^{\mathrm{loc}})|_{[0,\mathfrak{v}]} \bigg)\\
	&\qquad\qquad\qquad\qquad \in (-\infty, \mathfrak{s}]\times (-\infty,\mathfrak{t}]\times S_{-}(\mathfrak{u})\times S_+(\mathfrak{v}) \bigg\} +\limsup_{n \to \infty} \Prob(E_n^c) \\
    &\leq \Prob\bigg\{ \bigg( \mathbb{H}^{(2)}(0; f_0) , \mathbb{H}^{(3)}(0; f_0),  (\mathbb{H}(\cdot;f_0),\mathbb{Y}(\cdot;f_0))|_{[-\mathfrak{u}, 0]}, (\mathbb{H}(\cdot;f_0),\mathbb{Y}(\cdot;f_0))|_{[0,\mathfrak{v}]}\bigg)\\
    &\qquad\qquad\qquad\qquad \in (-\infty, \mathfrak{s}]\times (-\infty,\mathfrak{t}]\times S_-(\mathfrak{u})\times S_{+}(\mathfrak{v}) \bigg\} \\
	& =  \Prob\bigg\{ \mathbb{H}^{(2)}(0; f_0)\leq \mathfrak{s},\,\mathbb{H}^{(3)}(0; f_0)\leq \mathfrak{t},\,h^\ast_-(f_0) \leq \mathfrak{u},\,h^\ast_+(f_0)\leq \mathfrak{v}\bigg\}.
	\end{align*}
	In the last equality we used that for any $\mathfrak{w} \in \R_{>0}$, with probability $1$ it holds that $h_\pm^\ast(f_0) \leq \mathfrak{w}  \Leftrightarrow (\mathbb{H}(\cdot;f_0),\mathbb{Y}(\cdot;f_0))|_{\pm[0,\mathfrak{w}]} \in S_{\pm}(\mathfrak{w})$.
	
	The other direction is easier, and essentially follows directly from Portmanteau theorem for vector-valued random variables: for any $\epsilon >0$,
	\begin{align*}
	&\liminf_{n \to \infty} \Prob\bigg\{ n^{2/5} \big(\widehat{f}_n(x_0)-f_0(x_0)\big)\leq \mathfrak{s},\,n^{1/5} \big(\widehat{f}_n'(x_0)-f_0'(x_0)\big) \leq \mathfrak{t}, \\
	&\qquad\qquad\qquad\qquad  \widehat{h}_-\leq \mathfrak{u},\,\widehat{h}_+\leq \mathfrak{v}\bigg\}\\
	&\geq  \liminf_{n \to \infty} \Prob\bigg\{ (\widetilde{\mathbb{H}}_n^{\mathrm{loc}})^{(2)}(0) < \mathfrak{s} - \varepsilon,\,(\widetilde{\mathbb{H}}_n^{\mathrm{loc}})^{(3)}(0) < \mathfrak{t} - \varepsilon, \\
	&\qquad\qquad\qquad\qquad \widetilde{\Delta}_{n, -}^{\mathrm{loc}}(\mathfrak{u} - \varepsilon) < 0,\,\widetilde{\Delta}_{n, +}^{\mathrm{loc}}(\mathfrak{v} - \varepsilon) < 0 \bigg\}\\
	& \geq \Prob\bigg\{ \mathbb{H}^{(2)}(0; f_0) < \mathfrak{s} - \varepsilon,\,\mathbb{H}^{(3)}(0; f_0) < \mathfrak{t} - \varepsilon,
	\\
	& \hspace{8em} \Delta_{n, -}(\mathfrak{u} - \varepsilon) < 0,\,\Delta_{n, +}(\mathfrak{v} - \varepsilon)  < 0 \bigg\}\\
	& =  \Prob\bigg\{ \mathbb{H}^{(2)}(0; f_0) < \mathfrak{s} - \varepsilon,\,\mathbb{H}^{(3)}(0; f_0) < \mathfrak{t} - \varepsilon,
	\\
	&\hspace{8em} h^\ast_-(f_0) < \mathfrak{u} - \varepsilon,\,h^\ast_+(f_0) < \mathfrak{v} - \varepsilon \bigg\}.
	\end{align*}
	As $\epsilon \downarrow 0$ and by the continuity of $(\mathfrak{s},\mathfrak{t},\mathfrak{u},\mathfrak{v})$, we have proved the joint convergence in distribution:
	\begin{align*}
	&\bigg(n^{2/5} \big(\widehat{f}_n(x_0)-f_0(x_0)\big),\,n^{1/5}\big(\widehat{f}_n'(x_0)-f_0'(x_0)\big),\,\widehat{h}_-(x_0),\,\widehat{h}_+(x_0) \bigg)\\
	&\qquad \qquad \rightsquigarrow \Big(\mathbb{H}^{(2)}(0; f_0),\,\mathbb{H}^{(3)}(0; f_0),\,h^\ast_-(f_0),\,h^\ast_+(f_0) \Big).
	\end{align*}
	By continuous mapping, we conclude that
	\begin{align*}
	&\begin{pmatrix}
	\sqrt{n\big(\widehat{v}(x_0)-\widehat{u}(x_0)\big)}\big(\widehat{f}_n(x_0)-f_0(x_0)\big) \\
	\sqrt{n\big(\widehat{v}(x_0)-\widehat{u}(x_0)\big)^3}\big(\widehat{f}_n'(x_0)-f_0'(x_0)\big)
	\end{pmatrix}
	\cr
	&= \begin{pmatrix}
	\sqrt{n^{1/5}\big(\widehat{v}(x_0)-\widehat{u}(x_0)\big)} \cdot n^{2/5}\big(\widehat{f}_n(x_0)-f_0(x_0)\big) \\
	\sqrt{n^{3/5}\big(\widehat{v}(x_0)-\widehat{u}(x_0)\big)^3} \cdot n^{1/5} \big(\widehat{f}_n'(x_0)-f_0'(x_0)\big)
	\end{pmatrix}
	\cr
	&= \begin{pmatrix}
	\sqrt{\widehat{h}_-(x_0) + \widehat{h}_+(x_0)} \cdot n^{2/5}\big(\widehat{f}_n(x_0)-f_0(x_0)\big) \\
	\sqrt{ (\widehat{h}_-(x_0) + \widehat{h}_+(x_0) )^3 } \cdot n^{1/5} \big(\widehat{f}_n'(x_0)-f_0'(x_0)\big)
	\end{pmatrix}
	\cr
	&\rightsquigarrow \begin{pmatrix}
	\displaystyle \sqrt{ h^\ast_+(f_0)+ h^\ast_-(f_0) }\cdot \mathbb{H}^{(2)}(0; f_0) \\
	\displaystyle \sqrt{(h^\ast_+(f_0)+ h^\ast_-(f_0) )^3}\cdot \mathbb{H}^{(3)}(0; f_0)
	\end{pmatrix}.
	\end{align*}
	Now we verify that the distribution of 
	\begin{align*}
	\Big( \sqrt{h^\ast_+(f_0) + h^\ast_-(f_0)}\cdot \mathbb{H}^{(2)}(0; f_0),\,\sqrt{(h^\ast_+(f_0) + h^\ast_-(f_0) )^3}\cdot \mathbb{H}^{(3)}(0; f_0) \Big)
	\end{align*}
	is pivotal with respect to the nuisance parameter $f_0''(x_0)$.	We recall $h^\ast_{\pm}$ are the touch points for $(\mathbb{H},\mathbb{Y})$ defined in similar fashion to $h^\ast_\pm(f_0)$ for $(\mathbb{H}(\cdot\,; f_0), \mathbb{Y}(\cdot\,; f_0))$. Here $(\mathbb{H},\mathbb{Y})$ is defined in Theorem \ref{thm:limiting_dist_convex} with $\alpha=2$. We wish to relate $h^\ast_\pm(f_0)$ to $h^\ast_{\pm}$. With the same $\gamma_0$ and $\gamma_1$ as in \eqref{value:gamma} that satisfy $\mathbb{Y}(t; f_0) = \gamma_0 \mathbb{Y}(\gamma_1 t)$, it follows that
	\begin{align*}
	\mathbb{H}^{(2)}(t; f_0) = \gamma_0 \gamma_1^2 \mathbb{H}^{(2)}(\gamma_1 t),\,\mathbb{H}^{(3)}(t; f_0) = \gamma_0 \gamma_1^3 \mathbb{H}^{(3)}(\gamma_1 t),\,h^\ast_{\pm} = \gamma_1 h^\ast_\pm(f_0).
	\end{align*}
	Hence, due to $\gamma_0 \gamma_1^{3/2} = \sigma$,
	\begin{align*}
	\sqrt{h^\ast_+(f_0) +   h^\ast_-(f_0)}\cdot \mathbb{H}^{(2)}(0; f_0) & = \sqrt{\big( h^\ast_{+}+ h^\ast_{-}\big)/\gamma_1}\cdot \gamma_0\gamma_1^2 \mathbb{H}^{(2)}(0)\\
	& = \sigma\cdot \sqrt{ h^\ast_{+} + h^\ast_{-}}\cdot \mathbb{H}^{(2)}(0),
	\end{align*}
	and
	\begin{align*}
	\sqrt{(h^\ast_+(f_0)+ h^\ast_-(f_0))^3}\cdot \mathbb{H}^{(3)}(0; f_0) & = \sqrt{\big( h^\ast_{+} + h^\ast_{-}\big)^3/\gamma_1^3}\cdot \gamma_0\gamma_1^3 \mathbb{H}^{(3)}(0)\\
	& = \sigma\cdot \sqrt{\big( h^\ast_{+} + h^\ast_{-}\big)^3}\cdot \mathbb{H}^{(3)}(0),
	\end{align*}
	as desired. 
\end{proof}

\subsection{Proof of Theorem \ref{thm:CI_fcn}}\label{pf:CI_fcn}
	We only prove the second claim. It follows from the rescaling argument in the end of the proof of Theorem \ref{thm:pivotal_limit_fcn} that
	\begin{align*}
	n^{2/5} \abs{\mathcal{I}_n^{(0)}(c_\delta^{(0)}) } 
	&= \textstyle 2\widehat{\sigma} c_\delta^{(0)} \big/ \sqrt{n^{1/5} (\widehat{v}(x_0)-\widehat{u}(x_0)) }
	\\ \nonumber
	&= \textstyle 2\widehat{\sigma} c_\delta^{(0)} \big/\sqrt{\widehat{h}_++\widehat{h}_-} 
	\\ \nonumber
	&\rightsquigarrow \textstyle 2 \sigma c_{\delta}^{(0)} \big/ \sqrt{h^\ast_+(f_0) + h^\ast_-(f_0)} 
	\\ \nonumber
	&=_d \textstyle \big(2 c_{\delta}^{(0)} / \sqrt{ h^\ast_{+} + h^\ast_{-} } \big) \cdot  \gamma_1^{1/2}\sigma 
	\\ \nonumber
	& = \textstyle \big(2 c_{\delta}^{(0)} / \sqrt{ h^\ast_{+} + h^\ast_{-} } \big) \cdot  \gamma_0\gamma_1^2 \nonumber \\
	&= \textstyle \big(2 c_{\delta}^{(0)} / \sqrt{ h^\ast_{+} + h^\ast_{-} } \big) \cdot \sigma^{4/5} d_2^{(0)}(f_0, x_0).
	\end{align*}
	Similarly, $ n^{1/5}\abs{\mathcal{I}_n^{(1)}(c_\delta^{(1)}) }\rightsquigarrow \big(2 c_{\delta}^{(1)} / \sqrt{(h^\ast_{+} + h^\ast_{-})^3 } \big) \cdot \sigma^{2/5} d_2^{(1)}(f_0, x_0)$. \qed

\subsection{Proof of Theorem \ref{thm:pivotal_limit_mode}} \label{pf:pivotal_limit_mode}

	The high level idea of the proof is to mimic the arguments in the proof of Theorem \ref{thm:pivotal_limit_fcn} by processes centered at the (random) mode. This causes some technical complications as detailed below.

	We continue to consider the processes in the proof sketch of Theorem \ref{thm:limiting_dist_convex} but at anti-mode $x_0 \equiv m_0$, so that
	\begin{align*}
	(\widetilde{\mathbb{H}}_n^{\mathrm{loc}})^{(2)}(t)& = n^{2/5} \big\{ \widehat{f}_n( m_0+n^{-1/5}t)-f_0( m_0)-n^{-1/5}f_0'( m_0)t \big\}
	\cr
	&= n^{2/5} \big\{ \widehat{f}_n( m_0+n^{-1/5}t)-f_0( m_0) \big\}.
	\end{align*}
	Let $$ m_{f_0} \equiv \big[ \mathbb{H}^{(2)}(\cdot\,; f_0) \big]_{ \mathrm{m} } \qquad \mbox{and}\qquad \widetilde{m}_n^{\mathrm{loc}, (2)} \equiv \big[(\widetilde{\mathbb{H}}_n^{\mathrm{loc}})^{(2)}(\cdot)\big]_{\mathrm{m}} ,$$ so that $\widetilde{m}_n^{\mathrm{loc}, (2)} = n^{1/5}(\widehat{m}_n-  m_0)$.  By similar arguments as in~\cite[pp.~1327]{balabdaoui2009limit} for the mode of the MLE of a log-concave density, we have
	\begin{align*}
	\widetilde{m}_n^{\mathrm{loc}, (2)} = n^{1/5}\big(\widehat{m}_n- m_0\big)= \big[(\widetilde{\mathbb{H}}_n^{\mathrm{loc}})^{(2)}(\cdot)\big]_{\mathrm{m}} \rightsquigarrow \big[ \mathbb{H}^{(2)}(\cdot\,; f_0) \big]_{ \mathrm{m} } =  m_{f_0}.
	\end{align*}
	For notational convenience, let $\widehat{h}_{ \mathrm{m} ;+} \equiv n^{1/5}(\widehat{v}_{\mathrm{m}}-\widehat{m}_n)$ and $\widehat{h}_{ \mathrm{m} ;-}\equiv n^{1/5}(\widehat{m}_n- \widehat{u}_{\mathrm{m}})$. Then for any $ \mathfrak{w} $, similar to (\ref{ineq:pivotal_lim_fcn_1}), due to the convexity of $\widehat{f}_n$,
	\begin{align}\label{ineq:proof_mode_reg_1}
	\widehat{h}_{ \mathrm{m} ;\pm} <  \mathfrak{w}  
	&\Leftrightarrow 2\widehat{f}_n \big(\widehat{m}_n\pm n^{-1/5} \mathfrak{w} /2 \big)- \widehat{f}_n(\widehat{m}_n\pm n^{-1/5} \mathfrak{w} ) - \widehat{f}_n(\widehat{m}_n)<0, \nonumber \\
	&\Leftrightarrow  \widetilde{\Delta}_{n, \pm}^{\mathrm{loc}}(\mathfrak{w}) \equiv 2(\widetilde{\mathbb{H}}_n^{\mathrm{loc}})^{(2)}\big(  \widetilde{m}_n^{\mathrm{loc}, (2)}\pm \mathfrak{w} /2\big) - (\widetilde{\mathbb{H}}_n^{\mathrm{loc}})^{(2)}( \widetilde{m}_n^{\mathrm{loc}, (2)}\pm \mathfrak{w} ) \nonumber\\
	&\qquad\qquad \qquad \qquad - (\widetilde{\mathbb{H}}_n^{\mathrm{loc}})^{(2)}\big( \widetilde{m}_n^{\mathrm{loc}, (2)}\big) <0, 
	\end{align}
	and similar to (\ref{ineq:pivotal_lim_fcn_2}), on the event $E_{\mathrm{m}; n,\pm}(\epsilon)\equiv \{\widehat{h}_{ \mathrm{m} ;\pm}\geq \epsilon\}$, 
	\begin{align}\label{ineq:proof_mode_reg_2}
	&\widehat{h}_{ \mathrm{m} ;\pm} \leq   \mathfrak{w}  \\
	&\quad  \Leftrightarrow  \big\{\mathbb{H}_n^{\mathrm{loc}} \big(  \widetilde{m}_n^{\mathrm{loc}, (2)}+ \cdot \big) ,\mathbb{Y}_n^{\mathrm{loc}}\big(  \widetilde{m}_n^{\mathrm{loc}, (2)}+ \cdot \big)\big\}|_{\pm[0,\mathfrak{w}]} \in S_{\pm}(\mathfrak{w};\epsilon),\nonumber
	\end{align}
	where $S_{\pm}(\mathfrak{w};\epsilon)\equiv \big\{(h,y) \in \big(C(\pm[0,\mathfrak{w} ])\big)^2,  \sup_{t \in \pm[\epsilon,\mathfrak{w}]}\{y(t)-h(t)\}=0 \big\}$.
	
	We first show for any $T>0$,
	\begin{align}\label{ineq:pivotal_limit_mode_1}
	&(\widetilde{\mathbb{H}}_n^{\mathrm{loc}})^{(2)}\big(  \widetilde{m}_n^{\mathrm{loc}, (2)}+ \cdot \big)
	\rightsquigarrow \mathbb{H}^{(2)}\big(  m_{f_0}+ \cdot \,; f_0\big) \hbox{ in $C([-T,T])$}; 
	\\ \nonumber
	& \big( \mathbb{H}_n^{\mathrm{loc}} (  \widetilde{m}_n^{\mathrm{loc}, (2)}+ \cdot ) ,\mathbb{Y}_n^{\mathrm{loc}} (  \widetilde{m}_n^{\mathrm{loc}, (2)}+ \cdot ) \big) 
	\\ \nonumber
	& \hspace{4em} \rightsquigarrow \big( \mathbb{H} (  m_{f_0}+ \cdot;f_0) ,\mathbb{Y} (   m_{f_0}+ \cdot;f_0 ) \big) \hbox{ in $\big(C([-T,T])\big)^2$}. 
	\end{align}
	We only prove the first claim in (\ref{ineq:pivotal_limit_mode_1}); the second one is analogous. The main challenge to show the first claim of (\ref{ineq:pivotal_limit_mode_1}) is the fact that the process $(\widetilde{\mathbb{H}}_n^{\mathrm{loc}})^{(2)}(\cdot)$ is centered at the random point $\widetilde{m}_n^{\mathrm{loc}, (2)}$, which is different from the random center $ m_{f_0}$ of the limit process $\mathbb{H}^{(2)}(\cdot;f_0)$. 
	
	To this end, let $(\Omega_n,\mathcal{A}_n,P_n)$ be the probability space on which the process $(\widetilde{\mathbb{H}}_n^{\mathrm{loc}})^{(2)}$ is defined, and $(\Omega_\infty, \mathcal{A}_\infty, P_\infty)$ be the one for $\mathbb{H}^{(2)}(\cdot\,; f_0)$. By the uniform tightness of $ \widetilde{m}_n^{\mathrm{loc}, (2)}$ and $  m_{f_0}$, for any $\epsilon>0$, there exists some $K \equiv K(\epsilon)$ such that $\abs{ \widetilde{m}_n^{\mathrm{loc}, (2)}} \leq K/2$ holds on $E_{n,\epsilon} \subset \Omega_n$ and $ \abs{ m_{f_0}}\leq K/2$ holds on $E_{\infty,\epsilon}\subset \Omega_\infty$, with $P_n(E_{n,\epsilon})\wedge P_\infty(E_{\infty,\epsilon})\geq 1-\epsilon$. Note that $(\widetilde{\mathbb{H}}_n^{\mathrm{loc}})^{(2)}(\cdot) \rightsquigarrow \mathbb{H}^{(2)}(\cdot\,; f_0)$ in $C([-(T+K),(T+K)])$. By Skorokhod's representation theorem (see e.g.,~\cite[Theorem 6.7]{bilingsley1999convergence}), there exists another probability space $(\widetilde{\Omega},\widetilde{\mathcal{A}},\widetilde{P})$ and measurable maps $\phi_{n}: \widetilde{\Omega}\to \Omega_n$ with $P_{n} = \widetilde{P}\circ \phi_{n}^{-1}$ ($n\leq \infty$) such that with $\widetilde{H}_{n}^{(2)}\equiv (\widetilde{\mathbb{H}}_n^{\mathrm{loc}})^{(2)}\circ \phi_{n} $ and $\widetilde{H}^{(2)}\equiv \mathbb{H}^{(2)}(\cdot\,;f_0) \circ \phi_{\infty}$, the processes $\{\widetilde{H}_{n}^{(2)}\}, \widetilde{H}^{(2)}$ are all defined on $(\widetilde{\Omega}, \widetilde{\mathcal{A}}, \widetilde{P})$, and
	\begin{align*}
	\sup_{t \in [-(T+K),(T+K)]}\abs{\widetilde{H}_{n}^{(2)}(t)-\widetilde{H}^{(2)}(t)}\to 0
	\end{align*}
	on a $\widetilde{P}$-probability 1 event $\widetilde{E}_0$. Let $\widetilde{E}_1$ be the event on which the piecewise linear convex function $\widetilde{H}^{(2)}$ has a unique minimizer. By Lemma \ref{lem:uniqueness_H_2_minimzer}, $\widetilde{P}(\widetilde{E}_1) = 1$.
	
	Now we are ready to prove the first claim of (\ref{ineq:pivotal_limit_mode_1}) on the `good event' $\widetilde{E}_\epsilon\equiv \phi_n^{-1}(E_{n,\epsilon})\cap \phi_\infty^{-1}(E_{\infty,\epsilon})\cap \widetilde{E}_0 \cap \widetilde{E}_1$:
	\begin{align*}
	&\sup_{t \in [-T,T]} \big|\widetilde{H}_{n}^{(2)}( \big[ \widetilde{H}_n^{(2)}\big]_{ \mathrm{m} }  +t)-\widetilde{H}^{(2)}(\big[ \widetilde{H}^{(2)}\big]_{ \mathrm{m} } +t) \big|\\
	&\leq \sup_{t \in [-(T+K),(T+K)]} \abs{ \widetilde{H}_{n}^{(2)}(t)-\widetilde{H}^{(2)}(t) }\\
	&\qquad\qquad +\sup_{t \in [-T,T]} \big|\widetilde{H}^{(2)}( \big[ \widetilde{H}_n^{(2)}\big]_{ \mathrm{m} }  +t)-\widetilde{H}^{(2)}(\big[ \widetilde{H}^{(2)}\big]_{ \mathrm{m} } +t) \big|\to 0.
	\end{align*}
	The second term vanishes by the uniform continuity of $\widetilde{H}^{(2)}(\cdot)$ over compact sets and the fact that $\big[ \widetilde{H}_n^{(2)}\big]_{ \mathrm{m} }  \to \big[\widetilde{H}^{(2)}\big]_{ \mathrm{m} } $ on $\widetilde{E}_1$. 
	
    Putting the pieces together, for any bounded and Lipschitz function $\mathfrak{H}$ on $C([-T,T])$,
	\begin{align*}
	&\biggabs{ \E \mathfrak{H}\Big[(\widetilde{\mathbb{H}}_n^{\mathrm{loc}})^{(2)}\big(  \widetilde{m}_n^{\mathrm{loc}, (2)}+\cdot \big)\Big] -\E \mathfrak{H}\Big[\mathbb{H}^{(2)}\big(  m_{f_0}+\cdot \, ; f_0\big)\Big] }
	\\
	&\leq \biggabs{\widetilde{E} \Big\{\mathfrak{H}\big[\widetilde{H}_{n}^{(2)}\Big( \Big[ \widetilde{H}_n^{(2)}\big]_{ \mathrm{m} }  +\cdot\Big)\Big] -\mathfrak{H} \Big[\widetilde{H}^{(2)}\Big(\big[ \widetilde{H}^{(2)}\big]_{ \mathrm{m} } +\cdot\Big)\Big]\Big\}\bm{1}_{\widetilde{E}_\epsilon}  }+2 \pnorm{\mathfrak{H}}{\infty}\widetilde{P}(\widetilde{E}_\epsilon^c)
	\\
    &\leq \widetilde{E} \bigg\{2\pnorm{\mathfrak{H}}{\infty}\bigwedge  \bigg[\pnorm{\mathfrak{H}}{\mathrm{Lip}}\sup_{t \in [-T,T]} \biggabs{\widetilde{H}_{n}^{(2)}\Big( \big[ \widetilde{H}_n^{(2)}\big]_{ \mathrm{m} }  + t\Big)-\widetilde{H}^{(2)}\Big(\big[ \widetilde{H}^{(2)}\big]_{ \mathrm{m} } +t\Big)} \bm{1}_{\widetilde{E}_\epsilon} \bigg]\bigg\} \\
	&\qquad\qquad +4\pnorm{\mathfrak{H}}{\infty}\epsilon.
	\end{align*}
	where in the last inequality we used $\widetilde{P}(\widetilde{E}_\epsilon^c)\leq 2\epsilon$. Hence with $\mathrm{BL}_1(C([-T,T]))\equiv \{\mathfrak{H}: C([-T,T]) \to \R, \pnorm{\mathfrak{H}}{\infty}\vee \pnorm{\mathfrak{H}}{\mathrm{Lip}}\leq 1\}$, we have
	\begin{align*}
	\sup_{\mathfrak{H} \in \mathrm{BL}_1(C([-T,T])) }\biggabs{ \E \mathfrak{H} \Big[(\widetilde{\mathbb{H}}_n^{\mathrm{loc}})^{(2)}\big(  \widetilde{m}_n^{\mathrm{loc}, (2)}+\cdot \big)\Big] -\E \mathfrak{H}\Big[\mathbb{H}^{(2)}\big(  m_{f_0}+\cdot\,;\, f_0 \big)\Big]   }\to 0
	\end{align*}
	by first taking supremum over $\mathfrak{H} \in \mathrm{BL}_1(C([-T,T]))$, and then letting $n \to \infty$ followed by $\epsilon \downarrow 0$ in the previous display. This shows (\ref{ineq:pivotal_limit_mode_1}). Using again Skorokhod's representation theorem, we conclude the weak convergence of
	\begin{align*}
	\bigg( \widetilde{m}_n^{\mathrm{loc}, (2)},\,\widetilde{\Delta}_{n, \pm}^{\mathrm{loc}}(\mathfrak{w} - \varepsilon),\,\big\{\mathbb{H}_n^{\mathrm{loc}} \big(  \widetilde{m}_n^{\mathrm{loc}, (2)}+ \cdot \big) ,\mathbb{Y}_n^{\mathrm{loc}}\big(  \widetilde{m}_n^{\mathrm{loc}, (2)}+ \cdot \big)\big\}|_{\pm[0,\mathfrak{w}]} \bigg)
	\end{align*}
	in $\R^2\times \big(C(\pm[0,\mathfrak{w}])\big)^2$. Using the equivalence (\ref{ineq:proof_mode_reg_1})-(\ref{ineq:proof_mode_reg_2}) and similar arguments as in the proof of Theorem \ref{thm:pivotal_limit_fcn} along with $\lim_{\epsilon \downarrow 0}\limsup_{n \uparrow \infty} \Prob(E_{\mathrm{m};n,\pm}(\epsilon))=0$ where $E_{\mathrm{m};n,\pm}(\epsilon)$ is defined before (\ref{ineq:proof_mode_reg_2}), we conclude that
	\begin{align*}
	\big( \widetilde{m}_n^{\mathrm{loc}, (2)}, \widehat{h}_{ \mathrm{m} ;-},\widehat{h}_{ \mathrm{m} ;+}\big) \rightsquigarrow \big( m_{f_0},  h^\ast_{\mathrm{m};-}(f_0), h^\ast_{\mathrm{m};+}(f_0) \big),
	\end{align*}
	where  $ h^\ast_{\mathrm{m};-}(f_0)$ (resp.~$h^\ast_{\mathrm{m};+}(f_0) $) is the first kink of $\mathbb{H}^{(2)}(\cdot\,;f_0)$ to the left (resp.~right) of its anti-mode $ m_{f_0}$. As $h^\ast_{\mathrm{m};+}(f_0) + h^\ast_{\mathrm{m};-}(f_0) \in (0,\infty)$ a.s., by continuous mapping we have
	\begin{align*}
	\frac{\widehat{m}_n - m_0}{\widehat{v}_{\mathrm{m}}- \widehat{u}_{\mathrm{m}}} = \frac{n^{1/5}\big(\widehat{m}_n- m_0\big)}{\widehat{h}_{ \mathrm{m} ;+}+\widehat{h}_{ \mathrm{m} ;-}} \rightsquigarrow \frac{m_{f_0} }{h^\ast_{\mathrm{m};+}(f_0) + h^\ast_{\mathrm{m};-}(f_0) }.
	\end{align*}
	Now we check the distribution of the random variable in the far right hand side of the above display is pivotal with respect to the nuisance parameters. This follows from the arguments in the proof of Theorem \ref{thm:pivotal_limit_fcn}: Using the same notation therein, we have that 
	\begin{align*}
	\frac{m_{f_0}}{h^\ast_{\mathrm{m};+}(f_0) + h^\ast_{\mathrm{m};-}(f_0) }& = \frac{ \big[ \gamma_0\gamma_1^2\mathbb{H}^{(2)}(\gamma_1\cdot) \big]_{ \mathrm{m} }  }{ \big( h^\ast_{\mathrm{m};+} + h^\ast_{\mathrm{m};-} \big)/\gamma_1} = \frac{  \big[ \mathbb{H}^{(2)} \big]_{ \mathrm{m} }  }{ h^\ast_{\mathrm{m};+} + h^\ast_{\mathrm{m};-} },
	\end{align*}
	as desired. \qed

\begin{lemma}\label{lem:uniqueness_H_2_minimzer}
With probability $1$, the random piecewise linear convex function $\mathbb{H}^{(2)}_2$ defined in Theorem \ref{thm:limiting_dist_convex} has a unique minimizer.
\end{lemma}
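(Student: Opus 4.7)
My plan is to reduce non-uniqueness of the minimizer to a measure-zero coincidence involving Brownian functionals at the random touch points of $(\mathbb{H}_2,\mathbb{Y}_2)$, and then to dispose of this coincidence. Since $\mathbb{H}^{(2)}_2$ is piecewise linear, convex, and satisfies $\mathbb{H}^{(2)}_2(t)\to\infty$ as $|t|\to\infty$ (forced by $\mathbb{H}_2\geq\mathbb{Y}_2$ and the quartic growth of $\mathbb{Y}_2$), its set of minimizers is a closed bounded interval $[m_-,m_+]$. Non-uniqueness (i.e., $m_-<m_+$) occurs if and only if the piecewise constant nondecreasing function $\mathbb{H}^{(3)}_2$ equals zero on a nondegenerate subinterval. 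Because the jumps of $\mathbb{H}^{(3)}_2$ coincide with the (a.s.~locally finite) touch points $\{\tau_i\}$ of $(\mathbb{H}_2,\mathbb{Y}_2)$, this is equivalent to the condition that on some piece $(\tau_i,\tau_{i+1})$ the constant value $k_i\equiv \mathbb{H}^{(3)}_2\big|_{(\tau_i,\tau_{i+1})}$ equals zero.

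On each such piece, $\mathbb{H}_2$ is a cubic polynomial, uniquely pinned by the four matching conditions $\mathbb{H}_2(\tau_j)=\mathbb{Y}_2(\tau_j)$ and $\mathbb{H}_2'(\tau_j)=\mathbb{Y}_2'(\tau_j)$ for $j\in\{i,i+1\}$ (the derivative match being forced by first-order tangency since $\mathbb{H}_2\geq\mathbb{Y}_2$). Separating the deterministic drift $t^4$ from the Brownian part of $\mathbb{Y}_2$, a direct computation gives the closed form
\begin{align*}
k_i \;=\; 12(\tau_i+\tau_{i+1}) + \frac{12}{(\tau_{i+1}-\tau_i)^3}\Bigl[(\tau_{i+1}-\tau_i)\frac{\mathbb{B}(\tau_i)+\mathbb{B}(\tau_{i+1})}{2} - \int_{\tau_i}^{\tau_{i+1}}\mathbb{B}(s)\,\d{s}\Bigr].
\end{align*}
For any deterministic pair $\tau<\tau'$, the bracketed Brownian functional is a centered Gaussian with strictly positive variance, hence the event $\{k(\tau,\tau')=0\}$ has probability zero at every deterministic pair.

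The hard part is that the $\tau_i$ are themselves measurable functionals of $\mathbb{B}$, so the pointwise statement above does not yield the lemma via a naive Fubini. I would bridge this gap by stratifying the sample space according to the combinatorial type of $(\mathbb{H}_2,\mathbb{Y}_2)$: on the event that $(\mathbb{H}_2,\mathbb{Y}_2)$ admits exactly $N$ touch points in a compact window $[-T,T]$ with locations in a prescribed open region, the conditional law of the relevant Brownian increments given the touch locations remains absolutely continuous on an affine slice of full dimension, so the constraint $\{k_i=0\}$ cuts out a codimension-one set with conditional probability zero. A countable union over $N$, the combinatorial types, and compacta $T\uparrow\infty$, together with the a.s.~local finiteness of touch points (in the spirit of \cite[Corollary~2.1]{groeneboom2001canonical}), would then close the argument. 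A conceptually cleaner but technically heavier alternative would be a Malliavin-calculus route: viewing each $k_i$ as a Wiener functional with non-degenerate Malliavin derivative implies that its marginal distribution is absolutely continuous, so $\mathbb{P}(k_i=0)=0$ for each $i$, and a countable union over $i$ suffices.
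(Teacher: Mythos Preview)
Your reduction is correct: non-uniqueness of the minimizer of the convex piecewise linear function $\mathbb{H}^{(2)}_2$ is equivalent to one of the constant values $k_i$ of the nondecreasing step function $\mathbb{H}^{(3)}_2$ being exactly zero, and your cubic Hermite interpolation formula for $k_i$ in terms of $(\tau_i,\tau_{i+1},\mathbb{B})$ is right.

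The gap is precisely where you flag it, and neither of your two proposed closures is actually executed. For the stratification route, the assertion that ``the conditional law of the relevant Brownian increments given the touch locations remains absolutely continuous on an affine slice of full dimension'' is the entire difficulty, not a routine step: the touch points $\{\tau_i\}$ are determined by a global, implicit variational characterization involving the whole path of $\mathbb{B}$, and there is no evident decoupling that would leave residual non-degenerate Gaussian randomness after conditioning on them. For the Malliavin route, you would need the $\tau_i$ themselves to be Malliavin-differentiable with a tractable derivative (so that the Malliavin derivative of $k_i$ can be shown non-degenerate), and no such regularity result is available for the touch points of the invelope process. Both programs are plausible in spirit, but each would require substantial new machinery; as written, the proposal stops at the reduction.

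The paper's proof takes a completely different and much shorter route that avoids conditioning on, or differentiating, the touch points. The key observation is a shift equivariance of the invelope construction: replacing $\mathbb{B}$ by the drifted path $t\mapsto \mathbb{B}(t)-b t^{2}/2$ sends the pair $(\mathbb{H}_2,\mathbb{Y}_2)$ to $(\mathbb{H}_2-bt^{3}/6,\,\mathbb{Y}_2-bt^{3}/6)$, which still satisfies the defining characterization, leaves the touch points unchanged, and shifts $\mathbb{H}^{(3)}_2$ to $\mathbb{H}^{(3)}_2-b$. Hence ``$b$ lies in the range of $\mathbb{H}^{(3)}_2$'' for the original Brownian motion is the same event as ``$0$ lies in the range'' for the drifted one. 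After localizing the minimizer to a compact window (by tightness), the Cameron--Martin formula converts probabilities under the drifted measure into expectations against a density that is continuous in $b$; this forces the associated distribution function to be continuous in $b$, hence atomless at $b=0$. The drift trick trades your hard problem (conditional absolute continuity given random, $\mathbb{B}$-dependent touch points) for an easy one (absolute continuity of the path law under a deterministic drift), which is exactly what Cameron--Martin delivers.
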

\begin{proof}
Let the probability space be $(\Omega,\mathcal{A},\mathbb{P})$. Let $E$ be the event on which the piecewise linear convex function $\mathbb{H}^{(2)}_2$ has a unique minimizer. Note that $E = (S_0(\mathbb{B}))^c$ where, for $b \in \R$ and $K>0$,
\begin{align*}
S_{b,K}(\mathbb{B}) \equiv \{\omega \in \Omega: b \in \{\mathbb{H}^{(3)}_2(t,\omega):t \in [-K,K] \}\},\, S_{b}(\mathbb{B}) \equiv S_{b,\infty}(\mathbb{B});
\end{align*}
the dependence on $\mathbb{B}$, the two-sided Brownian motion, is emphasized as $\mathbb{H}^{(3)}_2$ depends on $\mathbb{B}$. Let  $T_{b,K}(\mathbb{B})\equiv \cup_{b'\leq b} S_{b',K}(\mathbb{B})$ and $T_{b}(\mathbb{B})\equiv T_{b,\infty}(\mathbb{B})$, so $S_b(\mathbb{B})\subset \cap_{b'< b}\{T_b(\mathbb{B})\setminus T_{b'}(\mathbb{B})\}$, and hence $\Prob(S_b(\mathbb{B}))\leq \lim_{b' \uparrow b}\{\Prob(T_b(\mathbb{B}))-\Prob(T_{b'}(\mathbb{B}))\}$. As the pair $(\mathbb{H}_2(t)-bt^3/6,  \mathbb{Y}_2(t)-bt^3/6)$ satisfies the characterization conditions of the envelope process in \cite{groeneboom2001canonical}, it is determined by the drifted Brownian motion $t\mapsto \mathbb{B}(t)-bt^2/2$.  Let $E_{K}$ be the event that $\mathbb{H}^{(2)}_2$ attains its minimum inside $[-K,K]$. By localization, for any $\epsilon>0$, there exists $K_\epsilon>0$ such that $\Prob(E_{K_\epsilon})\geq 1-\epsilon$. By Cameron-Martin formula,
\begin{align*}
\Prob(T_b(\mathbb{B})) &	\leq \Prob \bigg( \Big\{\omega \in \Omega: \exists b'\leq 0, b' \in \{\mathbb{H}^{(3)}_2(t,\omega)-b:t \in \R \} \Big\}\cap E_{K_\epsilon} \bigg)+\epsilon\\
&\leq  \Prob \big( \omega \in \Omega: \exists b'\leq 0, b' \in \{\mathbb{H}^{(3)}_2(t,\omega)-b:t \in [-K_\epsilon,K_\epsilon] \}  \big)+\epsilon \\
&=\Prob \big(T_{0,K_\epsilon}(\mathbb{B}(\cdot)-b(\cdot)^2/2)\big)+\epsilon\\
&=\E \big(\bm{1}_{T_{0,K_\epsilon}(\mathbb{B})}\cdot e^{-b\int_{-K_\epsilon}^{K_\epsilon}t\,\d{\mathbb{B}(t)}- b^2\int_{-K_\epsilon}^{K_\epsilon} t^2\,\d{t}/2 }\big)+\epsilon\equiv \Gamma_\epsilon(b)+\epsilon.
\end{align*}
On the other hand, $\Prob(T_b(\mathbb{B}))\geq \Prob \big(T_{0,K_\epsilon}(\mathbb{B}(\cdot)-b(\cdot)^2/2)\big)\geq \Gamma_\epsilon(b)$ by simply restricting the process to $[-K_\epsilon,K_\epsilon]$. Hence we have $\abs{\Prob(T_b(\mathbb{B}))-\Gamma_\epsilon(b)}\leq \epsilon$. It is easy to check that $\Gamma_\epsilon(b)$ is continuous in $b$, so $
\Prob(S_b(\mathbb{B}))\leq \lim_{b' \uparrow b}\{\Prob(T_b(\mathbb{B}))-\Prob(T_{b'}(\mathbb{B}))\}\leq  2\epsilon$. Letting $\epsilon \downarrow 0$ yields that $\Prob(S_b(\mathbb{B}))=0$ for any $b \in \R$, which proves the claim of the lemma as $\Prob(E)=\Prob((S_0(\mathbb{B}))^c)=1$.
\end{proof}

\subsection{Proof of Theorem \ref{thm:CI_mode}}\label{pf:CI_mode}
	It follows from the rescaling argument in the end of the proof of Theorem \ref{thm:pivotal_limit_mode} that
	\begin{align*}
	n^{1/5}\abs{\mathcal{I}_n^{\mathrm{m}} (c_\delta^{(0)}) } &= 2c_\delta^{\mathrm{m}} n^{1/5} (\widehat{v}_{\mathrm{m}}- \widehat{u}_{\mathrm{m}})
	\\ \nonumber
	&\rightsquigarrow 2c_\delta^{\mathrm{m}} \big( h^\ast_{\mathrm{m};+}(f_0) + h^\ast_{\mathrm{m};-}(f_0) \big)
	\\ \nonumber
	&=_d 2c_\delta^{\mathrm{m}} \big(h^\ast_{\mathrm{m};+} + h^\ast_{\mathrm{m};-}\big) \cdot \gamma_1^{-1}
	\\ \nonumber
	&= 2c_\delta^{\mathrm{m}} \big(h^\ast_{\mathrm{m};+} + h^\ast_{\mathrm{m};-}\big) \cdot \sigma^{2/5} d_2^{\mathrm{m}}(f_0),
	\end{align*}
	as desired. \qed

\section{Proof of results in Section \ref{section:other_convex}}

\subsection{Proof of Theorems \ref{thm:pivotal_limit_log_concave} and \ref{thm:pivotal_limit_mode_log_concave}}\label{pf:pivotal_limit_log_concave}

	The log-concave MLE $\widehat{f}_n=\exp(\widehat{\varphi}_n)$ can be characterized as follows using the notation of \cite{balabdaoui2009limit}. For a concave function $\varphi$, let
	\begin{align*}
	\mathbb{Y}_n(t)&\equiv \int_{-\infty}^{t} \mathbb{F}_n(s)\,\d{s},\qquad \mathbb{H}_n(t;\varphi ) \equiv \int_{-\infty}^{t} F_n(u;\varphi)\,\d{u},
	\end{align*}
	where $F_n(t;\varphi)\equiv \int_{-\infty}^{x} e^{\varphi (t)}\,\d{t}$. We write $\mathbb{H}_n(t)\equiv \mathbb{H}_n(t;\widehat{\varphi}_n)$ and $ \widehat{F}_n(t) \equiv F_n(t,\widehat{\varphi}_n)$ 
	for notational simplicity. Then $f=\exp(\varphi)$ with a piecewise linear concave $\varphi$ is the log-concave MLE if and only if $\mathbb{H}_n(t;\varphi)\leq \mathbb{Y}_n(t)$ with equality taken at kinks of $\varphi$ including the boundary points. In other words, $\mathbb{H}_n(t)\leq \mathbb{Y}_n(t)$ with equality taken at kinks of $\widehat{\varphi}_n$ including $X_{(1)}$ and $X_{(n)}$. The direction of the inequality is reversed as we work with concave rather than convex underlying functions. Define the local processes $\mathbb{Y}_n^{\mathrm{loc}},\mathbb{H}_n^{\mathrm{loc}}$ by
	\begin{align}\label{ineq:log_concave_1}
	\mathbb{Y}_n^{\mathrm{loc}}(t) &\equiv n^{4/5}\int_{x_0}^{x_0+n^{-1/5}t} \bigg[\mathbb{F}_n(v)-\mathbb{F}_n(x_0) \\
	&\qquad\qquad - \int_{x_0}^v \big(f_0(x_0)+(u-x_0)f_0'(x_0)\big) \,\d{u} \bigg] \,\d{v},\nonumber\\
	\mathbb{H}_n^{\mathrm{loc}}(t) &\equiv n^{4/5}\int_{x_0}^{x_0+n^{-1/5}t} \bigg[ \widehat{f}_n(v)-\widehat{f}_n(x_0) \nonumber\\
	&\qquad\qquad - \int_{x_0}^v \big(f_0(x_0)+(u-x_0)f_0'(x_0)\big) \,\d{u} \bigg]\,\d{v}+A_n +B_nt, \nonumber
	\end{align}
	where $A_n\equiv n^{4/5}( \mathbb{H}_n(x_0)-\mathbb{Y}_n(x_0))$ and $B_n \equiv n^{3/5}\big( \widehat{F}_n(x_0)-\mathbb{F}_n(x_0)\big)$ so that $\mathbb{H}_n^{\mathrm{loc}}(t)\leq \mathbb{Y}_n^{\mathrm{loc}}(t)$. As we wish to explore the underlying concavity of $\varphi_0$, we further define
	\begin{align}\label{ineq:log_concave_2}
	\mathbb{Y}_n^{\mathrm{locmod}}(t) &\equiv \frac{\mathbb{Y}_n^{\mathrm{loc}}(t)}{f_0(x_0)}-n^{4/5}\int_{x_0}^{x_0+n^{-1/5}t} \int_{x_0}^v \Psi_{n, f}(u)\,\d{u}\d{v},\\
	\mathbb{H}_n^{\mathrm{locmod}}(t) &\equiv \frac{\mathbb{H}_n^{\mathrm{loc}}(t)}{f_0(x_0)}-n^{4/5}\int_{x_0}^{x_0+n^{-1/5}t} \int_{x_0}^v \Psi_{n, f}(u)\,\d{u}\d{v} \nonumber\\
	& = n^{4/5}\int_{x_0}^{x_0+n^{-1/5} t} \int_{x_0}^v \Psi_{n, \varphi}(u) \,\d{u}\d{v}+A_n+B_nt,\nonumber
	\end{align}
	where
	\begin{align*}
	\Psi_{n, \varphi}(u) &\equiv \widehat{\varphi}_n(u)-\varphi_0(x_0)-(u-x_0)\varphi_0'(x_0),
	\cr
	\Psi_{n, f}(u)&\equiv \frac{1}{f_0(x_0)} \big(\widehat{f}_n(u)-f_0(x_0)-(u-x_0)f_0'(x_0)\big) -\Psi_{n, \varphi}(u) \\
	&= \sum_{\ell=2}^{\infty} \frac{1}{\ell!} \big(\widehat{\varphi}_n(u)-\varphi_0(x_0)\big)^\ell.
	\end{align*}
	Clearly we still have $\mathbb{H}_n^{\mathrm{locmod}}(t)\leq \mathbb{Y}_n^{\mathrm{locmod}}(t)$ with equality taken at kinks of $\widehat{\varphi}_n$ including $X_{(1)}$ and $X_{(n)}$, and
	\begin{align*}
	n^{2/5}\big(\widehat{\varphi}_n(x_0)-\varphi_0(x_0)\big)  &= (\mathbb{H}_n^{\mathrm{locmod}})^{(2)}(0),\\
	n^{1/5}\big(\widehat{\varphi}_n'(x_0)-\varphi_0'(x_0)\big)  &= (\mathbb{H}_n^{\mathrm{locmod}})^{(3)}(0).
	\end{align*}
	Now following a similar technique as  before, we only need to compute the limit of $\mathbb{Y}_n^{\mathrm{locmod}}$ and rescale the process. First note that, after localization (see~\cite[Lemma 4.5]{balabdaoui2009limit}), 
	\begin{align*}
	&n^{4/5}\int_{x_0}^{x_0+n^{-1/5}t} \int_{x_0}^v \Psi_{n, f}(u)\,\d{u}\d{v} \\
	&= \mathfrak{o}_{\mathbf{P}}(1)+n^{4/5}  \int_{x_0}^{x_0+n^{-1/5}t} \int_{x_0}^v \frac{1}{2} \big(\widehat{\varphi}_n(u)-\varphi_0(x_0)\big)^2\,\d{u}\d{v}\\
	& = \mathfrak{o}_{\mathbf{P}}(1)+n^{4/5}  \int_{x_0}^{x_0+n^{-1/5}t} \int_{x_0}^v \frac{1}{2} (u-x_0)^2(\varphi_0'(x_0))^2\,\d{u}\d{v} 
	\cr
	& = \mathfrak{o}_{\mathbf{P}}(1) + \frac{ (\varphi_0'(x_0))^2}{4!} t^4. 
	\end{align*}
	The $\mathfrak{o}_{\mathbf{P}}(1)$ is uniform for $t$ on compacta. Also a standard argument shows that
	\begin{align*}
	\mathbb{Y}_n^{\mathrm{loc}}(t)\rightsquigarrow \sqrt{f_0(x_0)} \int_0^t \mathbb{B}(s)\,\d{s} + \frac{f_0''(x_0)}{4!} t^4,\quad \textrm{ in }C([-K,K])
	\end{align*}
	for any $K>0$. This means that
	\begin{align*}
	\mathbb{Y}_n^{\mathrm{locmod}}(t)&\rightsquigarrow (f_0(x_0))^{-1/2} \int_0^t \mathbb{B}(s)\,\d{s} + \frac{t^4}{4!}\bigg(\frac{f_0''(x_0)}{f_0(x_0)}-(\varphi_0'(x_0))^2\bigg)\\
	& = (f_0(x_0))^{-1/2} \int_0^t \mathbb{B}(s)\,\d{s} + \frac{\varphi_0''(x_0)}{4!} t^4\equiv -\mathbb{Y}(t; f_0), \quad \textrm{ in }C([-K,K])
	\end{align*}
	for any $K>0$. Let $\mathbb{H}$ be the a.s.~uniquely determined piecewise cubic function that majorizes $\mathbb{Y}$ with touch points only at jumps of $\mathbb{H}^{(3)}$. Now we choose the scaling factors $\gamma_0,\gamma_1$ by
	\begin{align*}
	\gamma_0 \gamma_1^{3/2} = \frac{1}{\sqrt{f_0(x_0)}},\qquad \gamma_0 \gamma_1^4 = \frac{ \abs{\varphi_0''(x_0)} }{4!};
	\end{align*}
	then we have $\gamma_0 \mathbb{Y}(\gamma_1 t)=_d \mathbb{Y}(t; f_0), \mathbb{H}^{(2)}(t; f_0) = \gamma_0 \gamma_1^2 \mathbb{H}^{(2)}(\gamma_1 t), \mathbb{H}^{(3)}(t; f_0) = \gamma_0 \gamma_1^3 \mathbb{H}^{(3)}(\gamma_1 t)$ and $h^\ast_\pm = \gamma_1 h^\ast_\pm(f_0)$. Hence, following similar arguments as in \eqref{eqn:brownian_scaling} from the proof of Theorem \ref{thm:pivotal_limit_fcn}, we have
	\begin{align*}
	&\sqrt{n(\widehat{v}(x_0)-\widehat{u}(x_0))}\big(\widehat{\varphi}_n(x_0)-\varphi_0(x_0)\big)\\
	&\rightsquigarrow - \sqrt{h^\ast_+(f_0) + h^\ast_-(f_0)}\cdot \mathbb{H}^{(2)}(0; f_0)
	\cr
	& =_d - \sqrt{ \big( h^\ast_{+} + h^\ast_{-}\big)/\gamma_1  }\cdot \gamma_0\gamma_1^2\mathbb{H}^{(2)}(0) = -(f_0(x_0))^{-1/2} \cdot \mathbb{L}^{(0)}.
	\end{align*}
	Similarly
	\begin{align*}
	\sqrt{n(\widehat{v}(x_0)-\widehat{u}(x_0))^3 }\big(\widehat{\varphi}_n'(x_0)-\varphi_0'(x_0)\big) \rightsquigarrow -(f_0(x_0))^{-1/2} \cdot \mathbb{L}^{(1)}. 
	\end{align*}
	The claims for $\widehat{f}_n,\widehat{f}_n'$ follow from the delta method. For the mode, similar to the proof of Theorem \ref{thm:pivotal_limit_mode} and using notation therein, we have
	\begin{align*}
	&\frac{1}{\widehat{v}(\widehat{m}_n)-\widehat{u}(\widehat{m}_n)}\big(\widehat{m}_n- m_0\big)\\
	&\rightsquigarrow\frac{  m_{f_0} }{h^\ast_{\mathrm{m};+}(f_0) + h^\ast_{\mathrm{m};-}(f_0) } = \frac{ \big[ \gamma_0\gamma_1^2\mathbb{H}^{(2)}(\gamma_1\cdot) \big]_{ \mathrm{m} }  }{ \big( h^\ast_{\mathrm{m};+} + h^\ast_{\mathrm{m};-} \big)/\gamma_1} = \frac{ \big[ \mathbb{H}^{(2)} \big]_{ \mathrm{m} }  }{ h^\ast_{\mathrm{m};+} + h^\ast_{\mathrm{m};-} }.
	\end{align*}
	The rest of the claims follow as in the proof of Theorem \ref{thm:CI_fcn}. \qed

\subsection{Proof of Theorems \ref{thm:pivotal_limit_s_concave} and \ref{thm:pivotal_limit_mode_s_concave}}\label{pf:pivotal_limit_s_concave}

	The proof is similar to that of Theorems \ref{thm:pivotal_limit_log_concave} and \ref{thm:pivotal_limit_mode_log_concave} so we only give a sketch here. Let $\mathbb{Y}_{n,s}^{\mathrm{loc}}, \mathbb{H}_{n,s}^{\mathrm{loc}}$ be defined similarly as (\ref{ineq:log_concave_1}) by replacing $\widehat{\varphi}_n$ with $\widehat{\varphi}_{n,s}$, and $\mathbb{Y}_{n,s}^{\mathrm{locmod}}, \mathbb{H}_{n,s}^{\mathrm{locmod}}$ be defined similarly as (\ref{ineq:log_concave_2}) by replacing $\mathbb{Y}_{n}^{\mathrm{loc}}, \mathbb{H}_{n}^{\mathrm{loc}}$ with $\mathbb{Y}_{n,s}^{\mathrm{loc}}$, $\mathbb{H}_{n,s}^{\mathrm{loc}}$, and $\Psi_{n, \varphi}$, $\Psi_{n, f}$ with
	\begin{align*}
	\Psi_{n, \varphi, s}(u) &\equiv \widehat{\varphi}_{n, s}(u)-\varphi_s(x_0)-(u-x_0)\varphi_s'(x_0),
	\cr
	\Psi_{n,f, s}(u) &\equiv \frac{1}{f_0(x_0)} \big(\widehat{f}_n(u)-f_0(x_0)-(u-x_0)f_0'(x_0)\big) - \frac{(-r_s)}{\varphi_s(x_0)} \Psi_{n, \varphi, s}(u).
	\end{align*}
	Then in the proof of \cite[Theorem 6.4]{han2015approximation}, it is shown that
	\begin{align*}
	\mathbb{Y}_{n,s}^{\mathrm{locmod}}(t)&\rightsquigarrow (f_0(x_0))^{-1/2} \int_0^t \mathbb{B}(s)\,\d{s} + \frac{(-r_s)\varphi_s''(x_0)}{\varphi_s(x_0) 4!}t^4,\quad \textrm{ in }C([-K,K])
	\end{align*}
	for any $K>0$, so the scaling constants $\gamma_0,\gamma_1$ can be chosen as 
	\begin{align*}
	\gamma_0 \gamma_1^{3/2} = \frac{1}{\sqrt{f_0(x_0)}},\qquad \gamma_0 \gamma_1^4 = \frac{r_s\varphi_s''(x_0)}{\varphi_s(x_0) 4!}.
	\end{align*}
	The rest of the proof parallels that of Theorem \ref{thm:pivotal_limit_log_concave}. \qed

\subsection{Proof of Proposition \ref{prop:constat_log_s_concave}}\label{pf:constat_log_s_concave}
	It is easy to calculate that 
	\begin{align*}
	\varphi_s''(x_0)=\frac{1}{r_s^2} \varphi_s(x_0)\big(\varphi_0'(x_0)^2- r_s \varphi_0''(x_0)\big).
	\end{align*}
	Then
	\begin{align*}
	d_{2, \mathrm{sc}}^{(0)} (f_0,x_0) &= \bigg(\frac{f_0(x_0)^3 \varphi_0'(x_0)^2}{4!r_s}+\frac{f_0(x_0)^3 \abs{\varphi_0''(x_0)}}{4!}\bigg)^{1/5}> d_{2, \mathrm{lc}}^{(0)} (f_0,x_0),\\
	d_{2, \mathrm{sc}}^{(1)}(f_0,x_0)& = \bigg(\frac{f_0(x_0)^{4/3} \varphi_0'(x_0)^2}{4!r_s}+\frac{f_0(x_0)^{4/3} \abs{\varphi_0''(x_0)}}{4!}\bigg)^{3/5}> d_{2, \mathrm{lc}}^{(1)} (f_0,x_0),\\
	d_{2, \mathrm{sc}}^{\mathrm{m}}(f_0) & = \bigg(\frac{4!}{ \frac{1}{r_s} (f_0( m_0))^{1/2}\varphi_0'( m_0) + (f_0( m_0))^{1/2} \abs{\varphi_0''( m_0) }}\bigg)^{2/5}\\
	&= \bigg(\frac{4!}{(f_0( m_0))^{1/2} \abs{\varphi_0''( m_0) }}\bigg)^{2/5}= d_{2, \mathrm{lc}}^{\mathrm{m}}(f_0),
	\end{align*}
	where in the last equality we have used the fact that $\varphi_0'( m_0)=0$ at the mode. The limit over $s \uparrow 0$ is equivalent to $r_s \uparrow \infty$. This completes the proof. \qed

\subsection{Proof of Theorem \ref{thm:pivotal_limit_density}}\label{pf:pivotal_limit_density}
	We only sketch the proof. The MLE $\widehat{f}_n$ can be characterized as follows. For a convex nonincreasing density $f$, let
	\begin{align*}
	\mathbb{H}_n(t;f)\equiv \int_0^t \frac{t-u}{f(u)}\,\d{\mathbb{F}_n(u)}.
	\end{align*}
	\cite[Lemma 2.4]{groeneboom2001estimation} showed that a piecewise linear convex nonincreasing density $f$ is the MLE if and only if $\mathbb{H}_n(t;f)\leq t^2/2$ with equality taken at kinks of $f$. We write $\mathbb{H}_n(t)\equiv \mathbb{H}_n(t;\widehat{f}_n)$ for simplicity. Then $\mathbb{H}_n(t)\leq t^2/2$ with equality taken at kinks of the MLE $\widehat{f}_n$. 
	
	Define the local processes $\mathbb{Y}_n^{\mathrm{loc}},\mathbb{H}_n^{\mathrm{loc}}$ by 
	\begin{align*}
	\mathbb{Y}_n^{\mathrm{loc}}(t) &\equiv n^{4/5} f_0(x_0) \int_{x_0}^{x_0+ n^{-1/5} t} \int_{x_0}^v \bigg(\frac{f_0(u)-f_0(x_0)-(u-x_0)f_0'(x_0)}{ \widehat{f}_n(u)}\bigg)\,\d{u}\d{v}\\
	&\qquad\qquad + n^{4/5} f_0(x_0) \int_{x_0}^{x_0+n^{-1/5}t} \int_{x_0}^v \frac{1}{\widehat{f}_n(u)}\,\d{(\mathbb{F}_n-F_0)(u)}\d{v},\\
	\mathbb{H}_n^{\mathrm{loc}}(t) &\equiv n^{4/5} f_0(x_0) \int_{x_0}^{x_0+ n^{-1/5} t} \int_{x_0}^v \bigg(\frac{\widehat{f}_n (u)-f_0(x_0)-(u-x_0)f_0'(x_0)}{ \widehat{f}_n(u)}\bigg)\,\d{u}\d{v}\\
	&\qquad\qquad +A_n+B_n t,
	\end{align*}
	where $A_n\equiv -n^{4/5}f_0(x_0)\big(\mathbb{H}_n(x_0)-x_0^2/2\big)$, $B_n\equiv -n^{3/5}f_0(x_0)\big(\mathbb{H}_n'(x_0)-x_0\big)$ and $F_0$ is the true distribution function. Some tedious calculations show that $\mathbb{H}_n^{\mathrm{loc}}(t)\geq \mathbb{Y}_n^{\mathrm{loc}}(t)$ with equality taken where $x_0+n^{-1/5}t$ is a kink of $\widehat{f}_n$, and
	a standard argument yields that
	\begin{align*}
	\mathbb{Y}_n^{\mathrm{loc}}(t)\rightsquigarrow \sqrt{f_0(x_0)} \int_0^t \mathbb{B}(s)\,\d{s} + \frac{f_0''(x_0)}{4!} t^4\equiv \mathbb{Y}(t; f_0),\quad \textrm{ in }C([-K,K])
	\end{align*}
	for any $K>0$. These calculations can be found in the proof of \cite[Theorem 6.2]{groeneboom2001estimation}. Let $\mathbb{H}(\cdot\,; f_0)$ be the a.s.~uniquely determined piecewise cubic function that majorizes $\mathbb{Y}(\cdot\,; f_0)$ with touch points only at jumps of $\mathbb{H}^{(3)}(\cdot; f_0)$. Using similar scaling arguments as in the proof of Theorem \ref{thm:pivotal_limit_log_concave} by choosing $\gamma_0,\gamma_1$ such that
	\begin{align*}
	\gamma_0 \gamma_1^{3/2} = \sqrt{f_0(x_0)},\qquad \gamma_0 \gamma_1^4 = \frac{f_0''(x_0)}{4!},
	\end{align*}
	we may conclude the pivotal limit distribution theory. The rest of the claims follow from the same proof technique as in Theorem \ref{thm:CI_fcn}.
	
	If $\widehat{f}_n$ is the LSE, we re-define the processes as
	\begin{align*}
	\mathbb{Y}_n(t) & \equiv \int_{0}^t \mathbb{F}_n(u)\,\d{u},
	\cr
	\mathbb{H}_n(t) & \equiv \int_{0}^t \int_{0}^v \widehat{f}_n(u)\,\d{u} \d{v},
	\cr
	\mathbb{Y}_n^{\mathrm{loc}}(t) &\equiv n^{4/5} \int_{x_0}^{x_0 + n^{-1/5}t} \Big[ \mathbb{F}_n(v) - \mathbb{F}_n(x_0) - 
	\cr
	&\hspace{12em} \int_{x_0}^v \big( f_0(x_0) + (u - x_0) f_0'(x_0) \big)\,\d{u} \Big]\,\d{v},
	\cr
	\mathbb{H}_n^{\mathrm{loc}}(t) &\equiv n^{4/5} \int_{x_0}^{x_0 + n^{-1/5}t} \int_{x_0}^v \bigg[\widehat{f}_n(u) - f_0(x_0) - (u - x_0) f_0'(x_0) \,\d{u}\d{v}\bigg]
	\cr
	& \hspace{12em} + A_n + B_n t,
	\end{align*}
	where
	\begin{align*}
	A_n = n^{4/5} \big\{ \mathbb{H}_n(x_0) - \mathbb{Y}_n(x_0) \big\} \hbox{ and } B_n = n^{3/5} \Big\{ \int_{0}^{x_0} \widehat{f}_n(u)\,\d{u} - \mathbb{F}_n(x_0) \Big\}.
	\end{align*}
	We omit the rest of the proof as it is almost the same as before for the MLE. \qed

\subsection{Proof of Theorem \ref{thm:pivotal_limit_hazard}}\label{pf:pivotal_limit_hazard}

	Following \cite{jankowski2009nonparametric}, we consider $x_0$ such that $h_0'(x_0)<0$. Then $\widehat{h}_n$ satisfies that
	\begin{align*}
	\int_0^x \int_0^v \frac{1}{\widehat{h}_n(u)}\,\d{\widetilde{ \mathbb{F} }_n(u)}\,\d{v}\leq \int_0^x \int_0^v (1-\mathbb{F}_n(u))\,\d{u}\d{v}
	\end{align*}
	for all $x\geq 0$ with equality taken at kinks of $\widehat{h}_n$ with negative slope (see~\cite[Lemma 2.2]{jankowski2009nonparametric}). Here $\widetilde{\mathbb{F}}_n(t) = n^{-1} \sum_{i=1}^{n-1} \bm{1}_{ \{ 0 \le X_{(i)} \le t \}} $. Define the local processes $\overline{\mathbb{Y}}_n^{\mathrm{loc}},\overline{\mathbb{H}}_n^{\mathrm{loc}}$ by
	\begin{align*}
	&\overline{\mathbb{Y}}_n^{\mathrm{loc}}(t)  \equiv n^{4/5} \frac{h_0(x_0)}{1-F_0(x_0)} \bigg\{\\
	&\qquad  \int_{x_0}^{x_0+n^{-1/5}t} \int_{x_0}^v \bigg[\frac{h_0(u)-h_0(x_0)-(u-x_0)h_0'(x_0)}{\widehat{h}_n(u) }\bigg] (1-\mathbb{F}_n(u))\,\d{u}\d{v}\\
	&\qquad \qquad + \int_{x_0}^{x_0+n^{-1/5}t} \int_{x_0}^v \frac{1-\mathbb{H}_n(u)}{\widehat{h}_n(u)}\,\d{ (\mathbb{H}_n^\ast(u)-H_0(u)) }\d{v}\bigg\},\\
	& \overline{\mathbb{H}}_n^{\mathrm{loc}}(t) \equiv n^{4/5} \frac{h_0(x_0)}{1-F_0(x_0)} \bigg\{\\
	&\qquad  \int_{x_0}^{x_0+n^{-1/5}t} \int_{x_0}^v \bigg[\frac{\widehat{h}_n(u)-h_0(x_0)-(u-x_0)h_0'(x_0)}{\widehat{h}_n(u) }\bigg] (1-\mathbb{F}_n(u))\,\d{u}\d{v}\bigg\}\\
	&\qquad\qquad +A_n+B_n t,
	\end{align*}
	where 
	\begin{align*}
	\d{\mathbb{H}_n(u)}&\equiv \big(1-\mathbb{F}_n(u-)\big)^{-1}\,\d{\mathbb{F}_n(u)},\\
	\d{\mathbb{H}_n^\ast(u)}&\equiv \frac{1-\mathbb{F}_n(u-)}{1-\mathbb{F}_n(u)}\,\d{\mathbb{H}_n(u)} = \frac{1}{1-\mathbb{F}_n(u)}\,\d{\mathbb{F}_n(u)},\\
	A_n &\equiv -n^{4/5} \frac{h_0(x_0)}{1-F_0(x_0)} \bigg(\int_0^{x_0} \int_0^v  \frac{1}{\widehat{h}_n(u)}\,\d{\mathbb{F}_n(u)}\d{v}-\int_{0}^{x_0} \int_0^v (1-\mathbb{F}_n(u))\,\d{u}\d{v} \bigg),\\
	B_n &\equiv -n^{3/5}\frac{h_0(x_0)}{1-F_0(x_0)}  \bigg(\int_0^{x_0}  \frac{1}{\widehat{h}_n(u)}\,\d{\mathbb{F}_n(u)}-\int_0^{x_0} (1-\mathbb{F}_n(u))\,\d{u} \bigg).
	\end{align*}
	Some tedious calculations show that $\overline{\mathbb{H}}_n^{\mathrm{loc}}(t)\geq \overline{\mathbb{Y}}_n^{\mathrm{loc}}(t)$ with equality taken at kinks of $\widehat{h}_n$ with negative slope. We consider a slight modification of the local process $\overline{\mathbb{H}}_n^{\mathrm{loc}}$ by replacing $\mathbb{F}_n$ in the integrand by $F_0$:
	\begin{align*}
	\mathbb{H}_n^{\mathrm{loc}}(t) &\equiv n^{4/5} \frac{h_0(x_0)}{1-F_0(x_0)} \bigg\{\\
	&\qquad  \int_{x_0}^{x_0+n^{-1/5}t} \int_{x_0}^v \bigg[\frac{\widehat{h}_n(u)-h_0(x_0)-(u-x_0)h_0'(x_0)}{\widehat{h}_n(u) }\bigg] (1-F_0(u))\,\d{u}\d{v}\bigg\}\\
	&\qquad\qquad +A_n+B_n t,\\
	\mathbb{Y}_n^{\mathrm{loc}}(t) &  = \overline{\mathbb{Y}}_n^{\mathrm{loc}}(t)+ \mathbb{H}_n^{\mathrm{loc}}(t) - \overline{\mathbb{H}}_n^{\mathrm{loc}}(t).
	\end{align*}
	It is easy to show that $\mathbb{H}_n^{\mathrm{loc}}- \overline{\mathbb{H}}_n^{\mathrm{loc}}\to 0$ a.s.~on compacta, and hence combined with the limit for $\overline{\mathbb{Y}}_n^{\mathrm{loc}}$ derived in \cite[pp.~1030-1031]{jankowski2009nonparametric}, we have
	\begin{align*}
	\mathbb{Y}_n^{\mathrm{loc}}(t)\rightsquigarrow \sqrt{\frac{h_0(x_0)}{1-F_0(x_0)}} \int_{0}^{t} \mathbb{B}(s)\,\d{s} + \frac{h_0''(x_0)}{4!} t^4,\quad \textrm{ in }C([-K,K])
	\end{align*}
	for any $K>0$, and $\mathbb{H}_n^{\mathrm{loc}}(t)\geq \mathbb{Y}_n^{\mathrm{loc}}(t)$ with equality taken at kinks of $\widehat{h}_n$ with negative slope. As the process $\mathbb{H}_n^{\mathrm{loc}}$ can be localized with arbitrarily high probability with touch points occuring at kinks of $\widehat{h}_n$ with negative slope, the rest of the arguments parallel the same pattern as before. In particular, let the scaling constants $\gamma_0,\gamma_1$ be chosen as
	\begin{align*}
	\gamma_0 \gamma_1^{3/2} = \sqrt{\frac{h_0(x_0)}{1-F_0(x_0)}},\qquad \gamma_0 \gamma_1^4 = \frac{h_0''(x_0)}{4!}.
	\end{align*}
	Then repeating the arguments in the proof of Theorem \ref{thm:pivotal_limit_log_concave} we obtain the asymptotically pivotal LNE theory. The rest of the claims follow from the same proof as in the proof of Theorem \ref{thm:CI_fcn}. \qed

\subsection{Proof of Theorem \ref{thm:pivotal_limit_deconvolution}}\label{pf:pivotal_limit_deconvolution}
	The LSE $\widehat{s}_n$ can be characterized as follows. Let
	\begin{align*}
	\mathbb{Y}_n(t)&\equiv \int_0^t U_n(u)\,\d{u},\\
	\mathbb{H}_n(t)&\equiv \int_0^t \bigg[\int_0^u\widehat{s}_n(v)\,\d{v} -  \int \widehat{s}_n^2 + \int \widehat{s}_n\,\d{U_n} \bigg]\d{u}\equiv \int_0^t S_n(u)\,\d{u}. 
	\end{align*}
	By \cite[Theorem 2.10]{jongbloed2009estimating}, $\mathbb{Y}_n(t)\leq \mathbb{H}_n(t)$ if and only if $t$ is a kink of $\widehat{s}_n$. Define the local processes $\mathbb{Y}_n^{\mathrm{loc}}, \mathbb{H}_n^{\mathrm{loc}}$ by
	\begin{align*}
	\mathbb{Y}_n^{\mathrm{loc}} (t) &\equiv n^{4/5}\int_{x_0}^{x_0+ n^{-1/5}t} \bigg[U_n(v)-U_n(x_0)-\\
	&\qquad\qquad\qquad\qquad - \int_{x_0}^v \big(s_0(x_0)+(u-x_0)s_0'(x_0)\big)\,\d{u}\bigg]\d{v},\\
	\mathbb{H}_n^{\mathrm{loc}}(t) &\equiv n^{4/5}\int_{x_0}^{x_0+ n^{-1/5}t} \bigg[S_n(v)-S_n(x_0)-\\
	&\qquad\qquad\qquad\qquad - \int_{x_0}^v \big(s_0(x_0)+(u-x_0)s_0'(x_0)\big)\,\d{u}\bigg]\d{v}\\
	&\quad +A_n +B_nt,
	\end{align*}
	where $A_n = n^{4/5}(\mathbb{H}_n(x_0)-\mathbb{Y}_n(x_0))$ and $B_n = n^{3/5} (\mathbb{H}_n'(x_0)-\mathbb{Y}_n'(x_0))$. Then $\mathbb{Y}_n^{\mathrm{loc}} (t)\leq \mathbb{H}_n^{\mathrm{loc}} (t)$ with equality taken where $x_0 + n^{-1/5}t$ is a kink of $\widehat{s}_n$. By the proof of \cite[Theorem 6.1]{jongbloed2009estimating},
	\begin{align*}
	\mathbb{Y}_n^{\mathrm{loc}} (t) \rightsquigarrow \frac{\sqrt{g_0(x_0)} }{k(0)} \int_0^t \mathbb{B}(s)\,\d{s} + \frac{s_0''(x_0)}{4!} t^4, \quad \textrm{ in }C([-K,K])
	\end{align*}
	for any $K>0$. The rest of the proof for the pivotal limit distribution theory is similar to that of Theorem \ref{thm:pivotal_limit_log_concave} by choosing the scaling factors $\gamma_0,\gamma_1$ such that $\gamma_0 \gamma_1^{3/2} = \sqrt{g_0(x_0)}/k(0)$ and $\gamma_0\gamma_1^4 = s_0''(x_0)/4!$. Hence the details are omitted. For the rest of the statements, it suffices to show that $\widehat{g}_n(x_0)\to_p g_0(x_0)$. As $n^{1/5}\big(\widehat{v}(x_0)-x_0\big)$ and $n^{1/5}\big(x_0-\widehat{u}(x_0)\big)$ converge to limiting random variables that put mass $0$ at $0$, so with probability at least $1-\epsilon$, there exists some $c=c_\epsilon>1$ such that
	\begin{align*}
	\bigabs{\widehat{g}_n(x_0)-g_0(x_0)} &\leq \sup_{ \substack{u\leq x_0\leq v,\\c^{-1} \leq n^{1/5}(v-x_0)\leq c, \\ c^{-1} \leq n^{1/5}(x_0-u)\leq c} } \biggabs{\frac{1}{v-u}\big(\G_n-G_0)(\bm{1}_{[u,v]})}\\
	& \qquad + \sup_{ \substack{u\leq x_0\leq v,\\ c^{-1} \leq n^{1/5}(v-x_0)\leq c, \\ c^{-1} \leq n^{1/5}(x_0-u)\leq c} }\biggabs{ \frac{1}{v-u}\int_u^v g_0(t)\,\d{t}-g_0(x_0)}.
	\end{align*}
	The second term is of order $\mathfrak{o}(1)$ by continuity of $g_0$ at $x_0$, so we only need to handle the first term which equals
	\begin{align*}
	&\sup_{h_1,h_2 \in [c^{-1},c]} \biggabs{\frac{1}{h_1+h_2} (\G_n-G_0)(n^{1/5}\bm{1}_{[x_0-h_1 n^{-1/5},x_0+h_2 n^{-1/5}] }) }\\
	&\leq (c/2) \sup_{h_1,h_2 \in [c^{-1},c]} \abs{\sqrt{n}(\G_n-G_0) (f_{n,h_1,h_2})},
	\end{align*}
	where $f_{n,h_1,h_2}\equiv n^{1/5-1/2}\bm{1}_{[x_0-h_1 n^{-1/5},x_0+h_2 n^{-1/5}] }$. Let $\mathcal{F}_n\equiv \{f_{n,h_1,h_2}: h_1,h_2 \in [c^{-1},c]\}$. Then $\mathcal{F}_n$ is VC-subgraph with index uniformly bounded in $n$, and has an envelope $F_n = f_{n,c,c}$. 
	As $G_0 F_n^2\leq \pnorm{g_0}{\infty} n^{2/5-1} \cdot 2c n^{-1/5} \to 0$, the above display converges in probability to $0$ by, e.g., \cite[Theorem 2.14.1]{van1996weak}. This completes the proof. \qed

\section{Proof of results in Section \ref{section:tail_estimate}}

\subsection{Proof of Theorem \ref{thm:tail_uniform}}\label{pf:tail_uniform}
	We consider the LSE in the convex nonincreasing density model with 
	\begin{align*}
	f_0(x) = b_0^{-1}\Big[(1-x)+  \alpha^{-1} 16^{-\alpha}  (x-x_0)^{\alpha}\Big]\bm{1}_{[0,\gamma_\alpha]},
	\end{align*}
	where $\gamma_\alpha = \inf \big\{ x>0: (1-x)+  \alpha^{-1} 16^{-\alpha} (x-x_0)^{\alpha}=0 \big\}$, and $x_0=1/2$. Here $b_0=b_0(\alpha)$ is a normalizing constant making $\int_0^{\gamma_\alpha} f_0 =1$, and $\gamma_\alpha \in [1, 3/2]$, so we have
	\begin{align*}
	b_0 &= \int_0^{\gamma_\alpha}\{ (1-x)+ \alpha^{-1} 16^{-\alpha}   (x- 1/2)^\alpha\} \,\d{x}\\
	& = \big(\gamma_\alpha-\gamma_\alpha^2/2\big)+ \frac{  \alpha^{-1} 16^{-\alpha}  }{(\alpha+1)} \big[(\gamma_\alpha-1/2)^{ \alpha+1 }-(1/2)^{ \alpha+1 } \big].
	\end{align*}
	Hence $3/8 \leq b_0 \leq 1/2 + 1/\big(\alpha(\alpha+1)16^{\alpha} \big) \leq 1$ as $\alpha \ge 2$. Let $r_n \equiv n^{-1/(2\alpha+1)}$.
	
	\smallskip
	\noindent (\textbf{Step 1}). Let $\{x_n \in I_n\}$ be a sequence of possibly random points, where $I_n \subset (0,\gamma_\alpha)$ is a non-random interval of length $Mr_n$, $M\geq 1$. Let $[\widehat{u},\widehat{v}]$, where
	$\widehat{v}\equiv \widehat{v}(x_n)$ and $\widehat{u}\equiv \widehat{u}(x_n)$, be the maximal interval containing $x_n$ on which $\widehat{f}_n$ is linear. 
	
	Using the same arguments as in \cite[pp.~1678]{groeneboom2001estimation}, we have 
	\begin{align*}
	-\int_{\widehat{u}}^{\widehat{v}} f_{\widehat{u},\widehat{v}}(x) f_0(x)\,\d{x}+ \abs{\mathbb{U}_n(\widehat{u},\widehat{w})}+\abs{\mathbb{U}_n(\widehat{w},\widehat{v})}\geq 0,
	\end{align*}
	where $f_{u,v}(x)= |x - (u+v)/2| - (v-u)/4$, $\mathbb{U}_n(u,v) = \int_u^v \big(x-(u+v)/2\big)\,\d{(\mathbb{F}_n-F_0)(x)}$ and $\widehat{w} = (\widehat{u} + \widehat{v})/2$. As $f_{\widehat{u},\widehat{v}}(x)$ is symmetric with respect to $x = \widehat{w}$,
	\begin{align*}
	& \int_{\widehat{u}}^{\widehat{v}} f_{\widehat{u},\widehat{v}}(x) f_0(x)\,\d{x} 
	\cr
	&= b_0^{-1} \int_{\widehat{u}}^{\widehat{v}} (1- \widehat{w} + \widehat{w} - x) f_{\widehat{u}, \widehat{v}}(x)\,\d{x} + b_0^{-1} \alpha^{-1} 16^{-\alpha}  \int_{\widehat{u}}^{\widehat{v}} f_{\widehat{u},\widehat{v}}(x)(x-x_0)^\alpha\,\d{x} \\
	& = 0 + b_0^{-1} \alpha^{-1} 16^{-\alpha}  \sum_{0\leq \beta \leq \alpha} \binom{\alpha}{\beta}(\widehat{w}-x_0)^{\alpha-\beta} \int_{\widehat{u}}^{\widehat{v}} f_{\widehat{u},\widehat{v}}(x)(x-\widehat{w})^\beta\,\d{x}\\
	& = b_0^{-1} \alpha^{-1} 16^{-\alpha}  \sum_{0\leq \beta \leq \alpha} \binom{\alpha}{\beta}(\widehat{w}-x_0)^{\alpha-\beta} \cdot 2 \int_{\widehat{u}}^{\widehat{w}} \Big( \widehat{w} - x - \frac{\widehat{v} - \widehat{u}}{4} \Big) (x-\widehat{w})^\beta\,\d{x}
	\cr
	& = \sum_{2\leq \beta\leq \alpha: \beta\textrm{ even}} \frac{b_0^{-1} \alpha^{-1} 16^{-\alpha}  \beta\binom{\alpha}{\beta}}{2^{\alpha +2}(\beta+1)(\beta+2)} \Big( \frac{ 2\widehat{w} - 2x_0}{\widehat{v} - \widehat{u} }\Big)^{\alpha - \beta} \cdot (\widehat{v}-\widehat{u})^{ \alpha +2} 
	\cr
	& \ge \frac{b_0^{-1}  16^{-\alpha} }{2^{\alpha +2}( \alpha +1)(\alpha +2)} \cdot (\widehat{v}-\widehat{u})^{ \alpha +2} \equiv  c_0 (\widehat{v}-\widehat{u})^{\alpha+2},
	\end{align*}
	where the last inequality follows from the fact that all summands are positive as $\alpha$ is even. Hence
	\begin{align}
	c_0 (\widehat{v}-\widehat{u})^{\alpha+2}\leq \abs{\mathbb{U}_n(\widehat{u},\widehat{w})}+\abs{\mathbb{U}_n(\widehat{w},\widehat{v})},
	\end{align}
	By choosing $\epsilon = c_0/4$, we see that 
	\begin{align*}
	0&\leq \big( |\mathbb{U}_n(\widehat{u},\widehat{w})| -\epsilon (\widehat{w}-\widehat{u})^{\alpha+2} \big)_+ + \big( |\mathbb{U}_n(\widehat{w},\widehat{v})| - \epsilon (\widehat{v}-\widehat{w})^{\alpha+2}\big)_+ \\
	&\qquad + (2\epsilon/2^{\alpha+2})(\widehat{v}-\widehat{u})^{\alpha+2} - c_0(\widehat{v}-\widehat{u})^{\alpha+2}\\
	&\leq \big( |\mathbb{U}_n(\widehat{u},\widehat{w})|-\epsilon (\widehat{w}-\widehat{u})^{\alpha+2} \big)_+ + \big( | \mathbb{U}_n(\widehat{w},\widehat{v})| - \epsilon (\widehat{v}-\widehat{w})^{\alpha+2}\big)_+
	\cr
	& \qquad - (c_0/2)(\widehat{v}-\widehat{u})^{\alpha+2}.
	\end{align*}
	As $c_0^{1/(\alpha+2)}$ stays away from $0$ and $\infty$ for all $\alpha$, Lemma \ref{lem:drifted_ep_size} with $s=0$ and $z_0(x,y)=(x+y)/2$ then implies the following: For any sequence $\{x_n \in I_n \}$ where $I_n \subset (0,\gamma_\alpha)$ is a non-random interval of length $Mr_n$($M\geq 1$), there exist absolute constants $t_0>0,C_1>0$ such that if $t\geq M^{1/2}t_0$, 
	\begin{align}\label{ineq:tail_uniform_1}
	\Prob\big( r_n^{-1}(\widehat{v}(x_n)-\widehat{u}(x_n)) >t^{1/(\alpha+2)}\big) \leq C_1 e^{-t^{1/2}/C_1}.
	\end{align}
	The exact numerical value of $t_0$ may change from line to line in the proof below.
	
	\smallskip
	\noindent (\textbf{Step 2}). This step is inspired by the proof of \cite[Lemma 4.3]{groeneboom2001estimation} but now using (\ref{ineq:tail_uniform_1}) and Lemma \ref{lem:drifted_ep_size} with $s=\alpha$. We replicate some details for the convenience of the readers. Let $\{\xi_n \in I_n\}$ be a sequence of possibly random points, where $\xi_n \pm r_n \in I_n$ and $I_n \subset (0, \gamma_{\alpha})$ is a non-random interval of length $Mr_n$ ($M\geq 1$). Applying (\ref{ineq:tail_uniform_1}) to $x_n = \xi_n \pm r_n$, we have with $\tau_n^-\equiv \widehat{u}(\xi_n-r_n), \tau_n^+\equiv \widehat{v}(\xi_n+r_n)$, it holds with probability at least $1-2C_1 e^{-t^{1/2}/C_1}$ that, $\xi_n- (t^{1/(\alpha+2)}+1)r_n\leq \tau_n^-\leq \xi_n - r_n < \xi_n + r_n\leq \tau_n^+\leq \xi_n+(t^{1/(\alpha+2)}+1)r_n$ for $t \geq M^{1/2} t_0$, where $t_0>0$ is a large absolute constant. In particular, $ 2r_n \leq \tau_n^{+} - \tau_n^{-} \leq 2r_n(t^{1/(\alpha + 2)} + 1)$. On the other hand, as both $\widehat{f}_n$ and $f_0$ are continuous, it holds on the event $\{\inf_{x \in [\tau_n^-,\tau_n^+]} \abs{\widehat{f}_n(x)-f_0(x)}\geq t r_n^\alpha \}$ that 
	\begin{align*}
	\biggabs{ \int_{\tau_n^-}^{\tau_n^+} \big(\widehat{f}_n(x)-f_0(x)\big)(\tau_n^+-x)\,\d{x} } \geq tr_n^\alpha (\tau_n^+-\tau_n^-)^2/2\geq (2t) r_n^{\alpha+2}.
	\end{align*}
	Recall the $\widehat{f}_n$ is the LSE if and only if 
	\begin{align*}
	\mathbb{H}_n(t) = \int_{0}^t \int_0^v \widehat{f}_n(u)\,\d{u} \d{v} \ge  \mathbb{Y}_n(t) = \int_0^t \mathbb{F}_n(v)\,\d{v}
	\end{align*}
	with equality taken at kink points (see \cite[Lemma 2.2]{groeneboom2001estimation}). As a result, $\mathbb{F}_n(v) = \int_{0}^v \widehat{f}_n(u)\,\d{u}$ when $v$ is a kink and therefore
	\begin{align*}
	\int_{\tau_n^-}^{\tau_n^+} \big(\widehat{f}_n(x)-f_0(x)\big)(\tau_n^+-x)\,\d{x} = \int_{\tau_n^-}^{\tau_n^+} (\tau_n^+-x)\,\d{ (\mathbb{F}_n-F_0)(x)}.
	\end{align*}
	Together with Lemma \ref{lem:drifted_ep_size} below (with $s=\alpha$ and $z_0(x,y) = y $), it follows that with probability at least $1-C_2 e^{-t^{1/2}/C_2}$, for $t \geq M^{1/2} t_0, M\geq 1$ with some large absolute constant $t_0>0$, 
	\begin{align*}
	&(2t) r_n^{\alpha+2} \leq \biggabs{\int_{\tau_n^-}^{\tau_n^+} (\tau_n^+-x)\,\d{ (\mathbb{F}_n-F_0)(x)} }\leq t r_n^{\alpha+2} + r_n^\alpha (\tau_n^+-\tau_n^-)^{2},
	\end{align*}
	which requires $ \tau_n^+-\tau_n^- \geq r_n \sqrt{t}$. This leads to a contradiction to $\tau_n^{+} - \tau_n^{-} \leq 2 r_n(t^{1/(\alpha + 2)} + 1)$ for large absolute $t$.

	In other words, for any (possibly random) sequence $\{\xi_n \in I_n\}$ where $\xi_n \pm r_n \in I_n$ and $I_n \subset (0, \gamma_{\alpha})$ is a non-random interval of length $Mr_n$($M\ge1$), there exist absolute constants $t_0>0,C_3>0$ such that if $t\geq M^{1/2}t_0$, 
	\begin{align}\label{ineq:tail_uniform_2}
	&\Prob\bigg(\inf_{x \in [\widehat{u}(\xi_n-r_n),\widehat{v}(\xi_n+r_n)]} \abs{\widehat{f}_n(x)-f_0(x)}\geq tr_n^\alpha\bigg) \leq C_3e^{-t^{1/2}/C_3}.
	\end{align}

	\smallskip
	\noindent (\textbf{Step 3}). This step is inspired by the proof of \cite[Lemma 4.4]{groeneboom2001estimation}. Let $\sigma_{n,1}$ be the first kink of $\widehat{f}_n$ to the right of $x_0+Lr_n$,  and $\xi_{n,1} \equiv \sigma_{n,1}+2r_n$. Let $\widehat{u}_{n,1} \equiv  \widehat{u}(\xi_{n,1}-r_n), \widehat{v}_{n,1} \equiv  \widehat{v}(\xi_{n,1}+r_n)$. Let $\sigma_{n,2}\equiv \widehat{v}(\widehat{v}_{n,1}+r_n)$, $\xi_{n,2}\equiv \sigma_{n,2}+2r_n$, $\widehat{u}_{n,2} \equiv  \widehat{u}(\xi_{n,2}-r_n)$ and $\widehat{v}_{n,2} \equiv  \widehat{v}(\xi_{n,2}+r_n)$.

	Fix $t > 0$. For each $n$, let $I_n \equiv [x_0 + Lr_n, x_0 + (L + M)r_n]$ where $M \equiv 3t^{1/(\alpha+2)}+ 7$. When $n$ is large enough, we have $x_0 + (L + M + t^{1/(\alpha + 2)}) r_n < 1 \le \gamma_{\alpha}$, so that $I_n \subset (0, \gamma_{\alpha})$. By repeated application of (\ref{ineq:tail_uniform_1}) with $x_n$ being $x_0 + Lr_n$, $\xi_{n,1} + r_n$, $\widehat{v}_{n,1} + r_n$ and $\xi_{n,2}  +r_n$, on an event $E_1$ with probability at least $1-C_4e^{-t^{1/2}/C_4}$, $\widehat{v}_{n,2}\leq x_0+(L + M + t^{1/(\alpha+2)}) r_n  < 1$ for $n$ large enough. By (\ref{ineq:tail_uniform_2}) with $\xi_n$ being $\xi_{n,1}$ and $\xi_{n,2}$,	if $t^{(\alpha-1)/(\alpha+2)} \geq M^{1/2}t_0$, on $E_1 \cap E_2$ where event $E_2$ has probability at least $1-2C_3 e^{- t^{(\alpha-1)/2(\alpha+2)}/C_3}$, there exist $\pi_{n,i} \in  [\widehat{u}_{n,i}, \widehat{v}_{n,i}] (i=1,2)$ such that $\abs{\widehat{f}_n(\pi_{n,i})-f_0(\pi_{n,i})}\leq t^{(\alpha-1)/(\alpha+2)} r_n^\alpha$. Hence on $E_1\cap E_2$,  if $t^{(\alpha-1)/(\alpha+2)}\geq M^{1/2}t_0$,
	\begin{align*}
	&\widehat{f}_n'(x_0+Lr_n)\leq \frac{\widehat{f}_n(\pi_{n,2})-\widehat{f}_n(\pi_{n,1})}{\pi_{n,2}-\pi_{n,1} } \\
	&\leq \frac{f_0(\pi_{n,2})-f_0(\pi_{n,1})+2t^{(\alpha-1)/(\alpha+2)} r_n^\alpha}{\pi_{n,2}-\pi_{n,1} }
	\\
	&\leq f_0'(\pi_{n,2})+ 2t^{(\alpha-1)/(\alpha+2)} r_n^{\alpha-1}
	\\
	&\leq f_0'(x_0) + b_0^{-1} (\alpha^{-1} 16^{-\alpha}) \alpha (\pi_{n,2}-x_0)^{\alpha-1}+2t^{(\alpha-1)/(\alpha+2)} r_n^{\alpha-1} \\
	&\leq f_0'(x_0) +  b_0^{-1} 16^{-\alpha}  \cdot (L+M + t^{1/(\alpha+2)})^{\alpha-1} r_n^{\alpha-1}+2t^{(\alpha-1)/(\alpha+2)} r_n^{\alpha-1}
	\\
	&\leq f_0'(x_0)+16^{1-\alpha}(4t^{1/(\alpha+2)}+ L + 7)^{\alpha-1} r_n^{\alpha-1}+2t^{(\alpha-1)/(\alpha+2)} r_n^{\alpha-1}.
	\end{align*}
	The above arguments show that for any choice of $L >0$ and $t > t_0$ such that $t^{(\alpha-1)/(\alpha+2)}\geq (3t^{1/(\alpha+2)}+ 7)^{1/2}t_0$ for a certain absolute constant $t_0$, we have by symmetry 
	\begin{align*}
	&\Prob \bigg( r_n^{-(\alpha-1)} \sup_{ |u - x_0| \le Lr_n
	}\abs{\widehat{f}_n'(u)-f_0'(x_0)}\\
    &\quad \quad > 16^{1-\alpha}(4t^{1/(\alpha+2)}+ L + 7)^{\alpha-1} + 2t^{(\alpha-1)/(\alpha+2)} \bigg)\leq C_5e^{- t^{(\alpha-1)/(2\alpha + 4)} /C_5}. \nonumber
	\end{align*}
	Here $C_5>0$ is an absolute constant. In particular, if we choose $L=2t^{1/(\alpha+2)}+3$, we have for $t\geq t_0$ (where $t_0\geq 1$ is a large enough absolute constant different from the previous display), it holds for $n\geq n_0(t,\alpha)$ where $n_0(t,\alpha)\in \N$ that
	\begin{align}\label{ineq:tail_uniform_3}
	\Prob\bigg( r_n^{-(\alpha-1)} \sup_{	|u - x_0| \leq (2t^{1/(\alpha+2)}+3)r_n
	}\abs{\widehat{f}_n'(u)-f_0'(x_0)}> 3t \bigg)\leq C_5 e^{-t^{1/2}/C_5}.
	\end{align}
	
	\smallskip
	
	\noindent (\textbf{Step 4}).  Let $\sigma_{n,+} \equiv \widehat{v}(x_0)$ and $\sigma_{n,-} \equiv \widehat{u}(x_0)$. Define $\xi_{n,\pm} \equiv \sigma_{n, \pm} \pm 2r_n$, $\widehat{u}_{n, \pm } = \widehat{u}(\xi_{n, \pm } - r_n)$ and $\widehat{v}_{n, \pm } = \widehat{v}(\xi_{n, \pm } + r_n)$. We set $I_n \equiv [x_0 - (t^{1/(\alpha + 2)}+3)r_n, x_0 + (t^{1/(\alpha + 2)}+3)r_n]$ so that $M \equiv 2t^{1/(\alpha + 2)}+6 $. It holds for large enough $n$ that $I_n \subset (0, \gamma_{\alpha})$. Then, by repeated application of \eqref{ineq:tail_uniform_1} on an event $E_3$ with probability at least $1-C_6 e^{-t^{1/2}/C_6}$, $ (\widehat{v}_{n,+}-x_0) \vee (x_0 - \widehat{u}_{n, -})
	\leq (2t^{1/(\alpha+2)}+3)r_n$. By \eqref{ineq:tail_uniform_2} with $\xi_n = \xi_{n, \pm}$,	on $E_3 \cap E_4$ where event $E_4$ has probability at least $1 - 2 C_3 e^{-t^{1/2}/C_3}$, there exists
	$\pi_{n,\pm} \in  [\widehat{u}_{n,\pm}, \widehat{v}_{n,\pm}]$ such that $\abs{\widehat{f}_n(\pi_{n,\pm})-f_0(\pi_{n,\pm})}\leq t r_n^\alpha$ when $t \ge M^{1/2}t_0$.  Using (\ref{ineq:tail_uniform_3}), on $E_3 \cap E_5$ where event $E_5$ has probability at least $1-C_5 e^{-t^{1/2}/C_5}$, $\sup_{| u - x_0| \le (2t^{1/(\alpha+2)} + 3)r_n
	}\abs{\widehat{f}_n'(u)-f_0'(x_0)}\leq 3t r_n^{\alpha-1}$ holds for $n$ large enough. Hence on the event $\cap_{j=3}^5 E_j$,
	\begin{align*}
	\widehat{f}_n(x_0)&\geq \widehat{f}_n(\pi_{n,+})+\widehat{f}_n'(\pi_{n,+})(x_0-\pi_{n,+})\\
	&\geq f_0(\pi_{n,+})-tr_n^\alpha+ \big(f_0'(x_0)+ 3t r_n^{\alpha-1} \big)(x_0-\pi_{n,+})\\
	&\geq f_0(x_0) +(\pi_{n,+}-x_0) f_0'(x_0)+(x_0-\pi_{n,+})f_0'(x_0) - \big(10t+ 6t^{\frac{\alpha + 3}{\alpha + 2} } \big) r_n^{\alpha} \\
	&\ge f_0(x_0) - K t^{5/4} r_n^{\alpha}.
	\end{align*}
	Reversely, 
	\begin{align*}
	\widehat{f}_n(x_0)&\leq \widehat{f}_n(\pi_{n,-})+ \frac{ \widehat{f}_n(\pi_{n,+})-\widehat{f}_n(\pi_{n,-}) }{\pi_{n,+}-\pi_{n,-}}(x_0-\pi_{n,-})\\
	& \leq f_0(\pi_{n,-})+ tr_n^\alpha+ \frac{ f_0(\pi_{n,+})-f_0(\pi_{n,-})+2tr_n^\alpha }{\pi_{n,+}-\pi_{n,-}}(x_0-\pi_{n,-})\\
	& = f_0(x_0)+b_0^{-1}\Big[-\big(\pi_{n,-}-x_0\big)+ \alpha^{-1} 16^{-\alpha}  (\pi_{n,-}-x_0)^\alpha\Big] + tr_n^{\alpha} \\
	&\qquad + \frac{x_0-\pi_{n,-} }{\pi_{n,+}-\pi_{n,-}}\bigg\{ f_0(x_0)+b_0^{-1}\Big[-\big(\pi_{n,+}-x_0\big)+ \alpha^{-1} 16^{-\alpha}  (\pi_{n,+}-x_0)^\alpha\Big]
	\\
	&\qquad\qquad -f_0(x_0)-b_0^{-1}\Big[-\big(\pi_{n,-}-x_0\big)+ \alpha^{-1} 16^{-\alpha} (\pi_{n,-}-x_0)^\alpha\Big] + 2tr_n^{\alpha} \bigg\}
	\\
	&\qquad+tr_n^\alpha+2tr_n^{\alpha} \cdot \frac{x_0-\pi_{n,-}}{\pi_{n,+}-\pi_{n,-}}
	\\
	& \le  f_0(x_0) + b_0^{-1} \alpha^{-1} 16^{-\alpha} \Big[ (\pi_{n,-}-x_0)^\alpha + \frac{x_0-\pi_{n,-}}{\pi_{n,+}-\pi_{n,-}} \Big( (\pi_{n,+}-x_0)^\alpha + 2tr_n^\alpha \Big) \Big]  \\
	& \leq f_0(x_0) + K t r_n^\alpha,
	\end{align*}
	for some absolute constant $K>0$. Hence we have proved that 
	when $t \ge (2  t^{1/(\alpha + 2)} + 6)^{1/2}t_0$ or equivalently when $t \ge t_0$ for a different large enough absolute constant $t_0$, it holds for $n\geq n_1(t,\alpha)$ where $n_1(t,\alpha)\in \N$ that
	\begin{align}
	\Prob\bigg(r_n^{-\alpha}\abs{\widehat{f}_n(x_0)-f_0(x_0)}>t^{5/4}\bigg)\leq C_7 e^{-t^{1/2}/C_7}. 
	\end{align}
	Here $C_7>0$ is an absolute constant.
	
	\smallskip
	\noindent (\textbf{Step 5}). Finally we take limits: by Portmanteau theorem, for $t\geq t_0$, where $t_0\geq 1$ is a large enough absolute constant that does not depend on $\alpha$, 
	\begin{align*}
	&\Prob \big( h^\ast_{\alpha;+}(f_0) + h^\ast_{\alpha;-}(f_0) >t\big) 
	\cr
	&\leq \liminf_{n \to \infty} \Prob\big(\abs{r_n^{-1}(\widehat{v}(x_0)-\widehat{u}(x_0)}>t\big)\leq C_1 e^{-t^{(\alpha+2)/2}/C_1}\leq C_1 e^{-t^2/C_1},
	\end{align*}
	and
	\begin{align*}
	&\Prob \big(\abs{ \mathbb{H}_\alpha^{(2)}(0; f_0) } > t\big)\leq \liminf_{n \to \infty} \Prob\big(r_n^{-\alpha}\abs{\widehat{f}_n(x_0)-f_0(x_0)}>t\big)\leq C_7 e^{-t^{2/5}/C_7},\\
	& \Prob \big(\abs{\mathbb{H}_\alpha^{(2)}(0; f_0) } > t\big)\leq \liminf_{n \to \infty} \Prob\big(r_n^{-(\alpha-1)}\abs{\widehat{f}_n'(x_0)-f_0'(x_0)}>t\big)\leq C_5 e^{-t^{1/2}/C_5}.
	\end{align*}
	The constants above do not depend on $\alpha$.	Now to translate these estimates to the canonical processes. Let $\gamma_0,\gamma_1>0$ be such that
	\begin{align*}
	\gamma_0\gamma_1^{3/2} = \sqrt{f_0(x_0)} = \sqrt{1/2b_0},\quad \gamma_0\gamma_1^{\alpha+2} = \frac{f_0^{(\alpha)}(x_0)}{(\alpha+2)!} =  \frac{ b_0^{-1} \alpha^{-1} 16^{-\alpha}  }{(\alpha+1)(\alpha+2)}.
	\end{align*}
	Then $\mathbb{H}_\alpha^{(2)}(0), \mathbb{H}_\alpha^{(3)}(0), h^\ast_{\alpha;\pm}$ are related to their canonical versions through
	\begin{align*}
	\mathbb{H}_\alpha^{(2)}(t; f_0) = \gamma_0 \gamma_1^2 \mathbb{H}_{\alpha}^{(2)}(\gamma_1 t),\,\mathbb{H}_\alpha^{(3)}(t; f_0) = \gamma_0 \gamma_1^3 \mathbb{H}_{\alpha}^{(3)}(\gamma_1 t),\,h^\ast_{\alpha; \pm}(f_0) = \gamma_1 h^\ast_{\alpha;\pm}.
	\end{align*}
	It is easy to solve that
	\begin{align*}
	\gamma_0\gamma_1^2 &= (\sqrt{1/2b_0})^{\frac{2\alpha}{2\alpha+1} } \bigg(\frac{b_0^{-1} \alpha^{-1} 16^{-\alpha} }{(\alpha+1)(\alpha+2)}\bigg)^{\frac{1}{2\alpha+1}},\\
	\gamma_0\gamma_1^3& = (\sqrt{1/2b_0})^{\frac{2\alpha-2}{2\alpha+1} } \bigg(\frac{b_0^{-1} \alpha^{-1} 16^{-\alpha} }{(\alpha+1)(\alpha+2)}\bigg)^{\frac{3}{2\alpha+1}},\\
	\gamma_1 &= \bigg(\frac{b_0^{-1} \alpha^{-1} 16^{-\alpha} }{(\alpha+1)(\alpha+2)} \bigg)^{\frac{2}{2\alpha+1}},
	\end{align*}
	which stay bounded away from $0$ and $\infty$ for all $\alpha$'s. The claim easily follows. \qed

\subsection{Proof of Corollary \ref{cor:unif_tail_pivot}}\label{pf:unif_tail_pivot}

	Note that by definition of $\mathbb{L}_\alpha^{(0)}$, 
	\begin{align*}
	\Prob\big(\abs{\mathbb{L}_\alpha^{(0)} }>t\big) &= \Prob\bigg(\biggabs{ \sqrt{ h^\ast_{\alpha;+} + h^\ast_{\alpha;-} }\cdot \mathbb{H}_{\alpha}^{(2)}(0)}>t\bigg)\\
	&\leq \Prob\bigg(\sqrt{ h^\ast_{\alpha;+} + h^\ast_{\alpha;-} }>t^{1/2} \bigg)+\Prob\big(\abs{ \mathbb{H}_{\alpha}^{(2)}(0) } >t^{1/2}\big).
	\end{align*}
	The desired tail bound now follows by Theorem \ref{thm:tail_uniform}. A similar argument works for $\mathbb{L}_\alpha^{(1)}$. \qed

\section{Technical lemmas}

\begin{lemma}\label{lem:drifted_ep_size}
	Fix any measurable function $z_0:\R\times \R\to \R$ such that $x\leq z_0(x,y)\leq y$ for $x\leq y$. Let
	\begin{align*}
	\mathbb{U}_n(x,y; z_0) = \int_x^y \big( z - z_0(x,y) \big)\,\d{(\mathbb{F}_n-F_0)(z)},
	\end{align*}
	where $\mathbb{F}_n$ is the empirical distribution function based on i.i.d.~observations with distribution function $F_0$ with a uniformly bounded Lebesgue density function. Let $r_n \equiv  n^{-1/(2\alpha+1)}$ and $M\geq 1$. For small enough $\epsilon>0$, there exists some $t_0 = t_0(\epsilon)$ such that for $t\geq M^{1/2} t_0$, we have for any $0\leq  s \leq \alpha$, and any interval $I$ of length $Mr_n$ contained in the support of $F_0$, 
	\begin{align*}
	\Prob\bigg(\sup_{\substack{x,y: x\leq x_0\leq y, \\ r_n\leq y - x\leq 1, \\ x_0 \in I}} r_n^{-(\alpha+2)}\Big( |\mathbb{U}_n(x,y ; z_0)| -\epsilon\cdot r_n^s (y-x)^{\alpha+2-s}\Big)_+>t\bigg)\leq Ce^{-t^{1/2}/C}.
	\end{align*}
	Here the constant $C>0$ does not depend on $\alpha,s,M,I$. 
\end{lemma}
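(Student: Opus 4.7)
The plan is to prove the bound by a dyadic peeling argument in the interval length $h \equiv y-x$, combined with Talagrand's concentration inequality applied on each scale. First, I would partition the admissible range $[r_n, 1]$ of $h$ into dyadic blocks $\mathcal{H}_j \equiv [2^j r_n, 2^{j+1}r_n)$ for $j = 0, 1, \ldots, \lceil \log_2(1/r_n) \rceil$. On each block, the threshold $\epsilon r_n^s h^{\alpha+2-s} + t r_n^{\alpha+2}$ fluctuates within a bounded factor, so it suffices to bound, for each $j$, the probability that $\sup|\mathbb{U}_n(x,y;z_0)|$ exceeds $u_j \equiv \tfrac{1}{2}(\epsilon\, 2^{j(\alpha+2-s)} + t) r_n^{\alpha+2}$, with $(x,y)$ ranging over pairs with $x \le x_0 \le y$, $x_0 \in I$, and $y-x \in \mathcal{H}_j$.

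To this end, I would work with the function class $\mathcal{F}_j \equiv \{f_{x,y}(z) \equiv (z - z_0(x,y))\bm{1}_{[x,y]}(z)\}$. Writing $f_{x,y} = g_{x,y,c}$ with $g_{x,y,c}(z) = (z-c)\bm{1}_{[x,y]}(z)$ and $c = z_0(x,y) \in [x,y]$, the class is contained in a three-parameter VC-subgraph family of uniformly bounded index (independent of $j,\alpha,s$). It carries envelope $H_j \lesssim 2^j r_n$ and, using the bounded Lebesgue density of $F_0$, variance $\sigma_j^2 \equiv \sup_{f \in \mathcal{F}_j} P f^2 \lesssim 2^{3j} r_n^3$. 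Talagrand's inequality (say, the Bousquet formulation) then yields, for universal constants $K_1, K_2$,
\begin{align*}
\Prob\bigg(\sup_{f \in \mathcal{F}_j} |(P_n - P) f| > K_1 \E \sup_{f \in \mathcal{F}_j} |(P_n - P) f| + u\bigg) \leq 2\exp\bigg(-\frac{n u^2}{K_2(\sigma_j^2 + H_j u)}\bigg).
\end{align*}
Standard uniform-entropy bounds give $\E \sup_{f \in \mathcal{F}_j} |(P_n - P) f| \lesssim \sqrt{\sigma_j^2/n}\cdot \sqrt{\log(M \vee 2^j)} + H_j \log(M \vee 2^j)/n$, of order $2^{3j/2} r_n^{\alpha+2}$ up to logarithmic factors. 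The condition $t \ge M^{1/2} t_0(\epsilon)$ is used precisely to ensure $u_j$ dominates this expectation uniformly in $j$.

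Using the key identity $n r_n^{2\alpha+1} = 1$, the Bernstein exponent simplifies to
\begin{align*}
\frac{n u_j^2}{K_2 \sigma_j^2} \gtrsim \frac{(\epsilon\, 2^{j(\alpha+2-s)} + t)^2}{2^{3j}}.
\end{align*}
Since $s \leq \alpha$, we have $2(\alpha+2-s) - 3 \geq 1$, so the infimum over $j$ of the right-hand side is attained at the crossover $2^j \asymp (t/\epsilon)^{1/(\alpha+2-s)}$ and equals $\gtrsim \epsilon^{3/2}\, t^{1/2}$; the exponent on $t$ is smallest, and equal to $1/2$, precisely in the worst case $s = \alpha$. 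Outside the crossover the exponent grows geometrically in $|j - j^\ast|$, so summing over the $O(\log(1/r_n))$ dyadic blocks produces a geometric series bounded by $C e^{-t^{1/2}/C}$. The Bernstein linear contribution $H_j u_j / \sigma_j^2 \to 0$ as $r_n \downarrow 0$, so the sub-Gaussian regime governs the tail.

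The main obstacle will be carefully isolating the dependence of $\E\sup_{f \in \mathcal{F}_j}|(P_n - P)f|$ on $M$, $j$, $\alpha$, and $s$, so that the polylogarithmic factors entering the VC-entropy estimate are uniformly absorbable by $t \geq M^{1/2} t_0(\epsilon)$. This is precisely why the exponent has the form $t^{1/2}$ rather than $t$: the square root leaves room to absorb $\sqrt{\log M}$-type factors without losing the sub-exponential form. A secondary concern is the small-$n$ regime where $r_n \asymp 1$; this can be dispatched by noting that $|\mathbb{U}_n|$ is trivially bounded by $h \le 1$ while the threshold $tr_n^{\alpha+2}$ becomes comparable, so the constants in $C$ can be chosen to handle it absolutely.
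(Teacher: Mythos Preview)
Your proposal is correct and follows essentially the same route as the paper: dyadic peeling in $y-x$, a VC-subgraph envelope/variance computation on each shell, Talagrand's (Bousquet) concentration, and a geometric sum over scales. Two small differences worth noting: the paper controls $\E\sup_{f\in\mathcal{F}_j}|(P_n-P)f|$ via the envelope bound $n^{-1/2}\|F\|_{L_2(P)}$ (so no $\sqrt{\log(M\vee 2^j)}$ factors appear and the $M^{1/2}$ dependence enters directly through $\E F_R^2 \asymp R^2(R\vee Mr_n)$), and it retains the Bernstein linear term rather than declaring it negligible, splitting the exponent into a $\min$ of two pieces and summing each; your asymptotic dismissal of that term would need to be replaced by this explicit bookkeeping to get a bound valid for all $n$.
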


\begin{proof}
	Define $S_{I}(a, b) = \{(x, y): x \le x_0 \le y, a \le y -x \le b, x_0 \in I\}$. Let $\mathcal{F}_R\equiv \{f_{x,y}(z)\equiv \big(z- z_0(x,y)
	\big) \bm{1}_{ \{z \in [x,y] \} }: 	(x, y) \in S_{I}(0, R)
	\}$. Then an envelope of $\mathcal{F}_R$ is given by $F_R=R  \bm{1}_{ \{ z \in [I_\ell-R,I_u+R] \}} $ where $[I_\ell,I_u]=I$, and hence $\E F_R^2 = \mathcal{O} (R^2(R \vee Mr_n))$. By a standard empirical process bound (see e.g.,~\cite[Theorem 2.14.1]{van1996weak}) upon noting that the class $\mathcal{F}_R$ is VC-subgraph, we have
	\begin{align*}
	\E \sup_{(x,y) \in S_I(0, R)
	} \abs{\mathbb{U}_n(x,y;z_0)}=\E \sup_{f \in \mathcal{F}_R} \abs{(\Prob_n-P)(f)}\lesssim n^{-1/2}\big( R^2(R \vee Mr_n)\big)^{1/2}.
	\end{align*}
	Hence with $L_n$ being the smallest integer such that $r_n2^{L_n-1}>1$, for any $1\leq \ell \leq L_n$, we have 
	\begin{align*}
	&\E \sup_{(x, y) \in S_I(0, 2^{\ell}r_n)
	} \abs{\mathbb{U}_n(x,y;z_0)}\leq K n^{-1/2} \big(2^{2\ell} (2^\ell \vee M) r_n^3\big)^{1/2} = K 2^{\ell}(2^\ell \vee M)^{1/2} r_n^{\alpha+2},\\
	&\sup_{f \in \mathcal{F}_{2^\ell r_n} }  \mathrm{Var}_{P} (f)  = \mathcal{O}\big( 2^{3\ell} r_n^3\big).
	\end{align*}
	Here $K>0$ is an absolute constant. Hence 
	\begin{align*}
	&\Prob\bigg(\sup_{(x, y) \in S_I(r_n, 1) } r_n^{-(\alpha+2)}\big(\abs{\mathbb{U}_n(x,y;z_0)}-\epsilon\cdot r_n^s(y-x)^{\alpha+2-s}\big)_+>t\bigg)\\
	&\leq \sum_{\ell=1}^{L_n }\Prob\bigg(\sup_{ (x, y) \in S_{I}(2^{\ell-1}r_n, 2^{\ell} r_n \wedge 1) 
	} r_n^{-(\alpha+2)}\big(\abs{\mathbb{U}_n(x,y;z_0)}-\epsilon\cdot r_n^s(y-x)^{\alpha+2-s}\big)_+>t\bigg)\\
	&\leq \sum_{\ell=1}^{L_n } \Prob\bigg(\sup_{ (x, y) \in S_I(2^{\ell-1}r_n, 2^{\ell} r_n \wedge 1) 
	} r_n^{-(\alpha+2)}\abs{\mathbb{U}_n(x,y;z_0)} >t+\epsilon \cdot 2^{(\ell-1)(\alpha+2-s)}\bigg)\\
	& \leq \sum_{\ell=1}^{L_n } \Prob\bigg(\sup_{f \in \mathcal{F}_{2^\ell r_n\wedge 1} } \abs{\sqrt{n}(\Prob_n-P)(f)}-2\E\sup_{f \in \mathcal{F}_{2^\ell r_n\wedge 1} } \abs{\sqrt{n}(\Prob_n-P)(f)}\\
	&\qquad\qquad\qquad\qquad\qquad > r_n^{3/2}\big(t+\epsilon\cdot 2^{(\ell-1)(\alpha+2-s)}-2K2^{\ell}(2^\ell \vee M)^{1/2}\big)_+  \bigg)\\
	&\leq \sum_{\ell=1}^{L_n } \exp\bigg(-K_1^{-1}\cdot \frac{r_n^3\big(t+\epsilon\cdot 2^{(\ell-1)(\alpha+2-s)}-2K2^{\ell}(2^\ell \vee M)^{1/2}\big)_+^2}{2^{3\ell} r_n^3+ (2^\ell r_n\wedge 1) r_n^{\alpha+2} \big(t+\epsilon \cdot 2^{(\ell-1)(\alpha+2-s)}-2K2^{\ell}(2^\ell \vee M)^{1/2}\big)_+ }\bigg)\\
	&\leq \sum_{\ell=1}^{L_n} \exp\bigg(-K_2^{-1}\cdot \frac{ (t+\epsilon \cdot 2^{(\ell-1)(\alpha+2-s)})^2 }{2^{3\ell} + r_n^{\alpha-1} (t+\epsilon \cdot 2^{(\ell-1)(\alpha+2-s)}) }\bigg)  \ \ \cdots \cdots (*),
	\end{align*}
	where in the second last inequality we used Talagrand's concentration inequality (see e.g., Lemma~\ref{lem:talagrand_conc_ineq} below), and in the last inequality we used the fact that for $t\geq \sup_{\ell \in \N} (4K 2^{\ell}(2^\ell \cdot M)^{1/2}-\epsilon 2^{2(\ell-1)}) \vee 0 \geq M^{1/2} \sup_{\ell \in \N} (4K 2^{3\ell/2}-\epsilon 2^{2(\ell-1)}) \equiv M^{1/2} t_0$, where $t_0\equiv t_0(K,\epsilon)$, so
	\begin{align*}
	t+\epsilon \cdot 2^{(\ell-1)(\alpha+2-s)}-2 K2^{\ell}(2^\ell \vee M)^{1/2} \geq  (t+\epsilon\cdot 2^{(\ell-1)(\alpha+2-s)})/2,\quad \forall \ell\geq 1.
	\end{align*}
	The probability bound $(*)$ can be further bounded by
	\begin{align*}
	&\sum_{\ell=1}^{L_n} \exp\bigg(-K_3^{-1} \min \bigg\{ \frac{(t+2^{(\ell-1)(\alpha+2-s)})^2}{ 2^{3\ell}  }, r_n^{-(\alpha-1)} (t+2^{(\ell-1)(\alpha+2-s)})\bigg\}\bigg)\\
	&\leq \sum_{\ell=1}^{L_n} \exp\bigg(-  \frac{t^2+2^{4(\ell-1)}}{K_3 2^{3\ell}}  \bigg) +  \sum_{\ell = 1}^{L_n} \exp \bigg( - \frac{ t+ 2^{(\ell-1)(\alpha+2-s)} }{ K_3 r_n^{\alpha-1}  } \bigg) \\
	& \leq \sum_{\ell: t> 2^{2(\ell-1)} } e^{-t^2/(K_4 2^{3\ell} )}+\sum_{\ell\leq L_n: t\leq 2^{2(\ell-1)}} e^{- 2^{\ell}/K_4 }+ e^{-t/K_4}\sum_{\ell} e^{-r_n^{-(\alpha-1)} 2^{2\ell}/K_4}\\
	&\leq K_5\log_+(t)\cdot e^{- t^{1/2}/K_5} + K_5 e^{- t^{1/2}/K_5}+ K_5e^{-t/K_5} \leq K_6e^{-t^{1/2}/K_6}.
	\end{align*}
	The constants depend on $\epsilon$ only.
\end{proof}

Talagrand's concentration inequality \cite{talagrand1996new} for the empirical process in the form given by Bousquet \cite{bousquet2003concentration} (see also \cite[Theorem 3.3.9]{gine2015mathematical}), is recorded as follows.

\begin{lemma}[Talagrand's concentration inequality]\label{lem:talagrand_conc_ineq}
	Let $\mathcal{F}$ be a countable class of real-valued measurable functions such that $\sup_{f \in \mathcal{F}} \pnorm{f}{\infty}\leq b$ and $X_1,\ldots,X_n$  be i.i.d.~random variables with law $P$. Then there exists some absolute constant  $K>1$ such that
	\begin{align*}
	&\Prob\bigg( K^{-1}\sup_{f \in \mathcal{F}}\abs{\sqrt{n}(\Prob_n-P) f} \geq \E\sup_{f \in \mathcal{F}}\abs{\sqrt{n}(\Prob_n-P) f} +x \bigg)\\
	&\leq \exp\bigg(-\frac{x^2}{  K(\sigma^2 + bx/\sqrt{n}) }\bigg),
	\end{align*}
	where $\sigma^2\equiv \sup_{f \in \mathcal{F}} \mathrm{Var}_P f$ and $\Prob_n$ denotes the empirical distribution of $X_{1},\dots,X_{n}$. 
\end{lemma}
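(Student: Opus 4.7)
The statement is Bousquet's sharpening of Talagrand's celebrated concentration inequality for the empirical process supremum; the plan is to follow the entropy (Herbst) method as developed by Ledoux and refined by Bousquet, rather than Talagrand's original inductive approach. First, I would reduce to a one-sided statement: by replacing $\mathcal{F}$ with $\mathcal{F} \cup (-\mathcal{F})$ (still countable, same $b$ and $\sigma^2$) it suffices to control $Z \equiv \sup_{f \in \mathcal{F}} \sum_{i=1}^n (f(X_i) - Pf)$ from above. Centering, I will work with $\bar Z \equiv Z - \mathbb{E} Z$ and aim to bound $\mathbb{E} e^{\lambda \bar Z}$ for $\lambda > 0$.

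The analytic engine is the modified logarithmic Sobolev inequality for product measures: for any $\lambda > 0$,
\begin{align*}
\mathrm{Ent}(e^{\lambda Z}) \;\leq\; \sum_{i=1}^n \mathbb{E}\bigl[\,e^{\lambda Z}\,\phi\bigl(-\lambda (Z - Z_i')\bigr)\,\bigr],
\end{align*}
where $\phi(x) = e^x - x - 1$ and $Z_i'$ denotes $Z$ with $X_i$ replaced by an independent copy $X_i'$. The second step is to control the increments $Z - Z_i'$: on the event where $f^\ast$ attains the supremum in $Z$, one has $Z - Z_i' \leq f^\ast(X_i) - f^\ast(X_i')$ by sub-optimality, and symmetrically from below. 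Using $|f| \leq b$ this gives the almost-sure bound $|Z - Z_i'| \leq 2b$, while the Efron--Stein-type variance calculation (after integrating out $X_i'$) yields
\begin{align*}
\sum_{i=1}^n \mathbb{E}'_i\bigl[(Z - Z_i')_+^2\bigr] \;\leq\; n\sigma^2 + 2b\,\mathbb{E}'(Z - Z'),
\end{align*}
which, together with $\mathbb{E} Z \leq \mathbb{E} Z_i'$ after centering, feeds back into the entropy bound.

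Third, using $\phi(-x) \leq x^2/2$ for $x \geq 0$ and the truncation $\phi(y) \leq y^2 e^{y_+}$, the entropy inequality converts into a differential inequality
\begin{align*}
\lambda H'(\lambda) - H(\lambda) \;\leq\; \psi\bigl(\lambda; \sigma^2, b, \mathbb{E} Z\bigr),
\end{align*}
for $H(\lambda) = \log \mathbb{E} e^{\lambda \bar Z}$. Integrating this ODE from $0$ (with $H(0)=0$, $H'(0)=0$) gives a Bennett-type moment generating function bound $H(\lambda) \leq v(e^{b\lambda/\sqrt{n}} - 1 - b\lambda/\sqrt{n})/(b^2/n)$ with $v = \sigma^2 + 2b \mathbb{E} Z/\sqrt{n}$, from which the stated tail follows by Markov's inequality and optimizing over $\lambda$, after folding the $\mathbb{E} Z$ term into the factor $K^{-1}$ on the left-hand side of the tail bound.

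The main obstacle, and the reason this result (despite its clean statement) is genuinely deep, is the second step: the supremum $Z$ is not smooth, so one cannot differentiate in the coordinates, and the inequality controlling $\sum_i \mathbb{E}'_i [(Z-Z_i')_+^2]$ in terms of $\sigma^2$ (rather than the much larger $\mathbb{E} \sup_f \sum (f-Pf)^2$) requires the careful sub-optimality argument at the (measurable selection of the) maximizer $f^\ast$, and crucially the passage to a \emph{countable} class so that $f^\ast$ is measurable. Getting the sharp Bousquet constants, as opposed to the looser Ledoux/Massart constants, further requires keeping track of the linear-in-$\mathbb{E} Z$ correction term $2b \mathbb{E} Z/\sqrt{n}$ throughout the entropy bookkeeping rather than absorbing it crudely.
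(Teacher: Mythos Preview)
The paper does not prove this lemma at all: it is merely \emph{recorded} with citations to Talagrand, Bousquet, and Gin\'e--Nickl, so there is no ``paper's own proof'' to compare against. Your sketch is a faithful outline of exactly the Bousquet entropy-method argument those references contain: the modified logarithmic Sobolev inequality for product measures, the sub-optimality trick at a measurable maximizer $f^\ast$ to bound $\sum_i \mathbb{E}_i'[(Z-Z_i')_+^2]$ by $n\sigma^2$ plus a linear-in-$\mathbb{E}Z$ correction, and the Herbst integration to a Bennett-type MGF bound. The key conceptual points --- countability for measurable selection, and the distinction between the sharp Bousquet variance $n\sigma^2 + 2b\,\mathbb{E}Z$ versus the cruder Ledoux/Massart constants --- are correctly identified.

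One minor caution: your displayed Efron--Stein-type inequality is written somewhat loosely (the right-hand side should read $n\sigma^2 + 2b\,\mathbb{E}Z$ after taking full expectation, not $2b\,\mathbb{E}'(Z-Z')$), and the step ``folding the $\mathbb{E}Z$ term into the factor $K^{-1}$'' deserves a line of justification, since the paper's formulation has $K^{-1}\sup|\cdot| \geq \mathbb{E}\sup|\cdot| + x$ rather than the more common $\sup|\cdot| \geq \mathbb{E}\sup|\cdot| + x$ with variance $\sigma^2 + 2b\,\mathbb{E}Z/\sqrt{n}$; the two are equivalent up to adjusting $K$, but you should make that rearrangement explicit. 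These are bookkeeping details rather than gaps in the argument.
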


\bibliographystyle{amsalpha}
\bibliography{mybib}

\end{document}